%
%
\documentclass[11pt,reqno]{amsart}
\usepackage{amssymb}
\usepackage{amsfonts}
\usepackage{stmaryrd}
\usepackage{color}
\hoffset=- 2cm \voffset=0cm \setlength{\textwidth}{16cm}
\setlength{\textheight}{22cm}
\theoremstyle{plain}
\newtheorem{Thm}{Theorem}

\newtheorem{Prop}[Thm]{Proposition}
\newtheorem{Rem}[Thm]{Remark}
\newtheorem{Lem}[Thm]{Lemma}

\newtheorem{Def}[Thm]{Definition}
\newtheorem{Prob}[Thm]{Problem}
\newtheorem{Ques}[Thm]{Question}

\newcommand {\p}{\partial}
\newcommand{\q}{\quad}
\newcommand{\qq}{\qquad}

\newcommand{\eq}{\begin{equation}}
\newcommand{\eeq}{\end{equation}}

\def\a{\alpha}
\def\b{\beta}

\def\g{\gamma}
\def\k{\kappa}
\def\lam{\lambda}
\def\p{\partial}

\def\O{\Omega}

\def\var{\varepsilon}

\def\lam{\lambda}
\def\var{\varepsilon}

\def\A{\bold A}
\def\B{\bold B}
\def\bv{\bold v}
\def\D{\bold D}

\def\H{\bold H}

\def\u{\bold u}

\def\w{\bold w}

\def\0{\bold 0}

\def\mA{\mathcal A}
\def\mB{\mathcal B}
\def\C{\mathbb C}

\def\mE{\mathcal E}
\def\mF{\mathcal F}
\def\mG{\mathcal G}

\def\mH{\mathcal H}

\def\mK{\mathcal K}
\def\mL{\mathcal L}

\def\mS{\mathcal S}
\def\mU{\mathcal U}
\def\mV{\mathcal V}
\def\mW{\mathcal W}
\def\mZ{\mathcal Z}

\def\q{\quad}
\def\qq{\qquad}
\def\qqq{\qq\qq}
\def\qqqq{\qq\qq\qq\qq}

\def\v{\vskip}

\def\curl{\text{\rm curl\,}}
\def\div{\text{\rm div\,}}

\def\max{\text{\rm max}}
\def\loc{\text{\rm loc}}
\def\dist{\text{dist}}

\errorcontextlines=0 \numberwithin{equation}{section}
\numberwithin{Thm}{section}

\setcounter{tocdepth}{2}

\begin{document}
\large

\title[Meissner States]
{Meissner States of Type I\!I Superconductors}

\author{Xing-Bin Pan}

\address{School of Mathematics,
East China Normal University,  and NYU-ECNU Institute of Mathematical Sciences at NYU Shanghai,
Shanghai 200062, P.R. China. xbpan@math.ecnu.edu.cn}

\thanks{ }

\keywords{Ginzburg-Landau system, superconductivity, Meissner state,
superheating field, elliptic equations, quasilinear system, asymptotic behavior}

\subjclass[2010]{82D55, 35B25, 35B40, 35B45, 35J47, 35J57, 35Q55, 35Q56}

\begin{abstract} This paper concerns mathematical theory of Meissner states of a bulk superconductor of type $I\!I$, which occupies a bounded domain $\O$ in $\Bbb R^3$ and is subjected to an applied magnetic field below the critical field $H_{\text{\rm S}}$. A Meissner state is described by a solution $(f,\A)$ of a nonlinear partial differential system called Meissner system, where $f$ is a positive function on  $\O$ which is equal to the modulus of the order parameter, and $\A$ is the magnetic potential defined on the entire space such that the inner trace of the normal component on the domain boundary $\p\O$ vanishes. Such a solution is called a Meissner solution. Various properties of the Meissner solutions are examined, including regularity, classification and asymptotic behavior for large value of the Ginzburg-Landau parameter $\k$. It is shown that the Meissner solution is smooth in $\O$, however the regularity of the magnetic potential outside $\O$ can be rather poor. This observation leads to the ides of decomposition of the Meissner system into two problems, a boundary value problem in $\O$ and an exterior problem outside of $\O$. We show that the solutions of the boundary value problem with fixed boundary data converges uniformly on $\O$ as $\k$ tends to $\infty$, where the limit field of the magnetic potential is a solution of  a nonlinear curl system. This indicates that, the magnetic potential part $\A$ of the solution $(f,\A)$ of the Meissner system, which has same tangential component of $\curl\A$ on $\p\O$, converges to a solution of the curl system  as $\k$ increases to infinity, which verifies that the curl system is indeed the correct limit of the Meissner system in the case of three dimensions.

\

Published in: 

J. Elliptic and Parabolic Equations, {\bf 4} (2) (2018), 441-523.

    DOI 10.1007/s41808-018-0027-0.

    https://link.springer.com/article/10.1007/s41808-018-0027-0

    Springer Nature Sharedit initiative link https://rdcu.be/bbM8F
\end{abstract}
\maketitle


\tableofcontents

\section{Introduction}
\label{Section1}

\subsection{Problems and motivations}

\subsubsection{Mathematical model of Meissner states}\

Below the critical temperature, a type I\!I superconductor undergoes phase transitions as the
applied magnetic field increases. This phenomenon is described by Ginzburg-Landau theory of superconductivity \cite{GL}. In this theory, superconductivity is described
by a complex-valued function $\psi$ called order parameter and a real
vector field $\mA$ called magnetic potential, and $(\psi,\mathcal
A)$ is a critical point of the Ginzburg-Landau functional
$$\mG[\psi,\mA]
=\int_{\O}\Bigl\{\Bigl|{\lam\over\k}\nabla\psi-i\mathcal
A\psi\Bigr|^2+{1\over 2}(1-|\psi|^2)^2\Bigr\}dx+\int_{\mathbb
R^3}|\lam\,\curl\mA-\mH^e|^2dx,
$$
namely, $(\psi,\mA)$ is a solution of the Euler-Lagrange
equations in $\Bbb R^3$ for the functional $\mG$, which is called Ginzburg-Landau system:
\begin{equation}\label{GL}
\left\{\aligned
-&\nabla_{\k\lam^{-1}\mA}^2\psi
=\k^2\lam^{-2}(1-|\psi|^2)\psi\q&\text{ \rm in } \O,\\
&\lam^2\curl^2\mathcal
A=\lam\k^{-1}\Im(\bar{\psi}\nabla_{\k\lam^{-1}\mA}\psi)
 &\text{ \rm in }\O,\\
&\curl^2\mA=\bold 0\qqqq\q&\text{\rm in }\O^c,\\
&(\nabla_{\k\lam^{-1}\mA}\psi)\cdot\nu=0\qqq&\text{\rm on }\p\O,\\
&[\mA_T]=\0,\qq [(\curl\mA)_T]=\bold 0 \q&\text{\rm on } \p\O,\\
&\lam\,\curl\mA-\mH^e\to\0\qq\text{\rm as } |x|\to
\infty,&
\endaligned \right.
\end{equation}
where $\O$ is a domain in $\mathbb R^3$ occupied by the
superconductor, $\O^c=\mathbb R^3\setminus\overline{\O}$, $\nu$ is the unit
outer normal vector to the domain boundary $\p\O$ pointing into $\O^c$, $\mH^e$
is the applied magnetic field satisfying
\footnote{Without the assumption $\curl\mH^e\equiv \0$,
the second
and the third lines in the Euler-Lagrange equations will take the form
$$\lam^2\curl^2\mathcal
A=\lam\k^{-1}\Im(\bar{\psi}\nabla_{\k\lam^{-1}\mA}\psi)+\lam^2\curl\mH^e\
\q\text{\rm in }\O,\q
\curl^2\mA=\lam^{-1}\curl\mH^e\q\text{\rm in }\O^c.
$$}
\eq\label{1.2}
\div\mH^e=0\q\text{and}\q\curl\mH^e=\0\q\text{in }
\mathbb R^3.
\eeq
$\lam$ is the penetration length, $\k$ is the Ginzburg-Landau
parameter and $\k=\lam/\xi$, where $\xi$ is the coherence length.
$\mA_T$,  $(\curl \mA)_T$ and $\mH^e_T$ denote
the tangential component of $\mA$, $\curl \mA$ and
$\mH^e$ on $\p\O$, respectively. $[\,\cdot\,]$ denotes the
jump in the enclosed quantity across $\p\O$, that is,
$$[\B]=\B^+-\B^-,
$$
where $\B^+$, $\B^-$ are the outer and inner trace of $\B$ at
$\p\O$, see section \ref{Section2} for more precise
definition. We use the notation
$$\aligned
\nabla_{\A}\psi=\nabla\psi-i\A\psi,\q
\nabla_\A^2\psi=(\nabla-i\A)^2\psi=
\Delta\psi-i(2\A\cdot\nabla\psi+\psi \div \A)-|\A|^2\psi.
\endaligned
$$
Note that $|\psi|^2$ is proportional to the density of
superconducting electron pairs. $\psi=0$ if the sample is in the
normal state, and $\psi\neq 0$ if the sample is in the Meissner state.
A superconductor in a weak magnetic field will be in the Meissner
state, namely $\psi$ does not vanish. If the applied magnetic field is below the first critical field $H_{C_1}$ then the Meissner state is the global minimizer of the Ginzburg-Landau functional (see \cite{GL, dG2, SST, T} for earlier physical literature and \cite{SS} and the references therein for mathematical study). When the applied magnetic field is higher then $H_{C_1}$ but still below the second critical field $H_{\text{\rm S}}$, the Meissner state is locally stable (here we omit the precise statement of the meaning of the local stability).  If the applied
field increases further and reaches the third critical field $H_{\text{\rm sh}}$, then vortices (the zero points of
$\psi$) nucleate in the sample and the sample turns into the mixed
state (see \cite{dG1, MS, Fn, Kra, FP1, FP2}).

Now we recall the mathematical model of Meissner states derived in \cite{C1, C2, C3},  see also \cite{Mon} and \cite[Section 2]{P3}.
Let us start with a solution  $(\psi,\A)$ of \eqref{GL} which describes a Meissner state of a superconductor occupying a bounded and simply-connected domain $\O$ in $\Bbb R^3$, so $\psi\neq 0$, and we can write
$$\psi=fe^{i\chi},\q
\mA=\A+{\lam\over\k}\nabla\chi,
$$
where $f>0$, and $\chi$ is a smooth function. Under the assumption $\curl\mH^e\equiv\0$,
from \eqref{GL} we derive equations for $(f,\A)$ in $\Bbb R^3$:
\begin{equation}\label{eqfA}
\left\{\aligned
-&{\lam^2\over\k^2}\Delta f=(1-f^2-|\A|^2)f\qq\q&\text{\rm in }\O,\\
&\lam^2\curl^2\A+f^2\A=\bold 0\qqq\q\,&\text{\rm in }\O,\\
&\curl^2\A={\bold 0}\qqq\qqq\q&\text{ \rm in }\O^c,\\
&{\p f\over\p\nu}=0,\q [\A_T]={\bold 0},\q
[(\curl\A)_T]={\bold 0}\;&\text{ \rm on }\p\O,\\
\endaligned\right.
\end{equation}
with a condition at infinity
\begin{equation}\lam\,\curl\A-\mH^e\to {\bold 0} \q\text{\rm as }
|x|\to\infty, \label{1.4}
\end{equation}
and a condition on $\p\O$
\eq\label{1.5}
\nu\cdot\A^-=0\q\text{\rm on}\;\;\p\O,
\eeq
where $\nu\cdot\A^-$ denotes the inner normal trace of $\A$ at $\p\O$, see the definition given in Section 2. One feature of this problem is that, besides a boundary condition for $f$, it includes a two-side continuity condition for the tangential component $\A_T$ and for $(\curl\A)_T$, and an one-side condition for the inner normal trace  $\nu\cdot\A^-$ on $\p\O$. We call a solution $(f,\A)$ of \eqref{eqfA} with $f>0$ on $\O$ and satisfying \eqref{1.5} a  {\it Meissner solution}, see Definition \ref{Def3.1} below.\footnote{Let us mention that in our paper the name ``Meissner solution" is used only for some solutions of \eqref{eqfA} and \eqref{dec-eqfA-1}, and their equivalent systems. Please note that in literature the name ``Meissner solution" is used for all solutions $(\psi,\A)$ of the Ginzburg-Landau system \eqref{GL} such that $|\psi(x)|>0$ on $\O$, see for instance \cite[p.1376]{BCM}.}

\subsubsection{Asymptotic limit as $\k\to\infty$}\

We shall examine behavior of Meissner solutions when $\k$ is large.
To get some information of the limiting behavior of the Meissner solutions, we begin with a formal analysis as in \cite{C2}. We fix
$\lam$ and let $\k\to\infty$. If we ignore the boundary condition of $f$ on $\p\O$, then formally we should have
${\lam^2\over\k^2}\Delta f(x)\sim 0$ for $x\in\O$. From this and the
first equation in \eqref{eqfA} we have
$f^2(x)\sim 1-|\A(x)|^2$, then replacing $f^2(x)$ by $1-|\A(x)|^2$ in the second equation in
\eqref{eqfA}, we reach a semilinear curl system on $\Bbb R^3$:
\begin{equation}\label{eqA}
\left\{\aligned
-&\lam^2\curl^2\A=(1-|\A|^2)\A\q\;\;&\text{\rm in }\O,\\
&\curl^2\A={\bold 0}\qqq&\text{ \rm in }\O^c,\\
&[\A_T]={\bold 0},\q [(\curl\A)_T]={\bold
0}\q &\text{ \rm on }\p\O,
\endaligned\right.
\end{equation}
which also includes the two-side continuity condition for the tangential components $\A_T$ and $(\curl\A)_T$ on $\p\O$.
It is natural to ask
\begin{Ques}\label{Ques1.1} Is Eq. \eqref{eqA} indeed the correct  limit of Eq. \eqref{eqfA} for Meissner solutions? More precisely, if $(f_\k,\A_\k)$ is a Meissner solution of \eqref{eqfA}-\eqref{1.4}-\eqref{1.5}, is it true that $\A_\k$ sub-converges to a solution of \eqref{eqA} as $\k\to\infty$?
\end{Ques}

The 2-dimensional version of Question \ref{Ques1.1} has been solved by Bonnet, Chapman and Monneau in \cite{BCM}.
In this paper  we work on the 3-dimensional problem.
Before going to study this question, let us look at a boundary value problem (BVP for short) in $\O$:
\begin{equation}\label{eqA-Omega}
\left\{\aligned
-&\lam^2\curl^2\A=(1-|\A|^2)\A\q&\text{\rm in }\O,\\
&\lam(\curl\A)_T=\mH_T\q&\text{\rm on }\p\O.
\endaligned\right.
\end{equation}
BVP \eqref{eqA-Omega} is deduced from \eqref{eqA} in the following sense: If $\A$ is a solution of \eqref{eqA}-\eqref{1.4}, then the restriction of $\A$ on $\overline{\O}$ solves \eqref{eqA-Omega} with the boundary data $\mH_T=(\lam\,\curl\A)_T$.
BVP \eqref{eqA-Omega} has been studied by
Chapman \cite{C1, C2, C3}, Berestycki, Bonnet and Chapman \cite{BBC},
Bonnet, Chapman and Monneau \cite{BCM}, Bolley and Helffer \cite{BH}, Pan and Kwek \cite{PK} in
the two dimensional case, and by Monneau \cite{Mon}, Bates and Pan
\cite{BaP}, Lieberman and Pan \cite{LiP}, Xiang \cite{X} in the three dimensional
case. Also see surveys \cite{P1, P3, P8} on \eqref{eqA-Omega} and related problems, and \cite{P2, P5, P7} for the Meissner model of anisotropic superconductors. Note that in the two dimensional case, if $\O$ is bounded and simply-connected, then the problem \eqref{eqA} in $\Bbb R^2$ is equivalent to BVP \eqref{eqA-Omega} with $\mathcal
H_T=\mH^e_T$, and in this case it has been proved in
\cite{BCM} that the Meissner solutions converge to a solution of
\eqref{eqA-Omega} as $\k\to\infty$.
 In the three dimensional case, it has been proved
in \cite[p.575, Theorem $1'$]{BaP}\footnote{Also see \cite[Theorem 1]{BaP}, which is stated for the equivalent system for $\H=\lam\,\curl\A$.} that, if $\O$ is a bounded and simply-connected domain
without holes and with a $C^4$ boundary, and if the following condition holds:\footnote{If $\A$ is a solution of \eqref{eqA} and $\mH=\lam\,\curl\A$, then the condition $\nu\cdot\curl\mH_T=0$ is natural, see the explanation in \cite[Remark 1.4]{BaP} and also see Lemma \ref{Lem-equiv-dec-eqfA} in this paper. Nevertheless, existence and regularity of solutions to \eqref{eqA-Omega} without this extra condition has been obtained in \cite{LiP}.

The condition $\|\mathcal
H^e_T\|_{C^0(\p\O)}<\sqrt{5/18}$  is optimal for existence of stable solutions for all small $\lam$ (see \cite{BaP}), and it has been shown that
$$\aligned
H_{S}(\mathbb R^2_+)=\sqrt{5\over
18}={\sqrt{5}\over 3}H_C(\mathbb R^2_+),\\
\endaligned
$$
see V. Galaiko \cite{Ga}, L. Kramer \cite{Kra} and Chapman
\cite{C2}. Note that in \cite{PK} we wrote $H_{\text{\rm S}}(\mathbb R^2_+)$ by
$H_{\text{\rm sh}}(\mathbb R^2_+)$.}
$$
\mH_T\in C^{2+\a}(\p\O,\Bbb R^3),\q
\nu\cdot\mH_T=0,\,\q \nu\cdot\curl\mathcal
H_T=0\q\text{\rm on}\;\;\p\O,\q \|\mathcal
H_T\|_{C^0(\p\O)}<\sqrt{5\over 18},
$$
then for all small $\lam$, \eqref{eqA-Omega} has a unique solution $\A\in
C^3_{\loc}(\O,\mathbb R^3)\cap C^{2+\a}(\overline{\O},\mathbb R^3)$, and it
satisfies
\begin{equation}\label{cond-1.8}
\|\A\|_{L^\infty(\O)}\ <  {1\over \sqrt{3}}.
\end{equation}

From these results, it is natural to expect that,
if the Meissner solutions of problem \eqref{eqfA}-\eqref{1.4} satisfy
some conditions which are comparable with \eqref{cond-1.8}, then their
restriction on $\overline{\O}$ converges to a solution of \eqref{eqA} as
$\k\to\infty$. In this paper we are able to verify a weaker version of this observation, see Theorem \ref{Thm4.9}.

\subsection{Mathematical challenges of the Meissner system}\

Eq.\eqref{eqfA} and \eqref{eqA} are derived from the Ginzburg-Landau system \eqref{GL}, however their mathematical structures are different to that of \eqref{GL}, which causes new difficulties in the study of solvability.

\subsubsection{Existence of solutions}\

The Ginzburg-Landau functional $\mG$ enjoys gauge invariance and one may always resume  compactness by working in the spaces where the magnetic potentials are divergence-free, and obtain solutions of \eqref{GL} by applying the standard variational methods. On the other hand, the energy functional associated with a solution $(f,\A)$ of problem \eqref{eqfA}-\eqref{1.4} is
$$\aligned &\mE[f,\A]\equiv \mathcal
G[fe^{i\chi},\A+{\lam\over\k}\nabla\chi]\\
=&\int_{\O}\Bigl\{{\lam^2\over\k^2}|\nabla f|^2+|f \A|^2+{1\over
2}(1-|f|^2)^2\Bigr\}dx+\int_{\mathbb R^3}|\lam\,\curl\A-\mathcal
H^e|^2dx.
\endaligned
$$
$\mE$ is not convex and does not enjoy the gauge invariance, and we are not able to find solutions by directly applying variational methods to $\mE$. The same difficulty exists for \eqref{eqA}. See more discussions in \cite[Subsection 2.1]{BCM} on the difficulties of \eqref{eqfA}. We also refer to \cite[p.576]{BaP} for the discussion on the mathematical difficulty of BVP \eqref{eqA-Omega}.

\subsubsection{The continuity requirements at $\p\O$}\

The requirements of continuity of the tangential components  $\A_T$ and $(\curl\A)_T$ is a key feature of problems \eqref{eqfA} and \eqref{eqA}. If one of the two continuity requirements is dropped, then the question of existence of solutions becomes much easier.

To see this, let us drop the requirement $[(\curl\A)_T]=\0$, then we can find solutions of \eqref{eqfA} as follows:

Step 1. Given a tangential vector field $\mB_T$ on $\p\O$ which satisfies some necessary conditions for solvability (for instance $\mB_T=\mH^e_T$), one can find $(f,\A^i)$ on $\O$ which satisfies the first two equations in \eqref{eqfA} in $\O$ and satisfies the boundary conditions
$${\p f\over\p\nu}=0,\q \lam(\curl\A^i)_T^-=\mB_T\q\text{on }\p\O,
$$
see \eqref{dec-eqfA-1} below.

Step 2. Then we solve the following exterior problem in $\O^c$:
$$\curl^2\A^o=\0\q\text{in }\O^c,\q (\A^o)^+_T=(\A^i)_T\q\text{on }\p\O,\q \lam\,\curl\A^o\to \mH^e\q\text{as }|x|\to\infty.
$$

We can show that both Steps 1 and 2 can be solved. Then we define a vector field $\A$ on $\Bbb R^3$ by letting $\A=\A^i$ in $\O$ and $\A=\A^o$ in $\O^c$. $(f,\A)$ satisfies \eqref{eqfA}-\eqref{1.4} except the continuity requirement $[(\curl\A)_T]=\0$.

On the other hand, if we drop the requirement $[\A_T]=\0$ from \eqref{eqfA}, then Step 1 is the same as above, and Step 2 is to solve
$$\curl^2\A^o=\0\q\text{in }\O^c,\q \lam(\curl\A^o)^+_T=\mB_T\q\text{on }\p\O,\q \lam\,\curl\A^o\to \mH^e\q\text{as }|x|\to\infty.
$$
Again we can solve these two steps and get $(f,\A)$ which satisfies \eqref{eqfA}-\eqref{1.4} except the requirement $[\A_T]=\0$.

In contrast, with both the two continuity conditions required, solvability of systems \eqref{eqfA} and \eqref{eqA} is much harder.

\subsubsection{Lack of control on divergence}\

One of the difficulties of \eqref{eqfA}, \eqref{eqA} and \eqref{eqA-Omega} is lack of control
on $\div\A$. Without control on divergence of the magnetic potential, we are not able to control derivatives of the solutions, hence not able to get higher regularity and a priori estimates of the weak solutions. Recall that, when studying \eqref{eqA-Omega}, to overcome this difficulty,  Chapman \cite{C2} introduce a
system for $\H=\lam\,\curl \A$, and solutions of the new system satisfy automatically the divergence-free condition.
Following this idea, we proved in \cite{P3} that, if $\A$ is a
solution of \eqref{eqA}-\eqref{cond-1.8}, then
$\H=\lam\,\curl \A$ solves a quasilinear system
\begin{equation}\label{eqH}
\left\{\aligned
-&\lam^2\curl \bigl[F(\lam^2|\curl \H|^2)\curl \H\bigr]=\H\q&\text{ \rm in } \O,\\
&\curl\H=\0,\qq \div\H=0\q&\text{ \rm in }\O^c,\\
&[\H_T]=\0\qqq\qqq\;\,&\text{ \rm on }\p\O,\\
\endaligned\right.
\end{equation}
and the following estimate holds:
\begin{equation}\label{cond-1.10}
\lam\|\curl \H\|_{L^\infty(\O)}< \sqrt{4\over 27}.
\end{equation}
Here the function $F$ is determined by
$$v=F(t^2)t\q\text{if and only if}\q t=(1-v^2)v,\q F(0)=1.
$$
$F$ is uniquely defined for $0\leq t\leq \sqrt{4/27}$, i.e., for $0\leq v\leq 1/\sqrt{3}$.\footnote{More precise description of regularity of the function $F$ is given in \cite[Lemma 2.2]{P3}.}
Similarly if $(f,\A)$ is a solution of \eqref{eqfA}-\eqref{cond-1.8}, and if we let
$\H=\lam\,\curl\A$,  from the second equation of \eqref{eqfA} we get
$$\A=-\lam\,f^{-2}\curl\H
$$
in $\O$, hence
$(f,\H)$ is a solution of the following
system
\begin{equation}\label{eqfH}
\left\{\aligned
-&{\lam^2\over\k^2}\Delta f=(1-f^2-\lam^2f^{-4}|\curl\H|^2)f\q&\text{\rm in }\O,\\
&\lam^2\curl(f^{-2}\curl\H)+\H=\bold 0\q\;\;\,&\text{\rm in }\O,\\
&\curl\H={\bold 0},\qq \div\H=0\qq&\text{\rm in }\O^c,\\
&{\p f\over\p\nu}=0,\qq [\H_T]={\bold 0}\qqq&\text{\rm on }\p\O,
\endaligned\right.
\end{equation}
and condition \eqref{1.4} is written as
\begin{equation}\label{cond-1.12}
\H-\mH^e\to {\bold 0}
\q\text{\rm as } |x|\to\infty.
\end{equation}
On the other hand, a solution of  \eqref{eqfH}-\eqref{cond-1.10} (resp. of \eqref{eqH}-\eqref{cond-1.10}) satisfying certain continuity conditions yields a solution of \eqref{eqfA}-\eqref{cond-1.8} (resp. of \eqref{eqA}-\eqref{cond-1.8}). For more details see \cite[Lemma 2.1, Lemma 3.3]{P3}. Therefore in the following, for our convenience, we shall call \eqref{eqfH} (resp. \eqref{eqH}) an equivalent system with \eqref{eqfA} (resp. with \eqref{eqA}), although the meaning of ``equivalence" needs to be understand carefully.
As \eqref{eqfH} and \eqref{eqH} have better structure than \eqref{eqfA} and \eqref{eqA} in the sense that solutions of \eqref{eqfH} and of \eqref{eqH} satisfy the divergence-free condition $\div\H=0$ both in $\O$ and in $\O^c$, which provides possibility to control derivatives of the solutions, so we study first \eqref{eqfH} and \eqref{eqH}.

\subsection{Outlines} \label{Subsection1.3}\

In Section 2 we collect some preliminary materials which will be used frequently in this paper, including spaces of vector fields, the div-curl-gradient inequalities, and a priori estimates of solutions of a linear Maxwell's system.

In Section 3 we study properties of solutions of \eqref{eqfA}. The main result in this section is Theorem \ref{Thm-reg-fA}, which gives regularity and a priori estimates of the weak Meissner solutions $(f,\A)$ of \eqref{eqfA}. We will see that $(f,\A)$ is smooth on $\overline{\O}$, however the regularity of $\A$ in $\O^c$ can be rather poor, and in general we only have $\A\in H^1_{\loc}(\O^c,\Bbb R^3)$. This is partially due to the fact that the definition of weak solutions to \eqref{eqfA} only requires continuity of the tangential components $\A_T$ and $(\curl\A)_T$ on $\p\O$ but allows the normal components $\nu\cdot\A$ and $\nu\cdot\curl\A$ be discontinuous across $\p\O$. This observation leads to the unusual-looking definition  of the classical solutions to \eqref{eqfA} in Definitions \ref{Def3.7} and \ref{Def3.8}, and leads to the idea of decomposition of \eqref{eqfA} into two problems: a BVP \eqref{dec-eqfA-1} for $(f,\A)$ in $\O$, and an exterior problem \eqref{dec-eqfA-2} for $\A$ on $\O^c$.

In Section 4 we study BVP \eqref{dec-eqfA-1}. Regularity of weak solutions of \eqref{dec-eqfA-1} is stated in Proposition \ref{Prop4.3}.
Existence of solutions $(f,\A)$ is proved in Proposition \ref{Prop4.6}, where we work on an equivalent BVP \eqref{eq4.7} for $(f,\H)$ with $\H=\lam\,\curl\A$, as \eqref{dec-eqfA-1} does not provide control on divergence of $\A$. The main result in this section is Theorem \ref{Thm4.9}, which verifies that \eqref{eqA-Omega} is the correct limit of \eqref{dec-eqfA-1} for the Meissner solutions. More precisely, Theorem \ref{Thm4.9} shows that, for each value of $\k$,  \eqref{dec-eqfA-1} has a classical Meissner solution $(f_\k,\A_\k)$, and $(f_\k,\A_\k)$ uniformly converges on $\overline{\O}$ to $(f_\infty,\A_\infty)$ as $\k\to\infty$, where $\A_\infty$ is a solution of \eqref{eqA-Omega} and
$f_\infty(x)=(1-|\A_\infty(x)|^2)^{1/2}.$
Recall that \eqref{dec-eqfA-1} is the restriction on $\O$ of the full Meissner system \eqref{eqfA}, and \eqref{eqA-Omega} is the restriction on $\O$ of \eqref{eqA}, Theorem \ref{Thm4.9} actually says that, the magnetic potential part $\A$ of the Meissner solution $(f,\A)$ of \eqref{eqfA}, of which the tangential component $(\curl\A)_T$ has same value on $\p\O$, converge uniformly on $\overline{\O}$ to a solution of the semilinear curl system \eqref{eqA}, hence it gives an answer to Question \ref{Ques1.1} in the three-dimensional case positively.

In Section 5 we study the exterior problem \eqref{dec-eqfA-2}. Existence and classification of weak solutions are given in Theorem \ref{Thm5.3}.

In Section 6 we study the limiting system \eqref{eqA}. We first examine the equivalent system \eqref{eqH}, and derive existence and classification of solutions (see Lemmas \ref{Lem6.2}, \ref{Lem6.3}). Then we discuss existence of classical Meissner solutions of \eqref{eqA} in Theorem \ref{Thm6.7}, where a solvability condition is given in \eqref{cond-6.15}, which can also be represented in terms of the Dirichlet-to-Neumann type operators $\Gamma$ and $\Sigma$, see Definition \ref{DefE.4} and \eqref{solvability-1} in Appendix E.

In section 7 we examine existence of solutions to the full Meissner system \eqref{eqfA}. We first consider the equivalent system \eqref{eqfH}, for which the  precise meaning of equivalence is carefully stated in Lemma \ref{Lem7.1}, and existence of solutions to \eqref{eqfH} is given in Lemma \ref{Lem7.2}.
Then we go back to \eqref{eqfA}, and in Theorem \ref{Thm7.5} we discuss existence of classical Meissner solutions of \eqref{eqfA}, where a solvability condition is given in \eqref{cond-7.8}, which can also be represented using a Dirichlet-to-Neumann type operator $\Pi$, see Definition \ref{Def7.6} and \eqref{cond-7.8}.
Combining Theorem \ref{Thm7.5} in this paper with \cite[Theorem 1]{BaP} we have a better understanding
on the Meissner solutions of problem \eqref{eqfA}-\eqref{1.4} for small $\lam$ and large $\k$.

In this paper we use frequently the results and techniques developed for Maxwell's equations and div-curl systems, in particular the div-curl-gradients inequalities, which can be found in various references including \cite{DaL1, DaL3, Ce, GR, Sc,  BW, W, KY, MMT, MP1, MP2, Pi, AuA, CD, Co, ABD, AS}. We also use frequently the results on exterior problems from \cite{NW}.
Finally we mention that nonlinear systems involving operator $\curl$ have been studied by many authors in the recent years, see for instance \cite{BF, Jo, Y1, Y2, P5, P6} and the references therein.

\v0.05in

{\bf Acknowledgements.}  This work was partially supported
by the National Natural Science Foundation of China grants no.  11671143 and no. 11431005.

\section{Preliminaries}\label{Section2}

\subsection{Spaces of vector fields}\

Let $\O$ be a bounded domain in $\Bbb R^3$ with a $C^1$ boundary. We use $\nu$ to denote the unit outer normal vector of $\p\O$ which
points to the outside of $\O$, and denote $\nu_{\p\O^c}=-\nu$.
For a function $u(x)$ defined in a neighborhood of $\p\O$, let
$u|_\O$ and $u|_{\O^c}$ denote the restrictions of $u$ on
$\O$ and on $\O^c$ respectively. We define the inner trace $u^-$ and
outer trace $u^+$   on $\p\O$ by
$u^-=\text{trace of $u\bigr|_\O$ on $\p\O$}$ and $u^+=\text{trace of
$u\bigr|_{\O^c}$ on $\p\O$}$
if they exist, and define the jump of $u$ by $[u]=u^+-u^-$.

For a vector field $\A$ defined in $\O$, the trace, tangential trace and normal trace
of $\A$ on $\p\O$, if  exist, are denoted by $\A$, $\A_T$   and $\nu\cdot\A$
respectively.\footnote{$\A_T$ is also denoted by $(\nu\times\A)\times\nu$.} These traces are also called as {\it inner trace}, {\it inner tangential trace}
and {\it inner normal trace}, and also denoted by $\A^-$, $\A_T^-$ and $\nu\cdot\A^-=(\nu\cdot\A)^-$
respectively. For a vector field $\A$ defined in $\O^c$, the trace, tangential trace and normal trace
of $\A$ on $\p\O$, if exist, are called {\it outer trace}, {\it outer tangential trace}
and {\it outer normal trace}, and denoted by $\A^+$, $\A_T^+$ and $\nu\cdot\A^+=(\nu\cdot\A)^+$
respectively. We write
$$[\nu\cdot\A]=\nu\cdot\A^+-\nu\cdot\A^-=(\nu\cdot\A)^+-(\nu\cdot\A)^-,\q [\A_T]=\A_T^+-A_T^-.
$$

We use $C^{k+\a}(\overline{\O})$, $L^p(\O)$ and $H^k(\O)$ to denote the H\"older spaces, Lebesgue spaces and Sobolev spaces for real valued functions,
$C^{k+\a}(\overline{\O},\Bbb C)$, $L^p(\O,\Bbb C)$ and $H^k(\O,\Bbb C)$ to denote the corresponding spaces of complex-valued functions,
$C^{k+\a}(\overline{\O},\Bbb R^3)$, $L^p(\O,\Bbb R^3)$ and $H^k(\O,\Bbb R^3)$ to denote the spaces of vectors fields. However the norms both for scalar functions and vector fields will be denoted by $\|\cdot\|_{C^{k+\a}(\overline{\O})}$, $\|\cdot\|_{L^p(\O)}$ and $\|\cdot\|_{H^k(\O)}$.
We write
$$\aligned
\mH(\O,\text{\rm div})=&\{\B\in L^2(\O,\mathbb
R^3):~\div\B\in L^2(\O)\},\\
\mH(\O,\text{\rm curl})=&\{\B\in L^2(\O,\mathbb
R^3):~\curl\B\in L^2(\O,\mathbb
R^3)\}.
\endaligned
$$
If $D$ is an
unbounded domain (for example $D=\O^c$ or $D=\mathbb R^3$), we define
\begin{equation}\label{sp-Hloc}
\aligned
\mH_{\loc}(D,\text{\rm div})=&\{\B\in L^2_{\loc}(D,\mathbb
R^3),\;
\div\B\in L^2_{\loc}(D)\},\\
\mH_{\loc}(D,\text{\rm curl})=&\{\B\in L^2_{\loc}(D,\mathbb
R^3),\;\curl\B\in L^2_{\loc}(D,\mathbb R^3)\}.\\
\endaligned
\end{equation}
Recall the following decomposition (see \cite[section 4.1]{DaL3}):
\begin{equation}\label{dec-L2}
L^2(\O,\mathbb R^3)=\text{\rm grad} H_0^1(\O)\oplus \mH(\O,\div0)
=\text{\rm grad} H^1(\O)\oplus \mH_{n0}(\O,\div0).
\end{equation}
For a vector field $\A\in\mathcal
H_{\loc}(\mathbb R^3,\div)$,
$[\nu\cdot\A]=\nu\cdot\A^+-\nu\cdot\A^-$ belongs to
$H^{-1/2}(\p\O)$. For a vector field $\A\in\mH_{\loc}(\mathbb
R^3,\curl)$, $[\A_T]=\A_T^+-\A_T^-$ belongs to
$H^{-1/2}(\p\O,\mathbb R^3)$.

We denote the spaces of tangential vector fields on $\p\O$ by
\eq\label{TanS}
\aligned
T\!C^{k+\a}(\p\O,\Bbb R^3)=&\{\w\in C^{k+\a}(\p\O,\Bbb R^3):~ \nu\cdot\w=0\;\text{on }\p\O\},\\
T\!H^{s}(\p\O,\Bbb R^3)=&\{\w\in H^s(\p\O,\Bbb R^3):~ \nu\cdot\w=0\;\text{on }\p\O\;\;\text{in the sense of trace}\}.
\endaligned
\eeq
$T\!H^{-s}(\p\O,\Bbb R^3)$ denotes the dual space of $T\!H^s(\p\O,\Bbb R^3)$.\footnote{When $\p\O$ is Lipschitz, see \cite{BCS} for the definition of $T\!H^s(\p\O,\Bbb R^3)$ and $T\!S^{-s}(\p\O,\Bbb R^3)$.}
If $F(\O)$ denote a space of scalar functions, then we set
$$\dot F(\O)=\{\phi\in F(\O): \int_\O \phi(x) dx=0\}.
$$
We also use the following notation: If $X(\O)$ denotes a space of vector fields, then we set
$$\aligned
X(\O,\div0)=&\{\u\in X(\O):~ \div\u=0\;\text{in }\O\},\\
X(\O,\curl0)=&\{\u\in X(\O):~ \curl\u=\0\;\text{in }\O\},\\
X_{t0}(\O)=&\{\u\in X(\O):~ \u_T=\0\;\text{on }\p\O\},\\
X_{n0}(\O)=&\{\u\in X(\O):~ \nu\cdot\u=0\;\text{on }\p\O\}.
\endaligned
$$

We need the following div-curl-gradient inequalities.
\begin{Lem}\label{Lem2.1} Assume $\O$ is a bounded domain in $\Bbb R^3$ with a $C^2$ boundary, $k$ is a non-negative integer, and $1<p<\infty$.
\begin{itemize}
\item[(i)] If $\O$ is simply-connected, then
\eq\label{dcg1}
\|\u\|_{W^{k+1,p}(\O)}\leq C(\O,k,p)\{\|\div\u\|_{W^{k,p}(\O)}+\|\curl\u\|_{W^{k,p}(\O)}+\|\nu\cdot\u\|_{W^{k+1-1/p,p}(\p\O)}\}.
\eeq
\item[(ii)] If $\O$ has no holes, then
\eq\label{dcg2}
\|\u\|_{W^{k+1,p}(\O)}\leq C(\O,k,p)\{\|\div\u\|_{W^{k,p}(\O)}+\|\curl\u\|_{W^{k,p}(\O)}+\|\nu\times\u\|_{W^{k+1-1/p,p}(\p\O)}\}.
\eeq
\end{itemize}
\end{Lem}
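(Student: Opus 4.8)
The plan is to prove the two div-curl-gradient inequalities \eqref{dcg1} and \eqref{dcg2} by reducing each of them to a Hodge-type decomposition adapted to the prescribed boundary trace, followed by elliptic ($L^p$) regularity for the vector Laplacian with appropriate boundary conditions. Since the two parts are dual in spirit (normal trace versus tangential trace), I would set up a common machinery and then specialize.

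First I would establish the base case $k=0$ of \eqref{dcg1}. Given $\u \in W^{1,p}(\O)$, I would introduce $g = \div\u \in L^p(\O)$, $\h = \curl\u \in L^p(\O,\Bbb R^3)$ and the normal datum $\varphi = \nu\cdot\u \in W^{1-1/p,p}(\p\O)$. The idea is to reconstruct $\u$ from this data: solve the Neumann problem $\Delta q = g$ in $\O$ with $\partial q/\partial\nu = \varphi$ on $\p\O$ (using the compatibility $\int_\O g = \int_{\p\O}\varphi$, which holds since $\u$ is the given field), so that $q \in W^{2,p}(\O)$ with $\|q\|_{W^{2,p}} \le C(\|g\|_{L^p} + \|\varphi\|_{W^{1-1/p,p}(\p\O)})$ by Agmon-Douglis-Nirenberg. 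Then $\w := \u - \nabla q$ satisfies $\div\w = 0$ in $\O$, $\curl\w = \h$, and $\nu\cdot\w = 0$ on $\p\O$. For this divergence-free, normally-vanishing field I would invoke the standard estimate $\|\w\|_{W^{1,p}(\O)} \le C\|\curl\w\|_{L^p(\O)}$ valid precisely when $\O$ is simply-connected (this kills the first cohomology space; this is the classical result of e.g. von Wahl / Bolik-von Wahl, or it can be derived from the $L^p$ theory of the operator $\u \mapsto (\curl\u, \div\u, \nu\cdot\u)$). Combining the two bounds gives \eqref{dcg1} for $k=0$.

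Part (ii) is the "dual" statement: I would do the same decomposition but swap the roles, writing $\u = \nabla q + \w$ where now $q$ solves the Dirichlet-type problem matching the tangential datum, so that $\w$ has vanishing tangential trace $\nu\times\w = \0$, vanishing divergence (after absorbing $\div\u$ into $\nabla q$), and prescribed curl; the estimate $\|\w\|_{W^{1,p}(\O)} \le C\|\curl\w\|_{L^p(\O)}$ for divergence-free fields with vanishing tangential trace requires that $\O$ have no holes (trivial second cohomology / no harmonic Neumann fields). To pass from $k=0$ to general $k$ I would argue by induction: differentiate the system, use that $\div$, $\curl$ and the boundary trace operators commute with tangential derivatives up to lower-order commutator terms controlled by the $C^2$ (indeed $C^{k+2}$, but one can bootstrap or cite the sharp version) regularity of $\p\O$, flatten the boundary locally, and apply the $k=0$ estimate to difference quotients; alternatively one cites the higher-order ADN estimates directly for the reconstructed potentials $q$ and $\w$. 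The constant depends only on $\O, k, p$ as claimed.

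The main obstacle I expect is not the elliptic regularity — that is standard ADN theory — but rather controlling the topological terms cleanly and with the correct hypotheses: one must verify that "simply-connected" is exactly what makes the $\curl$-controls-gradient estimate hold for normally-vanishing divergence-free fields (dimension of the space of such harmonic fields equals the first Betti number), and that "no holes" (second Betti number zero, equivalently $\p\O$ connected) is what does it for tangentially-vanishing ones. A secondary technical point is the boundary regularity bookkeeping in the inductive step: the fractional Sobolev trace spaces $W^{k+1-1/p,p}(\p\O)$ must match up under differentiation, and the commutators generated when one differentiates along a curved boundary must be absorbed, which is where the $C^2$ (or higher) smoothness of $\p\O$ is genuinely used. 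These are routine for a specialist but are the places where care is required.
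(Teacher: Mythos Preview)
The paper does not actually prove Lemma~2.1; immediately after stating it and the companion Lemma~2.2, the author simply writes that ``these inequalities and more general versions can be found in literature'' and gives a list of references (Dautray--Lions for $p=2$; von Wahl, Amrouche--Seloula, Kozono--Yanagisawa for general $p$; Bolik--von Wahl for the H\"older version). So your proposal already goes well beyond what the paper does: you are supplying a proof where the paper only cites one.

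Your strategy for part~(i) is the standard and correct one: absorb $\div\u$ and the normal trace into a Neumann potential $\nabla q$, then estimate the divergence-free, normally-vanishing remainder $\w$ using the fact that the relevant harmonic space is trivial precisely when $\O$ is simply-connected. One point in part~(ii) is phrased too loosely, however: you cannot make $q$ ``match the tangential datum'', since a scalar gradient cannot absorb an arbitrary tangential trace. The clean dual is to solve the \emph{Dirichlet} problem $\Delta q=\div\u$ in $\O$, $q=0$ on $\p\O$; then $(\nabla q)_T=\0$ on $\p\O$, so $\w:=\u-\nabla q$ is divergence-free with $\nu\times\w=\nu\times\u$ and $\curl\w=\curl\u$, and one invokes the estimate for divergence-free fields with prescribed curl and tangential trace, whose kernel (the harmonic Dirichlet fields) vanishes exactly when $\O$ has no holes. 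With that correction your outline is sound, and the induction on $k$ via ADN estimates on the reconstructed potentials is the right way to climb.
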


\begin{Lem}\label{Lem2.2} Assume $\O$ is a bounded domain in $\Bbb R^3$ with a $C^{k+2+\a}$ boundary, $k$ is a non-negative integer, and $0<\a<1$.
\begin{itemize}
\item[(ii)]  If $\O$ is simply-connected, then
\eq\label{dcg3}
\|\u\|_{C^{k+1+\a}(\overline{\O})}\leq C(\O,k,\a)\{\|\div\u\|_{C^{k+\a}(\overline{\O})}+\|\curl\u\|_{C^{k+\a}(\overline{\O})}+\|\nu\cdot\u\|_{C^{k+1+\a}(\p\O)}\}.
\eeq
\item[(ii)] If $\O$ has no holes, then
\eq\label{dcg4}
\|\u\|_{C^{k+1+\a}(\overline{\O})}\leq C(\O,k,\a)\{\|\div\u\|_{C^{k+\a}(\overline{\O})}+\|\curl\u\|_{C^{k+\a}(\overline{\O})}+\|\nu\times\u\|_{C^{k+1+\a}(\p\O)}\}.
\eeq
\end{itemize}
\end{Lem}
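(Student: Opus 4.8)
The plan is to exhibit $\u$ as a solution of an elliptic boundary value problem for the vector Laplacian, apply the Agmon--Douglis--Nirenberg (ADN) Schauder estimates, and then discard a resulting lower-order term by a compactness argument into which the topological hypotheses enter.

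First I would fix $\u\in C^{k+1+\a}(\overline\O,\R^3)$ and use the identity $\Delta\u=\d(\div\u)-\curl(\curl\u)$. Under the hypotheses the field $\F:=\d(\div\u)-\curl(\curl\u)$ lies in $C^{k-1+\a}(\overline\O,\R^3)$ with $\|\F\|_{C^{k-1+\a}(\overline\O)}\le C(\|\div\u\|_{C^{k+\a}(\overline\O)}+\|\curl\u\|_{C^{k+\a}(\overline\O)})$, while the boundary traces $(\curl\u)_T$ and $(\div\u)|_{\p\O}$ are controlled in $C^{k+\a}(\p\O)$ by the same quantities. Hence $\u$ solves one of the boundary value problems
\[
\Delta\bv=\F\ \text{in }\O,\qquad \nu\cdot\bv=\nu\cdot\u,\q (\curl\bv)_T=(\curl\u)_T\ \text{on }\p\O,
\]
\[
\Delta\bv=\F\ \text{in }\O,\qquad \bv_T=\u_T,\q \div\bv=\div\u\ \text{on }\p\O,
\]
whose boundary data are bounded, respectively, by $\|\nu\cdot\u\|_{C^{k+1+\a}(\p\O)}$ and $\|\curl\u\|_{C^{k+\a}(\overline\O)}$, and by $\|\nu\times\u\|_{C^{k+1+\a}(\p\O)}$ and $\|\div\u\|_{C^{k+\a}(\overline\O)}$.

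Next I would use that both of these pairs of boundary operators --- the normal trace together with the tangential trace of the curl, and the tangential trace together with the divergence --- satisfy the complementing (Lopatinski--Shapiro) condition for the Laplacian acting on $\R^3$-valued fields; this is a short computation on a half-space and is part of the classical elliptic theory of Maxwell-type systems (see the references collected in Section~\ref{Section1}, e.g.\ \cite{DaL3, W}). The ADN Schauder estimate on the $C^{k+2+\a}$ domain $\O$ then yields, in case (i),
\[
\|\u\|_{C^{k+1+\a}(\overline\O)}\le C\bigl(\|\div\u\|_{C^{k+\a}(\overline\O)}+\|\curl\u\|_{C^{k+\a}(\overline\O)}+\|\nu\cdot\u\|_{C^{k+1+\a}(\p\O)}+\|\u\|_{C^0(\overline\O)}\bigr),
\]
and the same with $\|\nu\times\u\|_{C^{k+1+\a}(\p\O)}$ replacing $\|\nu\cdot\u\|_{C^{k+1+\a}(\p\O)}$ in case (ii). Finally I would remove the term $\|\u\|_{C^0(\overline\O)}$ by contradiction: if one of \eqref{dcg3}, \eqref{dcg4} failed there would be a sequence $\u_n$ with $\|\u_n\|_{C^{k+1+\a}(\overline\O)}=1$ whose three data norms tend to $0$; the estimate just obtained then forces $\liminf\|\u_n\|_{C^0(\overline\O)}>0$, while Arzel\`a--Ascoli produces a subsequential limit $\u_*\ne\0$ with $\div\u_*=0$, $\curl\u_*=\0$ in $\O$ and $\nu\cdot\u_*=0$ (resp.\ $\nu\times\u_*=\0$) on $\p\O$, that is, a nonzero harmonic Neumann (resp.\ Dirichlet) field --- which is impossible when $\O$ is simply-connected (resp.\ has no holes), since the space of such fields then has dimension zero.

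The genuinely delicate points are the verification (or correct citation) that the two vector Laplacian boundary value problems are elliptic in the ADN sense, and the identification, in the compactness step, of ``$\O$ simply-connected'' with the triviality of harmonic Neumann fields and of ``$\O$ has no holes'' with the triviality of harmonic Dirichlet fields; the rest is standard elliptic regularity and a routine compactness argument. An essentially equivalent route would be to repeat the proof of Lemma~\ref{Lem2.1} verbatim, replacing its $L^p$ Calder\'on--Zygmund estimates for the scalar Neumann and Dirichlet problems by the corresponding Schauder estimates valid on the $C^{k+2+\a}$ domain $\O$.
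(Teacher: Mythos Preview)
The paper does not prove Lemma~\ref{Lem2.2}; immediately after stating it, it simply records that \eqref{dcg3} and \eqref{dcg4} ``can be found in \cite{BW}'' (Bolik--von Wahl). So there is no argument in the paper to compare yours against.

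Your outline is essentially a correct and standard route to these estimates, and the compactness step together with the identification of the topological hypotheses with the triviality of the harmonic Neumann (resp.\ Dirichlet) fields is exactly right. One technical point deserves tightening: in the base case $k=0$ the field $\F=\nabla(\div\u)-\curl(\curl\u)$ lies only in a negative-order space ``$C^{\a-1}$'', so the textbook ADN Schauder estimate for the second-order problem $\Delta\u=\F$ does not apply as written. The clean fix is to treat the div--curl system itself as a first-order (overdetermined) elliptic operator with complementing boundary operator $\nu\cdot\u$ (resp.\ $\nu\times\u$); the ADN/Solonnikov Schauder theory for such systems yields directly, for every $k\ge0$, the a priori inequality you wrote down (with the extra term $\|\u\|_{C^0(\overline\O)}$), after which your compactness argument removes that term unchanged. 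An alternative that stays closer to \cite{BW} is a Hodge-type splitting $\u=\nabla p+\curl\w+\h$ followed by scalar Schauder estimates for $p$ and for the components of $\w$. Finally, note that your ``equivalent route'' of repeating the proof of Lemma~\ref{Lem2.1} is not available in this paper either: that lemma is likewise only cited, not proved.
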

These inequalities and more general versions can be found in literature. For instance, \eqref{dcg1} and \eqref{dcg2} with $p=2$
can be found in Theorem 3 on p.209, and Proposition $6'$ on p.237 in \cite{DaL3}, also se \cite{Ce, GR, Sc, MMT}. \eqref{dcg1} and \eqref{dcg2} with $1<p<\infty$ can be found in \cite{W, AS, KY}. \eqref{dcg3} and \eqref{dcg4} can be found in \cite{BW}. For a domain with Lipschitz boundary, see for instance \cite{Ne, Sa, Co, ABD, CD} and the references therein.

For a smooth tangential vector field $\mB_T$
defined on $\p\O$, $\nu\cdot\curl\mB_T$ is well-defined and
it depends only on $\mB_T$. From \cite[Lemma 2.5]{NW} (also see \cite[Lemma 2.3]{BaP}) we have

\begin{Lem}\label{Lem-extension} Let $\O$ be a bounded and simply-connected domain in $\mathbb R^3$
with a $C^{k+1}$ boundary, $k\geq 1$, $0\leq \a<1$, and
$$\mB_T\in T\!C^{k+\a}(\p\O,\mathbb R^3),\q \nu\cdot\curl\mB_T=\0\q\text{\rm on }\p\O.
$$
Then $\mB_T$ can be extended to a curl-free $C^{k+\a}$ vector field on $\O$, namely, there exists $\tilde{\mB}\in
C^{k+\a}(\overline{\O},\curl0)$ such that $\tilde{\mB}_T=\mB_T$ on $\p\O$.
Furthermore, there
exists a harmonic function $\phi\in C^{k+1+\a}(\overline{\O})$ such that
$(\nabla\phi)_T=\mB_T$ on $\p\O$.
\end{Lem}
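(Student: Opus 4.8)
The plan is to realize $\tilde{\mB}$ as a gradient, thereby reducing the lemma to a scalar Dirichlet problem in $\O$. First I would reinterpret the compatibility condition: since $\nu\cdot\curl$ of a vector field along $\p\O$ depends only on the tangential trace (as recalled just above) and coincides with the intrinsic scalar surface curl of $\mB_T$, the hypothesis $\nu\cdot\curl\mB_T=\0$ says exactly that $\mB_T$, regarded as a $1$-form on the surface $\p\O$, is closed. Because $\O$ is bounded and simply-connected, $H^1(\p\O;\mathbb R)=0$ (equivalently, every connected component of $\p\O$ is a topological $2$-sphere); this is the half-lives-half-dies principle for the compact manifold $\overline{\O}$, used with $H_1(\overline{\O};\mathbb R)=0$. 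Hence $\mB_T=\nabla_{\p\O}g$ for some scalar $g$ on $\p\O$, unique up to an additive constant on each component, and from $\nabla_{\p\O}g=\mB_T\in C^{k+\a}(\p\O)$ one gets $g\in C^{k+1+\a}(\p\O)$.

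Next I would let $\phi$ solve $\Delta\phi=0$ in $\O$, $\phi=g$ on $\p\O$. Boundary Schauder theory gives $\phi\in C^{k+1+\a}(\overline{\O})$, and $\phi$ is harmonic, which already yields the ``furthermore'' part. Then I would set $\tilde{\mB}:=\nabla\phi$, so $\tilde{\mB}\in C^{k+\a}(\overline{\O},\curl0)$, and the splitting $\nabla\phi=(\p\phi/\p\nu)\,\nu+\nabla_{\p\O}(\phi|_{\p\O})$ on $\p\O$ gives $\tilde{\mB}_T=\nabla_{\p\O}g=\mB_T$, which is the first assertion.

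An essentially equivalent route, closer to the div-curl machinery of Lemmas \ref{Lem2.1}--\ref{Lem2.2}: extend $\mB_T$ to some $\B_1\in C^{k+\a}(\overline{\O},\Bbb R^3)$; then $\w:=\curl\B_1$ satisfies $\div\w=0$ in $\O$ and $\nu\cdot\w=\nu\cdot\curl\mB_T=0$ on $\p\O$, so one can solve the vector-potential problem $\curl\z=\w$, $\div\z=0$ in $\O$, $\z_T=\0$ on $\p\O$ for some $\z\in C^{k+\a}(\overline{\O})$ (solvable precisely because $\O$ is simply-connected, so the flux obstructions vanish), and then $\tilde{\mB}:=\B_1-\z$ is curl-free with $\tilde{\mB}_T=\mB_T$; curl-freeness on a simply-connected domain again produces the potential $\phi$.

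In both routes the conceptual input is the topology of $\O$: simply-connectedness both makes the closed form $\mB_T$ exact on $\p\O$ and removes the flux obstruction to the vector potential, while the hypothesis $\nu\cdot\curl\mB_T=\0$ supplies the closedness (respectively $\nu\cdot\w=0$). The step needing real care is the regularity bookkeeping near $\p\O$ when the boundary is only $C^{k+1}$: producing $g\in C^{k+1+\a}(\p\O)$ from $\mB_T\in C^{k+\a}(\p\O)$, and upgrading the harmonic extension (or the div-curl solution $\z$) to the claimed Hölder classes. This is handled by the boundary Schauder estimates for $\Delta$, the div-curl-gradient inequalities of Lemmas \ref{Lem2.1}--\ref{Lem2.2}, and the regularity estimates of \cite{NW}.
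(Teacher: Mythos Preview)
The paper does not prove this lemma; it quotes it from \cite[Lemma~2.5]{NW} (see also \cite[Lemma~2.3]{BaP}), as the sentence immediately preceding the statement indicates. Your first route---read $\nu\cdot\curl\mB_T=0$ as closedness of $\mB_T$ viewed as a $1$-form on $\p\O$, invoke half-lives-half-dies to get $H^1(\p\O;\Bbb R)=0$ from $H_1(\overline{\O};\Bbb R)=0$, conclude $\mB_T=\nabla_{\p\O}g$, and harmonically extend $g$---is correct and is the natural argument behind the cited result. Your div-curl alternative also yields a curl-free extension with the right tangential trace: the obstruction to solving $\curl\z=\w$, $\div\z=0$, $\z_T=\0$ under $\div\w=0$, $\nu\cdot\w=0$ is precisely the space of harmonic Neumann fields, which vanishes for simply-connected $\O$. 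Note, though, that this second route does not by itself produce a \emph{harmonic} potential, since $\div\tilde{\mB}=\div\B_1$ need not vanish; one still needs the Dirichlet step from the first route for the ``furthermore'' clause.

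Where your proposal is slightly optimistic is the final regularity claim: with only $\p\O\in C^{k+1}$, standard Schauder theory for the Dirichlet problem does not directly deliver $\phi\in C^{k+1+\a}(\overline{\O})$ when $\a>0$ (one usually needs $\p\O\in C^{k+1+\a}$), and Lemma~\ref{Lem2.2} as stated requires $\p\O\in C^{k+2+\a}$. You correctly flag this yourself; it is precisely why the paper defers to the sharper potential-theoretic estimates of \cite{NW} rather than deriving the lemma from Lemmas~\ref{Lem2.1}--\ref{Lem2.2} alone.
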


\subsection{Estimates for linear Maxwell's system}\

By the analysis in \cite[Lemmas B.2, B.3]{P6}, but with more careful control of computations, we can get the following estimates of solutions of a linear Maxwell's system.

\begin{Lem}\label{Lem-linear-curl}
 Let $\O$ be a bounded domain in $\mathbb R^3$ with a $C^3$ boundary, $a\in C^1(\overline{\O})$ and $a(x)>0$ on $\overline{\O}$. Define an operator $\mL$ by
$$
\mL\u=\curl\bigl(a(x)\curl\u\bigr)+\u.
$$
We have the following conclusions:
\begin{itemize}
\item[(i)] $\mL:~ H^2_{t0}(\O,\div0)\to \mH(\O,\div0)$ is an
isomorphism with
\begin{equation}\label{L-est1}
\aligned
&\|\u\|_{H^1(\O)}\leq {C(\O)\over\sqrt{m}}\|\mL\u\|_{L^2(\O)},\\
& \|\u\|_{H^2(\O)}\leq
C(\O)m^{-3/2}\|a\|_{C^1(\overline{\O})}\|\mL\u\|_{L^2(\O)},
\endaligned
\end{equation}
where $m=\min\{1,\min_{x\in\overline{\O}}|a(x)|\}$.
\item[(ii)] If furthermore $\O$ is simply-connected, without holes and with a $C^{3+\a}$ boundary, and $a\in
C^{1+\a}(\overline{\O})$, $0<\a<1$,
then $\mL:~C^{2+\a}_{t0}(\overline{\O},\div0)\to C^\a(\overline{\O},\div0)$  is an
isomorphism with
$$
\aligned
\|\u\|_{C^{2+\a}(\overline{\O})} \leq&
C(\O,\a)m^{-3/2}M(a)\|a^{-1}\|_{C^{1+\a}(\overline{\O})}\|\mL\u\|_{C^\a(\overline{\O})},
\endaligned
$$
where
$$M(a)=\Big(1+\|a\|_{C^1(\overline{\O})}\Big)^2\Big(1+\|\nabla(\log a)\|_{C^\a(\overline{\O})}^{2+\a/2}
+\|a^{-1}\|_{C^{1+\a}(\overline{\O})}^{\a/2}\Big).
$$
If $0<\a<1/2$, then
$$
\|\u\|_{C^{1+\a}(\overline{\O})}\leq
C(\O,\a)m^{-3/2}\|a^{-1}\|_{C^\a(\overline{\O})}\Big(1+\|a\|_{C^1(\overline{\O})}^2\Big)\|\mL\u\|_{H^1(\O)}.
$$
\end{itemize}
\end{Lem}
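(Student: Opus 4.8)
The plan is to establish the a priori estimates directly and then read off the isomorphism statements from them together with a routine Lax--Milgram argument; the only real subtlety is keeping track of the constants. For (i), set $\f=\mL\u$ and pair $\curl(a\curl\u)+\u=\f$ with $\u$. Since $\u_T=\0$ on $\p\O$ (hence $\nu\times\u=\0$ there), the boundary term in the identity $\int_\O\curl(a\curl\u)\cdot\u=\int_\O a|\curl\u|^2+\int_{\p\O}(a\curl\u)\cdot(\nu\times\u)$ drops, leaving $\int_\O a|\curl\u|^2+\int_\O|\u|^2=\int_\O\f\cdot\u$. This gives $\|\u\|_{L^2(\O)}\le\|\f\|_{L^2(\O)}$ and then $m\|\curl\u\|_{L^2(\O)}^2\le\int_\O\f\cdot\u\le\|\f\|_{L^2(\O)}^2$, i.e. $\|\curl\u\|_{L^2(\O)}\le m^{-1/2}\|\f\|_{L^2(\O)}$. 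As also $\div\u=0$ in $\O$, the div-curl-gradient inequality (Lemma~\ref{Lem2.1}, or rather its version valid on an arbitrary bounded $C^2$ domain with the extra term $\|\u\|_{L^2(\O)}$ on the right, see the references after Lemma~\ref{Lem2.2}) yields $\|\u\|_{H^1(\O)}\le C(\O)(\|\u\|_{L^2(\O)}+\|\curl\u\|_{L^2(\O)})\le C(\O)m^{-1/2}\|\f\|_{L^2(\O)}$, using $m\le1$.

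For the $H^2$ bound I would use two structural facts: that $\u_T=\0$ on $\p\O$ forces $\nu\cdot\curl\u=0$ on $\p\O$ (by Stokes' theorem on surface patches, the normal trace of $\curl\u$ depends only on $\u_T$), and that the equation rewrites, once $\div\u=0$ is used, as $\curl\curl\u=-\Delta\u=a^{-1}(\f-\u-\nabla a\times\curl\u)$. Granting $\u\in H^2(\O)$ --- which is classical regularity for this boundary value problem and is where the $C^3$ assumption on $\p\O$ enters --- one applies the div-curl-gradient inequality to $\bv:=\curl\u$, which satisfies $\div\bv=0$, $\nu\cdot\bv=0$ on $\p\O$ and $\curl\bv=a^{-1}(\f-\u-\nabla a\times\bv)$; with the bounds already obtained this gives $\|\curl\u\|_{H^1(\O)}\le C(\O)(m^{-1}+m^{-3/2}\|a\|_{C^1(\overline{\O})})\|\f\|_{L^2(\O)}$, and a further application to $\u$ itself ($\div\u=0$, $\nu\times\u=\0$) gives $\|\u\|_{H^2(\O)}\le C(\O)(\|\u\|_{L^2(\O)}+\|\curl\u\|_{H^1(\O)})\le C(\O)m^{-3/2}\|a\|_{C^1(\overline{\O})}\|\f\|_{L^2(\O)}$, after absorbing lower-order factors with $m\le1$ and $\|a\|_{C^1(\overline{\O})}\ge\min_{\overline{\O}}|a|$. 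The isomorphism in (i) is then immediate: injectivity from $\|\u\|_{L^2}\le\|\mL\u\|_{L^2}$, boundedness of $\mL$ from $a\in C^1(\overline{\O})$, and surjectivity onto $\mH(\O,\div0)$ from Lax--Milgram applied to $B(\u,\bv)=\int_\O a\,\curl\u\cdot\curl\bv+\int_\O\u\cdot\bv$, which is bounded and coercive with constant $m$ on $\{\u\in\mH(\O,\curl):\div\u=0\text{ in }\O,\ \u_T=\0\text{ on }\p\O\}$ --- a space continuously contained in $H^1(\O)$ by the div-curl-gradient inequality --- the resulting weak solution satisfying the system strongly and lying in $H^2_{t0}(\O,\div0)$ by the regularity above.

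For (ii), I would bootstrap along $L^2\to H^2\to W^{2,6}\to C^{1+\a}\to C^{2+\a}$. From $\u\in H^2(\O)$ and the three-dimensional embedding $H^1(\O)\hookrightarrow L^6(\O)$, the right-hand side of $-\Delta\u=a^{-1}(\f-\u-\nabla a\times\curl\u)$ lies in $L^6(\O)$ whenever $\f\in H^1(\O)$ (a fortiori when $\f\in C^\a(\overline{\O})$), so Lemma~\ref{Lem2.1} with $p=6$ gives $\u\in W^{2,6}(\O)\hookrightarrow C^{1+\a}(\overline{\O})$ for every $\a<1/2$; carrying the constants through the $L^6$-bounds and the $H^1$, $H^2$ estimates of (i) produces the stated low-regularity inequality. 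With $\u\in C^{1+\a}(\overline{\O})$ the right-hand side is now in $C^\a(\overline{\O})$, and iterating the Hölder div-curl-gradient inequalities of Lemma~\ref{Lem2.2} --- first to $\bv=\curl\u$ (using $\div\bv=0$, $\nu\cdot\bv=0$ and simple connectedness) to bound $\|\curl\u\|_{C^{1+\a}(\overline{\O})}$ by $\|\curl\bv\|_{C^\a(\overline{\O})}$, then to $\u$ (using $\div\u=0$, $\nu\times\u=\0$ and absence of holes) --- gives $\u\in C^{2+\a}(\overline{\O})$. The lower-order terms $\|\curl\u\|_{C^\a}$, $\|\u\|_{C^\a}$ and the coefficient $a^{-1}\nabla a$ appearing along the way are treated with the interpolation inequality $\|\bv\|_{C^\a(\overline{\O})}\le C\|\bv\|_{C^0(\overline{\O})}^{1-\a}\|\bv\|_{C^1(\overline{\O})}^\a$ and the $C^\a$-product rule, which is precisely what produces the fractional exponents $2+\a/2$ and $\a/2$ in $M(a)$; a top-order term is then absorbed by Young's inequality and the remaining factors bounded by the $H^1$ estimate from (i).

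The qualitative regularity and the existence are both standard --- $\mL$ is a Maxwell-type operator with a positive $C^1$ (resp. $C^{1+\a}$) coefficient under the boundary condition $\u_T=\0$, for which $H^2$ (resp. $C^{2+\a}$) solvability is classical --- so the real difficulty, and the point of the phrase ``more careful control of computations'', is the quantitative bookkeeping: each integration by parts, each product rule applied to $a\curl\u$, and each invocation of a scalar or div-curl elliptic estimate contributes factors of $\|a\|_{C^1(\overline{\O})}$, $\|a^{-1}\|_{C^{1+\a}(\overline{\O})}$, $\|\nabla(\log a)\|_{C^\a(\overline{\O})}$ and negative powers of $m$, and these must be recombined --- in the Hölder case by choosing the interpolation exponents so the fractional powers match and then using Young's inequality to absorb the top-order term --- so as to land exactly on $m^{-3/2}\|a\|_{C^1(\overline{\O})}$, respectively on $m^{-3/2}M(a)\|a^{-1}\|_{C^{1+\a}(\overline{\O})}$, rather than on an expression carrying larger negative powers of $m$ or extra powers of $\|a\|_{C^1(\overline{\O})}$.
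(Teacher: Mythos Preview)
The paper does not actually prove this lemma: it merely states that the estimates follow ``by the analysis in \cite[Lemmas B.2, B.3]{P6}, but with more careful control of computations'', so there is no in-paper proof to compare against. Your outline --- energy identity to get $\|\u\|_{L^2}$ and $\|\curl\u\|_{L^2}$, then iterated use of the div-curl-gradient inequalities (with the lower-order $\|\cdot\|_{L^2}$ term since no topological assumption is made in (i)) applied first to $\bv=\curl\u$ via $\nu\cdot\bv=0$ and then to $\u$ via $\nu\times\u=\0$, followed by a $H^2\hookrightarrow W^{1,6}\hookrightarrow C^{1/2}$ bootstrap and the H\"older div-curl inequalities of Lemma~\ref{Lem2.2} for (ii) --- is exactly the standard route for such Maxwell-type operators and is the approach one would expect from the cited reference.

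One small point to watch in your $H^2$ bookkeeping: after bounding $\|\curl\bv\|_{L^2}\le m^{-1}(\|\f\|_{L^2}+\|\u\|_{L^2}+\|\nabla a\|_{C^0}\|\bv\|_{L^2})$ you get a contribution of order $m^{-1}\|\f\|_{L^2}$, and absorbing this into $m^{-3/2}\|a\|_{C^1(\overline{\O})}\|\f\|_{L^2}$ requires $m^{1/2}\lesssim\|a\|_{C^1(\overline{\O})}$; since $\|a\|_{C^1(\overline{\O})}\ge\|a\|_{C^0(\overline{\O})}\ge\min_{\overline{\O}}a\ge m$ and $m\le1$ one only gets $\|a\|_{C^1(\overline{\O})}\ge m$, not $m^{1/2}$. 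This is the sort of place where the ``more careful control of computations'' matters --- one typically fixes it either by tracking $\min_{\overline{\O}}a$ and $1$ separately rather than through the single proxy $m$, or by writing the $a^{-1}\u$ term differently --- so it is worth checking that step explicitly when you fill in the details. Otherwise the argument is sound, and your identification of the interpolation/Young step as the source of the fractional exponents in $M(a)$ is correct.
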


\subsection{Assumptions}\label{Subsection2.3}\

Some of the following assumptions with $0<\a<1$ will be needed in various places in this paper.

\begin{itemize}
\item[$(O)$] $\O$ is a bounded and simply-connected domain in $\mathbb R^3$
with a $C^{r+\a}$ boundary and without holes, $r\geq 3$.

\item[$(H)$] $\mH^e\in C^{2+\a}_{\loc}(\O^c,\curl0,\div0)
\cap C^{1+\a}_{\loc}(\overline{\O^c},\mathbb R^3)$ such that
\begin{equation} \label{cond-vanish}
\int_{\p\O}\nu\cdot\mH^e dS=0.
\end{equation}

\item[$(F)$] There exists $\mF^e\in C^{2+\a}_{\loc}(\overline{\O^c},\div0)$ such that $\curl\mF^e=\mH^e$ in $\O^c$.
\end{itemize}

Under condition $(H)$ it holds that $\nu\cdot\curl\mH^e_T=\nu\cdot\curl\mH^e=0$ on $\p\O$.
Note that when we consider solutions of \eqref{eqH} with continuous tangential component we only need the following condition
\begin{itemize}
\item[$(H_0)$] $\mH^e\in C^{2+\a}_{\loc}(\O^c,\curl0,\div0)
\cap C^{1+\a}_{\loc}(\overline{\O^c},\mathbb R^3)$ with $\mH^e_T\in
C^{2+\a}(\p\O,\Bbb R^3)$.
\end{itemize}
Condition $(H_0)$ is weaker than $(H)$ where the integral condition \eqref{cond-vanish} is dropped. Later on when we look for  solutions of \eqref{eqH} and \eqref{eqfH} with continuous normal component $\nu\cdot\H$ we need \eqref{cond-vanish}, see for instance \eqref{cond-6.9} and the discussions in Section \ref{Section7}.

\v0.1in

\section{The Meissner System: Basic Properties of Solutions}\label{Section3}

\subsection{Definition and basic properties of weak solutions}\

The following sets of vector fields will be needed in order to define weak
solutions of \eqref{eqfA}. Given a vector field $\mH^e$ and
$\lam>0$ we define
\begin{equation}\label{sp-AB}
\aligned
&\mA(\O,\mathbb R^3)=\{\A\in\mH_{\loc}(\mathbb
R^3,\text{\rm curl}):~ \|\A\|_{L^\infty(\O)}<\infty\},\\
&\mA(\O,\mathbb R^3,\lam^{-1}\mH^e)=\{\A\in \mathcal
A(\O,\mathbb
R^3):~\curl\A-\lam^{-1}\mH^e\in L^2(\mathbb R^3,\mathbb R^3)\},\\
&\mB(\O,\mathbb R^3)=\{\B\in \mA(\O,\Bbb R^3):
~\curl\B\in L^2(\mathbb R^3,\mathbb R^3),\;\; \B_T^+\in T\!H^{1/2}(\p\O,\Bbb R^3)\}.
\endaligned
\end{equation}
Let
$\mE$ be the functional defined in section \ref{Section1}. If
$(f,\A)$ is a critical point of $\mE$ on
$H^1(\O)\times\mA(\O,\Bbb R^3,\lam^{-1}\mH^e)$,
then for any $(g,\B)\in H^1(\O)\times\mB(\O,\mathbb R^3)$ we have
\begin{equation}\label{wkeqfA}
\int_\O\Big\{{\lam^2\over\k^2}\nabla f\cdot\nabla
g-(1-|f|^2-|\A|^2)fg+f^2\A\cdot\B\Big\}dx+\int_{\mathbb
R^3}\lam(\lam\,\curl\A-\mH^e)\cdot\curl\B\, dx=0.
\end{equation}
If $\mH^e$ satisfies \eqref{1.2} and if $\B$ has bounded support, then  \eqref{wkeqfA} is reduced to
\begin{equation}\label{wkeqfA-2}
\int_\O\Big\{{\lam^2\over\k^2}\nabla f\cdot\nabla
g-(1-|f|^2-|\A|^2)fg+f^2\A\cdot\B\Big\}dx+\lam^2\int_{\mathbb
R^3}\curl\A\cdot\curl\B\, dx=0.
\end{equation}
Under condition \eqref{1.2}, from either \eqref{wkeqfA} or \eqref{wkeqfA-2}
we find that the Euler-Lagrange equations of $(f,\A)$ is exactly
\eqref{eqfA}.
Note that the second integral in \eqref{wkeqfA} makes sense
if  $(f,\A)\in H^1(\O)\times\mA(\O,\mathbb R^3,\lam^{-1}\mH^e)$
and $(g,\B)\in H^1(\O)\times \mB(\O,\Bbb R^3)$, however the second integral in \eqref{wkeqfA-2}
may not make sense for such $(f,\A)$ and $(g,\B)$. This observation leads to the
following different definitions of weak solutions to \eqref{eqfA} and to \eqref{eqfA}-\eqref{1.4}.

\begin{Def}\label{Def3.1}
{\rm (i)} $(f,\A)$ is called a weak
solution of \eqref{eqfA} if $(f,\A)\in H^1(\O)\times\mA(\O,\mathbb R^3)$
such that \eqref{wkeqfA-2} holds for all $(g,\B)\in H^1(\O)\times\mB(\O,\mathbb R^3)$ with $\B$ having bounded support.

{\rm (ii)} A weak solution $(f,\A)$ of \eqref{eqfA}  is called a weak
Meissner solution  if $f>0$ in $\overline{\O}$, and $\nu\cdot\A^-=0$ holds in
the sense of trace in $H^{-1/2}(\p\O)$.

{\rm (iii)} Assume $\mH^e$ satisfies \eqref{1.2}.
$(f,\A)$ is called a weak solution of problem \eqref{eqfA}-\eqref{1.4}
if $(f,\A)\in H^1(\O)\times\mA(\O,\mathbb R^3,\lam^{-1}\mH^e)$ such that \eqref{wkeqfA} holds for all
$(g,\B)\in H^1(\O)\times\mB(\O,\mathbb R^3)$.
\end{Def}

In Definition \ref{Def3.1},
the requirement $[\A_T]=\0$ on $\p\O$ is included in the condition $\A\in\mA(\O,\mathbb R^3)$
(see the trace theorem \cite[P.204, Theorem 2]{DaL3}), and it holds in $H^{-1/2}(\p\O,\Bbb R^3)$.
The requirement $[(\curl\A)_T]=\0$ on $\p\O$ is included in \eqref{wkeqfA} or \eqref{wkeqfA-2}, see Lemma \ref{Lem3.5} below, and the equality holds also in  $H^{-1/2}(\p\O,\Bbb R^3)$.
The requirement \eqref{1.4} is replaced by requiring
$(f,\A)\in H^1(\O)\times\mA(\O,\mathbb R^3,\lam^{-1}\mH^e)$ and requiring \eqref{wkeqfA} to hold for all $(g,\B)\in H^1(\O)\times\mB(\O,\Bbb R^3)$. The requirement
$\B^+_T\in H^{1/2}(\p\O,\Bbb R^3)$ for the test fields $\B\in\mB(\O,\Bbb R^3)$ is not needed in \eqref{wkeqfA} and \eqref{wkeqfA-2},
however, it is needed in Lemma \ref{Lem3.5} to derive \eqref{wkeqA-exterior}.

If $(f,\A)$ is a
weak solution of problem \eqref{eqfA}-\eqref{1.4}, then $f$ is a weak
solution of a Neumann problem in $\O$
\begin{equation}\label{3.4}
-{\lam^2\over\k^2}\Delta f=(1-f^2-|\A|^2)f\q\text{\rm in }\O,\q {\p
f\over\p\nu}=0\q\text{\rm on }\p\O,
\end{equation}
and $\A$ satisfies
\begin{equation}\label{3.5}
\int_{\mathbb R^3}\{\lam(\lam\,\curl\A-\mathcal
H^e)\cdot\curl\B+\chi_\O f^2\A\cdot\B\}dx=0,\q\forall \B\in \mB(\O,\Bbb R^3),
\end{equation}
where $\chi_\O$ is the characteristic function of $\O$.
Under condition \eqref{1.2} if $\B$ has bounded support then \eqref{3.5}
can be written as
\eq\label{3.6}
\int_{\mathbb R^3}\{\lam^2\curl\A\cdot\curl\B+\chi_\O f^2\A\cdot\B\}dx=0.
\eeq

\begin{Def}\label{Def3.2} {\rm (i)} $\A$ is called a weak solution of \eqref{eqA} if
$\A\in \mA(\O,\Bbb R^3)$ such that for all $\B\in \mB(\O,\Bbb R^3)$
having bounded support it holds that
\begin{equation}\label{3.7}
\int_\O(1-|\A|^2)\A\cdot\B\, dx+\lam^2\int_{\mathbb
R^3}\curl\A\cdot\curl\B\, dx=0.
\end{equation}
A weak solution of \eqref{eqA} is called a {\it Meissner solution} if $\nu\cdot\A^-=0$ on $\p\O$.

{\rm (ii)} Assume $\mH^e$ satisfies \eqref{1.2}. $\A$ is called a weak solution
of \eqref{eqA}-\eqref{1.4} if $\A\in\mA(\O,\mathbb R^3,\lam^{-1}\mathcal
H^e)$ such that
\begin{equation}\label{3.8}
\int_\O(1-|\A|^2)\A\cdot\B\, dx+\int_{\mathbb
R^3}\lam(\lam\,\curl\A-\mH^e)\cdot\curl\B\, dx=0,\q\forall \B\in\mB(\O,\mathbb R^3).
\end{equation}
\end{Def}

To define the weak solutions of \eqref{eqH} and \eqref{eqfH} we need the following sets:
\begin{equation}\label{sp-HHU}
\aligned
&\mH(\O,\mathbb R^3)=\{\H\in
\mH_{\loc}(\mathbb R^3,\curl):~\curl(\H|_{\O})\in L^\infty(\O,\Bbb R^3),\;\;
\div(\H|_{\O})=0,\\
&\qqq\qq \curl(\H|_{\O^c})=\0,\;\; \div(\H|_{\O^c})=0\},\\
&\mH(\O,\mathbb R^3,\mH^e)=\{\H\in
\mH(\O,\mathbb R^3):~\H-\mH^e\in L^2(\mathbb R^3,\mathbb R^3)\},\\
&\mU(\O,\mathbb R^3)
=\{\B\in \mH(\O,\mathbb R^3)\cap L^2(\Bbb R^3,\Bbb R^3):~\B_T^-\in T\!H^{1/2}(\p\O,\Bbb R^3)\}.
\endaligned
\end{equation}

\begin{Def}\label{Def3.3} {\rm (i)}
$\H$ is called a weak solution of \eqref{eqH} if $\H\in
\mH(\O,\mathbb R^3)$ such that
$$
\aligned
&\int_\O\{\lam^2F(\lam^2|\curl\H|^2)\curl\H\cdot\curl\B +\H\cdot\B\} dx\\
&\q +\int_{\p\O}\lam^2F(\lam^2|\curl\H|^2)((\curl\H)_T^-\times\B_T^-)\cdot\nu
dS=0,\q\forall \B\in\mU(\O,\mathbb R^3).
\endaligned
$$

{\rm (ii)} $\H$ is called a weak solution of problem \eqref{eqH}-\eqref{cond-1.12}
if $\H$ is a weak solution of \eqref{eqH} and $\H\in
\mH(\O,\mathbb R^3,\mH^e)$.
\end{Def}

\begin{Def}\label{Def3.4}
{\rm (i)} $(f,\H)$ is called a weak
solution of \eqref{eqfH} if $(f,\H)\in H^1(\O)\times\mH(\O,\mathbb R^3)$
such that $f>0$ on $\O$ and
$$
\aligned
&\int_\O\Big\{{\lam^2\over\k^2}\nabla f\cdot\nabla
g-(1-|f|^2-\lam^2f^{-4}|\curl\H|^2)fg+\lam^2f^{-2}\curl\H\cdot\curl\B+ \H\cdot\B\Big\}dx\\
+&\int_{\p\O}\lam^2f^{-2}((\curl\H)_T^-\times\B^-_T)\cdot\nu
dS=0,\qq\forall (g,\B)\in H^1(\O)\times\mU(\O,\mathbb R^3).
\endaligned
$$

{\rm (ii)}  $(f,\H)$ is called a weak solution of problem \eqref{eqfH}-\eqref{cond-1.12}
if $(f,\H)$ is a weak solution of \eqref{eqfH} and $\H\in
\mH(\O,\mathbb R^3,\mH^e)$.
\end{Def}

\begin{Lem}\label{Lem3.5}
Let $\O$ be a bounded domain in $\Bbb R^3$ with a Lipschitz boundary, and let
$(f,\A)$ be a weak solution of \eqref{eqfA}.
\begin{itemize}
\item[(i)] We have
\eq\label{curl2A0}
\curl^2\A(x)=\0\q\text{\rm a.e. in }\O^c,
\eeq
and the outer tangential trace $(\curl\A)_T^+$ exists in $T\!H^{-1/2}(\p\O,\Bbb R^3)$.
For any $\B\in\mB(\O,\Bbb R^3)$ with bounded support it holds that
\eq\label{wkeqA-exterior}
\int_{\O^c}\curl\A\cdot\curl\B\, dx=\int_{\p\O}((\curl\A)_T^+\times\B_T)\cdot\nu dS,
\eeq
where $\nu$ is the unit normal vector field of $\p\O$ pointing into $\O^c$.
\item[(ii)] $\curl^2\A(x)$ exists for a.e. $x\in\O$,
$\curl^2\A\in L^2(\O,\Bbb R^3)$, and the second equation in \eqref{eqfA} holds for a.e. $x\in\O$. The equality
\eq\label{t-cont-curlA}
[(\curl\A)_T]=\0\q\text{\rm on }\p\O
\eeq
holds in $T\!H^{-1/2}(\p\O,\Bbb R^3)$.
\item[(iii)] If in addition $\p\O$ is of class $C^2$, then
$\curl\A\in H^1_{\loc}(\Bbb R^3,\Bbb R^3)$ and
$$[\curl\A]=\0\q\text{\rm on }\p\O
$$
holds in $H^{1/2}(\p\O,\Bbb R^3)$.
\end{itemize}
\end{Lem}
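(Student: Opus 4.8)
The overall strategy is to exploit the weak formulation \eqref{wkeqfA-2} with carefully chosen test fields $\B\in\mB(\O,\Bbb R^3)$, first those supported in $\O^c$ to extract information about $\A$ outside $\O$, then those supported in $\O$ to recover the interior equation, and finally general test fields to read off the jump relations on $\p\O$. For part (i), I would first take test fields $\B\in C_c^\infty(\O^c,\Bbb R^3)$: for such $\B$ the first integral in \eqref{wkeqfA-2} vanishes and we are left with $\int_{\O^c}\curl\A\cdot\curl\B\,dx=0$, which is precisely the statement that $\curl^2\A=\0$ in the distributional sense in $\O^c$; since $\A\in\mH_{\loc}(\mathbb R^3,\curl)$ we have $\curl\A\in L^2_{\loc}$, so $\curl(\curl\A)=\0$ as an $L^2_{\loc}$ distribution and hence a.e., giving \eqref{curl2A0}. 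From $\curl^2\A=\0$ in $\O^c$ together with $\curl\A\in L^2_{\loc}(\O^c)$, the tangential trace $(\curl\A)_T^+$ is well-defined in $T\!H^{-1/2}(\p\O,\Bbb R^3)$ by the standard trace theorem for $\mH_{\loc}(\O^c,\curl)$ (cf.\ \cite[P.204, Theorem 2]{DaL3}), and the integration-by-parts identity \eqref{wkeqA-exterior} is the Green formula $\int_{\O^c}(\curl\A\cdot\curl\B-\curl^2\A\cdot\B)\,dx=\int_{\p\O^c}((\curl\A)_T^+\times\B_T)\cdot\nu_{\p\O^c}\,dS$ with $\curl^2\A=\0$ and $\nu_{\p\O^c}=-\nu$; one needs $\B_T^+\in T\!H^{1/2}(\p\O,\Bbb R^3)$ for the pairing to make sense, which is exactly why that requirement is built into $\mB(\O,\Bbb R^3)$, and a density/cutoff argument extends it from compactly supported smooth $\B$ in $\O^c$ to all $\B\in\mB(\O,\Bbb R^3)$ of bounded support.

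For part (ii), I would now take $\B\in C_c^\infty(\O,\Bbb R^3)$ in \eqref{wkeqfA-2}, which kills the exterior integral and yields $\int_\O\{f^2\A\cdot\B+\lam^2\curl\A\cdot\curl\B\}\,dx=0$ for all such $\B$. Since $f\in H^1(\O)$, $\A\in L^\infty(\O)$, we have $f^2\A\in L^2(\O,\Bbb R^3)$, so $\lam^2\curl(\curl\A)=-f^2\A$ holds distributionally in $\O$, whence $\curl^2\A\in L^2(\O,\Bbb R^3)$ and the second equation of \eqref{eqfA} holds a.e.\ in $\O$; this also gives that the interior tangential trace $(\curl\A)_T^-$ exists in $T\!H^{-1/2}(\p\O,\Bbb R^3)$ and the interior Green identity $\int_\O\curl\A\cdot\curl\B\,dx=\int_\O\curl^2\A\cdot\B\,dx+\int_{\p\O}((\curl\A)_T^-\times\B_T)\cdot\nu\,dS$ holds for $\B\in\mB(\O,\Bbb R^3)$. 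Then for a general $\B\in\mB(\O,\Bbb R^3)$ of bounded support I would split $\int_{\mathbb R^3}=\int_\O+\int_{\O^c}$ in \eqref{wkeqfA-2}, substitute the two Green formulas (interior, and \eqref{wkeqA-exterior} from part (i)), and use the a.e.\ equation $\lam^2\curl^2\A+f^2\A=\0$ in $\O$ to cancel the volume terms; what survives is $\lam^2\int_{\p\O}\big((\curl\A)_T^+-(\curl\A)_T^-\big)\times\B_T\big)\cdot\nu\,dS=0$ for all admissible $\B$. Since the tangential traces $\B_T$ of fields in $\mB(\O,\Bbb R^3)$ are dense enough in $T\!H^{1/2}(\p\O,\Bbb R^3)$ (any tangential $C^\infty$ field extends to such a $\B$), this forces $[(\curl\A)_T]=\0$ in $T\!H^{-1/2}(\p\O,\Bbb R^3)$, which is \eqref{t-cont-curlA}.

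For part (iii), with $\p\O\in C^2$ I would upgrade regularity of $\curl\A$ near $\p\O$: set $\H=\lam\,\curl\A$, so $\H\in L^2_{\loc}(\mathbb R^3,\Bbb R^3)$ with $\curl\H=-\lam f^2\A\in L^2$ in $\O$ and $\curl\H=\0$ in $\O^c$; moreover $\div\H=0$ in $\mathbb R^3$ since $\H$ is a curl. The jump relation \eqref{t-cont-curlA} says $[\H_T]=\0$ on $\p\O$, and $[\nu\cdot\H]=0$ automatically because $\div\H=0$ across $\p\O$; hence $\H\in\mH_{\loc}(\mathbb R^3,\curl)\cap\mH_{\loc}(\mathbb R^3,\div)$ with no jump in either tangential or normal part, and one then invokes local elliptic regularity for the $\div$-$\curl$ system (via the div-curl-gradient inequality, Lemma \ref{Lem2.1}, applied on small balls straddling $\p\O$, together with $C^2$ regularity of $\p\O$) to conclude $\H\in H^1_{\loc}(\mathbb R^3,\Bbb R^3)$, so $\curl\A\in H^1_{\loc}(\mathbb R^3,\Bbb R^3)$ and $[\curl\A]=\0$ in $H^{1/2}(\p\O,\Bbb R^3)$ by the $H^1$ trace theorem.

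The main obstacle I anticipate is the bookkeeping of trace spaces and the justification that \eqref{wkeqA-exterior} and the interior Green formula hold in the appropriate dual pairings for the full class $\mB(\O,\Bbb R^3)$, rather than just for smooth compactly supported fields — in particular, making precise the density statement that tangential traces of $\mB(\O,\Bbb R^3)$-fields exhaust a dense subset of $T\!H^{1/2}(\p\O,\Bbb R^3)$, and verifying that the duality $\langle[(\curl\A)_T],\B_T\rangle$ is legitimate given only $(\curl\A)_T^\pm\in T\!H^{-1/2}(\p\O,\Bbb R^3)$ and $\B_T\in T\!H^{1/2}(\p\O,\Bbb R^3)$. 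The analytic content of each step (distributional identification of $\curl^2\A$, elliptic regularity in (iii)) is standard; the care is entirely in the function-space hygiene at the interface.
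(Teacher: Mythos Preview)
Your proposal is correct and reaches all three conclusions, but the route for parts (i) and (ii) differs from the paper's. Where you argue purely distributionally---test against $\B\in C_c^\infty(\O^c)$ (resp.\ $C_c^\infty(\O)$) to identify $\curl^2\A=\0$ (resp.\ $\curl^2\A=-\lam^{-2}f^2\A\in L^2$) directly as distributions, and then invoke the trace theorem for $\mH_{\loc}(\curl)$---the paper instead works on a large ball $B(0,R)\supset\overline{\O}$ (or an interior ball $B_R\Subset\O$), performs a Helmholtz-type decomposition $\A=\A_R+\nabla\phi_R$ with $\A_R\in H^1_{n0}(B_R,\div0)$, and then uses the difference-quotient method on the equation $\Delta\A_R=\lam^{-2}f^2\A$ (interior case) to obtain $\A_R\in H^2_{\loc}$, whence $\curl\A=\curl\A_R\in H^1_{\loc}$. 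Your approach is more economical and yields exactly what the lemma states; the paper's extra machinery buys, as a byproduct, local $H^2$ regularity of the divergence-free part of $\A$ (equivalently $\curl\A\in H^1_{\loc}$ in the interior), which is not asserted in the lemma but feeds into the later regularity theory. For part (iii) the two arguments coincide: both observe that $\H=\lam\,\curl\A$ is globally divergence-free (as a curl) and, thanks to $[\H_T]=\0$ from part (ii), has global $\curl\H\in L^2_{\loc}(\Bbb R^3)$, so $\H\in H^1_{\loc}(\Bbb R^3)$ by the interior div-curl identity on $\Bbb R^3$. Your anticipated obstacle---density of the traces $\B_T$ of $\mB(\O,\Bbb R^3)$ in $T\!H^{1/2}(\p\O,\Bbb R^3)$---is handled in the paper simply by taking $\B\in C^1_c(\Bbb R^3,\Bbb R^3)$ as test fields in the final integration-by-parts step, which already gives a rich enough family of tangential traces.
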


The proof will be given in Appendix \ref{AppendixA}.

\subsection{Regularity of weak Meissner solutions}\

In this subsection we examine regularity in $\O$ of the weak solutions of \eqref{eqfA}. We shall use $C(\O)$ to denote a generic constant which depends only on $\O$ but may vary from line to line.

\begin{Thm}\label{Thm-reg-fA} Assume that $\O$ is a bounded domain in $\mathbb R^3$ with a $C^2$ boundary, and
$\mH^e\in C^1_{\loc}(\mathbb R^3,\mathbb R^3)$
satisfying \eqref{1.2}. Let $(f,\A)$ be a weak Meissner solution of
\eqref{eqfA}. Then there exist constants $c$ and $M$ such that
$$
0<c=\min_{\overline{\O}}f\leq \max_{\overline{\O}}f\leq 1,\q M=\|\A\|_{L^\infty(\O)}<\infty.
$$
Let $\H=\lam\,\curl\A$, $0<\a<1$, $0<\b<1/2$ and $\k\geq \max\{1,\lam\}$. Then we have:
\begin{itemize}
\item[(a)] $f\in H^2(\O)\cap C^{1+\a}(\overline{\O})$, $\A\in H^1_{\loc}(\mathbb R^3,\mathbb R^3)$,
$\H\in H^1_{\loc}(\mathbb R^3,\mathbb R^3)$, $\curl\H\in
H^1_{\loc}(\mathbb R^3,\mathbb R^3)\cap L^\infty(\Bbb R^3,\Bbb R^3)$,
$$[\A_T]=\0,\q
[\H_T]=\0\q\text{\rm on }\p\O,
$$
and
\eq\label{est-3.13}
\aligned
&\|\H\|_{L^2(\O)}\leq C(\O)(1+\lam)M,\\
&\|\H\|_{H^1(\O)}\leq C(\O)(\lam+\lam^{-1})M,\\
\endaligned
\eeq
\eq\label{est-3.14}
\|\A\|_{H^1(\O)}\leq C(\O)d_1 M,
\eeq
where $d_1=c^{-1}\lam^{-1}\k$.

\item[(b)] If $\p\O$ is of class $C^3$, then
$f\in H^3(\O)\cap C^{2+\b}(\overline{\O})$, $\A\in H^2(\O,\Bbb R^3)$, $\curl\H\in H^2(\O,\Bbb R^3)$, and
\begin{equation}\label{est-3.15}
\aligned
&\|\A\|_{H^2(\O)}\leq
C(\O,M)(c^{-1}d_1)^2M.
\endaligned
\end{equation}
If furthermore $\H_T\in T\!H^{3/2}(\p\O,\Bbb R^3)$, then $\H\in H^2(\O,\Bbb R^3)$, and
\eq\label{H-H2}
\|\H\|_{H^2(\O)}\leq
C(\O)\lam^{-1}d_1M+C(\O)\|\H_T\|_{H^{3/2}(\p\O)}.
\eeq

\item[(c)] If $\p\O$ is of class $C^{3+\b}$, then
$f\in C^{3+\b}(\overline{\O})$, $\A\in C^{1+\b}(\overline{\O},\mathbb R^3)$,
$\curl\H\in C^{1+\b}(\overline{\O},\mathbb R^3)$, and
\eq\label{est-3.17}
\sum_{n=0}^3\Big({\lam\over\k}\Big)^n\|D^n f\|_{C^0(\overline{\O})}+\Big({\lam\over\k}\Big)^{3+\b}[D^3f]_\b\leq
C(\O,M,\b).
\eeq
\end{itemize}
\end{Thm}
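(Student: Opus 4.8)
The plan is to run a bootstrap argument, moving information back and forth between the Neumann problem \eqref{3.4} for $f$ and the curl equation for $\A$, using the div-curl-gradient estimates of Lemmas~\ref{Lem2.1}--\ref{Lem2.2} to upgrade the regularity of $\A$ from the divergence-free structure that is available only after passing to $\H=\lam\,\curl\A$. First I would establish the pointwise bounds on $f$: since $(f,\A)$ is a weak Meissner solution, $f>0$ on $\overline\O$ by definition, and testing \eqref{3.4} against $(f-1)^+$ gives $f\le 1$ a.e., whence $c:=\min_{\overline\O}f>0$ and $M:=\|\A\|_{L^\infty(\O)}<\infty$ are well-defined; the strong maximum principle for the Neumann problem (after we know $f$ is continuous) upgrades $c>0$ to hold on all of $\overline\O$. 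The bound $f\le1$ forces $1-f^2-|\A|^2\ge -M^2$-type control, so the right-hand side of \eqref{3.4} lies in $L^\infty(\O)$ and elliptic $L^p$ theory gives $f\in W^{2,p}(\O)$ for all $p<\infty$, hence $f\in C^{1+\a}(\overline\O)$, proving the first assertion of part~(a).

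Next I would treat $\A$. From Lemma~\ref{Lem3.5}(ii)--(iii) we already have $\curl^2\A\in L^2(\O,\Bbb R^3)$, $\curl\A\in H^1_{\loc}(\Bbb R^3,\Bbb R^3)$, $[\A_T]=\0$ and $[(\curl\A)_T]=\0$ on $\p\O$; combined with the second equation of \eqref{eqfA}, namely $\lam^2\curl^2\A=-f^2\A$, and with the Meissner condition $\nu\cdot\A^-=0$, I would apply the div-curl-gradient inequality \eqref{dcg1} to $\A$ on $\O$. Here the key structural point is that $\div\A$ is \emph{not} directly controlled: instead one uses that $\curl\A=\lam^{-1}\H$ and that $\H$ is divergence-free in $\O$, so one first estimates $\H$. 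Working with the equivalent system \eqref{eqfH} for $(f,\H)$ (this is where I would invoke Definition~\ref{Def3.4} and the passage $\A=-\lam f^{-2}\curl\H$), the second equation $\lam^2\curl(f^{-2}\curl\H)+\H=\0$ together with $\div\H=0$ in $\O$ and $[\H_T]=\0$ lets me apply the isomorphism statement of Lemma~\ref{Lem-linear-curl} with coefficient $a=\lam^2 f^{-2}$: this yields \eqref{est-3.13}, the $H^1$ bound on $\H$ with the claimed dependence on $\lam,\lam^{-1},M$. Then \eqref{est-3.14} follows by applying \eqref{dcg1} with $k=0$ to $\A$ itself, using $\curl\A=\lam^{-1}\H\in L^2$, $\nu\cdot\A^-=0$, and—to handle $\div\A$—the identity $\div\A=\div(-\lam f^{-2}\curl\H)=-\lam\nabla(f^{-2})\cdot\curl\H$, which is controlled in $L^2$ by $\|f\|_{C^{1}}$, $c$, and $\|\H\|_{H^1}$, producing the factor $d_1=c^{-1}\lam^{-1}\k$.

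For parts~(b) and~(c) I would iterate: with $f\in C^{1+\a}$ and $\A\in H^1$ in hand, the right-hand side of \eqref{3.4} is now in $H^1(\O)$, so on a $C^3$ domain Schauder/$L^p$ estimates give $f\in H^3(\O)\cap C^{2+\b}(\overline\O)$; feeding this improved $f$ back into Lemma~\ref{Lem-linear-curl}(i)'s second estimate (the $H^2$ bound, with $a=\lam^2f^{-2}\in C^1$) gives $\curl\H\in H^2$ and $\A\in H^2$ with the constant displayed in \eqref{est-3.15}, the powers of $c^{-1}d_1$ coming from the explicit $m^{-3/2}\|a\|_{C^1}$-type dependence in \eqref{L-est1}. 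The extra hypothesis $\H_T\in T\!H^{3/2}(\p\O)$ then upgrades $\H$ itself to $H^2$ via \eqref{dcg1} with $k=1$, giving \eqref{H-H2}. Finally, on a $C^{3+\b}$ domain I would run one more round of the same loop in Hölder spaces, using Lemma~\ref{Lem-linear-curl}(ii) for $\curl\H\in C^{1+\b}$ and hence $\A\in C^{1+\b}$, and Schauder estimates for the Neumann problem to get $f\in C^{3+\b}(\overline\O)$; the scaled estimate \eqref{est-3.17} is obtained by tracking the $\lam/\k$ weights through the $\Delta f$-equation $-(\lam/\k)^2\Delta f=(1-f^2-|\A|^2)f$, treating $\lam/\k$ as the natural small parameter in interior Schauder estimates. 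The main obstacle throughout is the lack of a direct bound on $\div\A$: every regularity gain for $\A$ must be routed through $\H$ and the divergence identity $\div\A=-\lam\nabla(f^{-2})\cdot\curl\H$, and one must carefully verify at each stage that $f$ is already regular enough (and bounded below by $c$) for that identity and for Lemma~\ref{Lem-linear-curl} to apply with the stated quantitative constants.
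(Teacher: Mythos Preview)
Your bootstrap architecture is right, and the identity $\div\A=-\lam\nabla(f^{-2})\cdot\curl\H$ (equivalently $\div\A=-2f^{-1}\nabla f\cdot\A$, since $\div(f^2\A)=0$) is exactly the way the paper controls $\div\A$. But the heart of part~(a), your derivation of \eqref{est-3.13} via Lemma~\ref{Lem-linear-curl}, does not work as stated. That lemma is an isomorphism on $H^2_{t0}(\O,\div0)$, i.e.\ it requires $\H_T=\0$ on $\p\O$; the condition $[\H_T]=\0$ you cite is only continuity of the tangential trace across $\p\O$, not its vanishing. In the full Meissner problem \eqref{eqfA} the trace $\H_T$ is part of the unknown, not prescribed, and the equation reads $\mL\H=\0$ with $a=\lam^2f^{-2}$; if Lemma~\ref{Lem-linear-curl} applied it would force $\H\equiv\0$. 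So you cannot get \eqref{est-3.13} this way, and without \eqref{est-3.13} the rest of the chain (the $\div\A$ bound, \eqref{est-3.14}, and then parts (b)--(c)) does not start.

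What the paper does instead (see Appendix~\ref{AppendixC} and the parallel argument in Lemma~\ref{LemB.2}) is to test the equation $\lam^2\curl^2\A+f^2\A=\0$ against $\A$ and integrate by parts, obtaining the energy identity
\[
\int_\O\bigl(\lam^2|\curl\A|^2+f^2|\A|^2\bigr)\,dx=\lam\int_{\p\O}(\A_T\times\H_T)\cdot\nu\,dS.
\]
The boundary term is bounded by $\lam M\|\H_T\|_{L^1(\p\O)}\le C\lam M\|\H\|_{H^1(\O)}$; since the left side controls $\|\H\|_{L^2}^2$ and (via $\curl\H=-\lam^{-1}f^2\A$ and $\div\H=0$) also $\|\curl\H\|_{L^2}^2$, this linear-versus-quadratic structure lets one close and obtain \eqref{est-3.13}. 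This is the missing idea. For part~(c) there is a second, smaller gap: to pass from $\H\in H^1$ to $\A\in C^{1+\b}$ the paper does not reuse Lemma~\ref{Lem-linear-curl} (same obstruction) but introduces a Hodge-type splitting $\lam\A=\B+\nabla\phi$ with $\B$ solving $\curl\B=\H$, $\div\B=0$, $\nu\cdot\B=0$, and $\phi$ a Neumann problem with right-hand side $\lam\,\div\A$; Schauder estimates on these two linear problems give the H\"older regularity and ultimately \eqref{est-3.17}. Your sketch for (c) should be routed through that decomposition rather than through Lemma~\ref{Lem-linear-curl}(ii).
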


The proof of Theorem \ref{Thm-reg-fA} is similar to that of Proposition \ref{Prop4.3}. We shall give a complete proof of Proposition \ref{Prop4.3} in Appendix \ref{AppendixB} and give a brief proof of Theorem \ref{Thm-reg-fA} in Appendix~\ref{AppendixC}.

From Theorem \ref{Thm-reg-fA} we see that a weak Meissner solution
$(f,\A)$ is smooth on $\overline{\O}$. However, the regularity of the
magnetic potential part $\A$ in $\O^c$ can be rather poor. In general we only know $\A$ is in
$H^1_{\loc}(\O^c,\mathbb R^3)$. To see this, take a
function $\phi\in H^2_0(\O^c)$ that does not belong to
$H^3_{\loc}(\O)$. Then $(f,\A+\nabla\phi)$ is also a weak Meissner
solution of \eqref{eqfA}-\eqref{1.4} but $\A+\nabla\phi$ does not belong to $H^2_{\loc}(\O^c,\mathbb R^3)$.
The same reasoning applies to the weak solutions of \eqref{eqA}, and
we see that in general a weak solution $\A$ of \eqref{eqA} belongs
to $H^1_{\loc}(\O^c,\mathbb R^3)$ but does not belong to
$H^2_{\loc}(\O^c,\mathbb R^3)$. Moreover, the solutions of
\eqref{eqfA} or of \eqref{eqA} may have discontinuity in normal
components $\nu\cdot\A$ or $\nu\cdot\curl\A$ at $\p\O$. It was also proved in \cite[Lemma 3.1]{P3} that if $\A$ is a non-zero solution of \eqref{eqA} and satisfies
\eqref{cond-1.8}, then $\curl^2\A$ is not continuous on the boundary
$\p\O$.

To describe the precise regularity of such solutions of \eqref{eqfA} we need  some spaces of functions, which were introduced in \cite[Subsection 3.1]{P3} to study \eqref{eqA}. If $\p\O$ is of class $C^k$ and if $\A\in C^k(\overline{\O},\mathbb R^3)$, then the tangential
derivatives of $\A$ at $\p\O$ of order up to $k$ are well-defined,
and we may write them as $(D_t^l\A)^-$, $0\leq |l|\leq k$, and we
may call them the {\it inner trace of the tangential derivatives} of
$\A$. Similarly, if $\A\in C^k_{\loc}(\overline{\O^c},\mathbb R^3)$, then
we may write the tangential derivatives at $\p\O$ as $(D_t^l\A)^+$
and call them the {\it outer trace of the tangential derivatives} of
$\A$.

\begin{Def}\label{Def3.7} Let $k$ be a non-negative integer and $0<\b<1$.

If there exists a neighborhood $U$ of $\p\O$ such that
$\A\in C^k(\overline{\O}\cap \overline{U},\mathbb R^3)\cap C^k(\overline{\O^c}\cap \overline{U},\mathbb R^3)$
and if
$$(D_t^l\A)^-=(D_t^l\A)^+\q\text{holds on $\p\O$ for all $|l|\leq k$},
$$
then we say that $\A\in \C^k(\p\O,\mathbb R^3)$.

If $\A\in C^{k+\b}(\overline{\O}\cap \overline{U})\cap C^{k+\b}(\overline{\O^c}\cap \overline{U})$
and if
$$(D_t^l\A)^-=(D_t^l\A)^+\in C^\b(\p\O,\mathbb
R^3)\q\text{for all $|l|=k$},
$$
then we say that $\A\in
\C^{k+\b}(\p\O,\mathbb R^3)$.
\end{Def}

In the following we shall always assume that the unit outer normal
vector field $\nu$ has been extended onto a small neighborhood $U$
of $\p\O$. Then $\nu\cdot\A$ and $\A_T$ can be defined on $U$, and
the statement ``$\A_T\in \C^{m+\b}(\p\O,\mathbb R^3)$" is
meaningful. Then we can define, for integers $k, m\geq 0$ and real
numbers $0\leq\a,\b<1$,
\eq\label{sp-Ckalpha}
\aligned
&\C^{k+\a,m+\b}(\overline{\O},\overline{\O^c},\mathbb R^3) =\{\A\in
C^{k+\a}(\overline{\O},\mathbb R^3)\cap C^{k+\a}_{\loc}(\overline{\O^c},\mathbb
R^3):~\A\in \C^{m+\b}(\p\O,\mathbb R^3)\},\\
&\C_t^{k+\a,m+\b}(\overline{\O},\overline{\O^c},\mathbb R^3) =\{\A\in
C^{k+\a}(\overline{\O},\mathbb R^3)\cap C^{k+\a}_{\loc}(\overline{\O^c},\mathbb
R^3):~ \A_T\in \C^{m+\b}(\p\O,\mathbb R^3)\}.
 \endaligned
\eeq

In \cite[Definition 3.1]{P3} we define $\A$ to be a classical solution of \eqref{eqA} if $\A$ satisfies
\begin{equation}\label{cond-A-ct}
\A\in\C_t^{2,0}(\overline{\O},\overline{\O^c},\mathbb R^3),\q (\curl\A)_T\in
\C^0(\p\O,\mathbb R^3),
\end{equation}
and $\A$ satisfies \eqref{eqA} pointwise. We define
$\H$ to be a classical solution of \eqref{eqH} if
$\H\in \C_t^{2,0}(\overline{\O},\overline{\O^c},\mathbb R^3)$ and if $\H$
satisfies \eqref{eqH} pointwise.

\begin{Def}\label{Def3.8} $(f,\A)$ is called a classical
solution of \eqref{eqfA} if $f\in C^2_{\loc}(\O)\cap C^1(\overline{\O})$, $\A$ satisfies \eqref{cond-A-ct}, and $(f,\A)$ satisfies \eqref{eqfA} pointwise.
\end{Def}

\subsection{Locally $L^\infty$-stable solutions}\

Recall that for any solution $(f,\A)$ of \eqref{eqfA}, the field $(\psi,\mA)=(fe^{i\chi},\A+{\lam\over\k}\nabla\chi)$, as a solution of the original Ginzburg-Landau system \eqref{GL}, is not globally stable for large $\k$ (see \cite{S}). Local stability of Meissner solutions to \eqref{eqfA} has been discussed in \cite[Subsection 2.6]{P3}.\footnote{Chapman \cite{C2} shows in the 2-dimensional case that the solution of \eqref{eqA-Omega} is stable if it satisfies \eqref{cond-1.8}.}
Here we consider $L^\infty$-stability of $(f,\A)$ with respect to \eqref{eqfA}, but not to \eqref{GL}.

\begin{Def}
Assume $\mH^e$ satisfies \eqref{1.2}.
A weak solution of problem \eqref{eqfA}-\eqref{1.4} is said to be
locally $L^\infty$-stable if there exists $\delta>0$ such that for any
$(g,\B)\in H^1(\O)\times\mB(\O,\mathbb R^3)$ satisfying
\begin{equation}\label{g-B-small}
\|g\|_{L^\infty(\O)}+\|\B\|_{L^\infty(\O)}<\delta
\end{equation}
it holds that
$\mE[f+g,\A+\B]\geq \mE[f,\A].
$
\end{Def}

Similarly we can define local stability of solutions of \eqref{eqA}-\eqref{1.4}.\footnote{For the definition of weak solutions of \eqref{eqA-Omega} one may see \cite[Definition 3.1]{BaP}.}
\eqref{g-B-small} does not put any restriction on $\B$ in
$\O^c$. This is related to the invariance property
$$\mE[f,\A+\nabla\phi]=\mE[f,\A]\q\text{if $\nabla\phi=\0$ in $\O$}.
$$

\begin{Lem}\label{Lem-loc-stable} Assume $\mH^e$ satisfies \eqref{1.2}.

{\rm (i)} A weak Meissner solution  $(f,\A)$ of \eqref{eqfA}-\eqref{1.4} is locally $L^\infty$-stable provided
\begin{equation}\label{f-A-stable}
\inf_{x\in\overline{\O}}\;\{f^2(x)-|\A(x)|^2\}>{1\over 3}.
\end{equation}

{\rm (ii)} A weak solution $\A$ of \eqref{eqA}-\eqref{1.4} (resp. a solution of \eqref{eqA-Omega}) is
locally $L^\infty$-stable provided it satisfies \eqref{cond-1.8}.
\end{Lem}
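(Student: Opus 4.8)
The plan is to treat both parts as $L^\infty$-local-minimality statements for the relevant energy, obtained by an exact second-variation computation. Fix an admissible perturbation $(g,\B)\in H^1(\O)\times\mB(\O,\R^3)$ with $\|g\|_{L^\infty(\O)}+\|\B\|_{L^\infty(\O)}<\delta$. Since $t\mapsto\mE[f+tg,\A+t\B]$ is a polynomial of degree four in $t$, the expansion
$$\mE[f+g,\A+\B]-\mE[f,\A]=L(g,\B)+Q(g,\B)+R(g,\B)$$
is exact, $L$ being linear, $Q$ quadratic and $R$ the sum of the cubic and quartic parts. First I would check that $\A+\B\in\mA(\O,\R^3,\lam^{-1}\mH^e)$ and that all integrals are finite, which follows at once from $\B\in\mB(\O,\R^3)$ and Theorem \ref{Thm-reg-fA}. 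Next, a direct computation identifies $L(g,\B)$ with twice the left-hand side of \eqref{wkeqfA}; since $(f,\A)$ is a weak solution of \eqref{eqfA}-\eqref{1.4} and $(g,\B)$ is an admissible test pair, $L(g,\B)=0$.

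The crux is the quadratic term. Collecting the contributions of the four pieces of $\mE$ gives
$$Q(g,\B)=\int_\O\Bigl\{{\lam^2\over\k^2}|\nabla g|^2+f^2|\B|^2+4fg\,\A\cdot\B+g^2\bigl(|\A|^2+3f^2-1\bigr)\Bigr\}dx+\lam^2\int_{\R^3}|\curl\B|^2dx.$$
The Dirichlet-type and curl integrals are non-negative, so it is enough to make the remaining pointwise integrand coercive. Splitting $\B$ at each point into the components $\B_\parallel$ parallel and $\B_\perp$ orthogonal to $\A$, that integrand becomes $f^2|\B_\perp|^2$ plus a $2\times2$ quadratic form in $\bigl(g,|\B_\parallel|\bigr)$ whose leading coefficient is $f^2$ and whose determinant equals $f^2\bigl(3(f^2-|\A|^2)-1\bigr)$. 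Hence it is positive definite precisely when $f^2-|\A|^2>{1\over 3}$; combined with hypothesis \eqref{f-A-stable} and the lower bound $f\geq c>0$ furnished by Theorem \ref{Thm-reg-fA}, this yields a uniform bound $Q(g,\B)\geq\gamma\bigl(\|g\|_{L^2(\O)}^2+\|\B\|_{L^2(\O)}^2\bigr)$ with $\gamma>0$ independent of $(g,\B)$.

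Finally, every monomial appearing in $R(g,\B)$ contains at least three factors taken from $\{g,|\B|\}$, multiplied by the bounded quantities $f\leq1$ and $|\A|\leq M$; therefore $|R(g,\B)|\leq C(M)\,\delta\,\bigl(\|g\|_{L^2(\O)}^2+\|\B\|_{L^2(\O)}^2\bigr)$, and choosing $\delta<\gamma/C(M)$ gives $\mE[f+g,\A+\B]\geq\mE[f,\A]$, proving (i). Part (ii) is obtained by the same three steps applied to the reduced energy $\mathcal J[\A]=\int_\O\bigl(|\A|^2-{1\over 2}|\A|^4\bigr)dx+\int_{\R^3}|\lam\,\curl\A-\mH^e|^2dx$ associated with \eqref{eqA}-\eqref{1.4} (and to its analogue over $\O$, in the sense of \cite[Definition 3.1]{BaP}, for \eqref{eqA-Omega}): the linear term vanishes by \eqref{3.8}, the quadratic term equals $\int_\O\bigl\{(1-|\A|^2)|\B|^2-2(\A\cdot\B)^2\bigr\}dx+\lam^2\int_{\R^3}|\curl\B|^2dx$, whose pointwise part is $(1-3|\A|^2)|\B_\parallel|^2+(1-|\A|^2)|\B_\perp|^2$ and hence uniformly coercive exactly under \eqref{cond-1.8}, while the remainder $\int_\O\bigl\{-2(\A\cdot\B)|\B|^2-{1\over 2}|\B|^4\bigr\}dx$ is again $O(\delta)\,\|\B\|_{L^2(\O)}^2$.

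The only genuinely computational point is getting the quadratic form right and recognizing that its definiteness is governed by the single scalar quantity $3(f^2-|\A|^2)-1$ (respectively $1-3|\A|^2$); the vanishing of $L$, the uniform coercivity, and the absorption of $R$ under the $L^\infty$-smallness are then routine. As a sanity check, replacing $f$ formally by $(1-|\A|^2)^{1/2}$ turns $f^2-|\A|^2>{1\over 3}$ into $\|\A\|_{L^\infty(\O)}<1/\sqrt3$, matching \eqref{cond-1.8}, so the two parts are consistent with the heuristic reduction of \eqref{eqfA} to \eqref{eqA}.
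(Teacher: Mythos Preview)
Your proof is correct and is precisely the ``direct computations'' the paper alludes to; the paper does not spell out the argument. The only cosmetic difference is in how the pointwise quadratic form is diagonalized: you split $\B$ into components parallel and orthogonal to $\A$ and examine the resulting $2\times2$ determinant $f^2\bigl(3(f^2-|\A|^2)-1\bigr)$, whereas the paper (see the formula for $\langle G''(f,\A),(g,\B)\rangle$ after \eqref{funct-E-Omega}) completes the square as $|f\B+2g\A|^2+3g^2\bigl(f^2-|\A|^2-\tfrac13\bigr)$, from which positivity under \eqref{f-A-stable} is read off immediately. Both reorganizations of the same quadratic form lead to the same conclusion.
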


Lemma \ref{Lem-loc-stable} is proved by direct computations. It can also be derived using the convexity of $\mathcal
E_\O$ in $\mK(\O)$, see Lemma \ref{Lem4.2} (ii).

\begin{Lem}\label{Lem-Uniq}
Let $\O$ be a bounded domain in $\Bbb R^3$ with a $C^2$ boundary, and $\mH^e$ satisfy $(H)$. If
problem \eqref{eqfA}-\eqref{1.4} has two weak Meissner solutions $(f_0,\A_0)$ and $(f_1,\A_1)$ which satisfy \eqref{f-A-stable}, then
$$(f_0,\A_0)=(f_1,\A_1)\q\text{\rm in }\O,\q\curl\A_0=\curl\A_1\q\text{\rm in }\O^c.
$$
\end{Lem}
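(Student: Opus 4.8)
The plan is to prove a uniqueness result of the standard ``energy comparison via convexity'' type, exploiting the convexity hinted at in Lemma \ref{Lem4.2}(ii). First I would set $(f_0,\A_0)$ and $(f_1,\A_1)$ as two weak Meissner solutions satisfying the strong-stability condition \eqref{f-A-stable}, and consider the one-parameter family $(f_t,\A_t)=(1-t)(f_0,\A_0)+t(f_1,\A_1)$ for $t\in[0,1]$. By Theorem \ref{Thm-reg-fA} both solutions are smooth on $\overline{\O}$ with $f_i>0$ and $\|\A_i\|_{L^\infty(\O)}<\infty$, so all the integrals below make sense. The key observation is that on the convex set $\mK(\O)=\{(g,\B): g^2-|\B|^2>1/3 \text{ on }\overline{\O}\}$ the energy density $|g\B|^2+\tfrac12(1-g^2)^2$ appearing in $\mE_\O$ is strictly convex in $(g,\B)$ (this is precisely why the threshold $1/3$ appears: the Hessian of $\tfrac12(1-g^2)^2+g^2|\B|^2$ is positive definite exactly when $g^2-|\B|^2>1/3$, modulo the quadratic gradient and $|\curl\A-\mH^e/\lam|^2$ terms which are already convex). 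Thus $t\mapsto\mE[f_t,\A_t]$ is strictly convex along the segment unless $(f_0,\A_0)\equiv(f_1,\A_1)$ on $\O$, which forces the interior equality in $\O$.

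The technical work is to make this rigorous on the unbounded domain and to handle the exterior field. I would first reduce to a statement on $\O$ by using the decomposition philosophy of the paper: because both $\A_i$ satisfy $\curl^2\A_i=\0$ in $\O^c$ with $\lam\curl\A_i-\mH^e\to\0$ at infinity and matching tangential traces $(\A_i)_T^+=(\A_i)_T^-$, $(\curl\A_i)_T^+=(\curl\A_i)_T^-$, the exterior part of the energy $\int_{\O^c}|\lam\curl\A_i-\mH^e|^2dx$ is, for fixed inner tangential data, minimized by the (essentially unique up to gradients vanishing in $\O$) harmonic extension; and crucially $\int_{\mathbb R^3}|\lam\curl\A-\mH^e|^2dx$ is convex in $\A$ globally. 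So $\mE[f_t,\A_t]\le(1-t)\mE[f_0,\A_0]+t\mE[f_1,\A_1]$ always, with equality only if the convex integrand is affine along the segment a.e., which on $\O$ (by strict convexity there) gives $(f_0,\A_0)=(f_1,\A_1)$ in $\O$, and on $\mathbb R^3$ gives $\curl\A_0=\curl\A_1$ a.e., hence in $\O^c$. To convert the inequality into an equality I use that both endpoints are critical points: stability \eqref{f-A-stable} together with Lemma \ref{Lem-loc-stable}(i) says each $(f_i,\A_i)$ is a local minimizer, but more directly, since each is a weak solution, testing the Euler--Lagrange identity \eqref{wkeqfA} for $(f_0,\A_0)$ against $(g,\B)=(f_1-f_0,\A_1-\A_0)$ and for $(f_1,\A_1)$ against its negative, then subtracting, produces exactly $\int_{\mathbb R^3}$ of a sum of nonnegative terms (the differences of the monotone vector fields $(f,\A)\mapsto \nabla_{(f,\A)}[\text{density}]$), each of which must vanish; the strict monotonicity on $\mK(\O)$ then forces $(f_0,\A_0)=(f_1,\A_1)$ in $\O$, and the $|\curl(\cdot)|^2$ term forces $\curl\A_0=\curl\A_1$ in $\mathbb R^3$.

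So concretely the steps are: (1) invoke Theorem \ref{Thm-reg-fA} for regularity/boundedness of both solutions, and note $f_i^2-|\A_i|^2>1/3$ on $\overline{\O}$ puts them in the convexity regime; (2) write the weak formulations \eqref{wkeqfA} for both solutions, subtract with test field equal to the difference $(g,\B)=(f_1-f_0,\A_1-\A_0)$ — this is an admissible test field because $\A_1-\A_0\in\mB(\O,\mathbb R^3)$ by the trace regularity in Theorem \ref{Thm-reg-fA}(a); (3) recognize the resulting identity as $\int_{\mathbb R^3}\lam^2|\curl(\A_1-\A_0)|^2dx + \int_\O \mathcal Q\,dx=0$, where $\mathcal Q$ is the integrand coming from the difference of $(f,\A)\mapsto\big(-(1-f^2-|\A|^2)f,\ f^2\A\big)$ evaluated at the two points dotted with $(f_1-f_0,\A_1-\A_0)$, plus the convex gradient term $\tfrac{\lam^2}{\k^2}|\nabla(f_1-f_0)|^2$; (4) show $\mathcal Q\ge 0$ pointwise under \eqref{f-A-stable} — this is the monotonicity/convexity computation, the only genuinely computational step, amounting to checking that the map $(f,\A)\mapsto\big(-(1-f^2-|\A|^2)f,f^2\A\big)$ has a positive semidefinite symmetric part on $\mK(\O)$, with a strict lower bound $\mathcal Q\ge c_0(|f_1-f_0|^2+|\A_1-\A_0|^2)$; (5) conclude $\curl\A_0=\curl\A_1$ a.e. in $\mathbb R^3$ and $(f_0,\A_0)=(f_1,\A_1)$ in $\O$. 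I expect step (4) — the pointwise quadratic-form inequality showing that the nonlinearity is monotone precisely on the region $f^2-|\A|^2>1/3$ — to be the main obstacle, since one must carefully track the cross terms between the $f$-equation and the $\A$-equation and verify the threshold $1/3$ is exactly what makes the combined form nonnegative; this is the three-dimensional analogue of the stability computation already used for \eqref{cond-1.8}, and it is closely tied to Lemma \ref{Lem-loc-stable}(i) and Lemma \ref{Lem4.2}(ii), which I would cite to shortcut the algebra if their statements suffice.
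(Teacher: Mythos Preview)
Your proposal is correct and follows essentially the same route as the paper's proof: subtract the weak formulations \eqref{wkeqfA} for the two solutions, test with the difference $(g,\w)=(f_1-f_0,\A_1-\A_0)\in H^1(\O)\times\mB(\O,\mathbb R^3)$, and obtain an identity of the form $\tfrac{\lam^2}{\k^2}\int_\O|\nabla g|^2+\lam^2\int_{\mathbb R^3}|\curl\w|^2+\int_\O\int_0^1\{|f_t\w+2g\A_t|^2+(3f_t^2-3|\A_t|^2-1)g^2\}\,dt\,dx=0$, from which the conclusion follows by the pointwise positivity of each term under \eqref{f-A-stable}. Your step (4) is exactly the paper's use of the second-variation formula for $G$ (stated right after \eqref{funct-E-Omega}) together with Remark~\ref{Rem4.1}(c)--(d), so you can cite those directly rather than redoing the algebra; the paragraph about the ``decomposition philosophy'' and exterior harmonic extension is unnecessary, since the $\int_{\mathbb R^3}|\curl\w|^2$ term already handles $\O^c$.
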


The proof of Lemma \ref{Lem-Uniq} will be given in Appendix \ref{AppendixA}.

\subsection{Decomposition  of the Meissner system}\


We shall show that \eqref{eqfA} can be decomposed into two systems: a BVP in $\O$
\begin{equation}\label{dec-eqfA-1}
\left\{\aligned
-&{\lam^2\over\k^2}\Delta f=(1-f^2-|\A|^2)f\q&\text{\rm in }\O,\\
&\lam^2\curl^2\A+f^2\A=\bold 0\q\,&\text{\rm in }\O,\\
&{\p f\over\p\nu}=0,\q  \lam(\curl\A)^-_T=\mB_T\q&\text{\rm
on }\p\O,
\endaligned\right.
\end{equation}
and an exterior problem in $\O^c$:
\begin{equation}\label{dec-eqfA-2}
\left\{\aligned
&\curl^2\A={\bold 0}\qqq\qqq\q&\text{ \rm in }\O^c,\\
&\A_T^+=\mA_T,\q\lam(\curl\A)^+_T=\mB_T\q\;& \text{\rm
on }\p\O,
\endaligned\right.
\end{equation}
with $\mB_T$ being a suitably chosen vector field.
To define weak solutions of BVP \eqref{dec-eqfA-1} we need the following spaces:
\begin{equation}\label{sp-WWVD}
\aligned
&\mW(\O)=\{(f,\A):~f\in H^1(\O),\;\A\in\mathcal
H(\O,\curl)\cap
L^\infty(\O,\mathbb R^3)\},\\
&\mW_{t0}(\O)=\{(g,\B)\in\mW(\O):~  \B^-_T=\0\text{ \rm on
}\p\O\},\\
&\mV(\O)=\{(g,\B)\in \mW(\O):~ \B_T^-\in T\!H^{1/2}(\p\O,\Bbb R^3) \}.
\endaligned
\end{equation}

\begin{Def}
{\rm (i)} Giving $\mB_T\in T\!H^{-1/2}(\p\O,\Bbb R^3)$, we say that $(f,\A)$ is a weak
solution of \eqref{dec-eqfA-1} if $(f,\A)\in \mW(\O)$,
$$\lam\,(\curl\A)^-_T=\mB_T\q\text{\rm on }\p\O,
$$
holds in $T\!H^{-1/2}(\p\O,\mathbb R^3)$, and
\begin{equation}\label{wkdec-eqfA-1}
\aligned
&\int_\O\Big\{{\lam^2\over\k^2}\nabla f\cdot\nabla
g-(1-|f|^2-|\A|^2)fg+\lam^2\curl\A\cdot\curl\D+f^2\A\cdot\D\Big\} dx\\
+&\lam\int_{\p\O}(\mB^-_T\times\D^-_T)\cdot\nu dS=0,\qq\forall (g,\D)\in \mV(\O).
\endaligned
\end{equation}

{\rm (ii)} We say that $(f,\A)$ is a weak Meissner solution of
\eqref{dec-eqfA-1} if $(f,\A)$ is a weak solution of \eqref{dec-eqfA-1} and if in addition
$$f>0\q\text{\rm in }\overline{\O},\q
\nu\cdot\A=0\q\text{\rm on }\p\O.
$$
\end{Def}

To define weak solutions of \eqref{dec-eqfA-2} we need the following space:
\eq\label{sp-Z}
\mZ(\O^c)=\{\bold Z\in L^2_{\loc}(\O^c,\Bbb R^3):~\curl\bold Z\in L^2(\O^c,\Bbb R^3),\;\; \bold Z_T^+\in T\!H^{1/2}(\p\O,\Bbb R^3)\}.
\eeq

\begin{Def}
Giving $\mA_T, \mB_T\in T\!H^{-1/2}(\p\O,\Bbb R^3)$,
we say that $\A$ is a weak solution of \eqref{dec-eqfA-2} if $\A\in
\mH_{\loc}(\O^c,\curl)$,
$$\A^+_T=\mA_T\q\text{and}\q \lam(\curl\A)^+_T=\mB_T\q\text{\rm on } \p\O,
$$
which hold in $T\!H^{-1/2}(\p\O,\mathbb R^3)$, and for any $\bold Z\in \mZ(\O^c)$
with bounded support it holds that
$$
\int_{\O^c}\curl\A\cdot\curl\bold Z\, dx=\int_{\p\O}\lam^{-1}(\mB_T\times\bold Z_T^+)\cdot\nu dS,
$$
where $\nu$ is the unit normal to $\p\O$ pointing into $\O^c$.
\end{Def}

\begin{Lem}\label{Lem-equiv-dec-eqfA}
Let $\O$ be a bounded domain in $\mathbb R^3$ with a Lipschitz boundary and $(f,\A)\in H^1(\O)\times\mA(\O,\mathbb R^3)$ be a weak solution of \eqref{eqfA}. Set
$$
\A^i=\A\bigr|_{\overline{\O}},\q
\A^o=\A\bigr|_{\overline{\O^c}},\q \mA_T=(\A^i)_T^-,\q \mB_T=\lam(\curl\A^i)^-_T.
$$
Then we have the following conclusions:
\begin{itemize}
\item[(i)] $(f,\A^i)$
is a weak solution of \eqref{dec-eqfA-1} for the boundary datum $\mB_T$, and $\A^o$ is a weak solution of \eqref{dec-eqfA-2} for the boundary
data $\mA_T$ and $\mB_T$.

\item[(ii)] If furthermore $(f,\A)$ is a weak Meissner solution of \eqref{eqfA}, then $(f,\A^i)$ is a weak Meissner solution of \eqref{dec-eqfA-1} and
\begin{equation}\label{cond-3.27}
\nu\cdot\curl\mB_T=0\q
\text{\rm on }\p\O.
\end{equation}
\end{itemize}
\end{Lem}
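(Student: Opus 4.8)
The plan is to simply restrict the global weak formulation \eqref{wkeqfA-2} to appropriate classes of test fields, and to read off the two sub-problems from it. First I would address the regularity bookkeeping: by Lemma \ref{Lem3.5}(ii), $\curl^2\A \in L^2(\O,\Bbb R^3)$ and the second equation of \eqref{eqfA} holds a.e.\ in $\O$; moreover $\A\in\mA(\O,\Bbb R^3)$ gives $\A^i\in\mH(\O,\curl)\cap L^\infty(\O,\Bbb R^3)$, and the inner traces $\mA_T=(\A^i)_T^-$ and $\mB_T=\lam(\curl\A^i)_T^-$ make sense in $T\!H^{-1/2}(\p\O,\Bbb R^3)$. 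So $(f,\A^i)\in\mW(\O)$ and the boundary condition $\lam(\curl\A^i)_T^-=\mB_T$ holds by definition of $\mB_T$. That already disposes of the structural membership requirements in the definitions of weak solution of \eqref{dec-eqfA-1} and \eqref{dec-eqfA-2}.

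For part (i), to get the variational identity \eqref{wkdec-eqfA-1} I would start from \eqref{wkeqfA-2} with a test pair $(g,\D)\in\mV(\O)$, extend $\D$ by zero (or by any $\mH_{\loc}(\Bbb R^3,\curl)$ extension with bounded support whose inner tangential trace is $\D_T^-$) to a test field $\B\in\mB(\O,\Bbb R^3)$, and then split $\int_{\Bbb R^3}\curl\A\cdot\curl\B\,dx$ into the $\O$-part and the $\O^c$-part. The $\O^c$-part is handled by \eqref{wkeqA-exterior} of Lemma \ref{Lem3.5}(i): it equals $\int_{\p\O}((\curl\A)_T^+\times\B_T)\cdot\nu\,dS$; combining this with the $\O$-integral of $\curl\A\cdot\curl\D$ and integrating the latter by parts once (legitimate since $\curl^2\A\in L^2(\O)$) produces a boundary term involving $(\curl\A)_T^-$, and the jump $[(\curl\A)_T]=\0$ from \eqref{t-cont-curlA} shows the two boundary contributions combine to exactly $\lam\int_{\p\O}(\mB_T^-\times\D_T^-)\cdot\nu\,dS$ after inserting $\mB_T=\lam(\curl\A^i)_T^-$. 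This gives \eqref{wkdec-eqfA-1}, so $(f,\A^i)$ solves \eqref{dec-eqfA-1}. For the exterior piece, \eqref{wkeqA-exterior} directly gives, for $\bold Z\in\mZ(\O^c)$ with bounded support (which one views as a test field supported in $\overline{\O^c}$), the identity $\int_{\O^c}\curl\A^o\cdot\curl\bold Z\,dx=\int_{\p\O}((\curl\A^o)_T^+\times\bold Z_T^+)\cdot\nu\,dS$, and $(\curl\A^o)_T^+=(\curl\A^i)_T^-=\lam^{-1}\mB_T$ by \eqref{t-cont-curlA}; the condition $\A_T^+=\mA_T$ holds because $[\A_T]=\0$ for $\A\in\mA(\O,\Bbb R^3)$. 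Hence $\A^o$ solves \eqref{dec-eqfA-2}.

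For part (ii), the Meissner property of $(f,\A^i)$ is immediate: $f>0$ on $\overline{\O}$ is inherited, and $\nu\cdot\A^-=0$ on $\p\O$ is exactly the extra condition in the definition of weak Meissner solution of \eqref{eqfA}. To prove \eqref{cond-3.27}, i.e.\ $\nu\cdot\curl\mB_T=0$ on $\p\O$, I would use that $\mB_T=\lam(\curl\A^i)_T^-$ is the tangential trace of $\lam\,\curl\A$, and apply the surface identity $\nu\cdot\curl(\bold W_T)=\nu\cdot\curl\bold W$ valid for a tangential trace (the surface curl of the tangential part of a vector field agrees with the normal component of its curl). Thus $\nu\cdot\curl\mB_T=\nu\cdot\curl(\lam\,\curl\A)^-=\lam\,\nu\cdot(\curl^2\A)^-$ on $\p\O$; from the second equation of \eqref{eqfA}, $\lam^2\curl^2\A=-f^2\A$ in $\O$, so $\lam^2\,\nu\cdot(\curl^2\A)^-=-f^2\,\nu\cdot\A^-=0$ on $\p\O$ by \eqref{1.5}. (The trace $\nu\cdot(\curl^2\A)^-$ is meaningful because $\curl^2\A=-\lam^{-2}f^2\A\in\mH(\O,\div)$, indeed $\div(f^2\A)\in L^2(\O)$ using $f\in H^2(\O)\subset W^{1,6}$ and $\A\in\mH(\O,\curl)\cap L^\infty$ together with $\div(\curl^2\A)=0$.) This yields \eqref{cond-3.27}.

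The main obstacle I anticipate is the careful handling of the boundary integrations by parts at the level of $H^{\pm1/2}(\p\O)$ regularity: justifying the integration-by-parts formula $\int_\O\curl\A\cdot\curl\D\,dx=\int_\O\curl^2\A\cdot\D\,dx+\int_{\p\O}((\curl\A)_T^-\times\D_T^-)\cdot\nu\,dS$ for $\A$ merely in $\mA(\O,\Bbb R^3)$ with $\curl^2\A\in L^2$ and $\D\in\mH(\O,\curl)$ with $\D_T^-\in T\!H^{1/2}$, and matching the pairing on $\p\O$ with the exterior boundary term from \eqref{wkeqA-exterior} so that the jump condition \eqref{t-cont-curlA} can be invoked to cancel them — this requires being consistent about in which dual pairing each trace lives. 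Once the trace calculus is set up correctly, the rest is a direct transcription of \eqref{wkeqfA-2}.
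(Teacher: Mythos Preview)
Your approach is correct and is exactly the ``direct'' argument the paper has in mind (the paper omits the proof entirely, stating only that it is direct). One small correction: in deriving \eqref{wkdec-eqfA-1} you should \emph{not} integrate by parts in $\O$. After splitting $\int_{\Bbb R^3}\curl\A\cdot\curl\B$ and applying \eqref{wkeqA-exterior} to the $\O^c$-piece, you already have
\[
\int_\O\curl\A^i\cdot\curl\D\,dx+\int_{\p\O}((\curl\A)_T^+\times\B_T^+)\cdot\nu\,dS,
\]
and since $\B_T^+=\D_T^-$ by construction of the extension and $(\curl\A)_T^+=(\curl\A)_T^-=\lam^{-1}\mB_T$ by \eqref{t-cont-curlA}, the boundary term is immediately $\lam^{-1}\int_{\p\O}(\mB_T\times\D_T^-)\cdot\nu\,dS$, which after multiplying by $\lam^2$ gives the boundary term in \eqref{wkdec-eqfA-1}. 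If instead you integrate by parts in $\O$ as you propose, the resulting interior boundary term and the exterior one \emph{cancel} (they are the same integral with opposite orientations of the normal), leaving $\int_\O\curl^2\A\cdot\D$ rather than the desired $\int_\O\curl\A\cdot\curl\D$ plus a boundary term. Your argument for part~(ii) and for the exterior problem is fine as written.
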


\begin{Lem}\label{Lem-equiv-dec-eqfA-2} Let $(f,\A^i)$ be a weak solution of
\eqref{dec-eqfA-1} with $\mB_T\in T\!H^{-1/2}(\p\O,\Bbb R^3)$ and set $\mathcal
A_T=(\A^i)^-_T$. Then
$\mA_T\in T\!H^{-1/2}(\p\O,\mathbb R^3)$.
Let $\A^o$ be a weak solution of
\eqref{dec-eqfA-2} for the boundary data $\mA_T$ and $\mB_T$, and
define $\A$ on $\Bbb R^3$ by letting
$\A(x)=\A^i(x)$ if $x\in \O$ and $\A(x)=\A^o(x)$ if $x\in\O^c$.
Then $(f,\A)$ is a weak solution of \eqref{eqfA}.
If furthermore $(f,\A^i)$ is a weak Meissner solution of
\eqref{dec-eqfA-1}, then $(f,\A)$ is a weak Meissner solution of
\eqref{eqfA}.
\end{Lem}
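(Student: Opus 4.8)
The plan is to verify that the piecewise-defined field $\A$ satisfies the weak formulation \eqref{wkeqfA-2} for every admissible test pair $(g,\B) \in H^1(\O) \times \mB(\O,\Bbb R^3)$ with $\B$ of bounded support, and to check separately that the regularity constraint $\A \in \mA(\O,\Bbb R^3)$ holds, i.e.\ that $\A \in \mH_{\loc}(\Bbb R^3,\curl)$ with $\|\A\|_{L^\infty(\O)} < \infty$. The $L^\infty$ bound is immediate since $\A|_\O = \A^i$ and $(f,\A^i) \in \mW(\O)$ puts $\A^i \in L^\infty(\O,\Bbb R^3)$. For membership in $\mH_{\loc}(\Bbb R^3,\curl)$, the key point is that the two pieces have matching tangential traces on $\p\O$: by hypothesis $(\A^o)^+_T = \mA_T = (\A^i)^-_T$, and this is exactly the interface condition (cf.\ the trace theorem \cite[p.204, Theorem 2]{DaL3} cited after Definition \ref{Def3.1}) ensuring that the distributional $\curl$ of the glued field has no singular part on $\p\O$; hence $\curl\A \in L^2_{\loc}(\Bbb R^3,\Bbb R^3)$ and $[\A_T] = \0$ on $\p\O$.

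Next I would split the test integral in \eqref{wkeqfA-2} as $\int_\O + \int_{\O^c}$. On $\O$, using $\A = \A^i$, I invoke that $(f,\A^i)$ is a weak solution of \eqref{dec-eqfA-1}: choosing the test pair $(g,\D) = (g, \B|_{\overline\O})$ — which lies in $\mV(\O)$ precisely because $\B \in \mB(\O,\Bbb R^3)$ forces $\B^+_T \in T\!H^{1/2}(\p\O,\Bbb R^3)$ and hence, by $[\B_T]=\0$ for $\B \in \mH_{\loc}(\Bbb R^3,\curl)$, also $\B^-_T \in T\!H^{1/2}(\p\O,\Bbb R^3)$ — equation \eqref{wkdec-eqfA-1} gives
$$
\int_\O\Big\{{\lam^2\over\k^2}\nabla f\cdot\nabla g-(1-|f|^2-|\A|^2)fg+\lam^2\curl\A\cdot\curl\B+f^2\A\cdot\B\Big\} dx = -\lam\int_{\p\O}(\mB^-_T\times\B^-_T)\cdot\nu\, dS.
$$
On $\O^c$, using $\A = \A^o$ and the definition of a weak solution of \eqref{dec-eqfA-2} with test field $\bold Z = \B|_{\overline{\O^c}} \in \mZ(\O^c)$ (of bounded support), I get
$$
\lam^2\int_{\O^c}\curl\A\cdot\curl\B\, dx = \lam\int_{\p\O}(\mB_T\times\B_T^+)\cdot\nu\, dS.
$$
Adding the two identities, and using $\B^-_T = \B^+_T$ on $\p\O$ so that the two boundary integrals cancel exactly, yields \eqref{wkeqfA-2}. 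This shows $(f,\A)$ is a weak solution of \eqref{eqfA}.

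For the Meissner statement: if $(f,\A^i)$ is a weak Meissner solution of \eqref{dec-eqfA-1}, then by definition $f > 0$ in $\overline\O$ and $\nu\cdot\A^i = 0$ on $\p\O$; since $\A|_\O = \A^i$ we have $f > 0$ in $\overline\O$ and $\nu\cdot\A^- = 0$ in the trace sense, which is exactly the extra requirement in Definition \ref{Def3.1}(ii). Hence $(f,\A)$ is a weak Meissner solution of \eqref{eqfA}.

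I expect the main obstacle to be the bookkeeping of function spaces and trace regularities at the interface — specifically, confirming that the restrictions of a test field $\B \in \mB(\O,\Bbb R^3)$ (resp.\ with bounded support) are legitimate test fields for \eqref{dec-eqfA-1} (i.e.\ land in $\mV(\O)$) and for \eqref{dec-eqfA-2} (i.e.\ land in $\mZ(\O^c)$ with bounded support), and that the boundary pairings $\int_{\p\O}(\cdot\times\cdot)\cdot\nu\, dS$ are well-defined as $T\!H^{-1/2}$--$T\!H^{1/2}$ dualities and genuinely cancel. The condition $\B^+_T \in T\!H^{1/2}(\p\O,\Bbb R^3)$ built into $\mB(\O,\Bbb R^3)$ is designed precisely to make this work, and combined with $[\B_T] = \0$ (valid for any $\B \in \mH_{\loc}(\Bbb R^3,\curl)$) it transfers the regularity to $\B^-_T$ as well; the remaining verifications are routine once this is noted. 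Everything else is a direct assembly from the two defining weak formulations.
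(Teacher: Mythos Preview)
Your proof is correct and is precisely the direct verification the paper has in mind; indeed the paper omits the proof with the remark that it is ``direct and hence omitted.'' Your splitting of the test integral into interior and exterior parts, invoking \eqref{wkdec-eqfA-1} and the weak formulation of \eqref{dec-eqfA-2} respectively, and then cancelling the two boundary integrals via $\B_T^- = \B_T^+$, together with the routine checks that the restrictions of a test field $\B \in \mB(\O,\Bbb R^3)$ with bounded support land in $\mV(\O)$ and $\mZ(\O^c)$, is exactly the argument intended.
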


The proofs of these lemmas are direct and hence omitted.
A similar discussion shows that \eqref{eqA} can be decomposed into a BVP \eqref{eqA-Omega} and an exterior problem
\eqref{dec-eqfA-2}.

\section{BVP \eqref{dec-eqfA-1}: Uniqueness, Existence and Convergence}

\subsection{Uniqueness and regularity}\

In this subsection we assume $\mB_T\in T\!H^{1/2}(\p\O,\Bbb R^3)$ satisfies \eqref{cond-3.27},
and has a curl-free extension $\tilde{\mB}\in H^1(\O,\Bbb R^3)$, see Lemma \ref{Lem-extension}.
For simplicity we write $\tilde{\mB}$ as $\mB$. Hence
\eq\label{cond-B-4.1}
\mB\in H^1(\O,\curl0),\q  \nu\cdot\curl\mB=0\q\text{on }\p\O.
\eeq
Associated with BVP \eqref{dec-eqfA-1} we can define a functional by
\eq\label{funct-E-Omega}
\aligned
\mE_\O[f,\A]&=\int_{\O}\Bigl\{{\lam^2\over\k^2}|\nabla
f|^2+G(f,\A)(x) +|\lam\,\curl\A-\mB|^2\Bigr\}dx,\\
&\text{where}\qq G(f,\A)(x)=|f(x)\A(x)|^2+{1\over
2}(1-|f(x)|^2)^2.
\endaligned
\eeq
We shall use the following notation
$$\aligned
&\big\langle G'(f,\A), (g,\B)\big\rangle={d\over
dt}\Bigr|_{t=0}G(f+tg,\A+t\B)=G_f'(f,\A)g+G_{\A}'(f,\A)\cdot\bold B,\\
&\text{where}\qq G_f'(f,\A)=2(|f|^2+|\A|^2-1)f,\q G_\A'(f,\A)=2f^2\A,\\
&\big\langle G''(f,\A), (g,\B)\big\rangle={d^2\over
dt^2}\Bigr|_{t=0}G(f+tg,\A+t\B)=2|f \B+2g\A|^2+6g^2\Big(f^2-|\A|^2-{1\over 3}\Big).
\endaligned
$$
Following the ideas in \cite{BCM} we set
\begin{equation}\label{sp-KK}
\aligned
\mK(\O)=&\{(f,\A)\in\mW(\O):~ f(x)>0,\;
f^2(x)-|\A(x)|^2-{1\over 3}>0\text{ \rm on }\overline{\O}\},\\
\overline{\mK}(\O)=&\{(f,\A)\in\mW(\O):~ f(x)>0,\;
f^2(x)-|\A(x)|^2-{1\over 3}\geq 0\text{ \rm on }\overline{\O}\},\\
\mK_\delta(\O)=&\{(f,\A)\in\mW(\O):~f(x)>0,\;
f^2(x)-|\A(x)|^2-{1\over 3}>\delta\text{ \rm on }\overline{\O}\},\\
\mK^1_\delta(\O)=&\{(f,\A)\in\mW(\O):~0<f(x)\leq 1,\;
f^2(x)-|\A(x)|^2-{1\over 3}>\delta\text{ \rm on }\overline{\O}\}.
\endaligned
\end{equation}
We shall always assume $0<\delta<1/3$.

\begin{Rem}\label{Rem4.1} Let $0<\delta<1/3$.
\begin{itemize}
\item[(a)] $\mK(\O)$, $\overline{\mK}(\O)$, $\mK_\delta(\O)$ and $\mK^1_\delta(\O)$ are strict convex sets. $\mE_\O$ is convex on $\overline{\mK}(\O)$ and strictly convex on $\mK(\O)\cap [C^0(\overline{\O})\times C^0(\overline{\O},\Bbb R^3)]$.  A critical point of $\mE_\O$ in $\mW(\O)$ is a weak solution of \eqref{dec-eqfA-1}, and  a critical point of $\mE_\O$ lying in $\mK(\O)$ is a Meissner solution of \eqref{dec-eqfA-1}.
\item[(b)] There exists $C(\delta)>0$ such that, for any $(f_0,\A_0)$,
$(f_1,\A_1)\in\mK^1_\delta(\O)$ with $(g,\B)=(f_1-f_0,\A_1-\A_0)\in\mathcal
W_{t0}(\O)$, it holds that
$$
\big\langle G'(f_1,\A_1)-G'(f_0,\A_0), (g,\B)\big\rangle
\geq C(\delta)(|g|^2+|\B|^2)\q\text{\rm on }\overline{\O}.
$$
\item[(c)] For any $(f_0,\A_0), (f_1,\A_1)\in\overline{\mK}(\O)$, let
$$f_t=(1-t)f_0+tf_1, \qq \A_t=(1-t)\A_0+t\A,\q 0\leq t\leq 1.
$$
Then
$$ f_t^2-|\A_t|^2-{1\over 3}\geq 0,\q\forall\; x\in\O,\; 0\leq t\leq 1.
$$
If furthermore $(f_0,\A_0)\neq (f_1,\A_1)$, then
\eq\label{ineq4.4} f_t^2-|\A_t|^2-{1\over 3}>0,\q\forall\; x\in\O,\; 0<t<1.
\eeq
\item[(d)]  If $(f_0,\A_0), (f_1,\A_1)\in \mK_\delta(\O)$, and define $f_t, \A_t$ as above, then
$$ f_t^2-|\A_t|^2-{1\over 3}\geq \delta,\q\forall\; x\in\O,\; 0\leq t\leq 1.
$$
\end{itemize}
\end{Rem}

To prove the second part of (c), note that if $(f_0,\A_0), (f_1,\A_1)\in\overline{\mK}(\O)$ and  $(f_0,\A_0)\neq (f_1,\A_1)$, then $(f_t,\A_t)\in \mK(\O)$ for any $0<t<1$.

\begin{Lem}\label{Lem4.2} Assume $\O$ is a bounded domain in $\Bbb R^3$ with a $C^2$ boundary.
\begin{itemize}
\item[(i)] BVP \eqref{dec-eqfA-1} has at most one weak solution
in $\overline{\mK}(\O)$.
\item[(ii)] If $\mB_T\in T\!H^{1/2}(\p\O,\Bbb R^3)$ satisfies \eqref{cond-3.27}, then a solution of \eqref{dec-eqfA-1} in $\mK(\O)$ is a
minimizer of $\mE_\O$ in $\mK(\O)$, and hence it is a locally $L^\infty$-stable Meissner solution.
\end{itemize}
\end{Lem}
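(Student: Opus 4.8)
The plan is to obtain both parts from the convexity of $\mE_\O$ on $\overline{\mK}(\O)$ recorded in Remark \ref{Rem4.1}, after first recasting the weak formulation \eqref{wkdec-eqfA-1} as the assertion that a weak solution $(f^*,\A^*)$ of \eqref{dec-eqfA-1} is a critical point of $\mE_\O$ against \emph{every} direction $(g,\D)\in H^1(\O)\times\mH(\O,\curl)$:
\eq\label{planstar}
\int_\O\Big\{{\lam^2\over\k^2}\nabla f^*\cdot\nabla g+{1\over 2}\big\langle G'(f^*,\A^*),(g,\D)\big\rangle+\lam(\lam\,\curl\A^*-\mB)\cdot\curl\D\Big\}\,dx=0 .
\eeq
First I would prove \eqref{planstar} for smooth $(g,\D)$ directly from \eqref{wkdec-eqfA-1}, using the algebraic identity $-(1-|f^*|^2-|\A^*|^2)f^*g+(f^*)^2\A^*\cdot\D={1\over 2}\langle G'(f^*,\A^*),(g,\D)\rangle$ and the curl-free $H^1$ extension $\mB$ of $\mB_T$ (Lemma \ref{Lem-extension}, at hand by the standing hypothesis) to rewrite the boundary term: $\lam\int_{\p\O}(\mB^-_T\times\D^-_T)\cdot\nu\,dS=-\lam\int_\O\mB\cdot\curl\D\,dx$, since $\curl\mB=\0$. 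Since the left side of \eqref{planstar} is a bounded linear functional of $(g,\D)$ on $H^1(\O)\times\mH(\O,\curl)$ --- its coefficients lie in $L^2(\O)$ by $\A^*\in L^\infty(\O)$ and the regularity of $f^*$ (Proposition \ref{Prop4.3}) --- density of $C^\infty(\overline\O)$ then gives \eqref{planstar} for all admissible $(g,\D)$. Note that \eqref{planstar} is exactly ${1\over2}\langle\mE_\O'(f^*,\A^*),(g,\D)\rangle=0$.

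For part (i), given weak solutions $(f_0,\A_0),(f_1,\A_1)\in\overline{\mK}(\O)$, I would put $(g,\D)=(f_0-f_1,\A_0-\A_1)\in H^1(\O)\times\mH(\O,\curl)$, subtract the two copies of \eqref{planstar}, and test with $(g,\D)$, obtaining
\eq\label{planmono}
\int_\O\Big\{{\lam^2\over\k^2}|\nabla g|^2+{1\over 2}\big\langle G'(f_0,\A_0)-G'(f_1,\A_1),(g,\D)\big\rangle+\lam^2|\curl\D|^2\Big\}\,dx=0 .
\eeq
Writing the middle term as ${1\over2}\int_0^1\langle G''(f_s,\A_s),(g,\D)\rangle\,ds$ with $(f_s,\A_s)=(1-s)(f_1,\A_1)+s(f_0,\A_0)\in\overline{\mK}(\O)$ (Remark \ref{Rem4.1}(c)) and using $\langle G''(f,\A),(g,\D)\rangle=2|f\D+2g\A|^2+6g^2(f^2-|\A|^2-{1\over 3})\geq0$ on $\overline{\mK}(\O)$, I would conclude that all three integrands in \eqref{planmono} vanish a.e.; in particular, for a.e.\ $x$ and a.e.\ $s\in(0,1)$, $f_s\D+2g\A_s=\0$ and $g^2(f_s^2-|\A_s|^2-{1\over 3})=0$. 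If $(f_0,\A_0)\neq(f_1,\A_1)$, then \eqref{ineq4.4} forces $g\equiv0$, whence $f_s\D=\0$ together with $f_s\geq1/\sqrt3>0$ (valid on $\overline{\mK}(\O)$) forces $\D\equiv\0$ --- a contradiction; hence $(f_0,\A_0)=(f_1,\A_1)$.

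For part (ii), given a weak solution $(f_0,\A_0)\in\mK(\O)$ and arbitrary $(f_1,\A_1)\in\mK(\O)$, set $(g,\D)=(f_1-f_0,\A_1-\A_0)$. Taylor's formula with integral remainder gives
$$\mE_\O[f_1,\A_1]-\mE_\O[f_0,\A_0]=\int_\O\Big\{{\lam^2\over\k^2}|\nabla g|^2+\lam^2|\curl\D|^2+\int_0^1(1-s)\langle G''(f_0+sg,\A_0+s\D),(g,\D)\rangle\,ds\Big\}\,dx ,$$
whose right side is $\geq0$ because the first-order term equals twice the left side of \eqref{planstar} and vanishes, and $(1-s)(f_0,\A_0)+s(f_1,\A_1)\in\mK(\O)$ makes the remainder nonnegative; thus $(f_0,\A_0)$ minimizes $\mE_\O$ on $\mK(\O)$. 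It is moreover a Meissner solution: $f_0>0$ on $\overline\O$ by definition of $\mK(\O)$, while the second equation of \eqref{dec-eqfA-1} gives $\A_0=-\lam^2 f_0^{-2}\curl^2\A_0$ in $\O$, so (using the regularity of $(f_0,\A_0)$) $\nu\cdot\A^-_0=-\lam^2(f_0^{-2})^-\,\nu\cdot(\curl^2\A_0)^-$, and $\nu\cdot(\curl^2\A_0)^-$ is determined by $(\curl\A_0)^-_T=\lam^{-1}\mB_T$ (recall the discussion before Lemma \ref{Lem-extension}), hence equals $\lam^{-1}\nu\cdot\curl\mB_T=0$ by \eqref{cond-3.27}. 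Finally, since $\mK(\O)$ is open and $f_0,\A_0$ are continuous on $\overline\O$, any $(g,\D)$ with $\|g\|_{L^\infty(\O)}+\|\D\|_{L^\infty(\O)}$ small keeps $(f_0+g,\A_0+\D)$ in $\mK(\O)$, so $\mE_\O[f_0+g,\A_0+\D]\geq\mE_\O[f_0,\A_0]$; that is, $(f_0,\A_0)$ is locally $L^\infty$-stable.

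The step I expect to be the main obstacle is the upgrade from \eqref{wkdec-eqfA-1} to \eqref{planstar}, i.e.\ removing the constraint $\D^-_T\in T\!H^{1/2}(\p\O,\Bbb R^3)$ on admissible test directions. This is exactly where the standing hypothesis that $\mB_T$ admits a curl-free $H^1$ extension is indispensable: it converts the boundary term of the weak formulation into a volume integral, and only then may one legitimately test against $(f_1,\A_1)-(f_0,\A_0)$ for general competitors $(f_1,\A_1)\in\overline{\mK}(\O)$ (resp.\ $\mK(\O)$), whose $\A$-part has a priori only a $T\!H^{-1/2}$ tangential trace. Everything else is the strict-convexity bookkeeping already packaged in Remark \ref{Rem4.1}.
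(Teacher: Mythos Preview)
Your proof is correct and follows the same convexity route as the paper. The only substantive difference is preparatory: you use the curl-free extension $\mB$ to absorb the boundary integral and obtain the critical-point identity \eqref{planstar} valid against every $(g,\D)\in H^1(\O)\times\mH(\O,\curl)$, whereas the paper subtracts the two copies of \eqref{wkdec-eqfA-1} directly---the identical boundary terms cancel---and then appeals to the $H^1$ regularity of the $\A_j$ (Lemma~\ref{LemB.1}) to justify testing with $(f_1-f_0,\A_1-\A_0)$; your reformulation is a clean way to handle the admissibility of this test pair, and for part (ii) you also spell out the Taylor remainder, the verification of $\nu\cdot\A_0^-=0$ via \eqref{cond-3.27}, and the $L^\infty$-stability, all of which the paper compresses into a bare reference to Remark~\ref{Rem4.1}.
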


\begin{proof}
To prove (i), let $(f_0,\A_0)$ and
$(f_1,\A_1)\in\overline{\mK}(\O)$ be two solutions of \eqref{dec-eqfA-1}.
From \eqref{wkdec-eqfA-1}, for any $(g,\B)\in \mW_{t0}(\O)$, we have
\begin{equation}\label{eq4.5}
\aligned
\int_\O\Bigl\{&{\lam^2\over\k^2}\nabla (f_1-f_0)\cdot\nabla g+{1\over
2}\big\langle
[G'(f_1,\A_1)-G'(f_0,\A_0)],(g,\B)\big\rangle\\
&+\lam^2\curl(\A_1-\A_0)\cdot\curl\B\Bigr\}dx=0.
\endaligned
\end{equation}
Let
$$g=f_1-f_0,\q f_t=f_0+tg,\q \B=\A_1-\A_0,\q \A_t=\A_0+t\B,\q 0\leq t\leq 1.
$$
From Lemma \ref{LemB.1}, $\A_j\in H^1(\O,\Bbb R^3)$, so
$(g,\B)\in\mW_{t0}(\O)$.
Plugging $(g,\B)$ into \eqref{eq4.5} we find
\eq\label{eq4.6}
\aligned
&\int_\O\Big\{{\lam^2\over\k^2}|\nabla g|^2+\lam^2|\curl\B|^2\Big\}dx
+\int_\O\int_0^1\{|f_t\B+2g\A_t|^2+(3f_t^2-3|\A_t|^2-1)|g|^2\}dtdx=0.
\endaligned
\eeq
From this and Remark \ref{Rem4.1}~(c)  we see that each term in the left of \eqref{eq4.6} is non-negative, hence $\nabla g=0$ so $g=c$ is a constant, and $\curl\B\equiv \0$.
If $c\neq 0$, then from the second part of Remark \ref{Rem4.1}~(c) we see that the strict inequality \eqref{ineq4.4} holds for all $0<t<1$. Since the weak
Meissner solutions of \eqref{dec-eqfA-1} are continuous in $\overline{\O}$, so
$f_t$ and $\A_t$ are continuous on $\overline{\O}\times [0,1]$. Therefore the integral of $(3f_t^2-3|\A_t|^2-1)|g|^2$ is positive, thus the left side of \eqref{eq4.6} is positive.
This contradiction shows that we must have $c=0$, i.e. $f_1\equiv
f_0$. From this and since $\curl\A_1=\curl\A_0$, using the second equation of \eqref{dec-eqfA-1} we get
$\A_1=\A_0.$

(ii) follows directly from (b).
\end{proof}

\begin{Prop}\label{Prop4.3}
Assume that $\O$ is a bounded domain in $\mathbb R^3$ with a $C^4$ boundary, and
$\mB_T\in T\!H^{3/2}(\p\O,\Bbb R^3)$ satisfies \eqref{cond-3.27}.  Let $(f,\A)$ be a weak Meissner solution of \eqref{dec-eqfA-1} and let
$\H=\lam\,\curl\A$.
\begin{itemize}
\item[(i)] The conclusions  of Theorem \ref{Thm-reg-fA}
hold. In particular, for $0<\b<1/2$,
$$
\aligned
&f\in C^{3+\b}(\overline{\O}),\q
\A\in H^2(\O,\Bbb R^3)\cap C^{1+\b}(\overline{\O},\Bbb R^3),\\
&\H\in H^2(\O,\Bbb R^3),\q \curl\H\in H^2(\O,\Bbb R^3)\cap C^{1+\b}(\overline{\O},\Bbb R^3),
\endaligned
$$
and estimates \eqref{est-3.13}, \eqref{est-3.14}, \eqref{est-3.15},  \eqref{est-3.17}, and \eqref{H-H2} with $\H_T$ replaced by $\mB_T$  hold.
\item[(ii)]
If furthermore $\mB_T\in T\!C^{2+\a}(\p\O,\Bbb R^3)$ with $0<\a<1$, and if $(f,\A)\in \mK(\O)$, then there
exists a positive constant $C=C(\O,\k,\lam,\a,\|\mB_T\|_{C^{2+\a}(\p\O)})$ such that
$$
\aligned
\|f\|_{C^{2+\a}(\overline{\O})}+\|\H\|_{C^{2+\a}(\overline{\O})}+
\|\A\|_{C^{2+a}(\overline{\O})}\leq C.
\endaligned
$$
\end{itemize}
\end{Prop}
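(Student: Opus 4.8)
The plan is to establish the regularity of a weak Meissner solution $(f,\A)$ of the boundary value problem \eqref{dec-eqfA-1} by a bootstrap argument, passing through the equivalent system for $(f,\H)$ where $\H=\lam\,\curl\A$. First I would derive pointwise bounds: since $f$ solves the Neumann problem \eqref{3.4} in $\O$ with right-hand side $(1-f^2-|\A|^2)f$, the maximum principle forces $0<c\le f\le 1$, with $c>0$ because $(f,\A)$ is a Meissner solution; and $M=\|\A\|_{L^\infty(\O)}<\infty$ by definition of the space. The key structural observation, already used in the paper via \cite[Lemma 2.1, 3.3]{P3}, is that the second equation of \eqref{dec-eqfA-1} gives $\A=-\lam f^{-2}\curl\H$ in $\O$, so that $\H$ satisfies the quasilinear Maxwell-type system \eqref{eqfH} in $\O$ together with $\div\H=0$ and the boundary condition $\H_T^-=\mB_T$ (with $\nu\cdot\curl\mB_T=0$ from \eqref{cond-3.27}). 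The point of working with $\H$ rather than $\A$ is precisely the lack of divergence control on $\A$: $\H$ is automatically divergence-free.

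Next I would run the bootstrap. Starting from $f\in H^1$, $\A\in L^\infty$, and the estimates on $\H=\lam\,\curl\A$ in $L^2$, I apply the linear Maxwell estimates of Lemma \ref{Lem-linear-curl} to the equation $\lam^2\curl(f^{-2}\curl\H)+\H=\0$, treating $a(x)=f^{-2}(x)$ as a coefficient; this yields $\H\in H^1(\O)$, hence $\curl\H\in L^2$, hence (via $\A=-\lam f^{-2}\curl\H$) $\A\in H^1(\O)$, giving \eqref{est-3.13} and \eqref{est-3.14} with $d_1=c^{-1}\lam^{-1}\k$. Then $|\A|^2\in L^3$ (by Sobolev), so the right-hand side of the $f$-equation is in a better space, and elliptic regularity for the Neumann problem upgrades $f$ to $H^2\cap C^{1+\a}$. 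With $f\in C^{1+\a}$ the coefficient $a=f^{-2}$ is $C^{1+\a}$ (once $\p\O$ is $C^{3+\b}$), so the Hölder part of Lemma \ref{Lem-linear-curl}(ii) gives $\H\in C^{2+\a}$ and $\curl\H\in C^{1+\a}$, whence $\A\in C^{1+\a}$; feeding this back, $f\in C^{3+\b}$, and the remaining $H^2$ and $C^{2+\a}$ estimates follow by one more pass. The div-curl-gradient inequalities of Lemmas \ref{Lem2.1} and \ref{Lem2.2} are needed to convert the $\curl$ and $\div$ information on $\H$ and $\A$ into full Sobolev/Hölder norms, using that $\O$ is simply-connected without holes and that the relevant tangential or normal traces are under control. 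For part (ii), the stronger hypothesis $\mB_T\in C^{2+\a}$ together with $(f,\A)\in\mK(\O)$ (which keeps $f$ bounded below and $a=f^{-2}$ bounded) lets me track the constants explicitly through the same chain, producing the claimed bound $C(\O,\k,\lam,\a,\|\mB_T\|_{C^{2+\a}(\p\O)})$.

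The main obstacle I anticipate is handling the boundary condition correctly at each stage of the bootstrap. The system \eqref{dec-eqfA-1} prescribes only the tangential trace $\lam(\curl\A)_T^-=\mB_T$, not the full trace, and there is genuinely no control on $\nu\cdot\A$ or on $\div\A$ near $\p\O$; the virtue of recasting everything for $\H$ is that $\H$ carries the natural tangential boundary data $\H_T^-=\mB_T$ and is divergence-free, which is exactly what Lemma \ref{Lem-linear-curl} requires. But one still has to check that the weak formulation \eqref{wkdec-eqfA-1}, together with Lemma \ref{Lem-equiv-dec-eqfA} and the companion reduction from \cite{P3}, genuinely produces a weak solution of \eqref{eqfH} with the right boundary data, and that the condition \eqref{cond-3.27} $\nu\cdot\curl\mB_T=0$ is what permits the curl-free $H^1$ (or $C^{1+\a}$) extension of $\mB_T$ via Lemma \ref{Lem-extension}, so that $\H-\tilde{\mB}$ lies in the space $H^2_{t0}(\O,\div0)$ or $C^{2+\a}_{t0}(\overline{\O},\div0)$ on which $\mL$ is an isomorphism. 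Once that bookkeeping is in place, the proof is parallel to that of Theorem \ref{Thm-reg-fA}, and indeed the paper defers the detailed argument to Appendix \ref{AppendixB}.
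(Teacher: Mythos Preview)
Your route through Lemma~\ref{Lem-linear-curl} applied to the $\H$-equation is viable but is not what the paper does in Appendix~\ref{AppendixB}. The paper never invokes Lemma~\ref{Lem-linear-curl} for this result; instead, since $\A\in L^\infty$ by assumption, elliptic regularity for the scalar Neumann problem \eqref{3.4} gives $f\in W^{2,p}\cap C^{1+\a}$ \emph{first} (this must also come first in your scheme, since Lemma~\ref{Lem-linear-curl} needs $a=f^{-2}\in C^1$). Then the second equation yields $\div(f^2\A)=0$, hence the explicit identity $\div\A=-2f^{-1}\nabla f\cdot\A$, and together with $\curl\A=\lam^{-1}\H$ and the Meissner condition $\nu\cdot\A^-=0$ this places $\A$ directly in the scope of the div-curl-gradient inequality \eqref{dcg1}. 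Higher regularity is obtained via the decomposition $\lam\A=\B+\nabla\phi$, where $\B$ solves \eqref{eq-BH} and $\phi$ solves a scalar Neumann problem, followed by an alternating bootstrap through $f$, $\phi$, $\B$, $\H$.

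One point needs correction: you write that ``there is genuinely no control on $\nu\cdot\A$ or on $\div\A$ near $\p\O$,'' but for a \emph{Meissner} solution both are controlled---$\nu\cdot\A^-=0$ is part of the definition, and $\div\A$ is given by the identity above. This is precisely what the paper exploits, and it is why the detour through Lemma~\ref{Lem-linear-curl} is unnecessary here (that lemma is reserved for comparison arguments, e.g.\ in Proposition~\ref{Prop4.6} and Appendix~\ref{AppendixD}). Your approach buys a cleaner structural picture---the $\H$-system is manifestly in the form $\mL\H=0$ once the curl-free extension of $\mB_T$ is subtracted---but the paper's approach delivers the specific constants in \eqref{est-3.13}--\eqref{est-3.17} more transparently, since those track the explicit dependence on $c$, $M$, $\lam$, $\k$ rather than on $\|f^{-2}\|_{C^1}$.
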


Proposition \ref{Prop4.3} is a direct consequence of Lemma \ref{LemB.2} in Appendix
\ref{AppendixB}.

\subsection{Existence}\

It is difficult to get existence for \eqref{dec-eqfA-1} by minimizing the functional $\mE_\O$ as it does not provide control on $\div\A$. Instead, we consider an equivalent system
\begin{equation}\label{eq4.7}
\left\{\aligned
-&{\lam^2\over\k^2}\Delta f=(1-f^2-\lam^2f^{-4}|\curl\H|^2)f\q&\text{\rm in }\O,\\
&\lam^2\curl(f^{-2}\curl\H)+\H=\bold 0\qq\;\;\,&\text{\rm in }\O,\\
&{\p f\over\p\nu}=0,\q \H_T=\mB_T\qqq&\text{\rm on }\p\O.
\endaligned\right.
\end{equation}
Define
\begin{equation}\label{sp-Bkalpa-1}
\mB^{k+\a}(\p\O)=\{\B\in T\!C^{k+\a}(\p\O,\mathbb
R^3):~\nu\cdot\curl\B=0\;\;\text{\rm
on }\p\O\}.
\end{equation}
If $\O$ is simply-connected, every
vector field in $\mB^{k+\a}(\p\O)$ can be
extended to a harmonic gradient on $\overline{\O}$ (see Lemma \ref{Lem-extension}), and we may identify
$\mB^{k+\a}(\p\O)$ with the following space
\eq\label{sp-Bkalpa-2}
\aligned
\mB^{k+\a}(\overline{\O})&=\{\nabla\phi:~ \phi\in
C^{k+1+\a}(\overline{\O}),\; \Delta\phi=0\text{ \rm in }\O,\;\; {\p\phi\over\p\nu}=0\;\;\text{\rm on }\p\O\}.
\endaligned
\eeq

\begin{Lem}[Equivalent system]
Assume $\O$ is a bounded domain in $\mathbb
R^3$ with a $C^{3+\a}$ boundary, $0<\a<1$, and $\mB_T\in T\!C^2(\p\O,\mathbb
R^3)$.
\begin{itemize}
\item[(i)] Let $(f,\A)\in C^2(\overline{\O})\times C^3(\overline{\O}, \mathbb R^3)$
be a solution of \eqref{dec-eqfA-1} with $f>\overline{\O}$, and set
$\H=\lam\,\curl\A$. Then $(f,\H)\in C^2(\overline{\O})\times C^2(\overline{\O},
\mathbb R^3)$ and it is a solution of \eqref{eq4.7}.

\item[(ii)] Assume in addition $\O$ is a simply-connected domain and
has no holes. Let $(f,\H)\in C^{2+\a}(\overline{\O})\times C^{2+\a}(\overline{\O},\mathbb
R^3)$ be a solution of \eqref{eq4.7} with $f>0$ on $\overline{\O}$.
If we assume in addition either $\H\in C^{3+\a}(\overline{\O},\mathbb R^3)$ or $\mB\in\mB^{2+\a}(\p\O)$, then there exists $\A\in
C^{2+\a}(\overline{\O},\mathbb R^3)$ such that $\H=\lam\,\curl\A$ and
$(f,\A)$ is a Meissner solution of \eqref{dec-eqfA-1}.
\end{itemize}
\end{Lem}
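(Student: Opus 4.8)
The two parts are linked by the algebraic identity $\A=-\lam f^{-2}\curl\H$, which is forced by the second equation of \eqref{dec-eqfA-1}; the plan is to pass between the two systems through this identity.

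For (i): given a solution $(f,\A)\in C^2(\overline{\O})\times C^3(\overline{\O},\R^3)$ of \eqref{dec-eqfA-1} with $f>0$ on $\overline{\O}$, set $\H=\lam\curl\A\in C^2(\overline{\O},\R^3)$. Since $\curl\H=\lam\curl^2\A$, the second equation of \eqref{dec-eqfA-1} reads $\lam\curl\H+f^2\A=\0$, so $\A=-\lam f^{-2}\curl\H$ and $|\A|^2=\lam^2f^{-4}|\curl\H|^2$; inserting this into the first equation of \eqref{dec-eqfA-1} produces the first equation of \eqref{eq4.7}. Taking $\curl$ of $\A=-\lam f^{-2}\curl\H$ and using $\curl\A=\lam^{-1}\H$ gives $\lam^2\curl(f^{-2}\curl\H)+\H=\0$, the second equation of \eqref{eq4.7}. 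The Neumann condition on $f$ is inherited verbatim, and $\H_T=\lam(\curl\A)^-_T=\mB_T$ on $\p\O$; as $\A\in C^3(\overline{\O})$ we get $\H\in C^2(\overline{\O})$, so $(f,\H)\in C^2(\overline{\O})\times C^2(\overline{\O},\R^3)$. This direction is routine.

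For (ii) the candidate is essentially forced, so I set $\A:=-\lam f^{-2}\curl\H$. Rewriting the second equation of \eqref{eq4.7} as $\curl(f^{-2}\curl\H)=-\lam^{-2}\H$ gives $\curl\A=-\lam\curl(f^{-2}\curl\H)=\lam^{-1}\H$, i.e. $\H=\lam\curl\A$; hence $\lam^2\curl^2\A+f^2\A=\lam\curl\H-\lam\curl\H=\0$, and, since $|\A|^2=\lam^2f^{-4}|\curl\H|^2$, the first equation of \eqref{eq4.7} turns into the first equation of \eqref{dec-eqfA-1}. The boundary conditions $\p f/\p\nu=0$ and $\lam(\curl\A)^-_T=\H^-_T=\mB_T$ are read off from \eqref{eq4.7}. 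Two things remain: the $C^{2+\a}$ regularity of $\A$, and the Meissner requirement $\nu\cdot\A=0$ on $\p\O$. For the latter, $\nu\cdot\A=-\lam f^{-2}(\nu\cdot\curl\H)$ on $\p\O$, and $\nu\cdot\curl\H|_{\p\O}$ depends only on the tangential trace $\H^-_T=\mB_T$, hence equals $\nu\cdot\curl\mB_T$, which vanishes by \eqref{cond-3.27} (this is part of the hypothesis $\mB\in\mB^{2+\a}(\p\O)$ in the second alternative, and it is the natural compatibility condition, necessary for \eqref{dec-eqfA-1} to admit a Meissner solution by Lemma \ref{Lem-equiv-dec-eqfA}, which may be taken as standing).

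The hard part is the $C^{2+\a}$ regularity of $\A=-\lam f^{-2}\curl\H$: from $(f,\H)\in C^{2+\a}\times C^{2+\a}$ alone one only gets $\A\in C^{1+\a}$, because $\A$ involves $\curl\H$, and this is exactly why one of the two extra hypotheses is needed. If $\H\in C^{3+\a}(\overline{\O},\R^3)$ then $\curl\H\in C^{2+\a}$ and $\A\in C^{2+\a}$ at once. If instead $\mB\in\mB^{2+\a}(\p\O)$, I recover the missing derivative in the interior from the div-curl-gradient inequality \eqref{dcg3}: $\curl\A=\lam^{-1}\H\in C^{2+\a}$, $\div\A=-\lam\nabla(f^{-2})\cdot\curl\H\in C^{1+\a}$ (using $\div\curl\H=0$ and $f\in C^{2+\a}$), and $\nu\cdot\A=0$ on $\p\O$, so \eqref{dcg3} — available since $\O$ is simply connected — yields $\|\A\|_{C^{2+\a}(\overline{\O})}<\infty$. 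With this, $(f,\A)$ is a Meissner solution of \eqref{dec-eqfA-1} with $\H=\lam\curl\A$, completing the argument. The two alternative hypotheses are thus the two natural remedies for the same derivative gap: improve $\H$ directly, or convert the extra boundary regularity of $\mB$ into an interior gain via the div-curl-gradient estimate.
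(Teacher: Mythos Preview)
Your proof is correct. Part (i) is routine and matches the paper's treatment (the paper omits the proof here and refers to Lemma~\ref{Lem7.1}, whose Step~1 in Appendix~\ref{AppendixF} proceeds exactly as you do).

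For part (ii) your route differs from the paper's in a useful way. The paper (Appendix~\ref{AppendixF}, Steps~2--3) does not write $\A=-\lam f^{-2}\curl\H$ directly; instead it first solves the div--curl system $\curl\B=\H$, $\div\B=0$, $\nu\cdot\B=0$ for $\B$, then observes that $\bold Q:=\lam^2 f^{-2}\curl\H+\B$ is curl-free (by the second equation of \eqref{eq4.7}), hence $\bold Q=\nabla\varphi$, and sets $\A^i=\lam^{-1}(\B-\nabla\varphi)$. A one-line computation shows this $\A^i$ equals your $-\lam f^{-2}\curl\H$, so the two constructions coincide; the difference is in how the $C^{2+\a}$ regularity is extracted. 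Under the hypothesis $\H\in C^{3+\a}$ the paper reads off $\B\in C^{2+\a}$ and $\varphi\in C^{3+\a}$ separately; under the hypothesis $\mB\in\mB^{2+\a}(\p\O)$ it shows $\varphi$ satisfies the Neumann problem \eqref{F.5} and invokes Schauder theory. You bypass this decomposition entirely: in the first alternative $\A\in C^{2+\a}$ is immediate, and in the second you feed $\curl\A=\lam^{-1}\H\in C^{1+\a}$, $\div\A=-\lam\nabla(f^{-2})\cdot\curl\H\in C^{1+\a}$, and $\nu\cdot\A=0$ into \eqref{dcg3}. This is cleaner and avoids introducing the auxiliary fields $\B$ and $\varphi$; the paper's decomposition, on the other hand, is reused in the exterior-domain analysis of Lemma~\ref{Lem7.1}, which is why it appears there.

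Your observation that the Meissner condition $\nu\cdot\A=0$ forces $\nu\cdot\curl\mB_T=0$ is correct, and you are right that in the first alternative this is not literally contained in the stated hypotheses; it is the standing assumption \eqref{cond-3.27} of Section~4 (cf.\ also Lemma~\ref{Lem7.1}(iib), where the paper makes the analogous condition $\nu\cdot\curl\H=0$ explicit precisely to obtain a Meissner solution).
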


The proof of this lemma is omitted here, see Lemma \ref{Lem7.1}
for a more general problem.
Now we look for solutions of \eqref{eq4.7}.  Set
\begin{equation}\label{sp-XKUU}
\aligned
X^{2+\a}=&C^{2+\a}(\overline{\O})\times C^{2+\a}(\overline{\O},\mathbb R^3),\\
\mathbb K(\O)=&\mK(\O)\cap X^{2+\a},\q
\overline{\mathbb K}(\O)=\overline{\mK}(\O)\cap X^{2+\a},\q
\mathbb K_\delta(\O)=\mK_\delta(\O)\cap X^{2+\a},\\
\mathbb U(\O)=&\{(f,\mH)\in X^{2+\a}:~ (f,-\lam f^{-2}\curl\mH)\in\mathbb K(\O)\},\\
\overline{\mathbb U}(\O)=&\{(f,\mH)\in X^{2+\a}:~ (f,-\lam f^{-2}\curl\mH)\in\overline{\mathbb K}(\O)\},\\
\mathbb U_\delta(\O)=&\{(f,\mH)\in X^{2+\a}:~ (f,-\lam f^{-2}\curl\mathcal
H)\in\mathbb K_\delta(\O)\}.
\endaligned
\end{equation}
By using the implicit function theorem we can prove the following lemma.\footnote{For the 2 dimensional case see \cite[Proposition 2.5]{BCM}.}

\begin{Lem}\label{Lem4.5}
Let $\O$ satisfy $(O)$ with $r\geq 3$ and $0<\a<1$.
\begin{itemize}
\item[(i)] Assume  $(f^0,\H^0)\in \mathbb
U(\O)$ is a solution of \eqref{eq4.7} corresponding to
$$(\lam,\k,\mB_T)=(\lam^0,\k^0,\mB^0_T)\in \mathbb R_+\times\mathbb
R_+\times \mB^{2+\a}(\p\O).
$$
Then there exists $\eta=\eta(\O,\lam^0,\k^0,\mB^0_T,\a)>0$ such that if
$$
\aligned
&(\lam,\k,\mB_T)\in \mathbb R_+\times\mathbb
R_+\times\mB^{2+\a}(\p\O),\\
&|\lam-\lam^0|+|\k-\k^0|+\|\mB_T-\mathcal
B^0_T\|_{C^{2+\a}(\p\O)}<\eta,
\endaligned
$$
then \eqref{eq4.7} has a unique solution $(f,\H)\in\mathbb U(\O)$ corresponding to $(\lam,\k,\mB_T)$.

\item[(ii)] There exists a number $\var=\var(\O,\lam,\k,\a)>0$ such that for all
$\mB_T\in\mB^{2+\a}(\p\O)$ with $\|\mB_T\|_{C^{2+\a}(\p\O)}<\var$, \eqref{eq4.7} has a unique solution in $\mathbb
U(\O)$, hence \eqref{dec-eqfA-1} has a unique solution in $\mathbb
K(\O)$.
\end{itemize}
\end{Lem}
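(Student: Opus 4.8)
The plan is to prove Lemma \ref{Lem4.5} by a direct application of the implicit function theorem in H\"older spaces, using the equivalent system \eqref{eq4.7} precisely because it controls $\div\H$ via $\div\H=0$ in $\O^c$ together with the structural identity built into the operator. First I would set up the correct functional-analytic framework. Define the nonlinear map
\[
\Phi(f,\H;\lam,\k,\mB_T)=\left(
-\frac{\lam^2}{\k^2}\Delta f-(1-f^2-\lam^2f^{-4}|\curl\H|^2)f,\;\;
\lam^2\curl(f^{-2}\curl\H)+\H,\;\;
\frac{\p f}{\p\nu},\;\;
\H_T-\mB_T\right),
\]
acting from $X^{2+\a}$ (with parameters) into a target space encoding the interior equations in $C^\a(\overline\O)\times C^\a(\overline\O,\Bbb R^3)$ and the boundary conditions in $C^{1+\a}(\p\O)\times T\!C^{2+\a}(\p\O,\Bbb R^3)$. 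The point $(f^0,\H^0)$ is a zero of $\Phi(\cdot\,;\lam^0,\k^0,\mB^0_T)$, and $(f^0,\H^0)\in\mathbb U(\O)$ means $(f^0,-\lam^0(f^0)^{-2}\curl\H^0)\in\mathbb K(\O)$, i.e. $f^0>0$ and $(f^0)^2-|\A^0|^2-1/3>0$ on $\overline\O$ — this strict inequality is what makes the linearization invertible.

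The key step is to show that the partial Fr\'echet derivative $D_{(f,\H)}\Phi$ at $(f^0,\H^0)$ is an isomorphism onto the target space. The linearized system for $(g,\bold K)$ splits: the $f$-component is a linear second-order elliptic Neumann problem for $g$ (with a zeroth-order coefficient coming from $G''$), and the $\H$-component is a linear Maxwell-type operator $\bold K\mapsto \lam^2\curl((f^0)^{-2}\curl\bold K)+\bold K$ plus lower-order coupling through $g$ and through the $f^{-4}$ terms. The essential analytic input is Lemma \ref{Lem-linear-curl}~(ii): the operator $\mL\u=\curl(a\,\curl\u)+\u$ with $a=(f^0)^{-2}\in C^{1+\a}(\overline\O)$, $a>0$, is an isomorphism from $C^{2+\a}_{t0}(\overline\O,\div0)$ onto $C^\a(\overline\O,\div0)$. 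One must arrange that $\bold K$ lives in the divergence-free tangential-trace-zero space: since $\mB_T$ ranges over $\mB^{2+\a}(\p\O)$ and the boundary condition is $\bold K_T=\mB_T$ (a data perturbation), after subtracting a harmonic-gradient extension of $\mB_T$ (Lemma \ref{Lem-extension}, identifying $\mB^{2+\a}(\p\O)$ with $\mB^{2+\a}(\overline\O)$ as in \eqref{sp-Bkalpa-2}) we reduce to homogeneous tangential data; the divergence-free constraint is preserved because applying $\div$ to $\lam^2\curl(a\curl\bold K)+\bold K=\bold h$ forces $\div\bold K=\div\bold h$, and one checks $\div\bold h=0$ on the relevant subspace. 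Invertibility of the decoupled diagonal blocks plus the lower-triangular/compact nature of the coupling then gives invertibility of the full linearization; combined with the obvious smoothness of $\Phi$ in all its arguments (the nonlinearities are rational in $f$ and quadratic in $\curl\H$, hence $C^\infty$ on the open set where $f>0$), the implicit function theorem yields a unique solution $(f,\H)\in X^{2+\a}$ for all parameters within distance $\eta$ of $(\lam^0,\k^0,\mB^0_T)$. Shrinking $\eta$ if necessary keeps $(f,-\lam f^{-2}\curl\H)$ in the open set $\mathbb K(\O)$, so the solution lies in $\mathbb U(\O)$. This proves (i).

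For (ii) the plan is to apply (i) with the explicit trivial solution. When $\mB_T=\0$, the pair $(f,\H)=(1,\0)$ solves \eqref{eq4.7} (indeed $-\frac{\lam^2}{\k^2}\Delta 1 = 0 = (1-1-0)\cdot 1$, the second equation reads $\0=\0$, and the boundary conditions hold trivially), and $(1,-\lam\cdot 1\cdot\curl\0)=(1,\0)\in\mathbb K(\O)$ since $1-0-1/3=2/3>0$. Thus $(f^0,\H^0)=(1,\0)$ is a solution corresponding to $\mB^0_T=\0$, and part (i) furnishes $\eta=\eta(\O,\lam,\k,\a)>0$ so that every $\mB_T\in\mB^{2+\a}(\p\O)$ with $\|\mB_T\|_{C^{2+\a}(\p\O)}<\var:=\eta$ gives a unique solution of \eqref{eq4.7} in $\mathbb U(\O)$; then the Equivalent System lemma converts this into a unique Meissner solution of \eqref{dec-eqfA-1} in $\mathbb K(\O)$ (using that $\mB_T\in\mB^{2+\a}(\p\O)$, which is exactly the hypothesis needed in part (ii) of that lemma), and uniqueness in $\mathbb K(\O)$ — rather than merely local uniqueness — follows from Lemma \ref{Lem4.2}~(i) since $\mathbb K(\O)\subset\overline{\mK}(\O)$.

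The main obstacle I expect is verifying the isomorphism property of the linearized $\curl$-operator on the correct space and correctly handling the inhomogeneous tangential boundary data $\bold K_T=\mB_T$ within the divergence-free framework — Lemma \ref{Lem-linear-curl} is stated for zero tangential trace and zero divergence, so one genuinely needs the harmonic-gradient extension from Lemma \ref{Lem-extension} to absorb the boundary data, and one must check that this extension is compatible with the space $\mB^{2+\a}(\p\O)$ and does not destroy the $\div=0$ structure. The coupling between the $f$-equation and the $\H$-equation through the $f^{-4}|\curl\H|^2$ term is a secondary technical point: at the base solution in case (ii) this term has $\curl\H^0=\0$, so the coupling degenerates and the linearization is block-triangular, which makes (ii) clean; in the general case (i) one must argue that the off-diagonal terms are lower-order (no top derivatives of the "wrong" variable) so that the block operator is still Fredholm of index zero, and then injectivity — which follows from the strict positivity $f_0^2-|\A^0|^2>1/3$ exactly as in the energy computation \eqref{eq4.6} of Lemma \ref{Lem4.2} — upgrades this to invertibility.
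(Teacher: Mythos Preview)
Your approach is correct and is precisely the one the paper has in mind: the paper itself simply states ``By using the implicit function theorem we can prove the following lemma'' and gives no further detail, so your outline---setting up $\Phi$ on H\"older spaces, reducing to homogeneous tangential data via the harmonic-gradient extension of Lemma~\ref{Lem-extension}, invoking Lemma~\ref{Lem-linear-curl}(ii) for the $\curl(a\,\curl\cdot)+\text{id}$ block, and applying (i) at the trivial base point $(1,\0)$ for (ii)---is exactly what is intended. One small remark: the implicit function theorem only gives \emph{local} uniqueness near $(f^0,\H^0)$, whereas the statement asserts uniqueness in all of $\mathbb U(\O)$; you note this for (ii) via Lemma~\ref{Lem4.2}(i), and the same argument (through the Equivalent System lemma and the inclusion $\mathbb K(\O)\subset\overline{\mK}(\O)$) closes the gap for (i) as well.
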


Next we look for a bound of the boundary data for BVP \eqref{dec-eqfA-1} to have a solution. For this purpose, as in \cite[Section 7]{BaP} and \cite{P2} we fix  a $\mB_T\in\mB^{2+\a}(\p\O)$ with $\a\in (0,1)$, and  consider problems (\ref{dec-eqfA-1}$\mu$) and (\ref{eq4.7}$\mu$), which are the modified versions of \eqref{dec-eqfA-1} and \eqref{eq4.7}, respectively, with $\mB_T$ replaced by $\mu\mB_T$.
Define
\begin{equation}\label{eq-mu-star}
\aligned
\mu^*(\lam,\k,\mB_T)&=\sup\{b>0:~ \forall\;
0\leq\mu<b, \text{ $(\ref{dec-eqfA-1}\mu)$ has a
solution in $\mathbb K(\O)$}\}\\
&=\sup\{b>0:~ \forall\; 0\leq\mu<b, \text{ (\ref{eq4.7}$\mu$) has a solution in $\mathbb U(\O)$}\}.
\endaligned
\end{equation}

\begin{Prop}[Existence]\label{Prop4.6}
Let $\O$ satisfy $(O)$ with $r\geq 3$ and $0<\a<1$,
$\mB_T\in\mB^{2+\a}(\p\O)$ and $\mB_T\not\equiv\0$.
\begin{itemize}
\item[(i)] $0<\mu^*(\lam,\k,\mB_T)<\infty$. For any $0<\mu\leq
\mu^*(\lam,\k,\mB_T)$, $(\ref{dec-eqfA-1}\mu)$ has a unique
solution $(f_\mu,\A_\mu)\in \overline{\mathbb K}(\O)$, and (\ref{eq4.7}$\mu$)
has a unique solution $(f_\mu,\H_\mu)\in\overline{\mathbb U}(\O)$.
If $0<\mu<\mu^*(\lam,\k,\mB_T)$, then
$(f_\mu,\A_\mu)\in \mathbb K(\O)$ and  $(f_\mu,\H_\mu)\in\mathbb
U(\O)$.
\item[(ii)] The map $\mu\mapsto (f_\mu,\A_\mu)$ is continuous from the interval
$[0,\mu^*(\lam,\k,\mB_T)]$ to $C^{2+\a}(\overline{\O})\times
C^{2+\a}(\overline{\O},\mathbb R^3)$.
\item[(iii)] For $\mu^*=\mu^*(\lam,\k,\mB_T)$ we have
\begin{equation}\label{limit-mu}
\aligned
&\lim_{\mu<\mu^*, \mu\to\mu^*}\min_{x\in\overline{\O}}\;\big[f_\mu^2(x)-|\A_\mu(x)|^2\big]={1\over
3},\\
&\lim_{\mu<\mu^*,\mu\to\mu^*}\min_{x\in\overline{\O}}\;\big[f_\mu^2(x)-\lam^2
f_\mu^{-4}(x)|\curl\H_\mu(x)|^2\big]={1\over 3}.
\endaligned
\end{equation}
\end{itemize}
\end{Prop}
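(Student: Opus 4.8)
The plan is to build everything on top of Lemma \ref{Lem4.5} (local solvability via the implicit function theorem), Lemma \ref{Lem4.2} (uniqueness in $\overline{\mK}(\O)$), Proposition \ref{Prop4.3} (a priori $C^{2+\a}$ estimates for Meissner solutions lying in $\mK(\O)$), and the equivalence between $(\ref{dec-eqfA-1}\mu)$ and $(\ref{eq4.7}\mu)$. I would work with the set $I=\{\mu\ge 0:~(\ref{eq4.7}\mu)\ \text{has a solution in}\ \mathbb U(\O)\}$ and show it is a nonempty interval $[0,\mu^*)$ or $[0,\mu^*]$. First, $0\in I$ since $(f,\H)\equiv(1,\0)$ solves the $\mu=0$ problem and lies in $\mathbb U(\O)$. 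Next, $I$ is \emph{relatively open to the right}: if $\mu_0\in I$ with $\mu_0<\infty$ and the corresponding solution lies in $\mathbb U(\O)$ (i.e. strictly inside the constraint $f^2-|\A|^2-1/3>0$), then Lemma \ref{Lem4.5}(i) with $(\lam,\k,\mu_0\mB_T)$ fixed and $\mu$ varying slightly gives solvability for $\mu$ in a right-neighborhood of $\mu_0$. Combined with $0\in I$ and Lemma \ref{Lem4.5}(ii) (which shows a whole interval $[0,\var)$ lies in $I$ for small data, here applied to $\mu\mB_T$) this shows $\mu^*:=\sup I>0$. Thus $\mu^*\in(0,\infty]$.

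The core of the argument is the \emph{a priori bound} forcing $\mu^*<\infty$ and governing the behavior as $\mu\to\mu^*$. For $0<\mu<\mu^*$ the solution $(f_\mu,\A_\mu)$ lies in $\mathbb K(\O)$, hence by Proposition \ref{Prop4.3}(ii) is bounded in $C^{2+\a}(\overline{\O})$ \emph{provided} $\min_{\overline{\O}}(f_\mu^2-|\A_\mu|^2)$ stays bounded away from $1/3$; equivalently $(f_\mu,\H_\mu)$ is bounded in $X^{2+\a}$ away from the boundary of $\overline{\mathbb U}(\O)$. By Arzel\`a--Ascoli (compactness of $C^{2+\a}\hookrightarrow C^{2+\a'}$), any sequence $\mu_n\uparrow\mu^*$ then has a subsequence along which $(f_{\mu_n},\H_{\mu_n})$ converges in $C^{2+\a'}$ to some $(f_{\mu^*},\H_{\mu^*})$, which solves $(\ref{eq4.7}\mu^*)$; the limit lies in $\overline{\mathbb U}(\O)$ by passing the closed inequality $f^2-\lam^2f^{-4}|\curl\H|^2-1/3\ge 0$ to the limit, and if it were in the open set $\mathbb U(\O)$ then Lemma \ref{Lem4.5}(i) applied at $\mu^*$ would extend solvability past $\mu^*$, contradicting maximality. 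Hence the limit sits on $\partial\overline{\mathbb U}(\O)$, i.e. $\min_{\overline{\O}}(f_{\mu^*}^2-\lam^2 f_{\mu^*}^{-4}|\curl\H_{\mu^*}|^2)=1/3$; translating via $\A_{\mu^*}=-\lam f_{\mu^*}^{-2}\curl\H_{\mu^*}$ gives $\min_{\overline{\O}}(f_{\mu^*}^2-|\A_{\mu^*}|^2)=1/3$, and $(f_{\mu^*},\A_{\mu^*})$ is the unique solution in $\overline{\mathbb K}(\O)$ by Lemma \ref{Lem4.2}(i). Uniqueness on all of $(0,\mu^*]$ likewise follows from Lemma \ref{Lem4.2}(i). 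For finiteness of $\mu^*$: if $\mu^*=\infty$, the same estimate would have to fail, so I must independently show that solutions in $\mathbb K(\O)$ cannot exist for arbitrarily large $\mu$ — this is where one uses an energy/test-function argument on the functional $\mE_\O$ in \eqref{funct-E-Omega}, e.g. testing the weak form with $(f_\mu,\A_\mu)$ or with a fixed admissible competitor and using $\mB_T\not\equiv\0$ to produce a lower bound on $\|\A_\mu\|$ that eventually violates $f_\mu^2-|\A_\mu|^2>1/3$ together with $0<f_\mu\le 1$.

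For part (ii), continuity of $\mu\mapsto(f_\mu,\A_\mu)$ on the \emph{open} interval $(0,\mu^*)$ is immediate from the local uniqueness in Lemma \ref{Lem4.5}(i) plus the uniform $C^{2+\a}$ bounds (so convergence in $C^{2+\a'}$ upgrades by uniqueness of limit); continuity at the endpoint $0$ follows the same way from Lemma \ref{Lem4.5}(ii), and continuity at $\mu^*$ from the convergence argument above once one checks the full limit (not just a subsequence) exists, again by uniqueness of the limit solution in $\overline{\mathbb K}(\O)$ via Lemma \ref{Lem4.2}(i). Part (iii) is then exactly the statement, established in the previous paragraph, that the boundary of the constraint set is attained in the limit, written out in the two equivalent forms. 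The main obstacle I anticipate is establishing $\mu^*<\infty$ together with the sharp limit \eqref{limit-mu}: one needs the a priori estimate of Proposition \ref{Prop4.3}(ii) to degenerate \emph{precisely} as $\min(f^2-|\A|^2)\to 1/3$ and not sooner, i.e. one must rule out the solution branch ``escaping to infinity'' in $C^{2+\a}$ while the constraint margin stays positive — this requires that the only obstruction to continuation is loss of the ellipticity/strict-convexity margin, which is exactly the content of combining the a priori estimate with Lemma \ref{Lem4.5}(i), but care is needed to verify the estimate constant in Proposition \ref{Prop4.3}(ii) blows up only through the $\delta\to 0$ dependence and stays locally bounded in $\mu$ otherwise.
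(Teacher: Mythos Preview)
Your overall strategy matches the paper's: Lemma~\ref{Lem4.5} gives $\mu^*>0$, Proposition~\ref{Prop4.3}(ii) supplies uniform $C^{2+\a}$ bounds along the branch, compactness plus Lemma~\ref{Lem4.2}(i) yields the endpoint solution in $\overline{\mathbb K}(\O)$, and maximality forces the constraint to saturate. Two points deserve correction.

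First, your worry about the estimate in Proposition~\ref{Prop4.3}(ii) degenerating as the margin $\min(f^2-|\A|^2)-\tfrac13\to 0$ is misplaced: the constant there depends only on $\O,\k,\lam,\a,\|\mu\mB_T\|_{C^{2+\a}(\p\O)}$, not on any $\delta$. Membership in $\mK(\O)$ already forces $f\ge 1/\sqrt3$ and $|\A|\le\sqrt{2/3}$, which is all the proof of Lemma~\ref{LemB.2}(c.2) uses. So the family $\{(f_\mu,\A_\mu):0<\mu<\mu^*\}$ is uniformly bounded in $C^{2+\a}$ outright, and compactness runs with no caveat. This also makes the finiteness of $\mu^*$ cleaner: the paper tests the $\H$-equation against $\H_\mu$ and uses $|\A_\mu|\le\sqrt{2/3}$ to get the explicit bound \eqref{ineq-mu-star}; your sketch points at the right idea but should be made precise this way.

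Second, and more substantively, your argument for (ii) has a gap. Arzel\`a--Ascoli gives subsequential convergence in $C^{2+\b}$ for $\b<\a$, and uniqueness upgrades this to full-sequence convergence \emph{in $C^{2+\b}$}; it does not by itself give convergence in $C^{2+\a}$. On the open interval you can appeal to the implicit function theorem behind Lemma~\ref{Lem4.5}(i), which does yield $C^{2+\a}$-continuity, but at $\mu=\mu^*$ the solution sits on $\p\mathbb U(\O)$ and the IFT is unavailable. The paper closes this gap with an explicit bootstrapping step (its Step~3): it writes the difference equations for $f_{\mu_1}-f_{\mu_0}$ and $\mH_{\mu_1}-\mH_{\mu_0}$, applies the Schauder estimate and Lemma~\ref{Lem-linear-curl}(ii) respectively, and uses the already-established $C^{2+\b}$-continuity to show the right-hand sides are $o(1)$ in $C^\a$ as $\mu_1\to\mu_0$, forcing $C^{2+\a}$-continuity. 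You should include this step.
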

\begin{proof} {\it Step 1}. Let us fix $\lam>0$, $\k>0$ and $\mB_T\not\equiv\0$. From Lemma \ref{Lem4.5} (ii) we see that
$\mu^*(\lam,\k,\mB_T)>0$. Denote
$$
\aligned
&\mathcal J[\H]=\int_\O(\lam^2|\curl\H|^2+|\H|^2)dx,\\
&c(\mB_T)=\inf\{\mathcal J[\H]~:~ \H\in H^1(\O,\div0),\; \H_T=\mB_T\;\text{\rm
on }\p\O\}.
\endaligned
$$
Since $\mB_T\not\equiv\0$ we have $c(\mB_T)>0$.
As in the proof of Lemma 6.1 in \cite{P2} we can show that
\begin{equation}\label{ineq-mu-star}
\mu^*(\lam,\k,\mB_T)\leq {\lam\|\mB_T\|_{L^1(\p\O)}\over\min\{\lam^2,1\}c(\mB_T)}.
\end{equation}
In the following we write $\mu^*$ for $\mu^*(\lam,\k,\mB_T)$.
From Lemma \ref{Lem4.2} (i), for any $\mu\in (0,\mu^*)$, ($\ref{dec-eqfA-1}\mu$) has a solution $(f_\mu,\A_\mu)$ which is the unique solution of ($\ref{dec-eqfA-1}\mu$) in $\Bbb K(\O)$. We show that this is also true for $\mu=\mu^*$. Noting that  $1/\sqrt{3}\leq
f_\mu\leq 1$, from Proposition \ref{Prop4.3} (ii)  we see that the set
$$\{(f_\mu,\A_\mu): 0<\mu<
\mu^*\}
$$
is uniformly bounded in $C^{2+\a}(\overline{\O})\times
C^{2+\a}(\overline{\O},\mathbb R^3)$, hence pre-compact in $C^{2+\b}(\overline{\O})\times C^{2+\b}(\overline{\O},\Bbb R^3)$. Let $\mu_j<\mu^*$ and $\mu_j\to\mu^*$. After passing to a subsequence we may assume that
$$(f_{\mu_j},\A_{\mu_j})\to (f,\A)\q\text{in }C^{2+\b}(\overline{\O})\times
C^{2+\b}(\overline{\O},\mathbb R^3)\q\text{as }j\to\infty,
$$
where $(f,\A)$ is a solution of
(\ref{dec-eqfA-1}$\mu^*$), and $(f,\A)\in\overline{\mathbb K}(\O)$. From  Lemma \ref{Lem4.2} (i) we conclude
that $(f,\A)$ is the unique
solution $(f_{\mu^*},\A_{\mu^*})$ in $\overline{\mK}(\O)$.

{\it Step 2}. By the uniqueness and compactness of solutions in $\Bbb K(\O)$ described in step 1, we can show that the map  $\mu\mapsto (f_\mu,\A_\mu)$ is
continuous from $[0,\mu^*]$ to $C^{2+\b}(\overline{\O})\times
C^{2+\b}(\overline{\O},\mathbb R^3)$ for any $0<\b<\a$.

{\it Step 3}.
Now we show this map is continuous from $[0,\mu^*]$ to $C^{2+\a}(\overline{\O})\times
C^{2+\a}(\overline{\O},\mathbb R^3)$. Let $0\leq \mu_0,\mu_1\leq\mu^*$, let $(f_{\mu_0},\A_{\mu_0}), (f_{\mu_1},\A_{\mu_1})\in\overline{\mathbb K}(\O)$ be the solutions of
(\ref{dec-eqfA-1}$\mu$) for $\mu=\mu_0$ and $\mu_1$ respectively, and let
$$\H_{\mu_j}=\lam\,\curl\A_{\mu_j},\q
\mathcal H_{\mu_j}=\H_{\mu_j}-\mu_j\mB,
$$
where $\mB$ is the curl-free extension of $\mB_T$ on $\overline{\O}$. Applying the
Schauder estimate to the equation of $f_{\mu_1}-f_{\mu_0}$ we have
$$\|f_{\mu_1}-f_{\mu_0}\|_{C^{2+\a}(\overline{\O})}\leq
C(\O)\{\|f_{\mu_1}-f_{\mu_0}\|_{C^\a(\overline{\O})}
+\lam^{-2}\k^2\|z\|_{C^\a(\overline{\O})}\},
$$
where
$$z=(1-|f_{\mu_1}|^2-|\A_{\mu_1}|^2)f_{\mu_1}
-(1-|f_{\mu_0}|^2-|\A_{\mu_0}|^2)f_{\mu_0}.
$$
From step 2, the right hand side of the above inequality converges to
zero as $\mu_1\to\mu_0$, so we have
\begin{equation}\label{conv-f}
\lim_{\mu_1\to\mu_0}\|f_{\mu_1}-f_{\mu_0}\|_{C^{2+\a}(\overline{\O})}=0.
\end{equation}

From the second equation in \eqref{eq4.7} we see that
$$\left\{\aligned
&\lam^2\curl[f_{\mu_0}^{-2}\curl(\mH_{\mu_1}-\mathcal
H_{\mu_0})]
+\mH_{\mu_1}-\mH_{\mu_0}=\bold d\q&\text{\rm in }\O,\\
&(\mH_{\mu_1}-\mathcal
H_{\mu_0})_T=\0\qqq\qqq\qq\;&\text{\rm on }\p\O,
\endaligned\right.
$$
where
$$\bold d=(\mu_0-\mu_1)\mB
+\lam^2\curl[(f_{\mu_0}^{-2}-f_{\mu_1}^{-2})\curl\mathcal
H_{\mu_1}].
$$
From step 2 we have
$$\|\bold d\|_{C^\a(\overline{\O})}\to 0\q\text{as }\mu_1\to\mu_0.
$$
Since   $\O$ is simply-connected and has no holes, and $\div(\mH_{\mu_1}-\mH_{\mu_0})=0$, we
can apply Lemma \ref{Lem-linear-curl} (ii) with $a=f_{\mu_0}^{-2}$ to $\mH_{\mu_1}-\mH_{\mu_0}$ and
find that
$$\aligned
&\|\mH_{\mu_1}-\mH_{\mu_0}\|_{C^{2+\a}(\overline{\O})}\leq
C\|\bold
d\|_{C^\a(\overline{\O})}\to 0\q\text{as }\mu_1\to\mu_0,\\
\endaligned
$$
where $C$ depends only on $\O$, $\a$,
$\|f_{\mu_0}^2\|_{C^{1+\a}(\overline{\O})}$ and
$\|f_{\mu_0}^{-2}\|_{C^1(\overline{\O})}$. Therefore as $\mu_1\to\mu_0$,
\begin{equation} \label{conv-H}
\aligned
\|\H_{\mu_1}-\H_{\mu_0}\|_{C^{2+\a}(\overline{\O})} \leq \|\mathcal
H_{\mu_1}-\mathcal
H_{\mu_0}\|_{C^{2+\a}(\overline{\O})}+|\mu_1-\mu_0|\|\mathcal
B\|_{C^{2+\a}(\overline{\O})}\to 0. \endaligned
\end{equation}
Note  that
$$\div(\A_{\mu_1}-\A_{\mu_0})=2[f_{\mu_0}^{-1}\nabla
f_{\mu_0}\A_{\mu_0}-f_{\mu_1}^{-1}\nabla f_{\mu_1}\A_{\mu_1}].
$$
Then from \eqref{conv-f} and step 2, we see that
$$
\lim_{\mu_1\to\mu_0}\|\div(\A_{\mu_1}-\A_{\mu_0})\|_{C^{1+\a}(\overline{\O})}=0.
$$
From this, \eqref{conv-H} and the fact $\nu\cdot\A_\mu=0$, and since $\O$ is simply-connected, we  apply \eqref{dcg3} to get
$$
\lim_{\mu_1\to\mu_0}\|\A_{\mu_1}-\A_{\mu_0}\|_{C^{2+\a}(\overline{\O})}=0.
$$

{\it Step 4.} The two equalities in \eqref{limit-mu} are equivalent, and
we only need to prove the first one. From step 2 we see that the
function
$$
g(\mu)=\min_{x\in\overline{\O}}\;[f_\mu^2(x)-|\A_\mu(x)|^2]
$$
is continuous in $\mu\in [0,\mu^*]$. By the definition of $\mu^*$ we have
$g(\mu)>1/3$ for any $0<\mu<\mu^*$. So
$$\inf_{0\leq\mu\leq
b}\,g(\mu)>{1\over 3}\q\text{for any }0<b<\mu^*.
$$
Now we show
$$
\liminf_{\mu<\mu^*, \mu\to\mu^*}g(\mu)={1\over 3}.
$$
If not, there exists $\var>0$ such that $g(\mu)\geq 1/3+\var$ for
all $\mu<\mu^*$. From this and continuity of $g(\mu)$ we see that
$g(\mu^*)\geq 1/3+\var$. Then from Lemma \ref{Lem4.5} (i) we conclude that
there exists $\eta>0$ such that (\ref{dec-eqfA-1}$\mu$) has a solution for all
$\mu\in [\mu^*,\mu^*+\eta)$ with $g(\mu)>1/3+\var/2$. This
contradicts the definition of $\mu^*$.

Since $g(\mu)$ is continuous at $\mu^*$ from left, we conclude that
$$g(\mu^*)=\lim_{\mu<\mu^*,\mu\to\mu^*}g(\mu)=\liminf_{\mu<\mu^*,\mu\to\mu^*}g(\mu)={1\over
3}.
$$
So the first equality in \eqref{limit-mu} is proved. \end{proof}

\subsection{Convergence}\

\begin{Prop}[Estimates for large $\k$]\label{Prop4.7}
Let $\O$ satisfy $(O)$ with $r\geq 3$ and $0<\a<1$,
$\mB_T\in\mB^{2+\a}(\p\O)$, and $0<\delta<1/3$. Let $(f_\k,\A_\k)\in \mathbb
K_\delta(\O)$ be a Meissner solution of BVP \eqref{dec-eqfA-1}, and let $\A_\infty\in
C^{2+\a}(\overline{\O},\mathbb R^3)$ be a solution of BVP \eqref{eqA-Omega} with $\mH_T$ replaced by $\mB_T$, and satisfy \eqref{cond-1.8}. Denote $$\H_\k=\lam\,\curl\A_\k,\q
\H_\infty=\lam\,\curl\A_\infty,\q
f_\infty(x)=(1-|\A_\infty(x)|^2)^{1/2}.
$$
\begin{itemize}
\item[(i)] For all $\lam>0$ and $\k\geq \max\{1,\lam\}$, we have
\begin{equation}\label{est-4.16}
\aligned
\|f_\k-f_\infty\|_{L^2(\O)}+\|\A_\k-\A_\infty\|_{L^2(\O)}+\|\H_\k-\H_\infty\|_{L^2(\O)}\leq& C\k^{-3/2},\\
\|f_\k-f_\infty\|_{H^1(\O)}+\|\A_\k-\A_\infty\|_{H^1(\O)}+\|\H_\k-\H_\infty\|_{H^1(\O)}
\leq& C\k^{-1/2},\\
\|f_\k-f_\infty\|_{H^2(\O)}+\|\A_\k-\A_\infty\|_{H^2(\O)}+\|\H_\k-\H_\infty\|_{H^2(\O)}\leq& C\k^{1/2},
\endaligned
\end{equation}
where $C=C(\O,\delta,\lam,\mB_T)$.
\item[(ii)]
\begin{equation}\label{est-4.17}
\lim_{\k\to+\infty}\;\{\|f_\k-f_\infty\|_{C^0(\overline{\O})}+
\|\A_\k-\A_\infty\|_{C^0(\overline{\O})}\} =0.
\end{equation}
\end{itemize}
\end{Prop}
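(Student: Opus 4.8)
The plan is to treat $(f_\infty,\A_\infty)$, with $f_\infty=(1-|\A_\infty|^2)^{1/2}$, as an \emph{approximate} Meissner solution of \eqref{dec-eqfA-1}, to compare it with $(f_\k,\A_\k)$ by an energy identity built on the convexity of $\mE_\O$, and then to bootstrap the resulting $L^2$ and $H^1$ bounds up to the $H^2$ bounds and the $C^0$ convergence. The relevant structural facts are: since $\A_\infty$ solves \eqref{eqA-Omega} with $\mH_T=\mB_T$ and $f_\infty^2=1-|\A_\infty|^2$, the pair $(f_\infty,\A_\infty)$ satisfies the second equation of \eqref{dec-eqfA-1} and its boundary condition $\lam(\curl\A)^-_T=\mB_T$ exactly, fails the first equation only by the defect $r_\k:=-\lam^2\k^{-2}\Delta f_\infty$ (so $\|r_\k\|_{C^0(\overline\O)}\le C\k^{-2}$), and in general does not satisfy $\p f_\infty/\p\nu=0$, which is what produces an $O(\lam/\k)$--wide boundary layer for $f_\k$. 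Taking the normal trace of $-\lam^2\curl^2\A_\infty=(1-|\A_\infty|^2)\A_\infty$ on $\p\O$ and using that $\nu\cdot\curl^2\A_\infty$ depends only on $(\curl\A_\infty)_T=\lam^{-1}\mB_T$, condition \eqref{cond-3.27} forces $\nu\cdot\A_\infty=0$ on $\p\O$; hence $\B:=\A_\k-\A_\infty$ has $\nu\cdot\B=0$ on $\p\O$, while $(\curl\A_\k)_T=(\curl\A_\infty)_T=\lam^{-1}\mB_T$ gives $(\curl\B)_T=0$, and $\H_\k-\H_\infty=\lam\,\curl\B$ is divergence-free with vanishing tangential trace on $\p\O$. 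Finally, since $\|\A_\infty\|_{L^\infty}<1/\sqrt3$ is strict, $(f_\infty,\A_\infty)\in\mK_{\delta_0}(\O)$ for some $\delta_0>0$, so with $\delta_1=\min\{\delta,\delta_0\}$ both $(f_\k,\A_\k)$ and $(f_\infty,\A_\infty)$ lie in $\mK^1_{\delta_1}(\O)$.

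For the core estimate set $g=f_\k-f_\infty$. Subtracting the two systems, testing the $f$--equation against $g$ and the $\A$--equation against $\B$ (all integrations by parts legitimate by Proposition \ref{Prop4.3}), the $\curl^2$--boundary term drops since $(\curl\B)_T=0$, and the pointwise convexity inequality $\langle G'(f_\k,\A_\k)-G'(f_\infty,\A_\infty),(g,\B)\rangle\ge C(\delta_1)(|g|^2+|\B|^2)$ (the computation in the proof of Lemma \ref{Lem4.2}, valid along the segment in $\mK^1_{\delta_1}$) gives
$$\frac{\lam^2}{\k^2}\|\nabla g\|_{L^2(\O)}^2+\lam^2\|\curl\B\|_{L^2(\O)}^2+C(\delta_1)\big(\|g\|_{L^2(\O)}^2+\|\B\|_{L^2(\O)}^2\big)\le C\k^{-2}\big(\|g\|_{L^2(\O)}+\|g\|_{L^2(\p\O)}\big),$$
the right-hand side coming from $-\int_\O r_\k\,g$ and the $\Delta$--boundary term $-\lam^2\k^{-2}\int_{\p\O}(\p f_\infty/\p\nu)\,g$. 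Calling the left side $E$, one reads off $\|g\|_{H^1(\O)}\le C\k\,E^{1/2}$, so the trace inequality gives $\|g\|_{L^2(\p\O)}^2\le C\|g\|_{L^2(\O)}\|g\|_{H^1(\O)}\le C\k\,E$; feeding this back yields $E\le C\k^{-3}$, whence $\|g\|_{L^2},\|\B\|_{L^2},\|\curl\B\|_{L^2}\le C\k^{-3/2}$ and $\|\nabla g\|_{L^2}\le C\k^{-1/2}$, which with $\H_\k-\H_\infty=\lam\,\curl\B$ is the first line of \eqref{est-4.16}. For the $H^1$ line, $\|g\|_{H^1}\le C\k^{-1/2}$ is in hand; since $\nu\cdot\B=0$, \eqref{dcg1} bounds $\|\B\|_{H^1}$ by $\|\div\B\|_{L^2}+\|\curl\B\|_{L^2}$, and the divergence of the second equation ($\div\A=-2f^{-1}\nabla f\cdot\A$) with $\|\nabla f_\k\|_{L^\infty}\le C\lam^{-1}\k$ from Theorem \ref{Thm-reg-fA} gives $\|\div\B\|_{L^2}\le C\k^{-1/2}$; while $\H_\k-\H_\infty$ is divergence-free with vanishing tangential trace and $\curl(\H_\k-\H_\infty)=-\lam^{-1}(f_\k^2\A_\k-f_\infty^2\A_\infty)$ of $L^2$--norm $\le C\k^{-3/2}$, so \eqref{dcg2} gives $\|\H_\k-\H_\infty\|_{H^1}\le C\k^{-3/2}$.

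For the $H^2$ line, the $f$--equation gives $\|\Delta g\|_{L^2}\le C\lam^{-2}\k^2\big(\|g\|_{L^2}+\|\B\|_{L^2}+\|r_\k\|_{L^2}\big)\le C\k^{1/2}$, so Neumann elliptic regularity yields $\|g\|_{H^2}\le C(\|\Delta g\|_{L^2}+\|g\|_{L^2}+\|\p f_\infty/\p\nu\|_{H^{1/2}(\p\O)})\le C\k^{1/2}$, hence $\|D^2 f_\k\|_{L^2}\le C\k^{1/2}$. The $H^2$ bounds for $\A$ and $\H$ then follow from \eqref{dcg1}--\eqref{dcg2} with $k=1$, controlling $\|\curl(\cdot)\|_{H^1}$ from $-\lam^{-1}(f_\k^2\A_\k-f_\infty^2\A_\infty)$ and $\|\div\B\|_{H^1}$ from the divergence formula, using the improved $\|D^2 f_\k\|_{L^2}\le C\k^{1/2}$, the \emph{uniform} bounds $\|\A_\k\|_{H^1},\|\H_\k\|_{H^1}\le C$ (consequences of $\|\nabla f_\k\|_{L^2}\le C$, itself from the energy estimate, and of \eqref{dcg1}), Gagliardo--Nirenberg for the terms quadratic in $\nabla f_\k$, and the cancellation $\div\B=-2[(u_\k-u_\infty)\cdot\A_\k+u_\infty\cdot\B]$ with $u=f^{-1}\nabla f$; this completes \eqref{est-4.16}. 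For part (ii) a direct interpolation of the $H^2$ and $L^2$ bounds only yields $L^\infty$--boundedness, since the $H^2$ bounds degenerate exactly like $\k^{1/2}$, so one argues near $\p\O$: the equation for $g$ reads $-\lam^2\k^{-2}\Delta g+a(x)g=G(x)$ with $a(x)=\int_0^1\big(3f_t^2+|\A_t|^2-1\big)\,dt\ge 3\delta_1>0$ on the segment, $\|G\|_{L^\infty}\le C\|\B\|_{L^\infty}+C\k^{-2}$, and Neumann datum $\p g/\p\nu=-\p f_\infty/\p\nu$; a comparison argument (rescaling $x\mapsto\k x/\lam$ to resolve the layer) gives $\|g\|_{C^0(\overline\O)}\le C\|\B\|_{C^0(\overline\O)}+C\k^{-1}$, and a matching small-constant $L^\infty$ estimate for the curl--system of $\B$ with source $-(f_\k^2-f_\infty^2)\A_\infty$ (using the $W^{1,q}$ bounds of part (i) together with $\|g\|_{L^q}\lesssim\k^{-1}$, $q$ slightly above $3$) closes into $\|f_\k-f_\infty\|_{C^0}+\|\A_\k-\A_\infty\|_{C^0}\le C\k^{-\var}\to0$, which is \eqref{est-4.17}.

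The step I expect to be the main obstacle is part (ii) and, with it, the \emph{sharp} powers $\k^{1/2}$ in the $H^2$ line: the $O(\lam/\k)$--wide boundary layer of $f_\k$ (forced by $\p f_\infty/\p\nu\neq0$) makes the naive estimates of $\|D^2 f_\k\|_{L^2}$, $\|\div(\A_\k-\A_\infty)\|_{H^1}$ and $\|\A_\k-\A_\infty\|_{C^0}$ blow up too fast, and every Gagliardo--Nirenberg interpolation between the $L^2$, $H^1$ and $H^2$ bounds is \emph{borderline} at the layer scale. Getting past this requires systematically using the cancellations in the differences (rather than bounding $(f_\k,\A_\k)$ and $(f_\infty,\A_\infty)$ separately), the improved uniform $L^2$--control of $\nabla f_\k$ and $D^2 f_\k$ read off from the energy estimate, the gain $\|f_\k-f_\infty\|_{L^3}\lesssim\k^{-1}$, and a rescaled half-ball Schauder/comparison argument near $\p\O$ exploiting the coercivity $a\ge 3\delta_1>0$.
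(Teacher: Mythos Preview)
Your treatment of part~(i) is correct and is a genuine alternative to the paper's. Where you work directly with $g=f_\k-f_\infty$ and absorb the Neumann defect $\lam^2\k^{-2}\int_{\p\O}\frac{\p f_\infty}{\p\nu}\,g\,dS$ via the multiplicative trace inequality $\|g\|_{L^2(\p\O)}^2\le C\|g\|_{L^2}\|g\|_{H^1}\le C\k\,E$ (closing $E\le C\k^{-3/2}E^{1/2}$), the paper instead introduces a boundary-layer corrector
\[
\hat f(x)=f_\infty(x)-\chi(\k\,d(x))\,d(x)\,\frac{\p f_\infty}{\p\nu}(y_x)
\]
satisfying $\frac{\p\hat f}{\p\nu}=0$ on $\p\O$, runs the convexity energy estimate against $(\hat f,\A_\infty)$, and pays $\|\hat f-f_\infty\|_{L^2}^2+\lam^4\k^{-4}\|\Delta\hat f\|_{L^2}^2\le C\k^{-3}$. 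The subsequent $H^1$ and $H^2$ bootstraps are similar in spirit; the paper additionally invokes the linear Maxwell estimate of Lemma~\ref{Lem-linear-curl} for $\H_\k-\H_\infty$, whereas your direct use of \eqref{dcg2} (divergence-free, vanishing tangential trace) actually yields the slightly sharper bound $\|\H_\k-\H_\infty\|_{H^1}\le C\k^{-3/2}$. Your route is cleaner; the paper's keeps the layer scale explicit.

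Part~(ii), however, has a real gap. The paper does \emph{not} argue by interpolation or comparison; it explicitly remarks that the two-dimensional Gagliardo--Nirenberg route does not go through in three dimensions, and proves \eqref{est-4.17} instead by a blow-up/Liouville argument. Assuming the $C^0$ convergence fails along $x_\k\to x_0$, it rescales $y=\k(x-x_\k)$, uses the bounds of part~(i) to extract a limit $(f,\A)$ solving either \eqref{eqinR3} on $\Bbb R^3$ or \eqref{eqinhalfsp} on $\Bbb R^3_+$, with the additional structure $f-f_\infty(x_0)\in H^1$, $\A-\A_\infty(x_0)\in H^1$, $\div(f^2\A)=0$, $|\A|^2\le f^2-\tfrac13-\delta$, and then proves --- this is the key new step --- that the only such solution is the constant $(f_\infty(x_0),\A_\infty(x_0))$, a contradiction. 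Your loop $\|g\|_{C^0}\le C\|\B\|_{C^0}+C\k^{-1}$, $\|\B\|_{C^0}\le\cdots$ does not close: controlling $\|\B\|_{L^\infty}$ through $W^{1,q}$ with $q>3$ requires $\|\div\B\|_{L^q}$, which contains $\|\nabla g\|_{L^q}$, and interpolating the bounds of~(i) gives $\|\nabla g\|_{L^q}\le C\k^{(q-3)/q}$, growing for every $q>3$; every interpolation between the $L^2$, $H^1$, $H^2$ bounds lands exactly at $\k^0$ for $\|\B\|_{L^\infty}$ in dimension three, and there is no smallness in the coupling constants to rescue the comparison. Replace your sketch of~(ii) by the blow-up argument.
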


\begin{proof}
The proof of (i) will be given in Appendix \ref{AppendixD}.
In the following we prove (ii). To show the Meissner solutions $(f_\k(x),\A_\k(x))$ uniformly converges, we use the argument by contradiction. Direct computations  show that the rescaled functions $(f^\k(y),\A^\k(y))$ of the Meissner solutions approach a limit field $(f(y),\A(y))$, which is either a solution of \eqref{eqinR3} for $y\in \Bbb R^3$, or a solution of \eqref{eqinhalfsp} for $y\in \Bbb R^3_+$. The key step is to prove that $(f(y),\A(y))$ must be a constant solution.
In the two dimensional case, the nice $C^{k+\a}$ estimates
on $f_\k(x)-f_\infty(x)$ and on $\A_\k(x)-\A_\infty(x)$ are established in \cite{BCM} by using of the Gagliardo-Nirenberg inequality, which yields the uniform convergence of $(f_\k(x),\A_\k(x))$, and actually it also implies that $\A^\k(y)-\A_\infty(x_0)$ converges to zero in $C^0_{\loc}$ topology. Thus $(f(y),\A(y))$ is a solution of either \eqref{eqinR3} or \eqref{eqinhalfsp} with $|\A(y)|\equiv |\A_\infty(x_0)|$, a constant. Then using \cite[Lemma 5.4]{LuP} we can easily show that $f(y)$ must be a constant. However in the three dimensional case we do not have these $C^{k+\a}$ estimate on $f_\k(x)-f_\infty(x)$ and on $\A_\k(x)-\A_\infty(x)$. This makes our proof a bit involving. The key point in our proof is to show that the limiting field $(f(y),\A(y))$ is not only a solution of either \eqref{eqinR3} or \eqref{eqinhalfsp} but it also has the additional property \eqref{property-R3} or \eqref{property-R3+} respectively, which makes it possible to show  that $(f(y),\A(y))$ is a constant solution.

Now we begin to prove (ii). Suppose \eqref{est-4.17} were not true. Then there would exist $\eta>0$ and a sequence
$\k_j\to+\infty$ such that
$$\aligned
&\text{either }\q \|f_{\k_j}-f_\infty\|_{C^0(\overline{\O})}\geq\eta\q\;\;\,\text{for all $j$},\\
&\text{or }\;\; \qq\|\A_{\k_j}-\A_\infty\|_{C^0(\overline{\O})} \geq\eta\q\text{for all $j$.}
\endaligned
$$
For simplicity of notation we denote $\k_j$ by $\k$, and assume $x_{\k}\in \overline{\O}$ such that
\begin{equation}\label{ineq4.18}
\aligned
&\text{either }\q |f_{\k}(x_{\k})-f_\infty(x_{\k})|=\|f_{\k}-f_\infty\|_{C^0(\overline{\O})}\geq\eta
\qq\;\;\text{for all } \k,\\
&\text{or }\qq\;\; |\A_{\k}(x_{\k})-\A_\infty(x_{\k})|
=\|\A_{\k}-\A_\infty\|_{C^0(\overline{\O})} \geq\eta\q\text{for all } \k.
\endaligned
\end{equation}
Passing to another subsequence we may assume that $x_\k\to x_0$.
Denote
$$\O_\k=\k (\O-\{x_\k\}),\q \rho_\k=\k\,\dist(x_\k,\p\O),
$$
and set
\begin{equation}\label{4.19}
\aligned
 f^\k(y)=f_\k\Big(x_\k+{y\over\k}\Big),\q
\A^\k(y)=\A_\k\Big(x_\k+{y\over\k}\Big),\q
\A_\infty^\k(y)=\A_\infty\Big(x_\k+{y\over\k}\Big).
\endaligned
\end{equation}
Then $(f^\k,\A^\k)$ satisfies
\begin{equation}\label{eq4.20}
\left\{\aligned
-&\lam^2\Delta_y f^\k=(1-{|f^\k|}^2-|\A^\k|^2) f^\k
\q\;\text{ \rm in }\O_\k,\\
&\lam^2\curl_y^2\A^\k+{1\over\k^2}{|f^\k|}^2\A^\k=\bold 0
\qq\q\;\text{\rm in }\O_\k,\\
&{\p f^\k\over\p\nu}=0,\q
\lam(\curl_y{\A^\k})^-_T={1\over\k}\tilde{\mB}_T\q\text{ \rm
on }\p\O_\k.
\endaligned\right.
\end{equation}
Passing to a subsequence again and rotating the coordinate system if necessary, we may only consider the following two cases.

{\it Case 1.} $\lim_{\k\to+\infty}\rho_\k=+\infty$.

{\it Step 1.1}. In this case, for any $R>0$, there exists $\k(R)$ such that for all $\k>\k(R)$ we have $R<\rho_\k$, so
$B_R(0)\subset\O_\k$. We show that, after passing to a subsequence again if necessary, we have, as $\k\to\infty$,
\eq\label{conv4.21}
\aligned
&f^\k\to f\q\text{in } C^{2+\a}_{\loc}(\Bbb R^3),\q\text{and weakly in }H^1_{\loc}(\Bbb R^3),\\
& \A^\k\to\A\q\text{weakly in }H^1_{\loc}(\Bbb R^3,\Bbb R^3),\q\text{and strongly in } L^2_{\loc}(\Bbb R^3,\Bbb R^3),\\
&f-f_\infty(x_0)\in H^1(\Bbb R^3),\q \A-\A_\infty(x_0)\in H^1(\Bbb R^3,\Bbb R^3);\\
&\curl\A=\0,\q \div(f^2\A)=0\q\text{and}\q |\A(y)|^2\leq f^2(y)-{1\over 3}-\delta\q\text{in }\Bbb R^3.
\endaligned
\eeq

From Proposition \ref{Prop4.3} we have the estimate \eqref{est-3.17}, using which we can show that $\{f^\k\}$ is bounded in $C^{2+\a}_{\loc}(\mathbb
R^3)$. Since $(f_\k,\A_\k)\in\mathbb K_\delta(\O)$,
from \eqref{est-4.16} we have, for any $R>0$ and any $\k$,
\eq\label{est-fkAk}
\aligned
&\int_{B(0,R)}\{|f^\k(y)-f_\infty^\k(y)|^2+|Df^\k(y)-Df_\infty^\k(y)|^2\}dy
\leq C,\\
&\int_{B(0,R)}\{|\A^\k(y)-\A_\infty^\k(y)|^2
+|D\A^\k(y)-D\A_\infty^\k(y)|^2\}dy
\leq C,
\endaligned
\eeq
where $C$ is independent of $\k$ and $R$.
Hence $\{f^\k-f^\k_\infty\}$ is bounded in $H^1_{\loc}(\Bbb R^3)$ and $\{\A^\k-\A^\k_\infty\}$ is bounded in $H^1_{\loc}(\Bbb R^3,\Bbb R^3)$.
Noting that
$$\aligned
&\int_{B(0,R)}|Df_\infty^\k(y)|^2dy
=\k\int_{B(x_\k,R/\k)}|Df_\infty(x)|^2dx
\leq C\|Df_\infty\|_{C^0(\overline{\O})}^2R^3\k^{-2},\\
&\int_{B(0,R)}|D\A_\infty^\k(y)|^2dy
=\k\int_{B(x_\k,R/\k)}|D\A_\infty(x)|^2dx
\leq C\|D\A_\infty\|_{C^0(\overline{\O})}^2R^3\k^{-2},
\endaligned
$$
so we have
$f^\k_\infty(y)\to f_\infty(x_0)$ in $H^1_{\loc}(\Bbb R^3)$ and
$\A^\k_\infty(y)\to \A_\infty(x_0)$ in $H^1_{\loc}(\Bbb R^3,\Bbb R^3)$. Thus
$\{f^\k-f_\infty(x_0)\}$ is bounded in $H^1_{\loc}(\Bbb R^3)$ and $\{\A^\k-\A_\infty(x_0)\}$ is bounded in $H^1_{\loc}(\Bbb R^3,\Bbb R^3)$.
Therefore, after passing to another subsequence, there exist  a function $g$ and a vector field $\B$ such that, as $\k\to\infty$,
$$\aligned
&f^\k-f_\infty(x_0)\to g\q\text{in } C^{2+\a}_{\loc}(\Bbb R^3),\q\text{and weakly in }H^1_{\loc}(\Bbb R^3),\\
& \A^\k-\A_\infty(x_0)\to\B\q\text{weakly in }H^1_{\loc}(\Bbb R^3,\Bbb R^3),\q\text{and strongly in } L^2_{\loc}(\Bbb R^3,\Bbb R^3).
\endaligned
$$
Write
$$a=f_\infty(x_0),\q \bold b=\A_\infty(x_0),\q
f=g+f_\infty(x_0)=g+a,\q
\A=\B+\A_\infty(x_0)=\B+\bold b.
$$
Then we get the first two lines in \eqref{conv4.21}.
Letting $\k$ go to infinity in \eqref{est-fkAk} we get
$$
\int_{B(0,R)}(|g|^2+|Dg|^2)dy
\leq C,\qq
\int_{B(0,R)}(|\B|^2
+|D\B|^2)dy
\leq C,
$$
where $C$ is independent of $R$, so $g\in H^1(\Bbb R^3)$ and $\B\in H^1(\Bbb R^3,\Bbb R^3)$.

Since  $f^\k(y)\to f(y)$ and $\A^\k(y)\to\A(y)$ for a.e. $y\in\Bbb R^3$, using \eqref{dec-eqfA-1} we see that $(f,\A)$ is a solution of the following equations for $y\in \Bbb R^3$:
\eq\label{eqinR3}
-\lam^2\Delta f=(1-f^2-|\A|^2)f,\q \curl^2\A=\0\q\text{in }\Bbb R^3.
\eeq
From the second equation in \eqref{eqinR3} and since $\curl\A=\curl\B$, we have
$$\int_{\Bbb R^3}\curl\B\cdot\curl\D\, dx=0,\q\forall\; \D\in C^1_c(\Bbb R^3,\Bbb R^3).
$$
Since $\B\in H^1(\Bbb R^3,\Bbb R^3)$, we can approximate $\B$ in $H^1(\Bbb R^3,\Bbb R^3)$ by a sequence $\D_j\in C^1_c(\Bbb R^3,\Bbb R^3)$, then apply the above equality with  $\D=\D_j$ and take limit as $j\to\infty$ to obtain
$$\int_{\Bbb R^3}|\curl\B|^2 dx=0.
$$
So $\curl\A=\curl\B=\0$ for a.e. $y\in\Bbb R^3$.

From the second equation in \eqref{eq4.20} we have
$\div(|f^\k|^2\A^\k)=0$ in $\O_\k$. Taking limit we get
$\div(f^2\A)=0$ in $\Bbb R^3$.
Since $(f_\k,\A_\k)\in\Bbb K_\delta(\O)$, so
$|\A^\k(y)|^2<|f^\k(y)|^2-{1\over3}-\delta$,
hence
$$|\A(y)|^2\leq\liminf_{\k\to\infty}|\A^\k(y)|^2\leq |f(y)|^2-{1\over3}-\delta,\q\forall y\in\Bbb R^3.
$$
Now \eqref{conv4.21} is proved.

{\it Step 1.2}. Since $\div(f^2\A)=0$ in $\Bbb R^3$, we have
\eq\label{eq4.24}
\int_{\Bbb R^3}f^2\A\cdot\nabla\zeta\, dy=0,\qq\forall\; \zeta\in C^1_c(\Bbb R^3).
\eeq
Since $\B\in H^1(\Bbb R^3,\curl0)$, there exists a function $\phi$ with $\nabla \phi\in H^1(\Bbb R^3,\Bbb R^3)$
such that $\B=\nabla\phi$, so $\A=\nabla\phi+\bold b$.
Note that $a^2+|\bold b|^2=1$, $g\in L^2(\Bbb R^3)\cap L^\infty(\Bbb R^3)$, and $\B\in L^2(\Bbb R^3,\Bbb R^3)\cap L^\infty(\Bbb R^3,\Bbb R^3)$, so
$$1-|f(x)|^2-|\A(x)|^2=-2ag-g^2-2\bold b\cdot\B-|\B|^2\in L^2(\Bbb R^3).
$$
From this and \eqref{eqinR3}, and since $\nabla f=\nabla g\in L^2(\Bbb R^3,\Bbb R^3)$, we have
\eq\label{eq4.25}
\int_{\Bbb R^3}\{\lam^2\nabla f\cdot\nabla h-(1-f^2-|\A|^2)fh\} dy=0,\qq\forall\;  h\in H^1(\Bbb R^3).
\eeq
Now we see that the limit $(f,\A)$ is a solution of \eqref{eqinR3} and has the following property (with $x_0\in\Bbb R^3$ and $0<\delta<1$ being given):
\eq\label{property-R3}
\aligned
&f=a+g,\q  a=f_\infty(x_0),\q g\in H^1(\Bbb R^3),\\
&\A=\bold b+\nabla\phi,\q \bold b=\A_\infty(x_0),\q \nabla\phi\in H^1(\Bbb R^3,\Bbb R^3),\\
&\div(f^2\A)=0\q\text{and}\q |\A(y)|^2\leq |f(y)|^2-{1\over 3}-\delta\q \text{in }\Bbb R^3.
\endaligned
\eeq

{\it Step 1.3}.
Now we show  that \eqref{eqinR3} has only one solution that has the property \eqref{property-R3}. Otherwise suppose \eqref{eqinR3} has two solutions $(f_0,\A_0)$ and $(f_1,\A_1)$ and they have the property \eqref{property-R3},
that is,
$$
\aligned
&f_0=a+g_0,\q f_1=a+g_1,\q g_0,\; g_1\in H^1(\Bbb R^3),\\
&\A_0=\bold b+\nabla\phi_0,\q \A_1=\bold b+\nabla\phi_1,
\qq \nabla\phi_0,\; \nabla\phi_1\in H^1(\Bbb R^3,\Bbb R^3),\\
&|\A_0(y)|^2\leq |f_0(y)|^2-{1\over 3}-\delta,\q
|\A_1(y)|^2\leq |f_1(y)|^2-{1\over 3}-\delta,
\endaligned
$$
and they satisfy \eqref{eq4.24}.
Write
$$\aligned
&h=g_1-g_0=f_1-f_0,\q f_t=f_0+th,\\
&\psi=\phi_1-\phi_0,\q
\D=\A_1-\A_0=\nabla\psi,\q
\A_t=\A_0+t\D,\q 0\leq t\leq 1.
\endaligned
$$
Then $h\in H^1(\Bbb R^3)$, $\D\in H^1(\Bbb R^3,\Bbb R^3)$, and using Remark \ref{Rem4.1}(d) we have
\eq\label{eq4.27}
|f_t(y)|^2-|\A_t(y)|^2-{1\over 3}-\delta\geq 0,\q\forall\; y\in\Bbb R^3.
\eeq

We first apply \eqref{eq4.25} to $f_0$ and $f_1$ with this choice of $h$  and obtain
\eq\label{eq4.28}
\aligned
0=&\int_{\Bbb R^3}\{\lam^2|\nabla h|^2-(1-|f_1|^2-|\A_1|^2)f_1h+(1-|f_0|^2-|\A_0|^2)f_0h\}dy\\
=&\int_{\Bbb R^3}\Big\{\lam^2|\nabla h|^2-\int_0^1{d\over dt}(1-|f_t|^2-|\A_t|^2)f_t h dt\Big\}dy\\
=&\int_0^1dt\int_{\Bbb R^3}\{\lam^2|\nabla h|^2+(3|f_t|^2+|\A_t|^2-1)h^2+2f_th\A_t\cdot\D\}dy.
\endaligned
\eeq
Then we apply \eqref{eq4.24} to $(f_0,\A_0)$ and $(f_1,\A_1)$ to get, for any $\zeta\in C^1_c(\Bbb R^3)$,
$$\aligned
0=&\int_{\Bbb R^3}(f_1^2\A_1-f_0^2\A_0)\cdot\nabla\zeta dy
=\int_{\Bbb R^3}\int_0^1{d\over dt}(f_t^2\A_t)\cdot\nabla\zeta dtdy\\
=&\int_0^1dt\int_{\Bbb R^3}(f_t^2\D+2f_th\A_t)\cdot\nabla\zeta dy.
\endaligned
$$
Since $h\in L^2(\Bbb R^3)$ and $\D\in L^2(\Bbb R^3,\Bbb R^3)$, we see that
$f_t^2\D+2f_th\A_t\in L^2(\Bbb R^3,\Bbb R^3)$. After approximating $\eta_\rho(\psi-c_\rho)$ by smooth functions $\zeta_j$ with compact support,  applying the above equality with $\zeta=\zeta_j$, and taking limit as $j\to\infty$, we find
\eq\label{eq4.29}
\int_0^1dt\int_{\Bbb R^3}(f_t^2\D+2f_th\A_t)\cdot\nabla[\eta_\rho(\psi-c_\rho)] dy=0,
\eeq
where  $c_\rho$ is a constant, and $\eta_\rho$ is a cut-off function such that
$\eta_\rho(y)=1$ for $|y|\leq \rho$, $\eta_\rho(y)=0$ for $|y|\geq 2\rho$, and $|\nabla\eta_\rho|\leq C_1/\rho$, where $C_1$ is independent of $\rho$.
For each $\rho$ we choose a suitable $c_\rho$ and using the Poincar\'e inequality to get that
$$\int_{\rho\leq|y|\leq 2\rho}|\psi-c_\rho|^2dy\leq
C_2\rho^2\int_{\rho\leq|y|\leq 2\rho}|\nabla\psi|^2dy,
$$
where $C_2$ is independent of $\rho$. Since $\nabla\psi\in L^2(\Bbb R^3,\Bbb R^3)$
and $f_t^2\D+2f_th\A_t\in L^2(\Bbb R^3,\Bbb R^3)$, we have, as $\rho\to\infty$,
$$
\int_{\rho\leq|y|\leq 2\rho}|f_t^2\D+2f_th\A_t|^2dy\to 0,\qq \int_{\rho\leq|y|\leq 2\rho}|\nabla\psi|^2dy\to 0.
$$
So we have
$$\aligned
&\Bigl|\int_{\rho\leq|y|\leq 2\rho}(f_t^2\D+2f_th\A_t)\cdot(\psi-c_\rho)\nabla\eta_\rho]dy\Bigr|\\
\leq&C_1\rho^{-1}\Bigl(\int_{\rho\leq|y|\leq 2\rho}|f_t^2\D+2f_th\A_t|^2dy\Bigr)^{1/2}
\Bigl(\int_{\rho\leq|y|\leq 2\rho}|\psi-c_\rho|^2dy\Bigr)^{1/2}\\
\leq &C_1C_2^{1/2}\Bigl(\int_{\rho\leq|y|\leq 2\rho}|f_t^2\D+2f_th\A_t|^2dy\Bigr)^{1/2}
\Bigl(\int_{\rho\leq|y|\leq 2\rho}|\nabla\psi|^2dy\Bigr)^{1/2}
\to 0\q\text{as }\rho\to\infty.
\endaligned
$$
From this and \eqref{eq4.29}, and recalling that $\nabla\psi=\bold D$, we obtain
\eq\label{eq4.30}
\int_0^1dt\int_{\Bbb R^3}(f_t^2\D+2f_th\A_t)\cdot\D dy=0.
\eeq

Adding \eqref{eq4.28} and \eqref{eq4.30} together we get
$$\aligned
0=&\int_0^1dt\int_{\Bbb R^3}\{\lam^2|\nabla h|^2+(3|f_t|^2+|\A_t|^2-1)h^2+4f_th\A_t\cdot\D+f_t^2|\D|^2\}dy\\
=&\int_0^1dt\int_{\Bbb R^3}\{\lam^2|\nabla h|^2+(3|f_t|^2-3|\A_t|^2-1)h^2+|2h\A_t+f_t\D|^2\}dy.
\endaligned
$$
So we have $h=0$ and $\D=\0$.
Hence the only solution of \eqref{eqinR3} that has property \eqref{property-R3} is unique.

Note that $(f,\A)=(a,\bold b)=(f_\infty(x_0),\A_\infty(x_0))$ is a solution of \eqref{eqinR3} and has property \eqref{property-R3}, hence the unique solution of \eqref{eqinR3} having property \eqref{property-R3} must be $(f,\A)=(f_\infty(x_0),\A_\infty(x_0))$, namely, in \eqref{property-R3} we must have $g=0$ and $\nabla\phi=\0$.

Therefore the limit field $(f(y),\A(y))$ obtained in Step 1.1 must be constant:  $f\equiv f_\infty(x_0)$ and $\A\equiv \A_\infty(x_0)$.

{\it Step 1.4}. From Steps 1.1-1.3 we have, as $\k\to\infty$,
\eq\label{eq4.31}
\aligned
&f^\k\to f_\infty(x_0)\q\text{in }C^{2+\a}_{\loc}(\Bbb R^3),\\
&\A^\k\to\A_\infty(x_0)\q\text{weakly in }H^1_{\loc}(\Bbb R^3,\Bbb R^3),\q\text{and strongly in } L^2_{\loc}(\Bbb R^3,\Bbb R^3).
\endaligned
\eeq
Hence for any fixed $R>0$ we have
\eq\label{eq4.32}
\max_{y\in \overline{B}(0,R)}|f^\k(y)-f_\infty(x_0)|\to 0,\q
\max_{y\in \overline{B}(0,R)}|\Delta f^\k(y)|\to 0.
\eeq
So
\eq\label{eq4.33}
\|f_\k-f_\infty\|_{C^0(\overline{\O})}=|f_\k(x_\k)-f_\infty(x_0)|=|f^\k(0)-f_\infty(x_0)|\to 0.
\eeq

From the second equality in \eqref{eq4.32}, the fact
 $|f^\k(y)|^2\geq 1/3$, and the first equation in \eqref{eq4.20}, we find
$$\max_{y\in \overline{B}(0,R)}|1-|f^\k(y)|^2-|\A_k(y)|^2|\to 0,
$$
so
$$\aligned
&\max_{y\in \overline{B}(0,R)}||\A_\infty(x_0)|^2-|\A^\k(y)|^2|
=\max_{y\in \overline{B}(0,R)}|1-|f_\infty(x_0)|^2-|\A^\k(y)|^2|\\
=&\max_{y\in \overline{B}(0,R)}|1-|f^\k(y)|^2-|\A^\k(y)|^2|\to 0.
\endaligned
$$
Therefore, if $\A_\infty(x_0)=\0$, then we have
$$\|\A^\k-\A_\infty(x_0)\|_{C^0(\overline{B}(0,R)}\to 0;
$$
and if $\A_\infty(x_0)\neq\0$, then there exists  an orthogonal matrix-valued function $Q^\k(y)$ such that
$$\max_{y\in\overline{B}(0,R)}|\A^\k(y)-Q^\k(y)\A_\infty(x_0)|\to 0.
$$
From this and the second line in \eqref{eq4.31} we see that
$Q^\k(y)=I$, the identity matrix, for a.e. $y\in\Bbb R^3$. Since $\A^\k(y)$ is continuous, we must have
$Q^\k(y)=I$ for all $y\in\Bbb R^3$. Therefore
$$\max_{y\in\overline{B}(0,R)}|\A^\k(y)-\A_\infty(x_0)|\to 0.
$$
Hence
$$
\|\A_{\k}-\A_\infty\|_{C^0(\overline{\O})}=|\A_{\k}(x_\k)-\A_\infty(x_\k)|
=|\A^\k(0)-\A_\infty(x_\k)|\to 0.
$$
Combining this with \eqref{eq4.33} we see that \eqref{ineq4.18} can not be true.

\v0.05in

{\it Case 2.} $\k\,\dist(x_\k,\p\O)\leq C$ for all $\k$.

We may assume, after passing to a subsequence, that $x_\k\to
x_0$, and $x_0\in\p\O$. We define $f^\k$ and $\A^\k$ as in \eqref{4.19}.
As in case 1 we can show that, as $\k\to\infty$,
$$\aligned
&f^\k\to f\q\text{in } C^{2+\a}_{\loc}(\Bbb R^3_+),\q\text{and weakly in }H^1_{\loc}(\Bbb R^3_+),\\
& \A^\k\to \A\q\text{weakly in }H^1_{\loc}(\Bbb R^3_+,\Bbb R^3),\q\text{and strongly in } L^2_{\loc}(\Bbb R^3_+,\Bbb R^3),
\endaligned
$$
where
$$\mathbb R^3_+=\{x\in\mathbb R^3: x_3>0\},\q
f-f_\infty(x_0)\in H^1(\Bbb R^3_+),\q
\A-\A_\infty(x_0)\in H^1(\Bbb R^3_+,\Bbb R^3).
$$
Since $(f_\k,\A_\k)$ is a Meissner solution, so $\nu\cdot\A_\k=0$ on $\p\O$, hence
$\nu\cdot\A^\k=0$ on $\p\O_\k$. Since $\A^\k\to\A$ weakly in $H^1_{\loc}(\Bbb R^3_+,\Bbb R^3)$, we see that $\nu\cdot\A=0$ on $\p\Bbb R^3_+$
in the sense of trace in $H^{1/2}_{\loc}(\p\Bbb R^3_+)$. Hence $(f,\A)$ is a solution of the following equations in $\Bbb R^3_+$:

\eq\label{eqinhalfsp}
\left\{\aligned
-&\lam^2\Delta f=(1-f^2-|\A|^2)f,\q \curl^2\A=\0\q&\text{in }\Bbb R^3_+,\\
&{\p f\over\p\nu}=0,\q \nu\cdot\A=0,\q (\curl\A)_T=\0\q&\text{\rm on}\;\;\p\Bbb R^3_+.
\endaligned\right.
\eeq
Moreover $f^2(y)\geq 1/3+\delta$.
Since $\curl\A\in L^2(\Bbb R^3_+,\Bbb R^3)$,
$\curl^2\A=\0$ in $\Bbb R^3_+$ and $(\curl\A)_T=\0$ on $\p\Bbb R^3_+$,
we obtain as in case 1 that
$$\int_{\Bbb R^3_+}|\curl\A|^2dy=0,
$$
hence
$\curl\A=\0$ in $\Bbb R^3_+$.
So there exists a function $\phi$ with $\nabla\phi\in H^1(\Bbb R^3_+,\Bbb R^3)$ such that $\A=\nabla\phi+\bold b$, where $\bold b=\A_\infty(x_0)$.
Therefore the limit $(f,\A)$ is a solution of \eqref{eqinhalfsp} in $\Bbb R^3_+$, and it has the following property (with $x_0\in\Bbb R^3_+$ and $0<\delta<1$ being given):
\eq\label{property-R3+}
\aligned
&f=a+g,\q  a=f_\infty(x_0),\q g\in H^1(\Bbb R^3_+),\\
&\A=\bold b+\nabla\phi,\q \bold b=\A_\infty(x_0),\q \nabla\phi\in H^1(\Bbb R^3_+,\Bbb R^3),\\
& \div(f^2\A)=0\q\text{in }\Bbb R^3_+,\q \nu\cdot\A=0\q\text{on }\p\Bbb R^3_+,\\
&|\A(y)|^2\leq |f(y)|^2-{1\over 3}-\delta,\q\forall y\in\Bbb R^3_+.
\endaligned
\eeq

If \eqref{eqinhalfsp} has two solutions $(f_0,\A_0)$ and $(f_1,\A_1)$ which have property \eqref{property-R3+}, then we argue as in Case 1 to obtain \eqref{eq4.28} and \eqref{eq4.30} with $\Bbb R^3$ replaced by $\Bbb R^3_+$ and with
$h=f_1-f_0$, $\D=\A_1-\A_0$, $f_t=f_0+th$, $\A_t=\A_0+t\D$.
The inequality \eqref{eq4.27} remains true with $\Bbb R^3$ replaced by $\Bbb R^3_+$.
So we also get $h=0$ and $\D=\0$, that is, $f_0=f_1$ and $\A_0=\A_1$. Hence \eqref{eqinhalfsp} has only one solution which satisfies \eqref{property-R3+}. Obviously
$(f,\A)=(a,\bold b)=( f_\infty(x_0), \A_\infty(x_0))$ is a solution of \eqref{eqinhalfsp} satisfying \eqref{property-R3+}, so it must be the only solution of \eqref{eqinhalfsp} having property \eqref{property-R3+}. Hence the limit $(f,\A)$ of the rescaled functions must be equal to
$(f_\infty(x_0), \A_\infty(x_0))$.
Therefore we have, as $\k\to\infty$,
$$
\aligned
&f^\k\to f_\infty(x_0)\q\text{in }C^{2+\a}_{\loc}(\overline{\Bbb R^3_+}),\\
&\A^\k\to\A_\infty(x_0)\q\text{weakly in }H^1_{\loc}(\Bbb R^3_+,\Bbb R^3),\q\text{and strongly in } L^2_{\loc}(\Bbb R^3_+,\Bbb R^3).
\endaligned
$$
Finally we argue as in Step 1.4 to show that, as $\k\to\infty$,
$$f_\k(x_\k)=f^\k(0)\to f_\infty(x_0),\q \A_\k(x_\k)=\A^\k(0)\to\A_\infty(x_0).
$$
Again we see that \eqref{ineq4.18} can not be true.
\end{proof}

\subsection{Estimate of $\mu^*(\lam,\k,\mB_T)$ for large $\k$}\

Now we consider BVP (\ref{eqA-Omega}$\mu$) which is a modified version of \eqref{eqA-Omega} with $\mB_T$ replaced by $\mu\mB_T$.
Define
\begin{equation}\label{mu-star-lam}
\mu^*(\lam,\mB_T)=\sup\;\Big\{b>0:~\forall 0<\mu<b,\;\text{(\ref{eqA-Omega}$\mu$) has
a solution $\A$
with $\|\A\|_{C^0(\overline{\O})}<{1\over\sqrt{3}}$}\Big\}.
\end{equation}
The following conclusions have been proved in \cite[Section 7]{BaP} under the
condition that $\O$ is a bounded and simply-connected domain with a $C^4$ boundary and without holes:\footnote{Please note that in \cite[Section 7]{BaP} the conclusions are stated with respect to the equivalent system for $\H=\lam\,\curl\A$.}

(i) For any $\mB_T\in \mB^{2+\a}(\p\O)$ not identically zero and $\lam>0$, we have
$0<\mu^*(\lam,\mB_T)<\infty$.

(ii) For any $\mu\in (0,\mu^*(\lam,\mB_T))$,
(\ref{eqA-Omega}$\mu$) has a unique solution $\mA_\mu$ with
$\|\mA_\mu\|_{C^0(\overline{\O})}<1/\sqrt{3}$.

(iii) $\mu^*(\lam,\mB_T)$ has the following
characterization\footnote{The equality \eqref{limit-1/3} follows from
\cite[(7.6)]{BaP} and the relation between $\|\mathcal
A_\mu\|_{C^0(\overline{\O})}$ and $\lam\|\curl\mathcal
H_\mu\|_{C^0(\overline{\O})}$, where $\mH_\mu=\lam\;\curl\mathcal
A_\mu$. Please note that to avoid confusion here we use $\mathcal
A_\mu$ and $\mH_\mu$ to denote the vector fields $\A_\mu$ and
$\H_\mu$ in \cite{BaP}.}
\begin{equation}\label{limit-1/3}
\lim_{\mu\to \mu^*(\lam,\mB_T)^-}\|\mathcal
A_\mu\|_{C^0(\overline{\O})}={1\over\sqrt{3}}.
\end{equation}

(iv) The following asymptotic estimate holds:
\begin{equation}\label{limit-mu-star}
\lim_{\lam\to 0^+}\mu^*(\lam,\mB_T)=\sqrt{5\over
18}\Big(\|\mB_T\|_{C^0(\p\O)}\Big)^{-1}.
\end{equation}

Let us fix $\lam>0$ and $\mB_T\in\mB^{2+\a}(\p\O,\mathbb
R^3)$ which is not identically zero. For $0<\mu<\mu^*(\lam,\mB_T)$, let $\mA_\mu$ denote the unique solution of
(\ref{eqA-Omega}$\mu$) satisfying $\|\mathcal
A_\mu\|_{C^0(\overline{\O})}<1/\sqrt{3}$.
For each small $\var>0$ we define
\begin{equation}\label{mu-var}
\mu(\lam,\var)=\min\,\Big\{\mu>0:~\|\mA_\mu\|_{C^0(\overline{\O})}^2\geq {1\over
3}-\var\Big\}.
\end{equation}
From \cite[Lemma 7.1]{BaP}, the function $\mu\mapsto \|\mathcal
\A_\mu\|_{C^0(\overline{\O})}$ is continuous, so $\mu(\lam,\var)$ is achieved
for small $\var>0$:
$$\|\mathcal
A_{\mu(\var)}\|_{C^0(\overline{\O})}^2={1\over 3}-\var.
$$
Using \eqref{limit-1/3} we can show that
\begin{equation}\label{lim-mu-var}
\lim_{\var\to
0^+}\mu(\lam,\var)=\mu^*(\lam,\mB_T).
\end{equation}

Similarly for the fixed $\lam$ and $\mB_T$ as above, let
$(f_{\k,\mu},\A_{\k,\mu})$ denote the unique solution of
(\ref{eq4.7}$\mu$) in $\mathbb K(\O)$, and define
$$
\mu_\k(\lam,\var)=\min\,\Big\{\mu>0:~\min_{x\in\overline{\O}}\;\big[|f_{\k,\mu}(x)|^2-|\A_{\k,\mu}(x)|^2\big]
\leq {1\over 3}+2\var\Big\}.
$$

\begin{Lem}
Assume the conditions of Proposition \ref{Prop4.6} and $\lam>0$ is fixed. For small $\var>0$ we
have
\begin{equation}\label{lim-mu-k-var}
\liminf_{\k\to\infty}\mu_\k(\lam,\var)\geq \mu(\lam,\var),
\end{equation}
and
\begin{equation}\label{lim-lam-k}
\liminf_{\k\to\infty}\mu^*(\lam,\k,\mB_T)\geq\mu^*(\lam,\mB_T).
\end{equation}
\end{Lem}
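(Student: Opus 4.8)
The plan is to prove \eqref{lim-mu-k-var} first and then deduce \eqref{lim-lam-k} from it together with \eqref{lim-mu-var}. Fix $\lam>0$ and a small $\var\in(0,1/3)$. For $0<\mu<\mu^*(\lam,\k,\mB_T)$ let $(f_{\k,\mu},\A_{\k,\mu})$ be the unique solution of $(\ref{dec-eqfA-1}\mu)$ in $\mathbb K(\O)$ supplied by Proposition \ref{Prop4.6}, and set $g_\k(\mu)=\min_{\overline{\O}}[f_{\k,\mu}^2-|\A_{\k,\mu}|^2]$. By Proposition \ref{Prop4.6}(ii), $\mu\mapsto(f_{\k,\mu},\A_{\k,\mu})$, hence $g_\k$, extends continuously to $[0,\mu^*(\lam,\k,\mB_T)]$, with $g_\k(0)=1$ (the solution at $\mu=0$ being $(1,\0)$) and $g_\k(\mu)\to1/3$ as $\mu\to\mu^*(\lam,\k,\mB_T)^-$ by \eqref{limit-mu}. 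As $\var<1/3$, it follows that $\mu_\k(\lam,\var)$ is well defined, $0<\mu_\k(\lam,\var)<\mu^*(\lam,\k,\mB_T)$, and $g_\k(\mu_\k(\lam,\var))=1/3+2\var$; in particular $(f_{\k,\mu_\k(\lam,\var)},\A_{\k,\mu_\k(\lam,\var)})\in\mathbb K_\var(\O)$. Granting \eqref{lim-mu-k-var}, since $\mu_\k(\lam,\var)<\mu^*(\lam,\k,\mB_T)$ we get $\liminf_{\k\to\infty}\mu^*(\lam,\k,\mB_T)\ge\liminf_{\k\to\infty}\mu_\k(\lam,\var)\ge\mu(\lam,\var)$ for every small $\var>0$, and letting $\var\to0^+$ and using \eqref{lim-mu-var} yields \eqref{lim-lam-k}. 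Thus everything reduces to \eqref{lim-mu-k-var}.

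To prove \eqref{lim-mu-k-var} I would argue by contradiction. Suppose $\liminf_{\k\to\infty}\mu_\k(\lam,\var)<\mu(\lam,\var)$; after passing to a subsequence (still indexed by $\k$) we may assume $\mu_\k:=\mu_\k(\lam,\var)\to L$ with $0\le L<\mu(\lam,\var)$. Since $\|\mA_\mu\|_{C^0(\overline{\O})}^2\to1/3$ as $\mu\to\mu^*(\lam,\mB_T)^-$ by \eqref{limit-1/3}, one has $\mu(\lam,\var)<\mu^*(\lam,\mB_T)$ for $\var$ small, hence $L<\mu^*(\lam,\mB_T)$; let $\mA_L$ be the unique solution of $(\ref{eqA-Omega}L)$ with $\|\mA_L\|_{C^0(\overline{\O})}<1/\sqrt3$ and put $f_L=(1-|\mA_L|^2)^{1/2}$. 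Note that $L<\mu(\lam,\var)$ forces, by the definition \eqref{mu-var} of $\mu(\lam,\var)$, the strict inequality $\|\mA_L\|_{C^0(\overline{\O})}^2<1/3-\var$. The key step is to show
$$\|f_{\k,\mu_\k}-f_L\|_{C^0(\overline{\O})}+\|\A_{\k,\mu_\k}-\mA_L\|_{C^0(\overline{\O})}\longrightarrow0\qquad\text{as }\k\to\infty.$$
Once this is established, passing to the limit in $g_\k(\mu_\k)=1/3+2\var$ and using $f_L^2-|\mA_L|^2=1-2|\mA_L|^2$ gives $1-2\|\mA_L\|_{C^0(\overline{\O})}^2=1/3+2\var$, i.e.\ $\|\mA_L\|_{C^0(\overline{\O})}^2=1/3-\var$, which contradicts the previous line.

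The real work lies in the displayed uniform convergence, which I would obtain by running the proof of Proposition \ref{Prop4.7}(ii) with $(f_\infty,\A_\infty)$ replaced by $(f_L,\mA_L)$ and the fixed datum $\mB_T$ replaced by the bounded, convergent sequence $\mu_\k\mB_T$. This replacement is admissible because: $\{\mu_\k\mB_T\}$ is precompact in $T\!C^{2+\a}(\p\O,\Bbb R^3)$ (as $\mu_\k\to L$); the solutions $(f_{\k,\mu_\k},\A_{\k,\mu_\k})$ all lie in $\mathbb K_\var(\O)$ with $\var$ independent of $\k$; the only a priori bounds used in that proof are the scale-invariant estimate \eqref{est-3.17}, whose constant does not depend on $\k$, and the $H^1$- and $L^2$-type estimates \eqref{est-4.16}, whose constants stay bounded as the boundary datum ranges over $\{\mu_\k\mB_T\}$; and the rescaled boundary condition, which for $(\ref{dec-eqfA-1}\mu_\k)$ reads $\lam(\curl_y\A^\k)^-_T=\frac{\mu_\k}{\k}\tilde{\mB}_T$ (compare \eqref{eq4.20}), still tends to $\0$ since $\{\mu_\k\}$ is bounded. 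One thus reaches, exactly as in Steps 1.1--1.4 and Case 2 there, a blown-up limit $(f,\A)$ that solves \eqref{eqinR3} and has property \eqref{property-R3} (resp.\ solves \eqref{eqinhalfsp} with property \eqref{property-R3+}) with $a=f_L(x_0)$, $\bold b=\mA_L(x_0)$ and $x_0=\lim x_\k$ — this centering being legitimate because $\mu_\k\to L$, $x_\k\to x_0$, and $\mA_\mu$ (hence $(1-|\mA_\mu|^2)^{1/2}$) depends continuously on $\mu$. Since $\|\mA_L\|_{C^0(\overline{\O})}^2<1/3-\var$, the constant $(f_L(x_0),\mA_L(x_0))$ itself satisfies that property, so the uniqueness argument of Proposition \ref{Prop4.7}(ii) forces $(f,\A)\equiv(f_L(x_0),\mA_L(x_0))$, which yields the stated convergence. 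I expect the only genuinely delicate point to be this transfer of Proposition \ref{Prop4.7} to moving boundary data — that is, verifying that none of the constants behind \eqref{est-4.16} (in particular those coming from the exterior problem) degenerate as the datum moves along $\mu_\k\mB_T$ while $\k\to\infty$; everything else is a direct application of Propositions \ref{Prop4.6} and \ref{Prop4.7} and of the facts recalled from \cite{BaP}.
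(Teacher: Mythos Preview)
Your argument is correct and follows essentially the same route as the paper's own proof: pass to a convergent subsequence $\mu_\k\to\mu_0$, invoke Proposition~\ref{Prop4.7}(ii) to get $C^0$-convergence of $(f_{\k,\mu_\k},\A_{\k,\mu_\k})$ to $(f_{\mu_0},\mA_{\mu_0})$, and read off $\|\mA_{\mu_0}\|_{C^0}^2=1/3-\var$, hence $\mu_0\ge\mu(\lam,\var)$; then deduce \eqref{lim-lam-k} via \eqref{lim-mu-var}. The only differences are cosmetic: you frame the first part as a contradiction argument (assuming $L<\mu(\lam,\var)$) whereas the paper argues directly that every subsequential limit $\mu_0$ satisfies $\mu_0\ge\mu(\lam,\var)$; and you are more explicit than the paper about the fact that Proposition~\ref{Prop4.7}(ii) is being applied with the \emph{moving} boundary data $\mu_\k\mB_T$ rather than a fixed datum --- the paper simply writes ``from Proposition~\ref{Prop4.7}(ii)'' at this step without comment, while you correctly identify and justify the needed extension.
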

\begin{proof} From Proposition \ref{Prop4.6}, for every $\k>1$, $\mu_\k(\lam,\var)$ is
achieved, namely, for $\mu=\mu_\k(\lam,\var)$, (\ref{eq4.7}$\mu$) has a solution
$$(f_{\k,\mu_\k(\lam,\var)},\A_{\k,\mu_\k(\lam,\var)})\in\mathbb K(\O),
$$
which
is denoted for simplicity by $(f_\k,\A_\k)$, such that
\begin{equation}\label{eq4.43}
\min_{x\in\overline{\O}}\;[f_\k^2(x)-|\A_\k(x)|^2]= {1\over 3}+2\var.
\end{equation}
Let us choose $x_\k\in\overline{\O}$ such that
$$
f_\k^2(x_\k)-|\A_\k(x_\k)|^2=
\min_{x\in\overline{\O}}\;[f_\k^2(x)-|\A_\k(x)|^2].
$$
From \eqref{ineq-mu-star}, for fixed $\mB_T$,
$\mu^*(\lam,\k,\mB_T)$ is bounded as $\k\to\infty$. Hence $\mu_\k(\lam,\var)$ is bounded as
$\k\to\infty$. After passing to a subsequence we may assume that
$x_\k\to x_0\in\overline{\O}$ and $\mu_\k(\lam,\var)\to\mu_0$ as $\k\to\infty$, where $\mu_0$ depends on $\lam$. Since
$(f_\k,\A_\k)\in\mathbb K_{2\var}(\O)$, from
Proposition \ref{Prop4.7}~(ii) we know that $(f_\k,\A_\k)$ converges to
$(f_\infty,\A_\infty)$ uniformly on $\overline{\O}$ as $\k\to\infty$, where
$\A_\infty$ is a
solution of (\ref{eqA-Omega}$\mu$) for $\mu=\mu_0$, that is,
$\A_\infty=\mA_{\mu_0}$, and $f_\infty(x)=(1-|\mA_{\mu_0}(x)|^2)^{1/2}$.
Therefore
$$
\lim_{\k\to\infty}\; [f_\k^2(x_\k)-|\A_\k(x_\k)|^2]=
f_\infty^2(x_0)-|\mA_{\mu_0}(x_0)|^2=1-2|\mA_{\mu_0}(x_0)|^2.
$$
From this and \eqref{eq4.43} we have
$1-2|\mA_{\mu_0}(x_0)|^2=1/3+2\var$, so
$$\|\mA_{\mu_0}\|_{C^0(\overline{\O})}^2\geq |\mA_{\mu_0}(x_0)|^2= {1\over
3}-\var.
$$
Hence $\mu(\lam,\var)\leq \mu_0$, so
$\liminf_{\k\to\infty}\mu_\k(\lam,\var)=\mu_0\geq\mu(\lam,\var).$ Therefore
\eqref{lim-mu-k-var} is true. From \eqref{lim-mu-k-var} we have
$$\liminf_{\k\to\infty}\mu^*(\lam,\k,\mB_T)\geq
\liminf_{\k\to\infty}\mu_\k(\lam,\var)\geq\mu(\lam,\var).
$$
Letting $\var$ go to $0$ and using \eqref{lim-mu-var} we get \eqref{lim-lam-k}.
\end{proof}

\begin{Thm}\label{Thm4.9}
Let $\O$ be a bounded and simply-connected
domain in $\mathbb R^3$ without holes and with a $C^{3+\a}$ boundary, $0<\a<1$. Let $\mB_T\in\mB^{2+\a}(\p\O)$ and
\begin{equation}\label{ineq4.44}
\|\mB_T\|_{C^0(\p\O)}<\sqrt{5\over
18}.
\end{equation}
There exist $\lam_{f\A}(\O,\mB_T)>0$, and $\k_{f\A}(\O,\mB_T,\lam)>0$ for all $0<\lam< \lam_{f\A}(\O,\mB_T)$, such that the following conclusions are true:
\begin{itemize}
\item[(i)]  For any
$\k>\k_{f\A}(\O,\mB_T,\lam)$, \eqref{dec-eqfA-1} has a classical Meissner solution
$(f_\k,\A_\k)$ which is unique in $\mathbb K(\O)$. Let $\H_\k=\lam\,\curl\A_\k$, then
$(f_\k,\H_\k)\in \mathbb U(\O)$ and it is a Meissner solution of
\eqref{eq4.7}.
\item[(ii)]  As $\k\to\infty$, $(f_\k,\A_\k)$ uniformly converges to
$(f_\infty,\A_\infty)$ on $\overline{\O}$, where $\A_\infty$ is a solution
of \eqref{eqA-Omega} with $\mH_T=\mB_T$, and $f_\infty(x)=(1-|\A_\infty(x)|^2)^{1/2}$.
\end{itemize}
\end{Thm}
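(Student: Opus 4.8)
The plan is to assemble the statement from four pieces already in place: the existence result Proposition~\ref{Prop4.6}, the convergence result Proposition~\ref{Prop4.7}, the asymptotic estimate \eqref{limit-mu-star} for the threshold of the limit problem, and the threshold comparison \eqref{lim-mu-k-var}--\eqref{lim-lam-k}. The idea is to arrange that the value $\mu=1$ (which corresponds to the genuine boundary datum $\mB_T$) lies strictly below all the relevant thresholds, uniformly as $\k\to\infty$; the only substantial point in the assembly is the uniform membership $(f_\k,\A_\k)\in\mathbb K_{2\var_0}(\O)$ for a fixed $\var_0>0$, which is exactly what permits Proposition~\ref{Prop4.7}~(ii) to be applied with a $\k$-independent $\delta$.

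First I would fix $\lam_{f\A}(\O,\mB_T)$. By \eqref{ineq4.44} and \eqref{limit-mu-star} one has $\lim_{\lam\to0^+}\mu^*(\lam,\mB_T)=\sqrt{5/18}\,\|\mB_T\|_{C^0(\p\O)}^{-1}>1$, so there is $\lam_{f\A}(\O,\mB_T)>0$ with $\mu^*(\lam,\mB_T)>1$ for all $0<\lam<\lam_{f\A}$. Fixing such a $\lam$, and using $1<\mu^*(\lam,\mB_T)$, the results of \cite{BaP} recalled above furnish a unique $\A_\infty\in C^{2+\a}(\overline{\O},\Bbb R^3)$ solving \eqref{eqA-Omega} with $\mH_T=\mB_T$ and satisfying \eqref{cond-1.8}; put $f_\infty=(1-|\A_\infty|^2)^{1/2}$. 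Next, using \eqref{lim-mu-var} (namely $\mu(\lam,\var)\to\mu^*(\lam,\mB_T)>1$ as $\var\to0^+$), I would fix $\var_0=\var_0(\O,\mB_T,\lam)\in(0,1/3)$ with $\mu(\lam,\var_0)>1$. Finally, invoking \eqref{lim-mu-k-var}, I would fix $\k_{f\A}(\O,\mB_T,\lam)>\max\{1,\lam\}$ so large that $\mu_\k(\lam,\var_0)>1$ for all $\k>\k_{f\A}$; since $\mu_\k(\lam,\var_0)\leq\mu^*(\lam,\k,\mB_T)$ always (at $\mu=\mu^*(\lam,\k,\mB_T)$ the quantity $\min_{\overline{\O}}(f_{\k,\mu}^2-|\A_{\k,\mu}|^2)$ equals $1/3$ by \eqref{limit-mu}), this also gives $1<\mu^*(\lam,\k,\mB_T)$ there.

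To prove (i), take $\k>\k_{f\A}$ and apply Proposition~\ref{Prop4.6}~(i) with $\mu=1$: it produces $(f_\k,\A_\k)\in\mathbb K(\O)$ solving \eqref{dec-eqfA-1} and $(f_\k,\H_\k)\in\mathbb U(\O)$ solving \eqref{eq4.7} with $\H_\k=\lam\,\curl\A_\k$, each unique in its class. Since $\mathbb K(\O)\subset C^{2+\a}(\overline{\O})\times C^{2+\a}(\overline{\O},\Bbb R^3)$ and Proposition~\ref{Prop4.3} applies, $(f_\k,\A_\k)$ is a classical solution, and by Lemma~\ref{Lem4.2}~(ii) it is a locally $L^\infty$-stable Meissner solution, so in particular $\nu\cdot\A_\k=0$ on $\p\O$. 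To prove (ii), note that since $1<\mu_\k(\lam,\var_0)$ and $\mu_\k(\lam,\var_0)$ is by definition the least $\mu>0$ for which $\min_{\overline{\O}}(f_{\k,\mu}^2-|\A_{\k,\mu}|^2)\leq 1/3+2\var_0$, we get $\min_{\overline{\O}}(f_\k^2-|\A_\k|^2)>1/3+2\var_0$, that is $(f_\k,\A_\k)\in\mathbb K_{2\var_0}(\O)$ for every $\k>\k_{f\A}$ with $\var_0$ fixed. Then Proposition~\ref{Prop4.7}~(ii) applied with $\delta=2\var_0$ (legitimate as $\A_\infty$ satisfies \eqref{cond-1.8}) yields $\|f_\k-f_\infty\|_{C^0(\overline{\O})}+\|\A_\k-\A_\infty\|_{C^0(\overline{\O})}\to 0$ as $\k\to\infty$, which is (ii).

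The real work lies not in this bookkeeping but inside the inputs: Proposition~\ref{Prop4.7} (the blow-up/classification argument for the rescaled fields $(f^\k,\A^\k)$ on $\Bbb R^3$ or $\Bbb R^3_+$) and the threshold comparison \eqref{lim-mu-k-var}. Within the assembly itself, the one delicate point is the uniform lower bound $(f_\k,\A_\k)\in\mathbb K_{2\var_0}(\O)$, which is precisely why one needs \eqref{lim-mu-var}--\eqref{lim-mu-k-var} and not merely $1<\mu^*(\lam,\k,\mB_T)$. I would also flag the mild technical point that the cited results of \cite{BaP} are stated for a $C^4$ boundary whereas here $\p\O$ is only $C^{3+\a}$; this causes no difficulty, as those statements remain valid under assumption $(O)$ with $r\geq3$, just as they were used in the Lemma preceding the theorem.
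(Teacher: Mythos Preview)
Your proposal is correct and follows essentially the same route as the paper's own proof: use \eqref{limit-mu-star} with \eqref{ineq4.44} to get $\mu^*(\lam,\mB_T)>1$ for small $\lam$, then \eqref{lim-mu-var} and \eqref{lim-mu-k-var} to arrange $\mu_\k(\lam,\var_0)>1$ for large $\k$, which simultaneously yields existence via Proposition~\ref{Prop4.6} and the uniform membership $(f_\k,\A_\k)\in\mathbb K_{2\var_0}(\O)$ needed to invoke Proposition~\ref{Prop4.7}~(ii). Your version is in fact slightly tidier than the paper's in that you use a single threshold $\k_{f\A}$ for both (i) and (ii) rather than two, and your remark on the $C^4$ versus $C^{3+\a}$ boundary regularity is a fair technical caveat.
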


\begin{proof} From \eqref{limit-mu-star} and \eqref{ineq4.44}, there exist $\eta>0$ and
$\lam_{f\A}=\lam_{f\A}(\O,\mB_T)>0$ such that $\mu^*(\lam,\mB_T)>1+\eta$ for all $0<\lam< \lam_{f\A}$. From \eqref{lim-lam-k}, we can find
$\k_{f\A}(\lam)=\k_{f\A}(\O,\mB_T,\lam)>0$ such that, for all $0<\lam<\lam_{f\A}$ and
 $\k>\k_{f\A}(\lam)$ we have $\mu^*(\lam,\k,\mB_T)>1+\eta$, hence \eqref{dec-eqfA-1} has a Meissner solution. So (i) is true.

To prove (ii), note that from \eqref{lim-mu-var}, for any $0<\lam< \lam_{f\A}(\O,\mB_T)$, there exits $\var_0=\var_0(\lam)>0$
such that $\mu(\lam,\var)>1+\eta$ for all $0<\var\leq \var_0$. Then from
\eqref{lim-mu-k-var} we can find $\k(\lam,\var_0)> \k_{f\A}(\O,\mB_T,\lam)$ such that
$\mu_\k(\lam,\var_0)>1$ if $\k>\k(\lam,\var_0)$.
Hence for each $\k>\k(\lam,\var_0)$, the solution $(f_\k,\A_\k)$ of
\eqref{dec-eqfA-1} satisfies
$$\min_{x\in\overline{\O}}\;[f_\k^2(x)-|\A_\k(x)|^2]> {1\over
3}+\var_0.
$$
So $(f_\k,\A_\k)\in\mathbb K_{2\var_0}(\O)$ for all
$\k>\k(\lam,\var_0)$. It follows from Proposition \ref{Prop4.7} (ii) that $(f_\k,\A_\k)$
uniformly converges to $(f_\infty,\A_\infty)$ as $k\to\infty$.
\end{proof}

\begin{Rem}
Proposition \ref{Prop4.7} and Theorem \ref{Thm4.9} give the convergence in $H^1$ and in $C^0$ as $\k\to\infty$ of the magnetic potential part of the solutions of BVP \eqref{dec-eqfA-1} to  a solution of \eqref{eqA-Omega} when the boundary datum $\mH_T$ is given. These results  imply that the magnetic potential part of the Meissner solutions of \eqref{eqfA}-\eqref{1.4} with fixed tangential component of $\curl\A$ converge to a  solution of \eqref{eqA}-\eqref{1.4} as $\k\to\infty$.  Proposition \ref{Prop4.7} also  suggests sub-convergence of the Meissner solutions of \eqref{eqfA}-\eqref{1.4} with the tangential component of $\curl\A$ being uniformly bounded in $C^{2+\a}(\p\O,\Bbb R^3)$.
\end{Rem}

\section{The Exterior Problem}\label{Section5}

In this section we study exterior problem \eqref{dec-eqfA-2}.
Denote
$$\aligned
&H^1_{0,\loc}(\O^c)=\{u\in H^1_{\loc}(\O^c):~ u=0\text{ \rm on } \p\O\},\\
&C^{k+\a}_{\loc}(\O^c)=\{u:~ u\in C^{k+\a}(\overline{B})\text{ for any ball
$B\Subset \O^c$}\},\\
&C^{k+\a}_{\loc}(\overline{\O^c})=\{u:~ u\in C^{k+\a}(\overline{B\cap \O^c})\text{ for any ball
$B\subset\Bbb R^3$}\}.
\endaligned
$$
Similarly we define $H^k_{\loc}(\O^c,\mathbb R^3)$,
$C^{k+\a}_{\loc}(\O^c,\mathbb R^3)$, $C^{k+\a}_{\loc}(\overline{\O^c},\mathbb R^3)$.


Assume $\O$ is a bounded domain in
$\mathbb R^3$ with a $C^2$ boundary,  and $\mB_T\in
T\!H^{1/2}(\p\O,\mathbb R^3)$. Let $\A$ be a weak solution of
\eqref{dec-eqfA-2}-\eqref{1.4} and set $\H=\lam\,\curl\A$. Then $\H\in H^1_{\loc}(\O^c,\mathbb R^3)\cap
C^{\infty}_{\loc}(\O^c,\mathbb R^3)$, and it satisfies the following
\begin{equation}\label{Eq5.1}
\left\{\aligned
&\curl\H=\0\q\text{\rm and}\q\div\H=0\q\text{\rm in }\O^c,\\
&\H_T^+=\mB_T\q\text{\rm on }\p\O,\q
\H-\mH^e\to\0\q\text{\rm as }|x|\to\infty.\endaligned\right.
\end{equation}
However, even if the boundary datum $\mA_T\in
H^{3/2}(\p\O,\mathbb R^3)$, in general the solution $\A$ of \eqref{dec-eqfA-2} does not belong to
$H^1_{\loc}(\O^c,\mathbb R^3)$. To see this, assume $\A_0\in
H^1_{\loc}(\O^c,\mathbb R^3)$ is a weak solution of
\eqref{dec-eqfA-2}. Let $\psi\in H^1_{0,\loc}(\O^c)\setminus
H^2_{\loc}(\O^c)$ which vanishes near $\p\O$, and set $\A=\A_0+\nabla\psi.$ Then $\A$
is also a weak solution of \eqref{dec-eqfA-2}, but $\A\not\in
H^1_{\loc}(\O^c,\mathbb R^3)$.

\subsection{Existence and classification of solutions of \eqref{Eq5.1}}\

\begin{Lem}\label{Lem5.1} Assume $\O$ satisfies $(O)$, $0<\a<1$, $\mH^e\in C^{1+\a}_{\loc}(\overline{\O^c},\curl0,\div0)$, $\mB_T\in T\!C^{1+\a}(\p\O,\mathbb R^3)$ satisfies
\begin{equation}\label{cond-5.2}
\nu\cdot\curl(\mB_T-(\mH^e)^+_T)=0\q\text{\rm on}\;\;\p\O,
\end{equation}
where $\nu$ is the unit normal vector of $\p\O$ pointing into $\O^c$.
Then \eqref{Eq5.1} has a one-parameter family of solutions $\H_\mu\in
C^{1+\a}_{\loc}(\overline{\O^c},\mathbb R^3)$ with $\mu\in\Bbb R$, and they have the form
\begin{equation}\label{eqH-mu}
\H_\mu=\mathcal
H^e+\nabla\phi_\mu,
\end{equation}
where $\phi_\mu$ satisfies
\begin{equation}\label{eqphi-mu}
\left\{\aligned
&\Delta\phi=0\q\text{\rm in }\O^c,\qq (\nabla\phi)_T=\mB_T-(\mH^e)^+_T\q\text{\rm on
}\p\O,\\
&\int_{\p\O}{\p\phi\over\p\nu} dS=\mu,\qq \phi(x)=O(|x|^{-1})\q\text{\rm as }|x|\to\infty.
\endaligned\right.
\end{equation}
$\H_\mu$'s are the only solutions of \eqref{Eq5.1}.
\end{Lem}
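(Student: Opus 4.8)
The plan is to show that \eqref{Eq5.1} is equivalent, after a potential representation, to the scalar problem \eqref{eqphi-mu}, and then to solve \eqref{eqphi-mu} uniquely for every $\mu\in\R$ by reducing it to a classical exterior Dirichlet problem together with a one–dimensional adjustment fixing the flux $\mu$. First I would set up the potential representation. Under $(O)$ the exterior domain $\O^c$ is connected and simply connected ($\p\O$ is then diffeomorphic to a $2$–sphere), so any solution $\H$ of \eqref{Eq5.1} has $\curl(\H-\mH^e)=\curl\H=\0$ in $\O^c$ and hence $\H-\mH^e=\nabla\phi$ for a single–valued scalar $\phi$ of the same regularity; $\div(\H-\mH^e)=\div\H=0$ then makes $\phi$ harmonic, the boundary condition gives $(\nabla\phi)_T=\mB_T-(\mH^e)^+_T$ on $\p\O$, and $\H-\mH^e\to\0$ gives $\nabla\phi\to\0$ at infinity. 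Expanding $\phi$ in spherical harmonics in $\{|x|>R\}$ with $\overline{\O}\subset B_R$ and using $\nabla\phi\to\0$ leaves only the constant and the decaying modes, so $\phi=c_0+O(|x|^{-1})$ and $\nabla\phi=O(|x|^{-2})$; subtracting $c_0$ (which does not alter $\H$) normalizes $\phi(x)=O(|x|^{-1})$, and with $\mu:=\int_{\p\O}\frac{\p\phi}{\p\nu}\,dS$ the function $\phi$ solves \eqref{eqphi-mu}. Conversely, if $\phi$ solves \eqref{eqphi-mu} then $\H:=\mH^e+\nabla\phi$ as in \eqref{eqH-mu} immediately satisfies \eqref{Eq5.1}, and $\H\in C^{1+\a}_{\loc}(\overline{\O^c},\R^3)$ once $\phi\in C^{2+\a}_{\loc}(\overline{\O^c})$. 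So everything reduces to the unique solvability of \eqref{eqphi-mu} for each $\mu$.

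Next I would solve \eqref{eqphi-mu}. The main observation is that, for a harmonic $\phi$, the condition $(\nabla\phi)_T=\mB_T-(\mH^e)^+_T$ amounts to prescribing the trace $\phi|_{\p\O}$ up to a single additive constant: by \eqref{cond-5.2} and Lemma \ref{Lem-extension} (with $k=1$, applied to $\mB_T-(\mH^e)^+_T\in T\!C^{1+\a}(\p\O,\R^3)$) there is a harmonic function on $\O$ whose boundary trace $g\in C^{2+\a}(\p\O)$ has surface gradient $\mB_T-(\mH^e)^+_T$, and since $(\nabla\phi)_T=\nabla_{\p\O}(\phi|_{\p\O})$ and $\p\O$ is connected, the condition holds iff $\phi|_{\p\O}=g+t$ for some $t\in\R$. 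For each $t$ I would take the unique decaying solution $\phi^{(t)}$ of the exterior Dirichlet problem $\Delta\phi=0$ in $\O^c$, $\phi=g+t$ on $\p\O$, $\phi\to 0$ at infinity, which lies in $C^{2+\a}_{\loc}(\overline{\O^c})$ by Schauder theory (using $\p\O\in C^{r+\a}$, $r\ge 3$, and $g\in C^{2+\a}(\p\O)$) and satisfies $\phi^{(t)}=O(|x|^{-1})$ (standard exterior potential theory, along the lines of \cite{NW}). Writing $\phi^{(t)}=\phi^{(0)}+t\,w$, where $w$ is the capacitary potential of $\O$ ($\Delta w=0$ in $\O^c$, $w=1$ on $\p\O$, $w\to 0$), a divergence–theorem computation on $\O^c\cap B_R$ as $R\to\infty$ gives $\int_{\p\O}\frac{\p w}{\p\nu}\,dS\ne 0$ (a nonzero multiple of the capacity of $\O$; here one must keep track of $\nu$ pointing into $\O^c$). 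Hence $t\mapsto\int_{\p\O}\frac{\p\phi^{(t)}}{\p\nu}\,dS$ is affine with nonzero slope, so for each $\mu\in\R$ there is exactly one $t=t(\mu)$ making it equal to $\mu$; put $\phi_\mu:=\phi^{(t(\mu))}$.

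Finally I would settle uniqueness and assemble the statement. If $\phi,\tilde\phi$ both solve \eqref{eqphi-mu} for the same $\mu$, then $\psi=\phi-\tilde\phi$ is harmonic in $\O^c$, decays like $O(|x|^{-1})$, has $(\nabla\psi)_T=\0$ on $\p\O$ (so $\psi|_{\p\O}$ equals a constant $c$ on the connected $\p\O$), and has zero net flux; then $\psi-c\,w$ is a decaying harmonic function vanishing on $\p\O$, hence $\psi=c\,w$ by uniqueness for the exterior Dirichlet problem, and the zero–flux condition forces $c=0$, so $\psi\equiv 0$. Setting $\H_\mu:=\mH^e+\nabla\phi_\mu$ then gives the asserted one–parameter family in $C^{1+\a}_{\loc}(\overline{\O^c},\R^3)$; distinct $\mu$ give distinct $\H_\mu$ (if $\nabla\phi_\mu=\nabla\phi_{\mu'}$ then $\phi_\mu-\phi_{\mu'}$ is constant, hence $0$ by the decay, forcing $\mu=\mu'$), and by the first part of the argument every solution of \eqref{Eq5.1} is one of the $\H_\mu$. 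I expect the exterior–Dirichlet solvability and Schauder regularity to be routine; the genuine difficulties are (a) reducing the tangential–gradient boundary condition to a Dirichlet condition, which uses the compatibility condition \eqref{cond-5.2} through Lemma \ref{Lem-extension} together with the topology of $\p\O$; (b) extracting the normalization $\phi\to 0$ and the $O(|x|^{-1})$ asymptotics from the mere pointwise decay $\H-\mH^e\to\0$ via a spherical–harmonic expansion outside a large ball; and (c) the nonvanishing of the flux slope $\int_{\p\O}\frac{\p w}{\p\nu}\,dS$, which is exactly what produces the whole line of admissible values of $\mu$.
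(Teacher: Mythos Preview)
Your argument is correct and arrives at the same reduction as the paper: set $\w=\H-\mH^e$, use simple connectedness of $\O^c$ to write $\w=\nabla\phi$, and obtain the scalar problem \eqref{eq5.6}. From there, however, the two proofs diverge. The paper dispatches existence, uniqueness (under the flux constraint), regularity, and the $O(|x|^{-1})$ decay in one stroke by citing the exterior tangential-gradient results of Neudert--von Wahl \cite[Corollary 2.1, Lemmas 2.6, 2.7]{NW}. You instead convert the tangential-gradient boundary condition into a Dirichlet condition via Lemma~\ref{Lem-extension} (which produces $g\in C^{2+\a}(\p\O)$ with $\nabla_{\p\O}g=\mB_T-(\mH^e)^+_T$), solve the exterior Dirichlet problem for each additive constant $t$, and then use the capacitary potential $w$ to hit every value of the flux $\mu$ and to prove uniqueness. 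Your route is more self-contained and makes transparent exactly where the single free parameter comes from (the nonvanishing of $\int_{\p\O}\partial_\nu w\,dS$), at the cost of a little more work on the asymptotics (your spherical-harmonic normalization) and on regularity (Schauder for the exterior Dirichlet problem). The paper's route is shorter but relies on the reader accepting the \cite{NW} package for the nonstandard tangential-gradient boundary condition. Both are valid; your version would in fact reconstruct the content of the cited lemmas in \cite{NW} in this special setting.
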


\begin{proof} If $\H$ is a solution of \eqref{Eq5.1} and if we let
$\w=\H-\mH^e$, then \eqref{Eq5.1} is transformed to
\begin{equation}\label{eq5.5}
\left\{\aligned
&\curl\w=\0\q\text{and}\q\div\w=0\q\text{\rm in }\O^c,\\
&\w_T^+=\mB_T-(\mH^e)^+_T\q\text{\rm on }\p\O,\qq
\w\to\0\q\text{\rm as }|x|\to\infty.\endaligned\right.
\end{equation}
Since $\O^c$ is simply-connected, we can write $\w=\nabla\phi$. Then $\phi$ is a solution of
\eq\label{eq5.6}
\left\{
\aligned &\Delta\phi=0\q\text{in }\O^c,\qq
(\nabla\phi)_T=\mB_T-(\mathcal
H^e)^+_T\q\text{\rm on}\;\;\p\O,\\
& \lim_{|x|\to\infty}\nabla\phi(x)=\0.
\endaligned\right.
\eeq
From \cite[Lemma 2.7]{NW} we know that, under condition $(O)$, for any $\mu\in\Bbb R$, \eqref{eq5.6} has at most one solution satisfying the following condition
\eq\label{eq5.7}
\int_{\p\O}{\p\phi\over\p\nu} dS=\mu.
\eeq
On the other hand, from \cite[Corollary 2.1, Lemma 2.6]{NW} we know that, under the conditions on $\O$, $\mH^e$ and $\mB_T$ mentioned in the lemma, problem \eqref{eq5.6}-\eqref{eq5.7} has a solutions $\phi_\mu\in C^{2+\a}_{\loc}(\overline{\O^c})$.
So all the solutions of \eqref{eq5.5} are in the form $\w_\mu=\nabla\phi_\mu$, $\mu\in\Bbb R$, and hence all the solutions of \eqref{Eq5.1} are given by \eqref{eqH-mu}.
Furthermore from \cite[Lemma 2.6]{NW} we know that $\phi_\mu(x)=O(|x|^{-1})$ as $|x|\to\infty$.  From this and
using \cite[Lemma 2.2]{NW} we conclude that
$|\w_\mu(x)|=|\nabla\phi_\mu(x)|=O(|x|^{-2})$ as $|x|\to\infty$.
\end{proof}

Condition \eqref{cond-5.2} is necessary for \eqref{Eq5.1} to have a solution. In fact, if
\eqref{Eq5.1} has a solution $\H$, then
$$0=(\nu\cdot\curl(\H-\mH^e))^+=(\nu\cdot\curl(\H-\mH^e)_T)^+=\nu\cdot\curl(\mB_T-(\mH^e)^+_T).
$$

\begin{Rem}\label{Rem5.2}
Let $\phi_\mu$ be the solution of \eqref{eqphi-mu}. For any simple, closed and oriented surface $S\subset\O^c$ which encloses $\O$, it holds that
$$\int_{S}\nu_S\cdot\nabla\phi_\mu dS=\mu,
$$
where $\nu_S$ is the unit outer normal to $S$. Hence $\nabla\phi_\mu$ has zero flux only when $\mu=0$. Since
$$\int_{\p\O}{\p\phi_0\over\p\nu}dS=0,
$$
from the asymptotic behavior of harmonic functions (see for instance \cite[p.391-392, Proposition 17]{DaL1}) we know that
$$\phi_0(x)=O(|x|^{-2}),\q\text{\rm and}\q
|\nabla\phi_0(x)|=O(|x|^{-3})\q\text{\rm as }|x|\to\infty.
$$
\end{Rem}

\vskip0.1in

\subsection{Existence of solutions of \eqref{dec-eqfA-2}}\

If there exists $\mF^e\in C^{1+\a}_{\loc}(\overline{\O^c},\div0)$ such that $\curl\mF^e=\mH^e$ in $\O^c$, and if we let $\u=\lam\,\A-\mF^e$, then \eqref{dec-eqfA-2} is transformed to
\eq\label{eq5.8}
\curl\u=\nabla\phi_\mu\q\text{in }\O^c,\q \u_T^+=\lam\mA_T-(\mF^e)^+_T\q\text{\rm on}\;\;\p\O,
\eeq
for some $\mu\in\Bbb R$, where $\phi_\mu$ is given in \eqref{eqH-mu}.
If \eqref{eq5.8} has a solution $\u$, then $\nabla\phi_\mu\in \curl[H^1_{\loc}(\O^c,\Bbb R^3)]$, hence $\nabla\phi_\mu$ has zero flux in $\O^c$, and  from Remark \ref{Rem5.2} we have $\mu=0$. On the other hand, the following theorem shows that when $\mu=0$, \eqref{eq5.8} is solvable if $\mA_T$ satisfies
\eqref{cond-5.9}.

\begin{Thm}\label{Thm5.3}
Assume $\O, \mH^e, \mB_T$ satisfy the conditions in Lemma \ref{Lem5.1} with $0<\a<1$, assume $\mA_T\in T\!C^{1+\a}(\p\O,\mathbb R^3)$ satisfies
\begin{equation}\label{cond-5.9}
\lam\,\nu\cdot\curl\mathcal
A_T= (\nu\cdot\mH^e)^++{\p\phi_0\over\p\nu}\q\text{\rm on }\p\O,
\end{equation}
where $\phi_0$ is the solution of \eqref{eqphi-mu} with $\mu=0$ for the given $\mB_T$ and $\mH^e$, and assume there exists $\mF^e\in C^{1+\a}_{\loc}(\overline{\O^c},\div0)$ such that $\curl\mF^e=\mH^e$ in $\O^c$. Then we have the following conclusions:
\begin{itemize}
\item[(i)] Problem \eqref{dec-eqfA-2}-\eqref{1.4} has a weak solution $\A_0\in
C^1_{\loc}(\overline{\O^c},\mathbb R^3)$, which can be represented as
$\A_0=\lam^{-1}(\mF^e+\u_0),$
where $\u_0$ is the unique solution of
\begin{equation}\label{eq5.10}
\left\{\aligned
&\curl\u_0=\nabla\phi_0\q\text{\rm and}\q \div\u=0 \q\text{\rm in }\O^c,\\
&\u_{0,T}^+=\lam\mA_T-(\mF^e)^+_T\qqq\q\;\;\,\text{\rm on }\p\O,\\
&\int_{\p\O}\u_0\cdot\nu dS=0,\q \u_0(x)\to\0\q\text{\rm as }|x|\to\infty.
\endaligned\right.
\end{equation}
Moreover $\u_0$ has the decay rate
\begin{equation}
|\u_0(x)|=O\Big({\log|x|\over |x|^2}\Big)\q\text{\rm as }|x|\to\infty. \label{eq5.11}
\end{equation}
\item[(ii)] The general solution of \eqref{dec-eqfA-2}-\eqref{1.4} can be written
as
\begin{equation}\label{eq5.12}
\A=\lam^{-1}(\mF^e+\u_0)+\nabla\psi,\q\forall \psi\in
H^1_{\loc}(\O^c)\;\;\text{\rm satisfying } \nabla\psi=\0\;\;\text{\rm on }\p\O.
\end{equation}
\item[(iii)]   Problem \eqref{dec-eqfA-2}-\eqref{1.4} with $\mA_T$ replaced by $\lam\mF^e_T$ and $\mB_T$ replaced by $\mH^e_T$ is solvable, and all the solutions can be written as
\begin{equation}\label{eq5.13}
\A=\lam^{-1}\mF^e+\nabla\psi,\q\forall \psi\in
H^1_{\loc}(\O^c)\;\;\text{\rm satisfying } \nabla\psi=\0\;\;\text{\rm on }\p\O.
\end{equation}
\end{itemize}
\end{Thm}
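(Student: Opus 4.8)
The plan is to reduce \eqref{dec-eqfA-2}-\eqref{1.4} to an exterior div-curl system for $\u=\lam\A-\mF^e$ and then invoke the exterior-domain results of \cite{NW}. First I would observe that if $\A$ solves \eqref{dec-eqfA-2}-\eqref{1.4} and $\H=\lam\,\curl\A$, then $\curl\H=\0$, $\div\H=0$ in $\O^c$, $\H^+_T=\mB_T$ on $\p\O$ and $\H-\mH^e\to\0$, so $\H$ solves \eqref{Eq5.1}; by Lemma \ref{Lem5.1}, $\H=\mH^e+\nabla\phi_\mu$ for some $\mu\in\Bbb R$. Since $\lam\,\curl\A=\curl(\lam\A)$ and $\mH^e=\curl\mF^e$ are both curls, their flux through any closed surface enclosing $\O$ vanishes, hence so does the flux of $\nabla\phi_\mu=\H-\mH^e$; by Remark \ref{Rem5.2} this forces $\mu=0$. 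Consequently $\u:=\lam\A-\mF^e$ satisfies $\curl\u=\nabla\phi_0$ in $\O^c$ with $\u^+_T=\lam\mA_T-(\mF^e)^+_T$ on $\p\O$, which is \eqref{eq5.8} with $\mu=0$. Conversely, any $\u$ solving that system gives $\A=\lam^{-1}(\mF^e+\u)$ solving \eqref{dec-eqfA-2}-\eqref{1.4}: indeed $\curl^2\A=\lam^{-1}\curl\nabla\phi_0=\0$, $\A^+_T=\mA_T$, $\lam(\curl\A)^+_T=(\mH^e)^+_T+(\nabla\phi_0)^+_T=\mB_T$ by \eqref{eqphi-mu}, and $\lam\,\curl\A-\mH^e=\nabla\phi_0\to\0$, while the weak identity defining a weak solution of \eqref{dec-eqfA-2} follows by integration by parts.

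For (i) the task is to produce the distinguished solution $\u_0$ of \eqref{eq5.10}: an exterior div-curl problem with prescribed curl $\nabla\phi_0$ (which is divergence-free and, by Remark \ref{Rem5.2}, $O(|x|^{-3})$), prescribed divergence $0$, prescribed tangential trace $\lam\mA_T-(\mF^e)^+_T$, zero flux, and decay at infinity. The solvability condition for this problem is the surface identity $\nu\cdot\curl\bigl(\lam\mA_T-(\mF^e)^+_T\bigr)=(\nu\cdot\nabla\phi_0)^+$ on $\p\O$; since $\nu\cdot\curl\bigl((\mF^e)^+_T\bigr)=(\nu\cdot\curl\mF^e)^+=(\nu\cdot\mH^e)^+$, this is exactly \eqref{cond-5.9}. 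Granting the compatibility \eqref{cond-5.9} and the regularity hypotheses ($\mH^e,\mF^e\in C^{1+\a}_{\loc}(\overline{\O^c})$, $\mA_T\in T\!C^{1+\a}(\p\O,\Bbb R^3)$, and $(O)$), the exterior div-curl theory of \cite{NW} yields a unique $\u_0\in C^1_{\loc}(\overline{\O^c},\Bbb R^3)$ solving \eqref{eq5.10}. The decay \eqref{eq5.11} I would obtain from the curl-representation of $\u_0$: convolving the kernel $|x-y|^{-2}$ against $|\nabla\phi_0(y)|=O(|y|^{-3})$ and splitting the integral over $\{|y|<|x|/2\}$, $\{|x|/2\le|y|\le 2|x|\}$, $\{|y|>2|x|\}$ produces the borderline logarithmic loss $O(\log|x|/|x|^2)$. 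Putting $\A_0=\lam^{-1}(\mF^e+\u_0)$ and using the previous paragraph proves (i).

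For (ii), if $\A$ is any solution of \eqref{dec-eqfA-2}-\eqref{1.4}, then $\lam\,\curl(\A-\A_0)$ solves \eqref{Eq5.1} with $\mB_T=\0$ and $\mH^e=\0$; by the flux argument above it has vanishing flux, so Lemma \ref{Lem5.1} (with the uniqueness in \cite{NW}) forces $\lam\,\curl(\A-\A_0)=\0$. As $\O^c$ is simply-connected, $\A-\A_0=\nabla\psi$ with $\psi\in H^1_{\loc}(\O^c)$, and the boundary condition $(\A-\A_0)^+_T=\0$ together with connectedness of $\p\O$ determines the admissible $\psi$, giving the representation \eqref{eq5.12}; conversely each such field is a solution. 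Part (iii) is the special case $\mB_T=\mH^e_T$ with $\mA_T$ the tangential trace of $\lam^{-1}\mF^e$: then \eqref{cond-5.2} and \eqref{cond-5.9} hold automatically, the unique decaying solution of \eqref{eqphi-mu} with $\mu=0$ is $\phi_0\equiv 0$, so $\u_0\equiv\0$, $\A_0=\lam^{-1}\mF^e$, and \eqref{eq5.13} follows from \eqref{eq5.12}.

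The hard part will be the construction of $\u_0$ in (i): arranging the exterior curl system \eqref{eq5.10} so that it is solvable precisely under \eqref{cond-5.9} while controlling the prescribed flux and the behavior at infinity, and then squeezing the sharp rate \eqref{eq5.11} out of the borderline $O(|x|^{-3})$ decay of $\nabla\phi_0$. The reduction in the first paragraph also needs care: ruling out $\mu\neq 0$ must rely on the intrinsic zero-flux property of $\curl\A$, since $\A$ itself carries no a priori decay or integrability.
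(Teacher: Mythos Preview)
Your proposal is correct and follows essentially the same route as the paper: reduce to the exterior div-curl system \eqref{eq5.8} for $\u=\lam\A-\mF^e$, force $\mu=0$ via the zero-flux property of curls (this is the paragraph the paper places just before the theorem), verify that the compatibility condition for \eqref{eq5.10} is exactly \eqref{cond-5.9}, and invoke the exterior results of \cite{NW} for existence and uniqueness of $\u_0$. The only minor departure is that the paper obtains the decay \eqref{eq5.11} by citing \cite[Theorem 3.3, Lemma 3.5]{NW} and the remarks on p.~1368 there directly, whereas you sketch the Biot-Savart splitting argument yourself; both are valid.
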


\begin{proof} Let $\H_0=\mH^e+\nabla\phi_0$ be the solution of \eqref{Eq5.1} given in \eqref{eqH-mu} with $\mu=0$. Consider the following equation
\eq\label{eq5.14}
\lam\,\curl\A=\H_0\q\text{in }\O^c,\q \A_T^+=\mA_T\q\text{\rm on}\;\;\p\O.
\eeq
If \eqref{eq5.14} has a solution $\A\in C^{1+\a}_{\loc}(\overline{\O^c},\Bbb R^3)$, then
 $\A$ is a solution of \eqref{dec-eqfA-2}-\eqref{1.4}.
To solve \eqref{eq5.14}, we let $\u=\lam\,\A-\mF^e$ and transfer \eqref{eq5.14} to
 \eq\label{eq5.15}
\curl\u=\nabla\phi_0\q\text{in }\O^c,\q \u_T^+=\lam\,\mA_T-(\mF^e)^+_T\q\text{\rm on}\;\;\p\O.
\eeq
Note that any solution $\u_0$ of \eqref{eq5.10} is a solution of \eqref{eq5.15}, hence we only need to examine solvability of \eqref{eq5.10}.
Since $\nabla\phi_0$  has zero flux in $\O^c$, has decay rate given in Remark \ref{Rem5.2}, and satisfies  \eqref{cond-5.9},
using \cite[Theorem 3.3, Lemma 3.5]{NW} and Remarks (1) and (2) on p.1368 in \cite{NW}, we see that
\eqref{eq5.10} has a unique solution $\u_0\in C^{1+\a}(\overline{\O^c},\Bbb R^3)$, and it has the decay rate given in \eqref{eq5.11}.
Then $\A_0=\lam^{-1}(\mF^e+\u_0)$ is a solution of \eqref{dec-eqfA-2}-\eqref{1.4}.

If $\A$ is another solution, then $\curl(\A-\A_0)=\0$, so $\A-\A_0=\nabla\psi$ for some function $\psi$, because $\O$ is simply-connected, so is $\O^c$.  Hence the general solution of \eqref{dec-eqfA-2}-\eqref{1.4} is given by \eqref{eq5.12}.

When $\mB_T=\mH^e_T$, the only solution of \eqref{eqphi-mu} is $\phi_0=0$. So $\mA_T=\lam\mF^e_T$ satisfies \eqref{cond-5.9}. By the uniqueness of solutions to \eqref{eq5.10} we have $\u_0=\0$. So we get \eqref{eq5.13} from \eqref{eq5.12}.
\end{proof}

Note that condition \eqref{cond-5.9} is necessary for
\eqref{dec-eqfA-2}-\eqref{1.4} to have a solution. In fact if
\eqref{dec-eqfA-2}-\eqref{1.4} has a solution, then \eqref{eq5.10} has
a solution $\u$, hence on $\p\O$ we have
$$
{\p\phi_0\over\p\nu}=(\nu\cdot\curl\u)^+ =(\nu\cdot\curl\u_T)^+
=(\nu\cdot(\lam\,\curl\mA_T-\mH^e))^+,
$$
which gives \eqref{cond-5.9}.

We may view \eqref{cond-5.9} as a requirement on $\mA_T$, $\mH^e$ and $\mB_T$ (through $\phi_0$). Condition \eqref{cond-5.9} implies that $\mH^e$ satisfies \eqref{cond-vanish}, see Proposition \ref{PropE.2}. On the other hand, if $\mB_T$ and $\mH^e$ are given and $\mH^e$ satisfies \eqref{cond-vanish}, then there exists $\mA_T$ that satisfies \eqref{cond-5.9}, see Proposition \ref{PropE.2} in Appendix \ref{AppendixE}.

\section{The Limiting System}\label{Section6}

In this section we examine existence and classification of classical solutions of the limiting problem \eqref{eqA}.
Equivalence of \eqref{eqA}-\eqref{1.4}-\eqref{cond-1.8} with \eqref{eqH}-\eqref{cond-1.12}-\eqref{cond-1.10} in the sense of classical solutions has been discussed in \cite[Lemma 3.3]{P3}. So we start with discussions on \eqref{eqH}. Recall that for a ``classical" solution $\H$ of \eqref{eqH} we only require its tangential component to be continuous across $\p\O$, namely $\H_T^+=\H_T^-$£¬ see \cite[Definition 3.1]{P3}. If in addition the normal component of $\H$ is also continuous, so $\H$ is continuous across $\p\O$, then there exists a solution $\A$ of \eqref{eqA} such that $\H=\lam\,\curl\A$, see \cite[Lemma 3.3]{P3}.

\subsection{Existence and classification of solutions of \eqref{eqH}-\eqref{cond-1.12}}\

\begin{Lem}\label{Lem6.1}
 Assume $\O$ is a bounded domain in $\Bbb R^3$ with a $C^{3+\a}$ boundary, $0<\a<1$, and $\mH^e$ satisfies $(H_0)$. Let $\H$ be a solution of \eqref{eqH}-\eqref{cond-1.12}-\eqref{cond-1.10}, and assume
\eq\label{cond-H-6.1}
\H\in
\C^{1+\a,0}_t(\overline{\O},\overline{\O^c},\mathbb R^3)\cap C^{2+\a}(\overline{\O},\Bbb R^3).
\eeq
Set
$$ \H_\O=\H|_{\overline{\O}},\qq
\mH=\H_{\overline{\O^c}}.
$$
Then we have the following conclusions:
\begin{itemize}
\item[(i)]  $\H_\O\in C^{2+\a}(\overline{\O},\mathbb R^3)$ and it satisfies the equation
\eq\label{eq6.2}
-\lam^2\curl \bigl[F(\lam^2|\curl \H_\O|^2)\curl \H_\O\bigr]=\H_\O\q\text{\rm in }\O;
\eeq
$\mH\in C^{1+\a}_{\loc}(\overline{\O^c},\curl0,\div0)$,
$\lim_{|x|\to\infty}(\mH-\mH^e)=\0$; and
\eq\label{bdry6.3}
(\H_\O)^-_T=\mH_T^+\q\text{\rm and}\q \nu\cdot\curl [(\H_\O)_T^-]=\nu\cdot\curl(\mH_T^+)=0\q\text{\rm on }\p\O.
\eeq
\item[(ii)] If furthermore $\O^c$ is simply-connected, then
\eq\label{eq6.4}
\H=\mH^e+\nabla\phi\q\text{\rm in }\O^c,
\end{equation}
where $\phi\in C^{2+\a}_{\loc}(\overline{\O^c})$ and it is determined by
\begin{equation}\label{eq6.5}
\left\{\aligned
&\Delta\phi=0\qqq\qqq\;\;\text{\rm in }\O^c,\\
&(\nabla\phi)_T^+=(\H_\O)_T^- -(\mathcal
H^e)_T^+\q\text{\rm on }\p\O,\\
&\lim_{|x|\to\infty}\nabla\phi=\0.
\endaligned\right.
\end{equation}
\end{itemize}
\end{Lem}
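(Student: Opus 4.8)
The plan is to exploit the two key facts established earlier: first, the algebraic relation between $\A$ and $\H=\lam\,\curl\A$ coming from the second equation of \eqref{eqA}, which under the constraint \eqref{cond-1.8} yields the quasilinear equation \eqref{eqH} via the function $F$ (this is precisely \cite[Lemma 2.2, Lemma 3.3]{P3}); and second, the structural observation that in $\O^c$ the field $\H$ is curl-free and divergence-free, so on a simply-connected exterior domain it must be a gradient perturbation of $\mH^e$. I would organize the argument into three short blocks.

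For part (i), I would first restrict the pointwise identity \eqref{eqH}, which holds for the classical solution $\H$ by hypothesis, to $\O$: since $\H_\O\in C^{2+\a}(\overline{\O},\Bbb R^3)$ by \eqref{cond-H-6.1} and $F$ is $C^{1+\a}$ on the range of $\lam^2|\curl\H_\O|^2$ (guaranteed by \eqref{cond-1.10}, which is the counterpart of \eqref{cond-1.8}), the expression $F(\lam^2|\curl\H_\O|^2)\curl\H_\O$ is $C^{1+\a}$ and \eqref{eq6.2} is just the first line of \eqref{eqH} read in $\O$. For the exterior statement, the second line of \eqref{eqH} gives $\curl\mH=\0$ and $\div\mH=0$ in $\O^c$; together with local elliptic regularity for the overdetermined Hodge-type system (or simply writing $\mH$ locally as a gradient of a harmonic function) this upgrades $\mH$ to $C^{1+\a}_{\loc}(\overline{\O^c})$, and $\div0$, $\curl0$ are inherited. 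The limit at infinity is \eqref{cond-1.12}. The boundary relations \eqref{bdry6.3}: the equality $(\H_\O)_T^-=\mH_T^+$ is exactly the continuity requirement $[\H_T]=\0$ built into the definition of a classical solution of \eqref{eqH} (Definition in \cite[Definition 3.1]{P3}), i.e. $\H\in\C^{1+\a,0}_t$; and $\nu\cdot\curl$ applied to a tangential trace depends only on that trace, so $\nu\cdot\curl[(\H_\O)_T^-]=\nu\cdot\curl[\mH_T^+]$, while the latter vanishes because $\curl\mH=\0$ in $\O^c$ forces $(\nu\cdot\curl\mH)^+=0$ and $\nu\cdot\curl$ of the full field equals $\nu\cdot\curl$ of its tangential part on $\p\O$.

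For part (ii), assuming $\O^c$ simply-connected, set $\w=\mH-\mH^e$. Then $\curl\w=\0$ in $\O^c$, so $\w=\nabla\phi$ for a scalar potential $\phi$ on $\O^c$ (simple-connectedness is what licenses this). Since $\div\mH=\div\mH^e=0$, $\phi$ is harmonic; its boundary tangential trace is $(\nabla\phi)_T^+=\mH_T^+-(\mH^e)_T^+=(\H_\O)_T^--(\mH^e)_T^+$ using \eqref{bdry6.3}; and $\nabla\phi=\w\to\0$ at infinity from \eqref{cond-1.12}. This is exactly system \eqref{eq6.5}, and regularity $\phi\in C^{2+\a}_{\loc}(\overline{\O^c})$ follows from elliptic regularity for the Neumann/tangential problem for $\Delta$ with the $C^{1+\a}$ data $\w$ — the same regularity input used in Lemma \ref{Lem5.1} via \cite[Lemma 2.6, Corollary 2.1]{NW}. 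I expect no single step to be genuinely hard; the only point requiring care is the handling of the traces $\nu\cdot\curl$ of tangential fields on $\p\O$ — making sure that ``$\nu\cdot\curl\mB_T$ depends only on $\mB_T$'' is applied correctly and that the one-sided limits from $\O^c$ match — together with verifying that the regularity hypotheses \eqref{cond-H-6.1} plus $(H_0)$ are exactly enough to invoke the $C^{1+\a}$ theory for the exterior harmonic problem. Everything else is a direct translation of \eqref{eqH}--\eqref{cond-1.12}--\eqref{cond-1.10} into the restricted pieces.
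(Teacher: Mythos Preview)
Your proposal is correct and follows essentially the same route as the paper: read off \eqref{eq6.2} and the exterior curl-/div-free properties directly from \eqref{eqH}, obtain the tangential boundary equality from the $\C^{1+\a,0}_t$ hypothesis, deduce $\nu\cdot\curl(\mH_T^+)=0$ from $\curl\mH=\0$ in $\O^c$, and for part (ii) write $\mH-\mH^e=\nabla\phi$ by simple-connectedness and verify \eqref{eq6.5}. The paper's own proof is even terser (it cites \cite[Lemma 3.5]{P3} for part (ii)), and one small simplification you can make is that $\mH\in C^{1+\a}_{\loc}(\overline{\O^c})$ is already contained in the hypothesis \eqref{cond-H-6.1}, so no separate elliptic-regularity upgrade is needed there.
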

\begin{proof} From \eqref{cond-H-6.1} we get the first equality in \eqref{bdry6.3}, which yields
$$\nu\cdot\curl [(\H_\O)_T^- - \mH_T^+]=0\q\text{on }\p\O.
$$
If $\O$ hence $\O^c$ is simply-connected, since $\curl\mH=\0$ in $\overline{\O^c}$, we get the second equality in \eqref{bdry6.3}. Part (ii) is cited from \cite[Lemma 3.5]{P3}.
\end{proof}

{\bf Remark (i)}. Now we describe an observation in \cite[Lemma 3.6]{P3}. Assume the conditions of Lemma \ref{Lem6.1} and assume $\O$ is simply-connected. For any $\H_\O\in C^{1+\a}(\overline{\O},\mathbb R^3)$, Eq. \eqref{eq6.5} is solvable if and only if
$$(\nu\cdot\curl(\H_\O-\mH^e)_T)^-=0\q\text{on }\p\O,
$$
see \cite[Lemmas 2.5, 2.6]{NW}.
From $(H_0)$ we have $\nu\cdot\curl\mH^e_T=0$ on $\p\O$, hence the solvability condition reads
\eq\label{cond-6.6}
(\nu\cdot\curl(\H_\O)_T)^-=0\q\text{\rm on }\p\O.
\eeq
If $\H_\O$ satisfies \eqref{cond-6.6}, then the solutions of \eqref{eq6.5} form a one-parameter family $\{\phi^\tau: \tau\in \Bbb R\}$, where $\phi^\tau$ satisfies
$$
\int_{\p\O}{\p\phi^\tau\over\p\nu}dS=\tau.
$$
$\phi^\tau$ is uniquely determined by $\tau$ and $(\H_\O-\mathcal
H^e)_T^-$. Note that $\nabla\phi^\tau$ has zero flux if and only if $\tau=0$. Thus we can verify that (see Remark \ref{Rem5.2}) $\phi^0$ satisfies
\eq\label{eq6.7}
\left\{\aligned
&\Delta\phi^0=0\q\text{\rm in }\O^c,\q (\nabla\phi^0)_T^+=(\H_\O-\mathcal
H^e)_T^-\q\text{\rm on }\p\O,\\
&\int_{\p\O}{\p\phi^0\over\p\nu}dS=0,\q
 \phi^0(x)=O(|x|^{-2}),\q |\nabla\phi^0(x)|=O(|x|^{-3})\q\text{as }
|x|\to\infty.
\endaligned\right.
\eeq
\qed

{\bf Remark (ii)}. Equations \eqref{eqfA}, \eqref{eqA} and \eqref{eqH} require the continuity of the tangential component of $\A$ and of $\H=\lam\,\curl\A$, but not of their normal component.

Continuity of normal component of a solution $\A$ of \eqref{eqA} can always be satisfied after modifying the value of $\A$ in $\O^c$ by adding a gradient
if $\A\in \C^{2+\a,0}_t(\overline{\O},\overline{\O^c},\Bbb R^3)$, which yields a new solution of \eqref{eqA} that is continuous across $\p\O$, see \cite[Lemma 3.2]{P3}.

However, continuity of normal component of a solution $\H$ of \eqref{eqH} can not be made up by adding a gradient in $\O^c$, because this continuity requires $\mH$  to satisfy an integral condition
\eq\label{cond-6.8}
\int_{\p\O} \nu\cdot\mH^+ dS=0.
\eeq
To see this, assume the conditions of Lemma \ref{Lem6.1} hold, and suppose the normal component of $\H$ is continuous on $\p\O$, so
$\H\in
\C^{1+\a,0}(\overline{\O},\overline{\O^c},\mathbb R^3)\cap C^{2+\a}(\overline{\O},\Bbb R^3)$ and
$\nu\cdot(\H_\O^- -\mH^+)=0$ on $\p\O$. This and  the divergence theorem gives
$$\int_{\p\O}\nu\cdot\mH^+ dS=\int_{\p\O}\nu\cdot\H_\O^- \,dS=\int_\O\div\H \,dx=0.
$$
\qed

{\bf Remark (iii)}. Assume  $\O, \mH^e$ and $\mF^e$ satisfy $(O), (H_0), (F)$ respectively, and let $\H$ be a solution of \eqref{eqH}. If there exists a solution $\A$ of \eqref{eqA} such that $\H=\lam\,\curl\A$, then $\mH=\H|_{\overline{\O^c}}$ must satisfy the stronger condition $(H)$, and the function $\phi$ in \eqref{eq6.4} must be equal to $\phi^0$.
Indeed, since $\mH=\lam\,\curl\A$ in $\O^c$, $\mH$ has zero flux in $\O^c$, hence \eqref{cond-6.8} holds. Since $\mH^e=\curl\mF^e$ also has zero flux in $\O^c$, so $\nabla\phi^\tau=\mH-\mH^e$ has zero flux, hence $\tau=0$.
\qed

Next we consider existence of solutions of \eqref{eqH}. We assume $\O$ and $\mH^e$ satisfy
\eq\label{cond-6.9}
\text{$\O$ satisfies $(O)$ with $r\geq 3$ and $0<\a<1$, \q $\mH^e$ satisfies $(H_0)$},
\eeq
and $\mH$ satisfies
\begin{equation}\label{cond-6.10}
 \aligned
&\mH\in C^{2+\a}_{\loc}(\overline{\O^c},\curl0,\div0),\qq
\|\mH_T^+\|_{C^0(\p\O)}< \sqrt{5\over 18},\\
&\mH-\mH^e\in C^{\a}(\overline{\O^c},\mathbb R^3),\qq
\lim_{|x|\to\infty}(\mH-\mH^e)=\0.
\endaligned
\end{equation}

\begin{Lem}[\cite{P3} Lemma 3.4]\label{Lem6.2} Assume $\O$ and $\mH^e$ satisfy \eqref{cond-6.9}, and $\mH$ satisfies \eqref{cond-6.10}.
Then there exists $\lam_{\H}(\O,\mH_T)>0$ such that for all $0<\lam<\lam_{\H}(\O,\mH_T)$,
\eqref{eqH} has a solution $\H$ satisfying \eqref{cond-1.10} and \eqref{cond-H-6.1},
and $\H=\mH$ in $\O^c$, hence $\H$ satisfies \eqref{cond-1.12}.
\end{Lem}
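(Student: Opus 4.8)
The plan is to construct $\H$ by decomposition. In the exterior region I simply take $\H=\mH$, which by hypothesis \eqref{cond-6.10} already satisfies $\curl\H=\0$, $\div\H=0$ in $\O^c$ and $\H-\mH^e\to\0$ at infinity, so \eqref{cond-1.12} and the second line of \eqref{eqH} hold there automatically. It then remains to solve the quasilinear interior equation \eqref{eq6.2} in $\O$ with the correct boundary datum and to glue the two pieces, checking that the tangential traces match so that $[\H_T]=\0$ on $\p\O$.

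For the interior problem I would set $\mB_T:=\mH_T^+$ and verify it is an admissible boundary datum for the interior quasilinear BVP attached to \eqref{eq6.2} in the sense of the Bates--Pan existence theorem quoted in the introduction (in its $\H$-formulation, \cite[Theorem 1]{BaP}): indeed $\mB_T\in C^{2+\a}(\p\O,\Bbb R^3)$ with $\nu\cdot\mB_T=0$ by \eqref{cond-6.10}, $\|\mB_T\|_{C^0(\p\O)}<\sqrt{5/18}$ again by \eqref{cond-6.10}, and $\nu\cdot\curl\mB_T=0$ on $\p\O$, because $\nu\cdot\curl$ of a tangential field depends only on that field (see the remark preceding Lemma~\ref{Lem-extension}) and $\nu\cdot\curl\mB_T=\nu\cdot\curl\mH=0$ since $\curl\mH=\0$ in $\overline{\O^c}$. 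By that theorem there is $\lam_\H(\O,\mH_T)>0$ such that for every $0<\lam<\lam_\H(\O,\mH_T)$ the interior BVP for \eqref{eq6.2} with $(\H_\O)_T^-=\mB_T$ on $\p\O$ has a solution $\H_\O\in C^{2+\a}(\overline{\O},\Bbb R^3)$ with $\div\H_\O=0$ in $\O$ and $\lam\|\curl\H_\O\|_{L^\infty(\O)}<\sqrt{4/27}$, i.e. \eqref{cond-1.10}; here the bound $\sqrt{4/27}$ is the image of $1/\sqrt3$ under the monotone map $v\mapsto(1-v^2)v$ that converts the non-degeneracy estimate $\|\A\|_{L^\infty(\O)}<1/\sqrt3$ of \eqref{cond-1.8} into the estimate for $\lam|\curl\H_\O|$.

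Then I would define $\H$ on $\Bbb R^3$ by $\H=\H_\O$ in $\O$ and $\H=\mH$ in $\O^c$. Since the outer tangential trace of $\mH$ equals $\mH_T^+=\mB_T=(\H_\O)_T^-$, the inner and outer tangential traces of $\H$ agree on $\p\O$, hence $[\H_T]=\0$ and $\H\in\mathcal H_{\loc}(\Bbb R^3,\curl)$; together with the previous paragraph this makes $\H$ a solution of \eqref{eqH}-\eqref{cond-1.12} satisfying \eqref{cond-1.10}, with $\H=\mH$ in $\O^c$. The regularity \eqref{cond-H-6.1} is then immediate: $\H=\H_\O\in C^{2+\a}(\overline{\O},\Bbb R^3)$, $\H=\mH\in C^{1+\a}_{\loc}(\overline{\O^c},\Bbb R^3)$ by \eqref{cond-6.10}, and the equality of the inner and outer tangential traces of $\H$ on $\p\O$ is precisely the statement $\H_T\in\C^0(\p\O,\Bbb R^3)$, so $\H\in\C^{1+\a,0}_t(\overline{\O},\overline{\O^c},\Bbb R^3)\cap C^{2+\a}(\overline{\O},\Bbb R^3)$.

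The main obstacle is entirely the interior step, i.e. solving the quasilinear curl equation \eqref{eq6.2} with prescribed tangential datum. The operator $\curl\bigl[F(\lam^2|\curl\cdot|^2)\curl\cdot\bigr]$ provides no control on $\div\H_\O$, and its ellipticity degenerates precisely when $\lam|\curl\H_\O|\to\sqrt{4/27}$ (equivalently $|\A|\to 1/\sqrt3$); the two smallness requirements, $\|\mH_T^+\|_{C^0(\p\O)}<\sqrt{5/18}$ and $0<\lam<\lam_\H(\O,\mH_T)$, are exactly what the Bates--Pan continuity/fixed-point argument needs to produce a solution with the non-degeneracy bound \eqref{cond-1.10} together with uniform $C^{2+\a}$ estimates (obtained through the linear Maxwell estimates of Lemma~\ref{Lem-linear-curl} and the div-curl inequalities of Lemma~\ref{Lem2.2}). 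Granting that interior solution, the gluing and the exterior part are routine.
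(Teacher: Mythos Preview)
Your proposal is correct and follows essentially the same route as the paper's own proof: solve the interior quasilinear BVP \eqref{eq6.11} via the Bates--Pan existence result (the paper cites \cite[Theorem~7.4]{BaP}), set $\H=\mH$ in $\O^c$, and glue using the matching tangential traces to obtain $\H\in\C^{2+\a,0}_t(\overline{\O},\overline{\O^c},\Bbb R^3)$. Your write-up is in fact more explicit than the paper's in verifying the admissibility of $\mB_T=\mH_T^+$ (in particular the check that $\nu\cdot\curl\mB_T=0$ because $\curl\mH=\0$ in $\overline{\O^c}$) and in spelling out the regularity statement \eqref{cond-H-6.1}.
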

\begin{proof} Let $\mH$ satisfy \eqref{cond-6.10}. From \cite[Theorem 7.4]{BaP}, there exists $\lam_\H=\lam_\H(\O,\mH_T)>0$ such that,  for all $0<\lam<\lam_\H$, the following BVP
\begin{equation}\label{eq6.11}
\left\{\aligned
-&\lam^2\curl \bigl[F(\lam^2|\curl \H|^2)\curl \H\bigr]=\H\q&\text{ \rm in } \O,\\
&\H_T^-=\mH_T^+\qqq\qqq\q&\text{ \rm on }\p\O.
\endaligned\right.
\end{equation}
has a unique solution $\H^\lam\in C^{2+\a}(\overline{\O},\Bbb R^3)$ satisfying \eqref{cond-1.10}.
Define a vector field $\H$ on $\Bbb R^3$ by letting $\H=\H^\lam$ in $\overline{\O}$ and $\H=\mH$ in $\O^c$.
Then $[\H_T]=\0$ on $\p\O$, so $\H\in C^{2+\a,0}_t(\overline{\O},\overline{\O^c},\Bbb R^3)$, hence $\H$ satisfies \eqref{cond-H-6.1}. Thus $\H$ is a solution of \eqref{eqH}-\eqref{cond-1.10}-\eqref{cond-1.12}.
\end{proof}

\subsection{Classification of solutions of \eqref{eqA}-\eqref{1.4}}\


\begin{Lem}\label{Lem6.3}  Assume $\O$ and $\mH^e$ satisfy \eqref{cond-6.9}, and $\mF^e$ satisfies $(F)$. Let $\A\in
\C^{2+\a,0}_t(\overline{\O},\overline{\O^c},\mathbb R^3)$ be a solution of \eqref{eqA}-\eqref{1.4}-\eqref{cond-1.8}, and assume $\H=\lam\,\curl\A\in \C^{1+\a,0}_t(\overline{\O},\overline{\O^c},\Bbb R^3)$.
Denote
$$\A_\O=\A|_{\overline{\O}},\qq
\mA=\A|_{\overline{\O^c}},\qq
\H_\O=(\lam\,\curl\A)\big|_{\O}.
$$
Then we have the following conclusions:
\begin{itemize}
\item[(i)] $\A_\O\in C^{2+\a}(\overline{\O},\mathbb R^3)$ and it is a solution of
$$
-\lam^2\curl^2\A_\O=(1-|\A_\O|^2)\A_\O\q\text{\rm in }\O.
$$
\item[(ii)] $\mA\in C^{2+\a}_{\loc}(\overline{\O^c},\Bbb R^3)$ and it can be
represented in $\O^c$ by $\mA=\lam^{-1}(\mF^e+\u)$,
where
$\u\in C^{1+\a}_{\loc}(\overline{\O^c},\Bbb R^3)$ and it is determined
by
\begin{equation}\label{eq6.12}
\left\{\aligned
&\curl\u=\nabla\phi^0\q\text{\rm in }\O^c,\\
&\u_T^+=\lam\,(\A_\O)^-_T -(\mathcal
F^e)_T^+ \q\text{\rm on }\p\O,
\endaligned\right.
\end{equation}
where $\phi^0$ is the unique solution of \eqref{eq6.7} associated with this $\H_\O$.
\item[(iii)] $\A_\O$, $\H_\O$ and $\phi^0$ satisfy
\eq\label{cond-6.13}
\nu\cdot\curl((\H_\O)_T^-)=0,\q
\lam(\nu\cdot\curl\A_\O)^-=\nu\cdot(\mH^e)^+ +{\p\phi^0\over\p\nu}\q\text{\rm on }\p\O.
\eeq
\end{itemize}
\end{Lem}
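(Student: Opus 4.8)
The plan is to split $\A$ into its restrictions $\A_\O$ on $\overline{\O}$ and $\mA$ on $\overline{\O^c}$, treat each piece by the results already established (interior regularity for the semilinear curl equation, the exterior analysis of Section \ref{Section5} and the div-curl lemmas from \cite{NW}), and then match the two pieces along $\p\O$ to obtain the compatibility relations in (iii). For part (i): since $\A\in\C^{2+\a,0}_t(\overline{\O},\overline{\O^c},\mathbb R^3)$ is a solution of \eqref{eqA}-\eqref{1.4}, the restriction $\A_\O$ satisfies the BVP \eqref{eqA-Omega} with $\mH_T=(\lam\,\curl\A)_T$ (this is exactly the passage from \eqref{eqA} to \eqref{eqA-Omega} recorded after the statement of \eqref{eqA-Omega}), and since $\A$ satisfies \eqref{cond-1.8}, $\A_\O$ satisfies the same bound. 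Then the interior regularity result of \cite[Theorem $1'$]{BaP}, or equivalently the elliptic estimates used in Theorem \ref{Thm-reg-fA} (applied with $f\equiv 1$, i.e. to the equivalent quasilinear system for $\H_\O=\lam\,\curl\A_\O$ via \eqref{eqH}), upgrades $\A_\O$ to $C^{2+\a}(\overline{\O},\mathbb R^3)$. This gives (i), and the first equality in \eqref{cond-6.13} follows because $\H_\O^-=\H_T^+$ on $\p\O$ (continuity of the tangential trace is built into $\C^{1+\a,0}_t$), $\curl\H=\0$ in $\overline{\O^c}$, and $\O^c$ is simply-connected, exactly as in the derivation of \eqref{bdry6.3} in Lemma \ref{Lem6.1}.

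For part (ii): first I would record that $\H=\lam\,\curl\A$ restricted to $\O^c$, call it $\mH$, is curl-free and divergence-free in $\O^c$ with $\mH-\mH^e\in L^2$ and $\mH\to\mH^e$ at infinity, so by Lemma \ref{Lem6.1}(ii) (applied to $\H$, whose interior part is now known to lie in $C^{2+\a}(\overline{\O},\Bbb R^3)$ by (i)) one has $\mH=\mH^e+\nabla\phi$ in $\O^c$ with $\phi\in C^{2+\a}_{\loc}(\overline{\O^c})$ solving \eqref{eq6.5}. Moreover, because $\mH=\lam\,\curl\mathcal A$ has zero flux through any surface enclosing $\O$, Remark (iii) after Lemma \ref{Lem6.1} forces the flux parameter $\tau$ to vanish, so $\phi=\phi^0$ is precisely the solution of \eqref{eq6.7}. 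Now writing $\u=\lam\,\mA-\mF^e$, the relation $\lam\,\curl\mA=\mH=\mH^e+\nabla\phi^0=\curl\mF^e+\nabla\phi^0$ gives $\curl\u=\nabla\phi^0$ in $\O^c$, and the tangential trace condition $\A_T^+=(\A_\O)_T^-$ (continuity of $\A_T$, part of $\C^{2+\a,0}_t$) translates into $\u_T^+=\lam\,(\A_\O)_T^- -(\mF^e)_T^+$ on $\p\O$; this is exactly \eqref{eq6.12}. The regularity $\u\in C^{1+\a}_{\loc}(\overline{\O^c},\Bbb R^3)$ and uniqueness of $\u$ come from the div-curl solvability theory \cite[Theorem 3.3, Lemma 3.5]{NW}, just as in the proof of Theorem \ref{Thm5.3}(i); hence $\mA=\lam^{-1}(\mF^e+\u)\in C^{2+\a}_{\loc}(\overline{\O^c},\Bbb R^3)$.

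For part (iii): the first relation $\nu\cdot\curl((\H_\O)_T^-)=0$ was already obtained above. For the second, take the normal component of $\curl$ in \eqref{eq6.12} on $\p\O$ from the exterior side: $(\nu\cdot\curl\u)^+=(\nu\cdot\curl\u_T)^+=\p\phi^0/\p\nu$, while $\curl\u=\lam\,\curl\mA-\curl\mF^e=\lam\,\curl\mA-\mH^e$, so $(\nu\cdot(\lam\,\curl\mA))^+-\nu\cdot(\mH^e)^+=\p\phi^0/\p\nu$; using continuity of the full trace of $\curl\A$ across $\p\O$ — more precisely, that $(\lam\,\curl\mathcal A)^+$ and $\H_\O^-$ have the same normal component once the tangential components agree and the equation holds up to the boundary, which is Lemma \ref{Lem3.5}(iii) in the present setting / the argument after Theorem \ref{Thm5.3} — one rewrites $(\nu\cdot(\lam\,\curl\mA))^+=\lam(\nu\cdot\curl\A_\O)^-$, yielding $\lam(\nu\cdot\curl\A_\O)^- = \nu\cdot(\mH^e)^+ + \p\phi^0/\p\nu$. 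I expect the main obstacle to be precisely this last matching step: justifying that the normal trace of $\curl\A$ is continuous across $\p\O$ (the definitions only impose tangential continuity), which requires invoking the interior regularity of $\A_\O$ from (i) together with the structure of the equation near $\p\O$ — one must verify $\div\mathcal A$ is controlled and that $\curl\mathcal A\in H^1_{\loc}$ across the interface, as in Lemma \ref{Lem3.5}(iii). Everything else is a bookkeeping assembly of results already in hand.
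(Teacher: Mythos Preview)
Your approach is essentially the same as the paper's, but you are working harder than necessary in two places.

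For (i), the regularity $\A_\O\in C^{2+\a}(\overline{\O},\mathbb R^3)$ is immediate from the hypothesis $\A\in\C^{2+\a,0}_t(\overline{\O},\overline{\O^c},\mathbb R^3)$: by definition (see \eqref{sp-Ckalpha}) this space already requires $\A\in C^{2+\a}(\overline{\O},\mathbb R^3)\cap C^{2+\a}_{\loc}(\overline{\O^c},\mathbb R^3)$. No appeal to \cite{BaP} or Theorem \ref{Thm-reg-fA} is needed; the equation for $\A_\O$ is then read off directly from \eqref{eqA}. Likewise in (ii), $\u=\lam\mA-\mF^e$ is defined pointwise, so its regularity is inherited from $\mA$ and $\mF^e$, not from div-curl solvability (and note \eqref{eq6.12} does not determine $\u$ uniquely---it records the properties $\u$ has).

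For (iii), the ``obstacle'' you flag is illusory. The normal component of a curl is intrinsic to the tangential trace: for any sufficiently regular field $\bold V$ one has $\nu\cdot\curl\bold V=\nu\cdot\curl(\bold V_T)$ on $\p\O$, computed from either side. Since $[\A_T]=\0$ is part of the hypothesis $\A\in\C^{2+\a,0}_t$, the identity $\lam(\nu\cdot\curl\A_\O)^-=\lam\,\nu\cdot\curl((\A_\O)_T^-)=\lam\,\nu\cdot\curl(\mA_T^+)=(\nu\cdot(\lam\,\curl\mA))^+$ holds automatically. The paper exploits this directly: from $\curl\u=\nabla\phi^0$ and $\u_T^+=\lam(\A_\O)_T^- -(\mF^e)_T^+$ one gets $\partial\phi^0/\partial\nu=\nu\cdot\curl(\u_T^+)=\lam(\nu\cdot\curl\A_\O)^- - (\nu\cdot\mH^e)^+$, which is the solvability criterion \cite[Lemma 2.5]{NW} for \eqref{eq6.12}. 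No control of $\div\A$ or $H^1_{\loc}$ regularity of $\curl\A$ across the interface is required.
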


\begin{proof} From  Lemma \ref{Lem6.1},  $\H_\O$ satisfies the first equality in \eqref{cond-6.13}. Since $\O^c$ is simply connected,
$\H$ can be represented by \eqref{eq6.4} with $\phi$ being a solution of \eqref{eq6.5}. From the classification of solutions of \eqref{eq6.5} we know that $\phi=\phi^\tau$ for some $\tau\in\Bbb R$, see Remark (i) after Lemma \ref{Lem6.1}. From this and \eqref{eq6.4} we have
$$\lam\,\curl\mA=\mH^e+\nabla\phi^\tau\q\text{in }\O^c.
$$
From this and using $(F)$ we can write $\mA=\lam^{-1}(\mF^e+\u)$ in $\O^c$ for some vector field $\u$. It follows that $\nabla\phi^\tau=\curl\u$ in $\O^c$, thus $\nabla\phi^\tau$ has zero flux, which implies $\tau=0$, so $\phi^\tau=\phi^0$ is the solution of \eqref{eq6.7}. From this and \eqref{eqA} we see that $\u$ is a solution of \eqref{eq6.12}.
Solvability of \eqref{eq6.12} implies the second equality in \eqref{cond-6.13}, see \cite[Lemma 2.5]{NW}.
\end{proof}

\subsection{Existence of solutions to  \eqref{eqA}}\

\begin{Def}\label{Def6.4}
Assume $\O$ satisfies $(O)$ and $\mH$ satisfies \eqref{cond-6.10}. Let $\lam_\H(\O,\mH_T)$ be the number given in Lemma \ref{Lem6.2}. For $0<\lam< \lam_\H(\O,\mH_T)$, denote by
$\mS(\lam,\mH_T^+)$
the solution $\H_\O$ of \eqref{eq6.2}
under the boundary condition
$(\H_\O)_T^-=\mH_T^+$ on $\p\O$.
\end{Def}
The notation $\mS(\lam,\mH_T^+)$ reflects the fact that $\H_\O$ is determined by $\lam$ and $\mH_T^+$ only, if $\O$ is fixed.
Now we show that \eqref{eqA} has a solution $\A$ for all small $\lam$ if there exists a vector field $\mH$ satisfying the following conditions for some $\var_0>0$:
\begin{itemize}
\item[(i)]
$\mH$ satisfies \eqref{cond-6.10} and
\eq\label{cond-6.14}
\|\mH_T^+\|_{C^0(\p\O)}\leq \sqrt{5\over 18}-\var_0;
\eeq
\item[(ii)] the following comparability condition holds
\eq\label{cond-6.15}
\nu\cdot[\mS(\lam,\mH_T^+)]^-=\nu\cdot\mH^+\q\text{\rm on }\p\O;
\eeq
\item[(iii)] there exists a vector field $\mF$
such that
\begin{equation}\label{cond-6.16}
 \mF\in
C^{2+\a}_{\loc}(\overline{\O^c},\div0),\q \curl\mF=\mathcal
H\q\text{\rm  in }\O^c.
\end{equation}
\end{itemize}

Note that \eqref{cond-6.15} implies \eqref{cond-6.8}.  Another form of \eqref{cond-6.15} will be given in \eqref{eq6.33}.

\begin{Prop}\label{Prop6.5}
Assume $\O$ and $\mH^e$ satisfy \eqref{cond-6.9}. For any $\var_0>0$ small, there exists $\lam_\A(\O,\var_0)>0$, such that if  there exists a vector field $\mH$ on $\overline{\O^c}$ which satisfies \eqref{cond-6.14}, \eqref{cond-6.15} and \eqref{cond-6.16}, then for all $0<\lam<\lam_\A(\O,\var_0)$,  problem \eqref{eqA}-\eqref{cond-1.8}-\eqref{1.4} has a Meissner solution
$\A_\lam\in \C^{2+\a,0}(\overline{\O},\overline{\O^c},\mathbb R^3)$ such that
$$\lam\,\curl\A_\lam=\mH\q\text{\rm and} \q \lam\A_\lam=\mF+\nabla\psi^\lam\q\text{\rm in }\O^c,
$$
where $\psi^\lam\in C^{3+\a}_{\loc}(\overline{\O^c})$ satisfies
\eq\label{cond-6.17}
(\nabla\psi^\lam)_T=\lam\,(\A_\lam)_T^--\mF^+_T\q\text{\rm and}\q {\p\psi^\lam\over\p\nu}=-\nu\cdot\mF^+\q\text{\rm on }\p\O.
\eeq
Moreover $\curl\A_\lam\in \C^{2+\a,0}(\overline{\O},\overline{\O^c},\Bbb R^3)$.
\end{Prop}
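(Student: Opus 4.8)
\textbf{Proof proposal for Proposition \ref{Prop6.5}.}

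The plan is to solve the interior and exterior pieces separately and then glue them along $\p\O$, exactly in the spirit of the decomposition discussed in Section \ref{Section3}. First I would set $\lam_\A(\O,\var_0)=\lam_\H(\O,\mH_T)$, where $\lam_\H$ is the number produced by Lemma \ref{Lem6.2} applied to the given $\mH$; since $\|\mH_T^+\|_{C^0(\p\O)}\leq\sqrt{5/18}-\var_0$, the bound $\lam_\H$ can be taken uniform over all admissible $\mH$ with this $\var_0$ (this uniformity is the content of \cite[Theorem 7.4]{BaP}, and is what allows $\lam_\A$ to depend only on $\O$ and $\var_0$). For $0<\lam<\lam_\A(\O,\var_0)$, Lemma \ref{Lem6.2} gives a solution $\H$ of \eqref{eqH}-\eqref{cond-1.10}-\eqref{cond-1.12} with $\H=\mH$ in $\O^c$, $\H_\O:=\H|_{\overline\O}=\mS(\lam,\mH_T^+)\in C^{2+\a}(\overline\O,\Bbb R^3)$ solving \eqref{eq6.2}, and $\H\in\C^{1+\a,0}_t(\overline\O,\overline{\O^c},\Bbb R^3)$. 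By the equivalence of \eqref{eqH} with \eqref{eqA} in the classical sense (\cite[Lemma 3.3]{P3}, cf. also the discussion at the start of Section \ref{Section6}), there is an interior field $\A_\O\in C^{2+\a}(\overline\O,\Bbb R^3)$ with $\lam\,\curl\A_\O=\H_\O$ and $-\lam^2\curl^2\A_\O=(1-|\A_\O|^2)\A_\O$ in $\O$, satisfying \eqref{cond-1.8}; and since $\H_\O$ satisfies $\nu\cdot\curl((\H_\O)_T^-)=0$ on $\p\O$ (Lemma \ref{Lem6.1}(i)), one may normalize $\A_\O$ so that $\nu\cdot\A_\O^-$ has no obstruction — in particular $\curl\A_\O\in\C^{2+\a,0}$ after the usual gradient adjustment in $\O^c$ is postponed to the gluing step.

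The exterior construction is where condition \eqref{cond-6.15} enters. Since $\O^c$ is simply connected and $\curl\mH=\0$, $\div\mH=0$ in $\O^c$ with $\mH-\mH^e\to\0$, Lemma \ref{Lem5.1} gives $\mH=\mH^e+\nabla\phi^\tau$ for some $\tau\in\Bbb R$; but $\mH=\curl\mF$ by \eqref{cond-6.16}, so $\nabla\phi^\tau$ has zero flux across any surface enclosing $\O$, forcing $\tau=0$, and $\phi^0$ is exactly the solution of \eqref{eq6.7} associated with $\H_\O$ (here one checks $(\nabla\phi^0)_T^+=(\H_\O-\mH^e)_T^-$, which holds because $(\H_\O)_T^-=\mH_T^+$). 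Now I want a vector potential $\mA=\lam^{-1}(\mF+\u)$ in $\O^c$ with $\lam\,\curl\mA=\mH$, i.e. $\curl\u=\nabla\phi^0$ and $\u_T^+=\lam(\A_\O)_T^--\mF_T^+$ on $\p\O$, together with $\div\u=0$, $\u(x)\to\0$, and zero flux $\int_{\p\O}\u\cdot\nu\,dS=0$. This is precisely problem \eqref{eq5.10} (with $\mF$ in place of $\mF^e$), and its solvability by \cite[Theorem 3.3, Lemma 3.5]{NW} requires the compatibility relation $\lam(\nu\cdot\curl\A_\O)^- = \nu\cdot\mH^+ + \p\phi^0/\p\nu$ on $\p\O$. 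By the second equality in \eqref{cond-6.13} (Lemma \ref{Lem6.1}/the analogue in Lemma \ref{Lem6.3}, or directly from \eqref{eqA} and \eqref{eq6.7}) we have $\lam(\nu\cdot\curl\A_\O)^- = \nu\cdot(\mH^e)^+ + \p\phi^0/\p\nu$; and $\mH^e=\curl\mF^e$, $\mH=\curl\mF$, so $\nu\cdot\mH^+=\nu\cdot[\mS(\lam,\mH_T^+)]^-=\nu\cdot\H_\O^-$... — this is exactly where \eqref{cond-6.15} is used to identify $\nu\cdot\mH^+$ with the quantity $\nu\cdot(\mH^e)^+ + (\p\phi^0/\p\nu -\text{(interior contribution)})$ needed for \cite{NW}. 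So \eqref{eq5.10} has a unique solution $\u\in C^{1+\a}_{\loc}(\overline{\O^c},\Bbb R^3)$, giving $\mA=\lam^{-1}(\mF+\u)\in C^{2+\a}_{\loc}(\overline{\O^c},\Bbb R^3)$ solving the exterior equations of \eqref{eqA}-\eqref{1.4}.

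Finally I would define $\A_\lam$ by $\A_\lam=\A_\O$ in $\overline\O$ and $\A_\lam=\mA$ in $\overline{\O^c}$, after replacing $\mA$ by $\mA+\nabla\psi^\lam$ with $\psi^\lam$ chosen so that $(\nabla\psi^\lam)_T=\lam(\A_\O)_T^- - \mF_T^+$ and $\p\psi^\lam/\p\nu = -\nu\cdot\mF^+$ on $\p\O$ — such a harmonic-type $\psi^\lam\in C^{3+\a}_{\loc}(\overline{\O^c})$ exists because its Cauchy data on $\p\O$ are prescribed consistently (this is a standard exterior Neumann/Dirichlet patching, using that $\O^c$ is simply connected and the decay of $\u$). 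With this choice, $[(\A_\lam)_T]=\0$ by construction, $[(\curl\A_\lam)_T]=\0$ since both sides reduce to $\lam^{-1}\mH_T$, and $\nu\cdot\A_\lam^-=0$ follows from the zero-flux normalization of $\u$ together with the boundary identity for $\p\psi^\lam/\p\nu$; hence $\A_\lam\in\C^{2+\a,0}(\overline\O,\overline{\O^c},\Bbb R^3)$ is a Meissner solution of \eqref{eqA}-\eqref{cond-1.8}-\eqref{1.4}, and $\curl\A_\lam\in\C^{2+\a,0}$ because on both sides of $\p\O$ it equals $\lam^{-1}\mH$. The decay rate \eqref{eq5.11}-type estimate on $\u$ gives the stated regularity of $\psi^\lam$. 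The main obstacle is the boundary-matching bookkeeping in the middle step: one must verify that \eqref{cond-6.15}, combined with the interior identity \eqref{cond-6.13} and the flux computations forcing $\tau=0$, yields \emph{exactly} the scalar compatibility condition on $\p\O$ demanded by \cite[Theorem 3.3]{NW} for solvability of \eqref{eq5.10}; everything else is an assembly of results already proved in the excerpt (Lemma \ref{Lem5.1}, Lemma \ref{Lem6.2}, Lemma \ref{Lem6.1}, \cite{NW}, \cite{BaP}, \cite{P3}).
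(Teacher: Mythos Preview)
Your overall architecture (solve the interior BVP, build the exterior potential, glue along $\p\O$) matches the paper's, but the exterior step contains a genuine error. You write $\mA=\lam^{-1}(\mF+\u)$ and claim $\lam\,\curl\mA=\mH$ forces $\curl\u=\nabla\phi^0$. That is wrong: by hypothesis \eqref{cond-6.16} you already have $\curl\mF=\mH$, so $\lam\,\curl\mA=\mH+\curl\u$ and hence $\curl\u=\0$. Since $\O^c$ is simply connected, $\u=\nabla\psi$ is automatically a gradient, and there is no need to solve a div-curl exterior problem of type \eqref{eq5.10}; the only question is whether $\psi$ can be chosen with $(\nabla\psi)_T=\lam(\A_\O)_T^--\mF_T^+$ on $\p\O$. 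By \cite[Lemma 2.5]{NW} this holds iff $\nu\cdot\curl\bigl(\lam(\A_\O)_T^--\mF_T^+\bigr)=0$, and a one-line computation gives this as $\nu\cdot(\H^\lam)^--\nu\cdot\mH^+$, which vanishes precisely by \eqref{cond-6.15}. This is how the paper proceeds; your appeal to the second equality in \eqref{cond-6.13} via Lemma \ref{Lem6.3} is circular, since that lemma already \emph{assumes} a global solution of \eqref{eqA} exists.

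Separately, your justification of the Meissner condition $\nu\cdot\A_\lam^-=0$ is not right: the zero-flux normalization of an exterior field and the normal derivative of $\psi^\lam$ both live in $\O^c$ and say nothing about the \emph{inner} normal trace. In the paper this comes directly from the interior formula $\A^\lam=-\lam F(\lam^2|\curl\H^\lam|^2)\curl\H^\lam$, since on $\p\O$ one has $\nu\cdot\curl\H^\lam=\nu\cdot\curl(\H^\lam)_T^-=\nu\cdot\curl\mH_T^+=0$ (using $\curl\mH=\0$ in $\O^c$). The prescription of both traces of $\psi^\lam$ in \eqref{cond-6.17} is a separate final step (the paper writes $\psi^\lam=\psi_0^\lam+\xi^\lam$ with $\psi_0^\lam$ harmonic satisfying \eqref{eq6.25} and $\xi^\lam$ an $H^2$-extension satisfying \eqref{cond-6.26}), whose sole purpose is to upgrade $\A_\lam$ from $\C^{2+\a,0}_t$ to $\C^{2+\a,0}$ across $\p\O$, not to enforce the Meissner condition.
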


\begin{proof} {\it Step 1}. Assume $\mH$ satisfies \eqref{cond-6.10} and $\mF$ satisfies \eqref{cond-6.16}. Consider the problem
\begin{equation}\label{eq6.18}
\left\{\aligned
-&\lam^2\curl^2\A=(1-|\A|^2)\A\;\;&\text{\rm in }\O,\\
&(\lam\,\curl\A)^-_T=\mH_T^+\qqq&\text{\rm on }\p\O.
\endaligned\right.
\end{equation}
From \cite[Theorem 7.4]{BaP} and the equivalence between \eqref{eq6.18} and \eqref{eq6.11} we know that, for the number $\lam_\H(\O,\mH_T)$ given in  Lemma \ref{Lem6.2}, if $0<\lam<\lam_\H(\O,\mH_T)$, then BVP \eqref{eq6.18}-\eqref{cond-1.8} has a unique solution $\A^\lam\in C^{2+\a}(\overline{\O},\Bbb R^3)$,
$\H^\lam=\lam\,\curl\A^\lam$ is a solution of \eqref{eq6.11}, and
$$\A^\lam=-\lam F(\lam^2|\curl\H^\lam|^2)\curl\H^\lam,\q x\in\O,
$$
where $F$ is the function appeared in \eqref{eqH}.
Hence
$$
(\nu\cdot\A^\lam)^-=-[\lam\, F(\lam^2|\curl\H^\lam|^2)(\nu\cdot\curl\mH_T)]^-=0\q\text{\rm on}\;\;\p\O,
$$
so $\A^\lam$ is a Meissner solution of \eqref{eq6.18}. From the regularity results of \eqref{eq6.11} in \cite[Theorem 5.1]{BaP} we see that $\H^\lam\in C^{2+\a}(\p\O,\Bbb R^3)$.
Moreover, from the discussions in \cite{BaP} we see that, for a given $\var_0>0$, $\lam_\H(\O,\mH_T)$ is uniform for all $\mH_T$ satisfying \eqref{cond-6.14}. So we can find a positive number $\lam_\A(\O,\var)$ such that
\eq\label{lam-A-var}
\lam_\H(\O,\mH_T)\geq \lam_\A(\O,\var_0)\q\text{for all $\mH$ satisfying \eqref{cond-6.14}}.
\eeq

{\it Step 2}.
Denote $\mA_T^\lam=(\A^\lam)^-_T$. We look for a solution of
\begin{equation}\label{eq6.20}
\left\{\aligned
&\curl^2\A^o={\bold 0}\qqq\qqq&\text{\rm in }\O^c,\\
&(\A^o)_T^+=\mA_T^\lam,\q\lam(\curl\A^o)^+_T=\mH_T^+\q& \text{\rm
on }\p\O,\\
&\lam\,\curl\A^o-\mH^e\to\0\q\text{as }|x|\to\infty,
\endaligned\right.
\end{equation}
see \eqref{dec-eqfA-2}-\eqref{1.4}. Note that $\mA_T^\lam,\; \mH_T^+\in T\!C^{2+\a}(\p\O,\Bbb R^3)$.

We first derive a necessary condition for solvability of \eqref{eq6.20}.
If \eqref{eq6.20} has a solution $\A^o$ and letting $\u=\lam\,\A^o-\mF$, from \eqref{cond-6.16} and \eqref{eq6.20} we see that $\u$ is a solution of
\begin{equation}\label{eq6.21}
\left\{\aligned
&\curl^2\u={\bold 0}\qqq\qqq\qq\q&\text{ \rm in }\O^c,\\
&\u_T^+=\lam\,\mA_T^\lam-\mF^+_T,\q(\curl\u)^+_T=\0\q\;& \text{\rm
on }\p\O,\\
&\curl\u\to\0\q\text{as }|x|\to\infty.
\endaligned\right.
\end{equation}
If \eqref{eq6.21} has a solution $\u\in C^{2+\a}_{\loc}(\overline{\O^c},\Bbb R^3)$ and letting $\w=\curl\u$, then
\eq\label{eq6.22}
\left\{\aligned
&\curl\w={\bold 0}\q\text{and}\q\div\w=0\q&\text{ \rm in }\O^c,\\
&\w^+_T=\0\q&\text{\rm on }\p\O,\\
& \w\to\0\q&\text{as }|x|\to\infty.
\endaligned\right.
\eeq
Since $\curl\w=\0$ and $\O^c$ is simply-connected,  we can write $\w=\nabla \eta$ for some function $\eta$, and from \eqref{eq6.22} we find that
\eq\label{eq6.23}
\left\{\aligned
&\Delta\eta=0\q\text{ \rm in }\O^c,\q
(\nabla\eta)_T=\0\q\text{\rm
on }\p\O,\\
&\nabla\eta\to\0\q\text{as }|x|\to\infty.
\endaligned\right.
\eeq
The fact that $\nabla\eta=\w=\curl\u$ for some $\u$ implies that $\nabla\eta$ has zero flux, so
$$\int_{\p\O}{\p\eta\over\p\nu}dS=0.
$$
From this and \eqref{eq6.23}, and using \cite[Lemma 2.7]{NW}, we find
$\nabla\eta=\0$, so $\curl\u=\0$. This and \eqref{eq6.21} imply that there exists a function $\psi$ such that
$\u=\nabla\psi$ in $\O^c$, where
\eq\label{eq6.24}
\psi\in C^{3+\a}_{\loc}(\overline{\O^c}),\qq
(\nabla\psi)_T=\lam\,\mA_T^\lam-\mF^+_T\q\text{\rm on}\;\;\p\O.
\eeq
Since $\O^c$ is simply-connected, from \cite[Lemma 2.5]{NW} we see that \eqref{eq6.24} is solvable if and only if
$$\nu\cdot\curl(\lam\,\mA_T^\lam-\mF^+_T)=0\q\text{on }\p\O.
$$
By \eqref{cond-6.16} and \eqref{eq6.18} we have
$$
\nu\cdot\curl(\lam\,\mA_T^\lam-\mF^+_T)=\nu\cdot(\lam\,\curl(\A^\lam)^-_T-\curl\mF_T^+)
=\nu\cdot((\H^\lam)^- -\mH^+).
$$
Hence the above condition of solvability for  \eqref{eq6.24} can be written as
$$\nu\cdot[(\H^\lam)^--\mH^+]=0\q\text{on }\p\O,
$$
which is exactly \eqref{cond-6.15}. Thus we have proved that, if \eqref{eq6.20}, hence \eqref{eq6.21}, has a $C^{2+\a}_{\loc}$ solution, then \eqref{cond-6.15} holds.

{\it Step 3}. Now assume \eqref{cond-6.15} holds, and we show that  \eqref{eqA}-\eqref{cond-1.8}-\eqref{1.4} has a solution. By \eqref{cond-6.15} and from \cite[Lemma 2.5]{NW} we know that  \eqref{eq6.24} is solvable. Let us denote by $\psi^\lam$ a general solution of \eqref{eq6.24} in $C^{3+\a}_{\loc}(\overline{\O^c})$. Then the general solution of \eqref{eq6.20} in $C^{2+\a}_{\loc}(\overline{\O^c},\Bbb R^3)$ can be written as
$\A^o=\lam^{-1}(\mF+\nabla\psi^\lam)$.
We define $\A_\lam$ and $\H_\lam$ on $\Bbb R^3$ by letting
$$
\A_\lam=\begin{cases} \A^\lam&\text{in }\overline{\O},\\
\lam^{-1}(\mF+\nabla\psi^\lam)&\text{in }\O^c,\end{cases}\qq\q
\H_\lam=\begin{cases}\H^\lam&\text{in }\O,\\
\mH&\text{in }\O^c.
\end{cases}
$$
Then $\A_\lam\in \C^{2+\a,0}_t(\overline{\O},\overline{\O^c},\Bbb R^3)$ and it is a solution of \eqref{eqA}-\eqref{1.4}-\eqref{cond-1.8}.
Using the fact $(\A_\lam)_T^-=(\A_\lam)_T^+$ we can verify that $\curl\A_\lam$ exists in $\Bbb R^3$ and $\lam\,\curl\A_\lam=\H_\lam$. Then from \eqref{cond-6.15} we have $\H_\lam\in \C^{2+\a,0}(\overline{\O},\overline{\O^c},\Bbb R^3)$.

{\it Step 4}. Note that $\A_\lam$ and $\H_\lam$ constructed in step 3 depend on the choice of $\psi^\lam$.
Now we look for a function $\psi^\lam$ such that \eqref{eq6.24} holds and the associated $\A_\lam$ is a Meissner solution of \eqref{eqA}. For this purpose, let $\psi^\lam_0$ be a solution of \eqref{eq6.24} such that
\eq\label{eq6.25}
\left\{
\aligned
&\Delta\psi^\lam_0=0\q\text{in }\O^c,\qq(\nabla\psi^\lam_0)_T=\lam\,\mA_T^\lam-\mF^+_T\q\text{\rm on}\;\;\p\O,\\
&\int_{\p\O}{\p\psi^\lam_0\over\p\nu}dS=0,\q \psi^\lam_0\to 0\q\text{as }|x|\to\infty.
\endaligned\right.
\eeq
Since \eqref{cond-6.15} holds and $\lam\,\mA_T^\lam-\mF^+_T\in T\!C^{2+\a}(\p\O,\Bbb R^3)$,  from \cite[Corollary 2.1, Lemma 2.6]{NW} we see that \eqref{eq6.25} has a unique solution $\psi^\lam_0$. The integral condition in \eqref{eq6.25} implies that $\psi^\lam_0=O(|x|^{-2})$ and hence $|\nabla\psi^\lam_0(x)|=O(|x|^{-3})$ as $|x|\to\infty$.

Next, we choose $\xi^\lam\in C^{3+\a}_{\loc}(\overline{\O^c})$ such that
\eq\label{cond-6.26}
\xi^\lam=0\q\text{and}\q {\p\xi^\lam\over\p\nu}=-\nu\cdot\mF^+-{\p\psi_0^\lam\over\p\nu}\q\text{\rm on}\;\;\p\O.
\eeq
Existence of $\xi^\lam\in H^2(\O^c)$ satisfying \eqref{cond-6.26} is a consequence of the trace theorem for $H^2(\O^c)$, see for instance \cite[Theorem 7.53]{Ad}. We can actually find a function $\xi^\lam\in H^2(\O^c)$ so that its $H^2$-norm $\|\xi^\lam\|_{H^2(\O^c)}$ is the least among all $H^2$ functions satisfying \eqref{cond-6.26}, so $\xi^\lam$ satisfies a fourth order elliptic equation of constant coefficients. Then,  using the condition
$$\nu\cdot\mF^+ +{\p\psi^\lam_0\over\p\nu}\in C^{2+\a}(\p\O),
$$
and applying the Schauder estimate of elliptic equations,
we find that $\xi^\lam\in C^{3+\a}_{\loc}(\overline{\O^c})$.

Finally we fix $\psi^\lam=\psi^\lam_0+\xi^\lam$, and define $\A_\lam$ using this $\psi^\lam$ as in step 3. Recalling $(\nu\cdot\A^\lam)^-=0$ on $\p\O$, we see
that $\A_\lam\in \C^{2+\a,0}(\overline{\O},\overline{\O^c},\Bbb R^3)$ and it is a Meissner solution of \eqref{eqA}-\eqref{1.4}-\eqref{cond-1.8}.
\end{proof}

We mention that part of argument in the proof of  Proposition \ref{Prop6.5} has been used in the proof of Lemma 3.3 in \cite{P3}.

Now we examine all vector fields satisfying \eqref{cond-6.10}. We first mention that, if $\mH_1$ and $\mH_2$ satisfy \eqref{cond-6.8} and \eqref{cond-6.10}, and $\mH_{1,T}=\mH_{2,T}$ on $\p\O$, then $\mH_1\equiv \mH_2$ on $\O^c$.
To prove, Let $\w=\mH_2-\mH_1.$
Then $\w\in C^{2+\a}_{\loc}(\overline{\O^c},\Bbb R^3)\cap C^\a(\overline{\O^c},\Bbb R^3)$ and it satisfies
$$\left\{\aligned
&\curl\w=\0\q\text{and}\q\div\w=0\q\text{in }\O^c,\\
&\w_T=\0\q\text{on }\p\O,\q\lim_{|x|\to\infty}\w(x)=\0,\q
\int_{\p\O}\nu\cdot\w dS=0.
\endaligned\right.
$$
Since $\O$ is simply-connected and without holes, from \cite[Theorem 3.3 (b)]{NW} we know that
$\w=\0$.

\begin{Lem}\label{Lem6.6}  Assume $\O$ and $\mH^e$ satisfy \eqref{cond-6.9} with $0<\a<1$.
\begin{itemize}
\item[(i)] Every $\mH$ satisfying \eqref{cond-6.10} can be represented by
\eq\label{eq6.27}
\mH=\mH^e+\nabla\phi_{\bold v,\mu},\q x\in \O^c,
\eeq
where $\bold v\in \mB^{2+\a}(\p\O)$ satisfies
\begin{equation}\label{cond-6.28}
 \|\mH^e_T+\bold
v\|_{C^0(\p\O)}< \sqrt{5\over 18},
\end{equation}
$\mu\in \Bbb R$, and
$\phi_{\bold v,\mu}\in
C^{3+\a}_{\loc}(\overline{\O^c})\cap C^{1+\a}(\overline{\O^c})$ is a solution of
\begin{equation}\label{eq6.29}
\left\{\aligned
&\Delta\phi_{\bold v,\mu}=0\q\text{\rm in }\O^c,\qq
 (\nabla\phi_{\bold v,\mu})_T=\bold v\q\text{\rm on }\p\O,\\
& \int_{\p\O}{\p\phi_{\bold v,\mu}\over\p\nu}dS=\mu,\q \phi_{\bold v,\mu}(x)=O(|x|^{-1})\q\text{\rm as }|x|\to\infty.
\endaligned\right.
\end{equation}
\item[(ii)] Assume in addition $\mF^e$ satisfies $(F)$. Then any pair $\mH$ and $\mF$, which have the properties \eqref{cond-6.10} and \eqref{cond-6.16}, can be written as
\eq\label{eq6.30}
 \mH=\mH^e+\nabla\phi_{\bold v,0},\q \mF=\mF^e+\w_{\bold v}+\nabla g,
\eeq
where $\bold v\in \mB^{2+\a}(\p\O)$ satisfies \eqref{cond-6.28}, $\phi_{\bold v,0}$ is the solution of \eqref{eq6.29} for this $\bold v$ and for $\mu=0$, $\w_\bold v$ is the solution of
\eq\label{eq6.31}
\left\{\aligned
&\curl\w_{\bold v}=\nabla\phi_{\bold v,0}\q\text{\rm and}\q \div\w_{\bold v}=0\q&\text{\rm in }\O^c,\\
&\nu\cdot\w_{\bold v}=-\nu\cdot\mF^e\q&\text{\rm on }\p\O,\\
&\w_{\bold v}(x)\to \0\q&\text{\rm as }|x|\to\infty,
\endaligned\right.
\eeq
and $g\in C^{3+\a}_{\loc}(\overline{\O^c})$ is a harmonic function in $\O^c$.
\end{itemize}
\end{Lem}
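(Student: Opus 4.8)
The plan is to reduce both parts to the solvability and classification theory for div--curl systems in the exterior domain $\O^c$, which is already recorded in Lemma~\ref{Lem5.1} and Theorem~\ref{Thm5.3} and quoted there from \cite{NW}; throughout one uses that $\O^c$ is connected and simply-connected because $\O$ is simply-connected and has no holes. For part (i), set $\w=\mathcal H-\mathcal H^e$ on $\O^c$. By \eqref{cond-6.10} and $(H_0)$ the field $\w$ is curl-free and divergence-free in $\O^c$, tends to $\0$ at infinity, and has tangential trace $\w_T^+=\mathcal H_T^+-\mathcal H_T^e\in T\!C^{2+\a}(\p\O,\Bbb R^3)$. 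Since $\O^c$ is simply-connected I write $\w=\nabla\phi$; after subtracting a constant I may assume $\phi\to 0$ at infinity, so $\phi$ is harmonic in $\O^c$ with $\phi(x)=O(|x|^{-1})$ by the asymptotics of exterior harmonic functions (as used in Remark~\ref{Rem5.2}). Put $\bold v=\w_T^+$ and $\mu=\int_{\p\O}\p_\nu\phi\,dS$. Then $\bold v\in T\!C^{2+\a}(\p\O,\Bbb R^3)$, and since $\nu\cdot\curl\bold v=\nu\cdot(\curl\nabla\phi)^+=0$ we have $\bold v\in\mB^{2+\a}(\p\O)$; moreover $\mathcal H_T^e+\bold v=(\mathcal H^e+\nabla\phi)_T^+=\mathcal H_T^+$, which gives \eqref{cond-6.28}. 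Finally $\phi$ solves \eqref{eq6.29} for this $\bold v$ and $\mu$; by the uniqueness statement of \cite{NW} quoted in the proof of Lemma~\ref{Lem5.1} it coincides with $\phi_{\bold v,\mu}$, and the regularity $\phi_{\bold v,\mu}\in C^{3+\a}_{\loc}(\overline{\O^c})\cap C^{1+\a}(\overline{\O^c})$ follows from the Schauder theory for \eqref{eq6.29}. Hence $\mathcal H=\mathcal H^e+\nabla\phi_{\bold v,\mu}$, which is \eqref{eq6.27}.

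For part (ii), I would first pin down $\mu$. Since $\curl\mathcal F=\mathcal H$ in $\O^c$, Stokes' theorem gives $\int_S\nu_S\cdot\mathcal H\,dS=0$ for every closed oriented surface $S\subset\O^c$ enclosing $\O$; the same is true for $\mathcal H^e=\curl\mathcal F^e$, so $\nabla\phi_{\bold v,\mu}=\mathcal H-\mathcal H^e$ has zero flux in $\O^c$. By the argument of Remark~\ref{Rem5.2}, $\nabla\phi_{\bold v,\mu}$ has zero flux only when $\mu=0$; hence $\mu=0$ and $\mathcal H=\mathcal H^e+\nabla\phi_{\bold v,0}$. The field $\nabla\phi_{\bold v,0}$ is curl-free, divergence-free, has zero flux, and decays like $|\nabla\phi_{\bold v,0}(x)|=O(|x|^{-3})$ by \eqref{eq6.7}; therefore the div--curl system \eqref{eq6.31} with prescribed normal trace $-\nu\cdot\mathcal F^e$ on $\p\O$ and decay at infinity is solvable by the exterior theory of \cite{NW} used in Theorem~\ref{Thm5.3}, yielding $\w_{\bold v}\in C^{2+\a}_{\loc}(\overline{\O^c})$ with $\w_{\bold v}\to\0$. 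Setting $\bold z=\mathcal F-\mathcal F^e-\w_{\bold v}$ I then get $\curl\bold z=\mathcal H-\mathcal H^e-\nabla\phi_{\bold v,0}=\0$ and $\div\bold z=0$ in $\O^c$, so by simple-connectedness $\bold z=\nabla g$ with $g$ harmonic and $g\in C^{3+\a}_{\loc}(\overline{\O^c})$; this is exactly \eqref{eq6.30}.

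The routine verifications — that $\w$ has the stated regularity and decay, the bootstrap $\phi\in C^{3+\a}_{\loc}(\overline{\O^c})$, and that the representation in (i) is consistent with \eqref{cond-6.28} — are standard. The only point requiring care is the solvability of the exterior system \eqref{eq6.31}: one must check the compatibility (flux) conditions of the pertinent result of \cite{NW}, and these hold precisely because $\mu=0$ forces $\nabla\phi_{\bold v,0}$ to have zero flux, while the normal-trace datum $-\nu\cdot\mathcal F^e$ is otherwise unconstrained (a divergence-free field on $\O^c$ decaying at infinity may still carry nonzero flux through $\p\O$). This is the main, though not severe, obstacle; everything else is bookkeeping with the div--curl--gradient theory and the exterior-domain lemmas already cited.
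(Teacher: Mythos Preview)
Your proof is correct and follows essentially the same route as the paper: write $\mathcal H-\mathcal H^e=\nabla\phi$ using simple-connectedness of $\O^c$, read off $\bold v$ and $\mu$ from the boundary data and the flux integral, and then in part~(ii) force $\mu=0$ by the zero-flux property of $\nabla\phi_{\bold v,\mu}=\curl(\mathcal F-\mathcal F^e)$, invoke \cite[Theorem~3.2]{NW} for the existence of $\w_{\bold v}$, and recover $\nabla g$ as the curl-free, divergence-free remainder. The only cosmetic difference is that the paper sets $\w=\mathcal F-\mathcal F^e$ and classifies all solutions of $\curl\w=\nabla\phi_{\bold v,0}$, $\div\w=0$, whereas you subtract $\w_{\bold v}$ first and show the residue $\bold z$ is a harmonic gradient; these are the same argument.
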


\begin{proof} {\it Step 1}. Assume $\mH$ satisfies \eqref{cond-6.10}. Since $\O^c$ is simply-connected, we can write
$\mH=\mH^e+\nabla\phi$ for some $\phi\in C^{3+\a}_{\loc}(\overline{\O^c})\cap C^{1+\a}(\overline{\O^c})$.
Let $\bold
v=(\nabla\phi)_T$. Then $\bold v\in \mB^{2+\a}(\p\O)$ and
satisfies \eqref{cond-6.28}, and $\phi$ solves the following equation for
this $\bold v$:
$$
\left\{\aligned
&\Delta\phi=0\q\text{\rm in }\O^c,\q (\nabla\phi)_T=\bold v\q\text{\rm on}\;\;\p\O,\\ &\phi(x)=O(|x|^{-1})\q\text{as }|x|\to\infty.
\endaligned\right.
$$
From \cite[Corollary 2.1, Lemma 2.6]{NW}, for any real
number $\mu$, the above equation has a unique solution $\phi_{\bold v,\mu}\in
C^{3+\a}_{\loc}(\overline{\O^c})\cap C^{1+\a}(\overline{\O^c})$ such that
$$
\int_{\p\O}{\p\phi_{\bold v,\mu}\over\p\nu}dS=\mu,\q\text{and}\q
|\nabla\phi_{\bold v,\mu}(x)|=O(|x|^{-2})\q\text{as }
|x|\to\infty.
$$
Hence $\phi=\phi_{\bold v,\mu}$ for some $\mu$, and $\mH$ is represented by \eqref{eq6.27}.
On the other hand, any $\mH$ represented by \eqref{eq6.27} satisfies \eqref{cond-6.10}.

{\it Step 2}. Assume $(F)$ holds, and assume $\mH$ and $\mF$ satisfy \eqref{cond-6.10} and \eqref{cond-6.16}. Let
$\w=\mF-\mF^e$, where $\mF^e$ is given in $(F)$. From \eqref{eq6.27},
$\curl\w=\nabla\phi_{\bold v,\mu}$ in $\O^c$, so $\nabla \phi_{\bold v,\mu}$ has zero flux, hence $\mu=0$. Thus $\mH=\mH^e+\nabla\phi_{\bold v,0}$ and $\w$ satisfies
\eq\label{eq6.32}
\curl\w=\nabla\phi_{\bold v,0}\q\text{and}\q \div\w=0\q\text{in }\O^c.
\eeq
Since $\nabla\phi_{\bold v,0}$ has zero flux and $|\nabla\phi_{\bold v,0}(x)|=O(|x|^{-3})$ as $|x|\to\infty$,
we can apply \cite[Theorem 3.2]{NW} to conclude that there exists a unique $\w_{\bold v}\in C^{2+\a}_{\loc}(\overline{\O^c},\Bbb R^3)$ which satisfies \eqref{eq6.31}.\footnote{In fact $\w_{\bold v}$ has the asymptotic behavior $|\w_{\bold v}(x)|=O\Big({\log |x|\over|x|^2}\Big)$ as $|x|\to\infty$.}
$\phi_{\bold v,0}$ is uniquely determined by $\bold v$, so is $\w_{\bold v}$.
The general solution of \eqref{eq6.32} is $\w=\w_\bv+\nabla g$ where $g$ is any harmonic function in $\O^c$. So we get \eqref{eq6.30}.
\end{proof}

Combining Proposition \ref{Prop6.5} (in particular \eqref{cond-6.15}) and  Lemma \ref{Lem6.6} we get the following criterium for solvability of \eqref{eqA}.

\begin{Thm}\label{Thm6.7}  Assume $\O$ and $\mH^e$ satisfy \eqref{cond-6.9} with $0<\a<1$, and $\mF^e$ satisfies $(F)$.
\begin{itemize}
\item[(i)] Problem \eqref{eqA}-\eqref{1.4}-\eqref{cond-1.8} has a classical Meissner solution for small $\lam$ if and only if there exists a vector field $\bold v\in \mB^{2+\a}(\p\O)$ satisfying \eqref{cond-6.28} such that
\eq\label{eq6.33}
\nu\cdot\Big[\mS\Big(\lam, (\mH^e_T)^++\bold v\Big)\Big]^-=\nu\cdot(\mH^e)^+ +{\p\phi_{\bold v,0}\over\p\nu}\q\text{\rm on }
\p\O,
\eeq
where $\phi_{\bold v,0}$ is the solution of \eqref{eq6.29} for this $\bold v$ and for $\mu=0$.
\item[(ii)] For any $\var_0>0$ small, there exists $\lam_\A(\O,\var_0)>0$ such that, for all $0<\lam<\lam_\A(\O,\var_0)$, if there exists  $\bold v\in \mB^{2+\a}(\p\O)$ satisfying \eqref{eq6.33} and
\begin{equation}\label{cond-6.34}
 \|\mathcal H^e_T+\bold
v\|_{C^0(\p\O)}\leq \sqrt{5\over 18}-\var_0,
\end{equation}
then problem \eqref{eqA}-\eqref{1.4}-\eqref{cond-1.8} has a classical Meissner solution $\A_\lam$ such that
$\A_\lam\in\C^{2+\a,0}_t(\overline{\O},\overline{\O^c},\Bbb R^3)$,
$\curl\A_\lam\in \C^{2+\a,0}(\overline{\O},\overline{\O^c},\Bbb R^3)$,
and
$$\aligned
\lam\,\curl\A_\lam=&\begin{cases}
\mS\Big(\lam, (\mH^e_T)^++\bold v\Big)\q&\text{\rm in }\O,\\
\mH^e+\nabla\phi_{\bold v,0}\q&\text{\rm in }\O^c,
\end{cases}\\
\lam\,\A_\lam=&\mF^e+\w_{\bold v}+\nabla \psi^\lam\q\text{\rm in }\O^c,
\endaligned
$$
where $\w_{\bold v}$ is the solution of \eqref{eq6.31}, and $\psi^\lam\in C^{3+\a}_{\loc}(\overline{\O^c})$ is any
function satisfying \eqref{cond-6.17} for $\mF=\mF^e+\w_{\bold v}$. We can choose $\psi^\lam$ such that $\A_\lam\in C^{2+\a,0}(\overline{\O},\overline{\O^c},\Bbb R^3)$.
\end{itemize}
\end{Thm}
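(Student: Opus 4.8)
The plan is to deduce Theorem \ref{Thm6.7} by combining the existence result of Proposition \ref{Prop6.5} with the parametrization of admissible exterior data given in Lemma \ref{Lem6.6}. The key observation is that the hypotheses \eqref{cond-6.14}, \eqref{cond-6.15}, \eqref{cond-6.16} appearing in Proposition \ref{Prop6.5} are conditions on a pair $(\mH,\mF)$ on $\overline{\O^c}$, and Lemma \ref{Lem6.6}(ii) says that precisely such pairs are in bijective correspondence with tangential fields $\bold v\in\mB^{2+\a}(\p\O)$ satisfying the $C^0$ bound \eqref{cond-6.28} (modulo the harmonic gradient $\nabla g$ in the expression for $\mF$, which does not affect $\curl\mF=\mH$). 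So the work is to rewrite each of the three conditions of Proposition \ref{Prop6.5} in terms of $\bold v$, and to check the necessity direction.

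For part (i): First I would prove sufficiency. Given $\bold v\in\mB^{2+\a}(\p\O)$ satisfying \eqref{cond-6.28} and \eqref{eq6.33}, set $\mH=\mH^e+\nabla\phi_{\bold v,0}$ with $\phi_{\bold v,0}$ the solution of \eqref{eq6.29} (with $\mu=0$), and $\mF=\mF^e+\w_{\bold v}$ with $\w_{\bold v}$ the solution of \eqref{eq6.31}; by Lemma \ref{Lem6.6} these satisfy \eqref{cond-6.10} and \eqref{cond-6.16}. The $C^0$-bound \eqref{cond-6.14} is exactly \eqref{cond-6.28} (here one may take $\var_0$ as small as needed, or work with the strict inequality and invoke Proposition \ref{Prop6.5} with a slightly smaller $\var_0$). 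The comparability condition \eqref{cond-6.15} reads $\nu\cdot[\mS(\lam,\mH_T^+)]^-=\nu\cdot\mH^+$ on $\p\O$; since $\mH_T^+=(\mH^e_T)^++\bold v$ and $\nu\cdot\mH^+=\nu\cdot(\mH^e)^++\p\phi_{\bold v,0}/\p\nu$, this is literally \eqref{eq6.33}. Hence Proposition \ref{Prop6.5} applies for $0<\lam<\lam_\A(\O,\var_0)$ and produces the desired Meissner solution $\A_\lam$. For necessity: if \eqref{eqA}-\eqref{1.4}-\eqref{cond-1.8} has a classical Meissner solution $\A$ for small $\lam$, apply Lemma \ref{Lem6.3} to write $\H=\lam\,\curl\A$ as $\mH^e+\nabla\phi$ in $\O^c$ with $\phi=\phi^0$ the zero-flux solution, and set $\bold v=(\nabla\phi^0)_T=(\H_\O-\mH^e)_T^-$. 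Then $\bold v\in\mB^{2+\a}(\p\O)$ by Lemma \ref{Lem6.1} (the second identity in \eqref{bdry6.3} gives $\nu\cdot\curl\bold v=0$), the $C^0$-bound \eqref{cond-6.28} follows from \eqref{cond-1.8} together with the relation between $\|\A_\O\|_{C^0}$ and $\lam\|\curl\H_\O\|_{C^0}$ (equivalently from \eqref{cond-6.10} applied to $\mH=\H|_{\overline{\O^c}}$), and the second equality in \eqref{cond-6.13} is exactly \eqref{eq6.33} since $\mS(\lam,(\mH^e_T)^++\bold v)=\H_\O$ by Definition \ref{Def6.4} and $\p\phi_{\bold v,0}/\p\nu=\p\phi^0/\p\nu$.

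For part (ii): this is just the quantitative version of the sufficiency direction above. Fix $\var_0>0$ and let $\lam_\A(\O,\var_0)$ be the number supplied by Proposition \ref{Prop6.5} (see \eqref{lam-A-var}). Given $\bold v$ satisfying \eqref{eq6.33} and \eqref{cond-6.34}, build $\mH$ and $\mF$ as in Lemma \ref{Lem6.6}(ii); then \eqref{cond-6.14} holds with this $\var_0$, and \eqref{cond-6.15} holds because it is \eqref{eq6.33}. Proposition \ref{Prop6.5} then yields $\A_\lam\in\C^{2+\a,0}(\overline{\O},\overline{\O^c},\Bbb R^3)$ with $\curl\A_\lam\in\C^{2+\a,0}(\overline{\O},\overline{\O^c},\Bbb R^3)$, and the displayed formulas for $\lam\,\curl\A_\lam$ and $\lam\A_\lam$ are read off directly from the conclusion of Proposition \ref{Prop6.5}, using $\mH_T^+=(\mH^e_T)^++\bold v$, $\mH=\mH^e+\nabla\phi_{\bold v,0}$ in $\O^c$, $\mF=\mF^e+\w_{\bold v}$ (the harmonic-gradient ambiguity $\nabla g$ is absorbed into $\nabla\psi^\lam$), and $\mS(\lam,(\mH^e_T)^++\bold v)=\H^\lam$ in $\O$.

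I do not expect a serious obstacle here: the theorem is a packaging of two already-proved results. The one place to be careful is the bookkeeping of the ``zero flux'' constraint $\mu=0$ — one must make sure that the $\mH$ produced from $\bold v$ really is of the form $\mH^e+\nabla\phi_{\bold v,0}$ (not $\phi_{\bold v,\mu}$ with $\mu\neq0$), which is forced by $(F)$ exactly as in Step 2 of the proof of Lemma \ref{Lem6.6} and in Lemma \ref{Lem6.3}(ii); and to track that the harmonic gradient $\nabla g$ left free in \eqref{eq6.30} is harmless for $\A_\lam$ because only $\curl\mF=\mH$ enters, and because we then re-choose $\psi^\lam$ as in Step 4 of the proof of Proposition \ref{Prop6.5} to make $\nu\cdot\A_\lam^+$ match $\nu\cdot\A^-_\lam=0$. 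With those points checked, the proof is essentially a citation of Proposition \ref{Prop6.5} and Lemma \ref{Lem6.6}.
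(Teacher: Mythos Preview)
Your proposal is correct and follows essentially the same route as the paper: both deduce Theorem \ref{Thm6.7} by combining Proposition \ref{Prop6.5} with the parametrization of admissible exterior data in Lemma \ref{Lem6.6}, translating the conditions \eqref{cond-6.14}--\eqref{cond-6.16} on $(\mH,\mF)$ into the condition \eqref{eq6.33} on $\bold v$ via the identities $\mH_T^+=(\mH^e_T)^++\bold v$ and $\nu\cdot\mH^+=\nu\cdot(\mH^e)^++\p\phi_{\bold v,0}/\p\nu$. Your handling of necessity (through Lemma \ref{Lem6.3}) is slightly more explicit than the paper's, which simply points to Step 2 of the proof of Proposition \ref{Prop6.5}, but the content is the same.
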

\begin{proof} For part (i), we only need to show that \eqref{eq6.33} is a necessary and sufficient condition for \eqref{eqA} to have a solution for small $\lam$. Let $\mH^e$ and $\mF^e$ satisfy \eqref{cond-6.10} and $(F)$. Then from Lemma \ref{Lem6.6} can write $\mH=\mH^e+\nabla\phi_{\bold v,0}$, where $\bold v\in \mB^{2+\a}(\p\O)$ satisfies \eqref{cond-6.34},  and $\phi_{\bold v,0}$ is a solution of \eqref{eq6.29} with $\mu=0$. On $\p\O$ we have
$$\aligned
&\mH_T^+=(\mH^e_T)^+ +(\nabla\phi_{\bold v,0})_T^+=(\mH^e_T)^++\bold v,\\
&\nu\cdot\mH^+=\nu\cdot[\mH^e +\nabla\phi_{\bold v,0}]^+=\nu\cdot(\mH^e)^+ +{\p\phi_{\bold v,0}\over\p\nu}.
\endaligned
$$
Hence \eqref{cond-6.15} takes the form of \eqref{eq6.33}. So conclusion (i) follows from Proposition \ref{Prop6.5} and Step 2 of its proof.
Conclusion (ii) follows from Proposition \ref{Prop6.5} directly.
\end{proof}

\v0.05in

\section{The Meissner System}\label{Section7}

\subsection{Existence of solutions to \eqref{eqfH}-\eqref{cond-1.12}}\

We first show that the equivalence of \eqref{eqfA}-\eqref{1.4} with \eqref{eqfH}-\eqref{cond-1.12} holds if we require the solutions of \eqref{eqfH} having continuous normal components across $\p\O$.

\begin{Lem}[Equivalence]\label{Lem7.1}
Assume that $\O$ is a bounded domain in $\mathbb R^3$ with a $C^4$ boundary, and $\mH^e$ satisfies $(H)$.
\begin{itemize}
\item[(i)] Let $(f,\A)\in C^2(\overline{\O})\times \mathbb
C^{3,0}_t(\overline{\O},\overline{\O^c}, \mathbb R^3)$ be a solution of
 \eqref{eqfA}-\eqref{1.4} with $f>0$ on $\overline{\O}$, and set
$\H=\lam\,\curl\A$. Then $(f,\H)\in C^2(\overline{\O})\times
C^{2,0}_t(\overline{\O},\overline{\O^c}, \mathbb R^3)$ and it is a solution of
\eqref{eqfH}-\eqref{cond-1.12}. If furthermore $\A\in \mathbb
C^{3,1}_t(\overline{\O},\overline{\O^c},\mathbb R^3)$, then $\H\in\mathbb
C^{2,0}(\overline{\O},\overline{\O^c},\mathbb R^3)$.

\item[(ii)] Assume in addition $\O$ is simply-connected and
without holes, and assume there exists $\mF^e\in
C^{2+\a}_{\loc}(\overline{\O^c},\div0)$ satisfying $\curl\mF^e=\mH^e$ in $\O^c$, where $0<\a<1$. Let
$(f,\H)\in C^{2+\a}(\overline{\O})\times \mathbb C^{2+\a,0}(\overline{\O},\overline{\O^c},\mathbb
R^3)$ be a solution of \eqref{eqfH}-\eqref{cond-1.12} such that $f>0$ on
$\overline{\O}$, and there exist $\delta>0$ and $\g>2$ such that
\begin{equation}\label{cond-7.1}
\aligned
&\H-\mH^e\in C^2_{\loc}(\overline{\O^c},\mathbb R^3)\cap
C^\delta(\overline{\O^c},\mathbb R^3),\\
&\H-\mH^e=O(|x|^{-\g})\q\text{as }|x|\to\infty.
\endaligned
\end{equation}
\begin{itemize}
\item[(iia)] If $\mF^e\in
C^{3+\a}_{\loc}(\overline{\O^c},\mathbb R^3)$ and $\H\in \mathbb
C^{3+\a,0}(\overline{\O},\overline{\O^c},\mathbb R^3)$, then there exists $\A\in \mathbb C^{2+\a,0}(\overline{\O},\overline{\O^c},\mathbb R^3)$ such that
$\H=\lam\,\curl\A$ and $(f,\A)$ is a solution of \eqref{eqfA}-\eqref{1.4}.

\item[(iib)] If $\H\in \mathbb C^{2+\a,0}(\overline{\O},\overline{\O^c},\mathbb R^3)$ and
$\nu\cdot\curl\H=0$ on $\p\O$, then there exists $\A\in\mathbb
C^{2+\a,0}(\overline{\O},\overline{\O^c},\mathbb R^3)$ such that
$\H=\lam\,\curl\A$ and $(f,\A)$ is a Meissner solution of
\eqref{eqfA}-\eqref{1.4}.
\end{itemize}
\end{itemize}
\end{Lem}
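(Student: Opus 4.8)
The plan is to treat the two directions separately, using the second equation of \eqref{eqfA} (resp. of \eqref{eqfH}) as the bridge between $\A$ and $\H=\lam\,\curl\A$.

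For part (i), suppose $(f,\A)$ solves \eqref{eqfA}-\eqref{1.4} with $f>0$ on $\overline{\O}$ and $\A\in\mathbb C^{3,0}_t(\overline{\O},\overline{\O^c},\mathbb R^3)$. First I would set $\H=\lam\,\curl\A$, so that $\H\in C^{2}(\overline{\O},\mathbb R^3)$ in $\O$ and, by \eqref{curl2A0} (or directly by the third equation of \eqref{eqfA}), $\curl\H=\0$ and $\div\H=0$ in $\O^c$. Since $\A_T$ is continuous across $\p\O$, the surface curl shows $\H_T^-=\H_T^+$, hence $\H\in C^{2,0}_t(\overline{\O},\overline{\O^c},\mathbb R^3)$. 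The condition at infinity \eqref{cond-1.12} is immediate from \eqref{1.4}. To get the first equation of \eqref{eqfH}, solve the second equation of \eqref{eqfA} for $\A$ in $\O$: since $f>0$, $\A=-\lam f^{-2}\curl\H$ in $\O$; substituting $|\A|^2=\lam^2 f^{-4}|\curl\H|^2$ into the first equation of \eqref{eqfA} gives the first equation of \eqref{eqfH}. Applying $\curl$ to $\A=-\lam f^{-2}\curl\H$ and using $\lam\,\curl\A=\H$ gives the second equation of \eqref{eqfH}. The boundary condition $\p f/\p\nu=0$ on $\p\O$ is inherited. Finally, if in addition $\A\in\mathbb C^{3,1}_t$, then one extra tangential derivative of $\A_T$ matches across $\p\O$, so $\curl\A=\H$ itself (not merely its tangential part) is continuous across $\p\O$, i.e. $\H\in\mathbb C^{2,0}(\overline{\O},\overline{\O^c},\mathbb R^3)$.

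For part (ii), given a solution $(f,\H)$ of \eqref{eqfH}-\eqref{cond-1.12} with $f>0$ on $\overline{\O}$ and the decay/regularity \eqref{cond-7.1}, I would reconstruct $\A$ in two pieces. In $\O$, define $\A_\O=-\lam f^{-2}\curl\H$; this lies in $C^{1+\a}(\overline{\O},\mathbb R^3)$, and by reversing the computation above $(f,\A_\O)$ satisfies the first two equations of \eqref{eqfA} in $\O$ and $\lam\,\curl\A_\O=\H$ in $\O$ (the latter using the second equation of \eqref{eqfH}, which forces $\curl(\H-\lam\,\curl\A_\O)=\0$ together with a matching of tangential data; alternatively one argues directly from the definition of $\A_\O$). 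In $\O^c$, since $\O^c$ is simply-connected and $\curl\H=\0$ there, write $\H=\mH^e+\nabla\phi$ on $\O^c$ for a harmonic $\phi$ (this is Lemma \ref{Lem5.1}/Lemma \ref{Lem6.1}(ii)); because $\mH^e=\curl\mF^e$ we may seek $\A$ with $\lam\,\curl\A=\mH^e+\nabla\phi$ in $\O^c$, i.e. after setting $\u=\lam\A-\mF^e$ we must solve $\curl\u=\nabla\phi$ in $\O^c$ with $\u_T^+$ prescribed to match $(\A_\O)_T^-$ (up to the factor $\lam$ and $(\mF^e)_T^+$). Exactly as in the proof of Theorem \ref{Thm5.3} and Proposition \ref{Prop6.5}, solvability of this exterior curl problem requires $\nabla\phi$ to have zero flux, which is why the hypothesis $\H\in\mathbb C^{2+\a,0}(\overline{\O},\overline{\O^c},\mathbb R^3)$ (continuity of the full field, not just tangential part) together with $\div\H=0$ in $\O$ and the divergence theorem — i.e. \eqref{cond-6.8} — is needed: it forces $\int_{\p\O}\p\phi/\p\nu\,dS=0$. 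Then the results of \cite{NW} (as invoked in the proof of Theorem \ref{Thm5.3}) provide $\u\in C^{1+\a}_{\loc}(\overline{\O^c},\mathbb R^3)$ with the right tangential trace, and we set $\A=\lam^{-1}(\mF^e+\u)$ in $\O^c$. Gluing $\A_\O$ and this $\A$ gives a field in $\mathbb C^{2+\a,0}(\overline{\O},\overline{\O^c},\mathbb R^3)$ with $\lam\,\curl\A=\H$ on all of $\Bbb R^3$, and one checks it solves \eqref{eqfA}-\eqref{1.4}.

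For the two sub-cases: in (iia), the extra regularity $\mF^e\in C^{3+\a}_{\loc}$ and $\H\in\mathbb C^{3+\a,0}$ is used only to bootstrap the Schauder regularity of $\u$ (hence of $\A$) up to $C^{2+\a}$ in $\O^c$, so the reconstructed $\A$ lies in $\mathbb C^{2+\a,0}$; no Meissner (normal-trace) condition is claimed. In (iib), the additional hypothesis $\nu\cdot\curl\H=0$ on $\p\O$ gives $\nu\cdot\A_\O^-=-\lam f^{-2}\,\nu\cdot\curl\H=0$ on $\p\O$, so $\A_\O$ automatically has vanishing inner normal trace; then, exactly as in Step 4 of the proof of Proposition \ref{Prop6.5}, one has the freedom to add a gradient $\nabla\psi^\lam$ to the exterior piece (with $\psi^\lam$ harmonic outside, $(\nabla\psi^\lam)_T$ prescribed, and $\p\psi^\lam/\p\nu$ chosen to kill the normal jump) so that the glued $\A$ satisfies $\nu\cdot\A=0$ on $\p\O$, i.e. $(f,\A)$ is a Meissner solution. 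The main obstacle is the zero-flux/solvability bookkeeping for the exterior curl problem — verifying that the hypotheses on $\H$ (especially $\mathbb C^{2+\a,0}$ continuity and the $O(|x|^{-\g})$ decay with $\g>2$) are exactly what is needed to invoke the relevant existence and uniqueness statements of \cite{NW} and to pin down $\phi=\phi^0$; the interior reconstruction and the Schauder bootstrap are routine by comparison.
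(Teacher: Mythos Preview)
Your plan for part (i) is correct and essentially identical to the paper's argument.

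For part (ii), your overall architecture --- reconstruct $\A$ inside $\O$ from $\H$, build the exterior piece via the curl problem for $\u=\lam\A-\mF^e$ using the results of \cite{NW}, check the zero-flux condition from the full continuity $\H\in\mathbb C^{2+\a,0}$, then correct by a gradient to match traces --- matches the paper's. But there is a genuine gap in case (iib).

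You set $\A_\O=-\lam f^{-2}\curl\H$ and note (correctly) that this lies only in $C^{1+\a}(\overline{\O},\mathbb R^3)$ when $\H\in C^{2+\a}(\overline{\O},\mathbb R^3)$. You then claim the glued field is in $\mathbb C^{2+\a,0}$ without ever upgrading the interior regularity. In (iia) the extra hypothesis $\H\in C^{3+\a}$ rescues this directly, since then $\curl\H\in C^{2+\a}$ and the product is $C^{2+\a}$; but in (iib) no such upgrade is available from your definition alone.

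The paper closes this gap by a different interior construction. It first takes $\B\in C^{2+\a}_{n0}(\overline{\O},\div0)$ solving $\curl\B=\H$, $\div\B=0$, $\nu\cdot\B=0$ (div-curl-gradient estimates gain one derivative, so in fact $\B\in C^{3+\a}$), then observes that $\bold Q=\lam^2 f^{-2}\curl\H+\B$ is curl-free by the second equation of \eqref{eqfH}, hence $\bold Q=\nabla\varphi$, and sets $\A^i=\lam^{-1}(\B-\nabla\varphi)$. Of course $\A^i=-\lam f^{-2}\curl\H$ as a function, but the decomposition allows a bootstrap: one computes $\Delta\varphi=\div\bold Q=\lam^2\nabla(f^{-2})\cdot\curl\H\in C^{1+\a}(\overline{\O})$, and --- this is the key use of the hypothesis you missed --- since $\nu\cdot\B=0$ and $\nu\cdot\curl\H=0$ on $\p\O$, one gets $\p\varphi/\p\nu=\nu\cdot\bold Q=0$. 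Schauder for this Neumann problem then gives $\varphi\in C^{3+\a}(\overline{\O})$, whence $\A^i=\lam^{-1}(\B-\nabla\varphi)\in C^{2+\a}(\overline{\O},\mathbb R^3)$. So the condition $\nu\cdot\curl\H=0$ is doing double duty: it yields the Meissner condition $\nu\cdot\A^-=0$ (as you noted) \emph{and} it provides the Neumann boundary condition needed to bootstrap $\A^i$ to $C^{2+\a}$.

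A minor remark: the gradient you add in $\O^c$ to fix the normal trace need not be harmonic; the paper simply invokes the trace theorem for $H^2(\O^c)$ to produce any $\phi\in C^{3+\a}_{\loc}(\overline{\O^c})$ with $\phi=0$ and prescribed $\p\phi/\p\nu$ on $\p\O$.
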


The proof of Lemma \ref{Lem7.1} involves some arguments used in
\cite[Lemma 3.3]{P3}. For completeness we give a brief proof in Appendix \ref{AppendixF}.
We emphasize that continuity of normal components of solutions to \eqref{eqfH} is important for constructing solutions of \eqref{eqfA} using the solutions of \eqref{eqfH}. When such continuity is required, we need the following assumption:
\eq\label{cond-7.2}
\text{$\O$ satisfies $(O)$ with $r\geq 3$ and $0<\a<1$, \q $\mH^e$ satisfies $(H)$},
\eeq
where the condition $(H_0)$ is replaced by a stronger condition $(H)$.

Similar to Lemma \ref{Lem6.2} we have the following existence result for \eqref{eqfH}-\eqref{cond-1.12}.

\begin{Lem}\label{Lem7.2}
Assume $\O$ and $\mH^e$ satisfy \eqref{cond-7.2}, and $\mH$ satisfies \eqref{cond-6.10}. Let $\lam_{f\A}(\O,\mH_T)$ and $\k_{f\A}(\O,\mH_T,\lam)$ be the numbers given in Theorem \ref{Thm4.9}.
For all $0<\lam< \lam_{f\A}(\O,\mH_T)$ and  $\k> \k_{f\A}(\O,\mH_T,\lam)$,
problem \eqref{eqfH}-\eqref{cond-1.12} has a solution $(f,\H)\in\mathbb
U(\O)$ with $\H=\mH$ in $\O^c$, and
$$(f,\H)\in
C^{3+\b}(\overline{\O})\times \C^{2+\a,0}_t(\overline{\O},\overline{\O^c},\mathbb
R^3)\q \text{for any }0<\b<\min\{1/2,\a\}.
$$
\end{Lem}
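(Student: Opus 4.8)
The plan is to follow the pattern of Lemma \ref{Lem6.2}: produce the interior part of the solution by invoking Theorem \ref{Thm4.9}, and then glue it to $\mH$ across $\p\O$.

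First I would pass from the exterior field $\mH$ to a tangential boundary datum for the interior problem by setting $\mB_T=\mH_T^+$, the outer tangential trace of $\mH$ on $\p\O$. Since $\mH$ obeys \eqref{cond-6.10}, $\mH_T^+\in T\!C^{2+\a}(\p\O,\Bbb R^3)$; and as $\O$ has no holes, $\O^c$ is simply-connected, so $\curl\mH=\0$ in $\O^c$ gives $\nu\cdot\curl\mH_T^+=\nu\cdot(\curl\mH)^+=0$ on $\p\O$. Hence $\mB_T\in\mB^{2+\a}(\p\O)$, and $\|\mB_T\|_{C^0(\p\O)}=\|\mH_T^+\|_{C^0(\p\O)}<\sqrt{5/18}$, so condition \eqref{ineq4.44} holds. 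Therefore Theorem \ref{Thm4.9} applies with this $\mB_T$, and the numbers it provides are precisely $\lam_{f\A}(\O,\mH_T):=\lam_{f\A}(\O,\mB_T)$ and $\k_{f\A}(\O,\mH_T,\lam):=\k_{f\A}(\O,\mB_T,\lam)$.

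Next, for $0<\lam<\lam_{f\A}(\O,\mH_T)$ and $\k>\k_{f\A}(\O,\mH_T,\lam)$, Theorem \ref{Thm4.9}(i) yields a classical Meissner solution $(f_\k,\A_\k)$ of \eqref{dec-eqfA-1}, unique in $\mathbb K(\O)$, such that $\H_\k:=\lam\,\curl\A_\k$ satisfies $(f_\k,\H_\k)\in\mathbb U(\O)$ and solves \eqref{eq4.7}. By Proposition \ref{Prop4.3} (whose hypotheses are met here, with $\mB_T\in T\!C^{2+\a}(\p\O,\Bbb R^3)$ and $(f_\k,\A_\k)\in\mathbb K(\O)$), one has $\H_\k\in C^{2+\a}(\overline{\O},\Bbb R^3)$ with $\curl\H_\k\in C^{1+\a}(\overline{\O},\Bbb R^3)$, and $f_\k\in C^{3+\b}(\overline{\O})$ for every $0<\b<\min\{1/2,\a\}$ (using that $\p\O$, being of class $C^{3+\a}$, is also of class $C^{3+\b}$ for $\b\le\a$). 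I would then set $f:=f_\k$ and define $\H$ on $\Bbb R^3$ by $\H:=\H_\k$ in $\overline{\O}$ and $\H:=\mH$ in $\O^c$. The boundary condition in \eqref{eq4.7} reads $(\H_\k)_T^-=\mB_T=\mH_T^+$, so the two tangential traces of $\H$ agree on $\p\O$; consequently $\H\in\C^{2+\a,0}_t(\overline{\O},\overline{\O^c},\Bbb R^3)$, $[\H_T]=\0$ on $\p\O$, and $\H\in\mH_{\loc}(\Bbb R^3,\curl)$ with $\curl\H=\curl\H_\k$ in $\O$ (which lies in $L^\infty(\O,\Bbb R^3)$) and $\curl\H=\0$ in $\O^c$. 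The second equation of \eqref{eq4.7} gives $\H|_\O=-\lam^2\curl(f^{-2}\curl\H)$, hence $\div(\H|_\O)=0$; meanwhile $\div(\H|_{\O^c})=0$ by \eqref{cond-6.10}, so $\H\in\mH(\O,\Bbb R^3)$, and $\H-\mH^e=\mH-\mH^e\to\0$ as $|x|\to\infty$ by \eqref{cond-6.10}, i.e. \eqref{cond-1.12} holds. Since the restriction of $(f,\H)$ to $\overline{\O}$ is $(f_\k,\H_\k)\in\mathbb U(\O)$, we get $(f,\H)\in\mathbb U(\O)$.

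Finally I would verify that $(f,\H)$ solves \eqref{eqfH}: the first two equations in $\O$ are exactly the first two equations of \eqref{eq4.7}; $\curl\H=\0$, $\div\H=0$ in $\O^c$ and $\p f/\p\nu=0$ on $\p\O$ have just been checked; and $[\H_T]=\0$ on $\p\O$ is the matching of tangential traces. Together with the regularity above this exhibits $(f,\H)$ as a (classical) solution of \eqref{eqfH}-\eqref{cond-1.12} with $\H=\mH$ in $\O^c$, $(f,\H)\in\mathbb U(\O)$, and $(f,\H)\in C^{3+\b}(\overline{\O})\times\C^{2+\a,0}_t(\overline{\O},\overline{\O^c},\Bbb R^3)$; if the weak form of Definition \ref{Def3.4} is also wanted, it follows by testing the interior equation against $\B\in\mU(\O,\Bbb R^3)$ and integrating by parts, the boundary integral that appears being exactly the one in that definition. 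I do not anticipate any real obstacle here: Theorem \ref{Thm4.9} supplies all the analytic substance, and what remains is the observation that $\mB_T=\mH_T^+\in\mB^{2+\a}(\p\O)$ — where the ``no holes'' hypothesis is used — together with routine bookkeeping of the function-space memberships after gluing.
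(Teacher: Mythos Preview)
Your proposal is correct and follows essentially the same approach as the paper: invoke Theorem \ref{Thm4.9} with boundary datum $\mB_T=\mH_T^+$ to obtain the interior solution $(f,\H^i)$ of \eqref{eq4.7}, then glue $\H^i$ to $\mH$ across $\p\O$ using the matching of tangential traces. The paper's proof is terser (it cites Theorem \ref{Thm-reg-fA} directly for the $C^{3+\b}$ regularity of $f$, whereas you go through Proposition \ref{Prop4.3}, which is equivalent), but the substance is identical. One small remark: the verification that $\nu\cdot\curl\mH_T^+=0$ does not actually require $\O^c$ to be simply-connected---it follows immediately from $\curl\mH=\0$ in $\O^c$ and the identity $\nu\cdot\curl\mH=\nu\cdot\curl\mH_T$ on $\p\O$; the simple-connectedness hypothesis is used elsewhere in the paper (e.g.\ for the curl-free extension in $\mB^{2+\a}(\overline{\O})$), not here.
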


\begin{proof} Let $\mH$ satisfy \eqref{cond-6.10}. From Theorem \ref{Thm4.9},  for all $0<\lam<\lam_{f\A}(\O,\mH_T)$ and $\k>\k_{f\A}(\O,\mH_T,\lam)$, \eqref{eq4.7} with
boundary data $\mB_T$ replaced by $\mH_T$ has a unique
solution $(f,\H^i)\in C^{2+\a}(\overline{\O})\times C^{2+\a}(\overline{\O},\mathbb
R^3)$, and in fact $(f,\H^i)\in \mathbb U(\O)$. From Theorem \ref{Thm-reg-fA},
$f\in C^{3+\b}(\overline{\O})$ for any $0<\b<\min\{1/2,\a\}$. Define a vector
field $\H$ in $\Bbb R^3$ by letting $\H=\H^i$ in $\O$ and $\H=\mH$ in
$\O^c$. Since $\H^i_T=\mH_T$ on $\p\O$, we have $[\H_T]=0$ on
$\p\O$, and hence $\H\in
\C^{2+\a,0}_t(\overline{\O},\overline{\O^c},\mathbb R^3)$. Thus $(f,\H)$
solves \eqref{eqfH}-\eqref{cond-1.12}.
\end{proof}

\subsection{Existence of solutions to \eqref{eqfA}-\eqref{1.4}}\

\begin{Def}\label{Def7.3} Assume $\O$ and $\mH^e$ satisfy \eqref{cond-7.2}, and $\mH$ satisfies \eqref{cond-6.10}.
Let $\lam_{f\A}(\O,\mH_T)$ and $\k_{f\A}(\O,\mH_T,\lam)$ be the numbers given in Theorem \ref{Thm4.9}.
For all $0<\lam< \lam_{f\A}(\O,\mH_T)$ and  $\k> \k_{f\A}(\O,\mH_T,\lam)$, let $(f,\A)$ be the unique Meissner solution of \eqref{dec-eqfA-1} lying in $\Bbb K(\O)$ with the boundary data $\mB_T$ replaced by $\mH_T^+$. Then we denote
\eq\label{map-P}
\mathcal P(\lam,\k,\mH_T^+)=\lam\,\curl\A.
\eeq
\end{Def}

\begin{Lem}\label{Lem7.4}
Assume $\O$ and $\mH^e$ satisfy \eqref{cond-7.2}. For any $\var_0>0$ small, there exist positive numbers $\lam_{f\A}(\O,\var_0)$ and $\k_{f\A}(\O,\var_0,\lam)$ such that, for any $0<\lam<\lam_{f\A}(\O,\var_0)$ and $\k>\k_{f\A}(\O,\var_0,\lam)$, if there exists $\mH$ on
$\overline{\O^c}$ satisfying the following conditions:
\begin{itemize}
\item[(a)]  $\mH$ satisfies \eqref{cond-6.10}  and \eqref{cond-6.14};
\item[(b)] $\mH$ satisfies the following comparability condition
\eq\label{cond-7.4}
\nu\cdot[\mathcal P(\lam,\k,\mH^+_T)]^-=\nu\cdot\mH^+\q\text{\rm on}\;\;\p\O;
\eeq
\item[(c)] $\mH$ satisfies  \eqref{cond-6.16} for some vector field $\mF$;
\end{itemize}
then  problem \eqref{eqfA}-\eqref{1.4} has a Meissner solution
$(f,\A)$ such that $\lam\,\curl\A=\mH$ in $\O^c$,
and $(f,\A)\in
C^{3+\b}(\overline{\O})\times\C^{2+\a,0}(\overline{\O},\overline{\O^c},\mathbb R^3)$
for any $0<\b<1/2$.
\end{Lem}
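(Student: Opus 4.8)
The plan is to reuse the decomposition machinery from Sections 3--6, exactly as Proposition \ref{Prop6.5} does for the limiting system \eqref{eqA}, but with the interior curl system \eqref{eqA-Omega} replaced by the interior Meissner BVP \eqref{dec-eqfA-1}. First I would fix $\var_0>0$ small and set $\lam_{f\A}(\O,\var_0)$ and $\k_{f\A}(\O,\var_0,\lam)$ to be the numbers produced by Theorem \ref{Thm4.9} (taken uniform over all boundary data $\mH_T$ with $\|\mH_T\|_{C^0(\p\O)}\le \sqrt{5/18}-\var_0$; the uniformity comes from \eqref{limit-mu-star} and \eqref{lim-lam-k}, just as in Step 1 of the proof of Proposition \ref{Prop6.5}). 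Then, given $\mH$ satisfying (a)--(c): by condition (a) we have $\|\mH^+_T\|_{C^0(\p\O)}\le\sqrt{5/18}-\var_0$, so for $0<\lam<\lam_{f\A}(\O,\var_0)$ and $\k>\k_{f\A}(\O,\var_0,\lam)$ Theorem \ref{Thm4.9}(i) gives a classical Meissner solution $(f,\A^i)$ of \eqref{dec-eqfA-1} with boundary datum $\mB_T=\mH^+_T$, unique in $\mathbb K(\O)$; by Proposition \ref{Prop4.3} and Theorem \ref{Thm-reg-fA} we have $f\in C^{3+\b}(\overline\O)$ and $\A^i\in C^{2+\a}(\overline\O,\Bbb R^3)$, with $\nu\cdot(\A^i)^-=0$ on $\p\O$, and $\lam\,\curl\A^i=\mathcal P(\lam,\k,\mH^+_T)$ by Definition \ref{Def7.3}.

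Next I would solve the exterior problem. Set $\mathcal A^\lam_T=(\A^i)^-_T\in T\!C^{2+\a}(\p\O,\Bbb R^3)$ and consider \eqref{dec-eqfA-2}--\eqref{1.4} with data $\mathcal A_T=\mathcal A^\lam_T$, $\mathcal B_T=\mH^+_T$, seeking $\A^o$ in $\O^c$ with $\curl^2\A^o=\0$, $(\A^o)^+_T=\mathcal A^\lam_T$, $\lam(\curl\A^o)^+_T=\mH^+_T$, and $\lam\,\curl\A^o-\mH^e\to\0$ at infinity. Following Steps 2--3 of the proof of Proposition \ref{Prop6.5} verbatim: letting $\mathbf u=\lam\,\A^o-\mF$ one is reduced (using $\curl\mF=\mH$ from (c), and Lemma \ref{Lem5.1} which forces $\mH=\mH^e+\nabla\phi$ for a harmonic $\phi$ with the right flux) to a problem of the form $\curl\mathbf u=\0$, hence $\mathbf u=\nabla\psi$ for some $\psi\in C^{3+\a}_{\loc}(\overline{\O^c})$ with $(\nabla\psi)_T=\lam\,\mathcal A^\lam_T-\mF^+_T$ on $\p\O$; by \cite[Lemma 2.5]{NW} this is solvable iff $\nu\cdot\curl(\lam\,\mathcal A^\lam_T-\mF^+_T)=0$ on $\p\O$, which by $\curl\mF=\mH$ in $\O^c$ and $\lam\,\curl\A^i=\mathcal P(\lam,\k,\mH^+_T)$ in $\O$ reduces exactly to the comparability condition \eqref{cond-7.4}. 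Thus hypothesis (b) is precisely what makes the exterior piece solvable. Defining $\A=\A^i$ in $\O$, $\A=\lam^{-1}(\mF+\nabla\psi)$ in $\O^c$, Lemma \ref{Lem-equiv-dec-eqfA-2} guarantees $(f,\A)$ is a weak Meissner solution of \eqref{eqfA}; the matching tangential traces give $\A\in\C^{2+\a,0}_t(\overline\O,\overline{\O^c},\Bbb R^3)$ and $\lam\,\curl\A=\mH$ in $\O^c$.

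Finally, to upgrade to a genuine classical Meissner solution with $\A\in\C^{2+\a,0}(\overline\O,\overline{\O^c},\Bbb R^3)$ (continuous across $\p\O$, not merely its tangential trace), I would choose $\psi$ as in Step 4 of the proof of Proposition \ref{Prop6.5}: first take $\psi^\lam_0$ the solution of the Laplace problem with $(\nabla\psi^\lam_0)_T=\lam\,\mathcal A^\lam_T-\mF^+_T$ on $\p\O$, zero flux, and decay at infinity (solvable by \cite[Corollary 2.1, Lemma 2.6]{NW} thanks to \eqref{cond-7.4}), then add a correction $\xi^\lam\in C^{3+\a}_{\loc}(\overline{\O^c})$ with $\xi^\lam=0$ and $\p\xi^\lam/\p\nu=-\nu\cdot\mF^+-\p\psi^\lam_0/\p\nu$ on $\p\O$ (produced via the $H^2$ trace theorem plus Schauder estimates for the biharmonic minimizer, exactly as in Proposition \ref{Prop6.5}), and set $\psi^\lam=\psi^\lam_0+\xi^\lam$. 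Since $\nu\cdot(\A^i)^-=0$, the normal traces then match and $\A$ is continuous across $\p\O$. The regularity $(f,\A)\in C^{3+\b}(\overline\O)\times\C^{2+\a,0}(\overline\O,\overline{\O^c},\Bbb R^3)$ follows from Theorem \ref{Thm-reg-fA}(c), Proposition \ref{Prop4.3}, and the elliptic regularity of the exterior Laplace and biharmonic problems. The main obstacle is organizational rather than technical: one must check that all the uniformity-in-$\mathcal H_T$ statements behind $\lam_{f\A}$, $\k_{f\A}$ genuinely transfer from Theorem \ref{Thm4.9} (which, unlike \cite[Theorem 7.4]{BaP} used in Proposition \ref{Prop6.5}, also carries a $\k$-threshold), and that the curl-free extension hypothesis \eqref{cond-B-4.1} on $\mathcal B_T=\mH^+_T$ needed for \eqref{dec-eqfA-1} is automatic here because $\mH^+_T=(\mH^e_T)^++\bv$ with $\bv\in\mathcal B^{2+\a}(\p\O)$ and $\nu\cdot\curl\mH^e_T=0$ under $(H)$, so that $\nu\cdot\curl\mH^+_T=0$ on $\p\O$ and Lemma \ref{Lem-extension} applies.
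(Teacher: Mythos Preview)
Your proposal is correct and follows essentially the same route as the paper's proof: solve the interior BVP \eqref{dec-eqfA-1} via Theorem \ref{Thm4.9}, reduce the exterior problem to finding $\psi$ with $(\nabla\psi)_T=\lam(\A^i_\lam)^-_T-\mF^+_T$ (solvable by \cite[Lemma 2.5]{NW} precisely when \eqref{cond-7.4} holds), and then adjust by a correction $\xi^\lam$ to match normal traces and obtain $\A\in\C^{2+\a,0}(\overline{\O},\overline{\O^c},\mathbb R^3)$. The only minor deviation is that the paper takes $\psi^\lam$ to be any harmonic function satisfying \eqref{cond-7.5} rather than the specific zero-flux solution $\psi^\lam_0$ you borrow from Step 4 of Proposition \ref{Prop6.5}, and it attributes the uniformity of $\lam_{f\A},\k_{f\A}$ directly to \cite{BaP} rather than to \eqref{limit-mu-star}--\eqref{lim-lam-k}; neither difference is substantive.
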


\begin{proof}
Let $\mH$ satisfy \eqref{cond-6.10}. From Theorem \ref{Thm4.9}, for all $0<\lam<\lam_{f\A}(\O,\mH_T)$ and $\k>\k_{f\A}(\O,\mH_T,\lam)$, BVP \eqref{dec-eqfA-1} with $\mB_T=\mH_T$
has a Meissner solution $(f_\lam,\A^i_\lam)\in C^{2+\a}(\overline{\O})\times C^{2+\a}(\overline{\O},\mathbb R^3)$, and  it is the only solution in $\mathbb K(\O)$.

We set $\mA_T=(\A^i_\lam)^-_T$ and look for a solution $\A^o$ of \eqref{dec-eqfA-2}-\eqref{1.4} for these $\mA_T$ and $\mB_T$. If \eqref{dec-eqfA-2}-\eqref{1.4} has a solution $\A^o$, since $\mH$ satisfies \eqref{cond-6.16} for some $\mF$, we can write as in the proof of Proposition \ref{Prop6.5} that $\lam\,\A^o=\mF+\nabla\psi$ in $\O^c$, where $\psi$ satisfies
\eq\label{cond-7.5}
(\nabla\psi)_T=\lam(\A^i_\lam)^-_T-\mF_T^+\q\text{\rm on}\;\;\p\O.
\eeq
Since $(\A^i_\lam)^-\in C^{2+\a}(\p\O,\Bbb R^3)$ and $\mF\in
C^{2+\a}(\p\O,\Bbb R^3)$, so $\lam(\A^i_\lam)^- -\mF\in
C^{2+\a}(\p\O,\Bbb R^3).$
Since $\O^c$ is simply-connected, as in the proof of Proposition \ref{Prop6.5} we
can show that, existence of $\psi$ satisfying \eqref{cond-7.5} is true if and only if the following equality holds:
\eq\label{cond-7.6}
\nu\cdot\curl(\lam\,(\A^i_\lam)_T-\mF^+_T)=0\q\text{\rm on}\;\;\p\O,
\eeq
which is exactly \eqref{cond-7.4} because
$$\nu\cdot\curl(\lam\,(\A^i_\lam)_T-\mF^+_T)=\lam\nu\cdot\curl(\A^i_\lam)_T-\nu\cdot\mH=\nu\cdot[\mathcal P(\mH^+_T)]^- -\nu\cdot\mH^+.
$$
Thus \eqref{cond-7.4} is necessary for solvability of \eqref{dec-eqfA-2}-\eqref{1.4}.

Now we fix $\var_0>0$. From the discussions in \cite{BaP} we know that the constants $\lam_{f\A}(\O,\mH_T)$ and $\k_{f\A}(\O,\mH_T,\lam)$ can be chosen uniformly valid for all $\mH$ satisfying \eqref{cond-6.14}. So we can find positive constants $\lam_{f\A}(\O,\var_0)$ and $\k_{f\A}(\O,\var_0,\lam)$ such that, for all $\mH$ satisfying \eqref{cond-6.14} it holds that
\eq\label{lam-ka-var}
\aligned
&\lam_{f\A}(\O,\mH_T)\geq \lam_{f\A}(\O,\var_0),\\
&\k_{f\A}(\O,\mH_T,\lam)\geq \k_{f\A}(\O,\var_0,\lam)\q\forall 0<\lam<\lam_{f\A}(\O,\var_0,\lam).
\endaligned
\eeq

Now assume $0<\lam<\lam_{f\A}(\O,\var_0)$ and $\k>\k_{f\A}(\O,\var_0,\lam)$, and assume $\mH$ satisfies \eqref{cond-6.14} and \eqref{cond-7.4}. Then \eqref{cond-7.6} holds, and we can find a harmonic function $\psi^\lam\in C^{3+\a}_{\loc}(\overline{\O^c})$ satisfying \eqref{cond-7.5}.
We choose $\xi^\lam\in C^{3+\a}_{\loc}(\overline{\O^c})$ such that
$$
 \xi^\lam=0\q\text{\rm and}\q {\p\xi^\lam\over\p\nu}=-\nu\cdot\mF^+-{\p\psi^\lam\over\p\nu}\q\text{\rm on }\p\O,
$$
and define
$$\A_\lam=\begin{cases} \A^i_\lam\q&\text{in }\O,\\
\lam^{-1}(\mF+\nabla\psi^\lam+\nabla\xi^\lam)\q&\text{in }\O^c.
\end{cases}
$$
Using the facts $(\nabla\xi^\lam)_T=0$ and $\nu\cdot\A^i_\lam=0$ on $\p\O$, we see that $(\A_\lam)^-=(\A_\lam)^+$ on
$\p\O$, and thus $\A_\lam\in \C^{2+\a,0}(\overline{\O},\overline{\O^c},\mathbb R^3)$.
So $(f_\lam,\A_\lam)$ is a Meissner solution of problem \eqref{eqfA}-\eqref{1.4}, and it has
the properties mentioned in the lemma.
\end{proof}

Combining Lemma \ref{Lem7.4} with Lemma \ref{Lem-loc-stable} and Lemma \ref{Lem6.6} we get the following existence result for problem \eqref{eqfA}-\eqref{1.4}, which is similar to Theorem \ref{Thm6.7}.

\begin{Thm}\label{Thm7.5} Assume $\O$ and $\mH^e$ satisfy \eqref{cond-7.2}.
\begin{itemize}
\item[(i)] For small $\lam$ and large $\k$, problem \eqref{eqfA}-\eqref{1.4} has a classical Meissner solution if and only if there exists a vector field $\bold v\in \mB^{2+\a}(\p\O)$ satisfying \eqref{cond-6.28} such that
\eq\label{cond-7.8}
\nu\cdot[\mathcal P(\lam,\k,(\mH^e_T)^++\bold v)]^-=\nu\cdot(\mH^e)^+ +{\p\phi_{\bold v,0}\over\p\nu}\q\text{\rm on}\;\;\p\O,
\eeq
where $\phi_{\bold v,0}$ is the solution of \eqref{eq6.29} for $\mu=0$.
\item[(ii)] Given $\var_0>0$, let $\lam_{f\A}(\O,\var_0)$ and $\k_{f\A}(\O,\var_0,\lam)$ be the numbers given in Lemma \ref{Lem7.4}. For any $0<\lam<\lam_{f\A}(\O,\var_0)$ and $\k>\k_{f\A}(\O,\var_0,\lam)$, if \eqref{cond-7.8} holds for some $\bold v\in \mB^{2+\a}(\p\O)$ satisfying \eqref{cond-6.34}, then
\eqref{eqfA}-\eqref{1.4} has a locally $L^\infty$-stable, classical Meissner solution $(f,\A)$ such that $(f,\A)\in C^{3+\b}(\overline{\O})\times\mathbb
C^{2+\a,0}(\overline{\O},\overline{\O^c},\mathbb R^3)$ with $0<\b<1/2$, $\A$ satisfies \eqref{cond-1.8} and
$$
\lam\,\curl\A=\begin{cases}
\mathcal P(\lam,\k, (\mH^e_T)^++\bold v)\q&\text{\rm in }\O,\\
\mH^e+\nabla\phi_{\bold v,0}\q&\text{\rm in }\O^c.
\end{cases}
$$
\end{itemize}
\end{Thm}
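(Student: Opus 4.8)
\textbf{Proof proposal for Theorem \ref{Thm7.5}.}
The plan is to mirror the structure of the proof of Theorem \ref{Thm6.7}, replacing the role of the limiting solution operator $\mS(\lam,\cdot)$ by the operator $\mathcal P(\lam,\k,\cdot)$ of Definition \ref{Def7.3}, and replacing Proposition \ref{Prop6.5} by Lemma \ref{Lem7.4}. For part (i), the necessity direction proceeds as follows. Suppose $(f,\A)$ is a classical Meissner solution of \eqref{eqfA}-\eqref{1.4} for some small $\lam$ and large $\k$. By Lemma \ref{Lem-equiv-dec-eqfA}, the restriction $(f,\A^i)$ with $\A^i=\A|_{\overline{\O}}$ is a weak (hence classical, by Proposition \ref{Prop4.3}) Meissner solution of \eqref{dec-eqfA-1} with boundary datum $\mB_T=\lam(\curl\A^i)^-_T$, and this datum lies in $\mB^{2+\a}(\p\O)$ by \eqref{cond-3.27}. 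By the uniqueness statement in Theorem \ref{Thm4.9}(i), $(f,\A^i)$ is the unique solution in $\Bbb K(\O)$, so $\lam\,\curl\A^i=\mathcal P(\lam,\k,\mB_T)$ by definition. Setting $\mH=\lam\,\curl\A|_{\overline{\O^c}}$, we apply Lemma \ref{Lem6.6}(i) to write $\mH=\mH^e+\nabla\phi_{\bv,0}$ with $\bv=(\nabla\phi)^+_T\in\mB^{2+\a}(\p\O)$ satisfying \eqref{cond-6.28}; here $\mu=0$ because $\mH=\lam\,\curl\A$ in $\O^c$ has zero flux. On $\p\O$ one has $\mB_T=\lam(\curl\A)^-_T=\lam(\curl\A)^+_T=\mH_T^+=(\mH^e_T)^++\bv$ by tangential continuity of $\curl\A$, and $\nu\cdot(\mH^e)^++\p\phi_{\bv,0}/\p\nu=\nu\cdot\mH^+=\nu\cdot[\lam\,\curl\A]^+=\nu\cdot[\lam\,\curl\A]^-=\nu\cdot[\mathcal P(\lam,\k,(\mH^e_T)^++\bv)]^-$, which is exactly \eqref{cond-7.8}. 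Conversely, if such a $\bv$ exists, set $\mH=\mH^e+\nabla\phi_{\bv,0}$; then $\mH$ satisfies \eqref{cond-6.10} with $\|\mH_T^+\|_{C^0}<\sqrt{5/18}$ (from \eqref{cond-6.28}), it satisfies \eqref{cond-6.16} via $\mF=\mF^e+\w_{\bv}$ from Lemma \ref{Lem6.6}(ii) (using $(F)$, which is part of \eqref{cond-7.2}), and \eqref{cond-7.8} rewrites as the comparability condition \eqref{cond-7.4} for $\mathcal P$; then Lemma \ref{Lem7.4} produces the desired Meissner solution.

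For part (ii), given $\var_0>0$ let $\lam_{f\A}(\O,\var_0)$ and $\k_{f\A}(\O,\var_0,\lam)$ be the numbers furnished by Lemma \ref{Lem7.4}. Fix $0<\lam<\lam_{f\A}(\O,\var_0)$ and $\k>\k_{f\A}(\O,\var_0,\lam)$, and suppose \eqref{cond-7.8} holds for some $\bv\in\mB^{2+\a}(\p\O)$ with $\|\mH^e_T+\bv\|_{C^0(\p\O)}\le\sqrt{5/18}-\var_0$. Put $\mH=\mH^e+\nabla\phi_{\bv,0}$ with $\phi_{\bv,0}$ solving \eqref{eq6.29} for $\mu=0$, and $\mF=\mF^e+\w_{\bv}$ with $\w_{\bv}$ the solution of \eqref{eq6.31}; by Lemma \ref{Lem6.6}(ii) the pair $(\mH,\mF)$ has the properties \eqref{cond-6.10} and \eqref{cond-6.16}, while \eqref{cond-6.14} holds by hypothesis. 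As noted above, \eqref{cond-7.8} is precisely the comparability condition \eqref{cond-7.4}. Therefore hypotheses (a), (b), (c) of Lemma \ref{Lem7.4} are met, and Lemma \ref{Lem7.4} yields a Meissner solution $(f,\A)$ of \eqref{eqfA}-\eqref{1.4} with $(f,\A)\in C^{3+\b}(\overline{\O})\times\C^{2+\a,0}(\overline{\O},\overline{\O^c},\Bbb R^3)$ for $0<\b<1/2$, with $\lam\,\curl\A=\mathcal P(\lam,\k,(\mH^e_T)^++\bv)$ in $\O$ and $\lam\,\curl\A=\mH=\mH^e+\nabla\phi_{\bv,0}$ in $\O^c$, and with $\lam\,\A=\mF^e+\w_{\bv}+\nabla\psi^\lam$ in $\O^c$ for a suitable $\psi^\lam$ chosen as in the proof of Lemma \ref{Lem7.4} so that $\A\in C^{2+\a,0}(\overline{\O},\overline{\O^c},\Bbb R^3)$. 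Finally, since $(f,\A^i)\in\Bbb K(\O)$ means $f^2-|\A|^2-1/3>0$ on $\overline{\O}$, i.e. $\inf_{\overline{\O}}(f^2-|\A|^2)>1/3$, Lemma \ref{Lem-loc-stable}(i) guarantees that $(f,\A)$ is locally $L^\infty$-stable, and the bound \eqref{cond-1.8} follows from \eqref{cond-1.10} via the equivalence established in \cite{P3} (or directly from $|\A|^2<f^2-1/3\le 1-1/3$ when $f\le1$).

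The main obstacle is bookkeeping rather than a genuinely new difficulty: one must track carefully that the two tangential-continuity conditions $[\A_T]=\0$ and $[(\curl\A)_T]=\0$ across $\p\O$ are exactly what force $\mu=0$ in the exterior representation (so that $\phi_{\bv,0}$, not a general $\phi_{\bv,\mu}$, appears) and that the single one-sided condition $\nu\cdot\A^-=0$ translates into the scalar comparability identity \eqref{cond-7.8}. The genuinely substantive inputs — existence and uniqueness in $\Bbb K(\O)$ for the interior BVP \eqref{dec-eqfA-1} for large $\k$ (Theorem \ref{Thm4.9}), solvability and decay for the exterior div-curl problems (Theorem \ref{Thm5.3} and the cited results from \cite{NW}), and the uniform-in-$\mH_T$ choice of the threshold constants (from \cite{BaP})— are all already in place, so the proof is an assembly of these pieces. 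One should also double-check that the operator $\mathcal P$ is well defined on the relevant range of boundary data, i.e. that $(\mH^e_T)^++\bv$ indeed lies in $\mB^{2+\a}(\p\O)$ and has $C^0$-norm below $\sqrt{5/18}$ whenever \eqref{cond-6.34} holds, which is immediate since $\bv\in\mB^{2+\a}(\p\O)$ and $\mH^e_T\in C^{2+\a}(\p\O,\Bbb R^3)$ with $\nu\cdot\curl\mH^e_T=0$ under $(H)$.
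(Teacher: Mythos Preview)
Your proposal is correct and follows essentially the same route as the paper: the paper states that Theorem \ref{Thm7.5} is obtained by combining Lemma \ref{Lem7.4}, Lemma \ref{Lem-loc-stable}, and Lemma \ref{Lem6.6}, exactly as in your argument, with the structure modeled on the proof of Theorem \ref{Thm6.7}. One small slip: condition $(F)$ is \emph{not} part of \eqref{cond-7.2} (which comprises only $(O)$ and $(H)$); the paper's parallel Theorem \ref{Thm6.7} explicitly assumes $(F)$, and you need it to invoke Lemma \ref{Lem6.6}(ii) and to produce the $\mF$ required in hypothesis (c) of Lemma \ref{Lem7.4}, so you should either add $(F)$ as an assumption or note that it is implicit here.
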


\begin{Def}\label{Def7.6} Assume $\O$ and $\mH^e$ satisfy \eqref{cond-7.2} and $\mH$ satisfies \eqref{cond-6.10}. Let $(f,\A)$ be the unique Meissner solution of \eqref{dec-eqfA-1} in $\Bbb K(\O)$ with boundary data $\mB_T$ replaced by $\mH_T^+$. Define a \emph{Dirichlet-to-Neumann} map $\Pi$ by
\eq\label{map-pi}
\Pi(\lam,\k,\mH_T^+)=\nu\cdot[\mathcal P(\lam,\k,\mH_T^+)]^-\equiv \nu\cdot(\lam\,\curl\A)^-,
\eeq
where $\nu$ is the unit outer normal vector to $\p\O$.
Note that up to the scalar multiplier $\lambda$, the operator $\Pi$ maps the tangential component of $\curl\A$ of the magnetic potential part of  a solution $(f,\A)$ of \eqref{dec-eqfA-1} to the normal component of the curl.
With this map, the comparability condition \eqref{cond-6.15} can be written as
\eq\label{eq-pi}
\Pi(\lam,\k,\mH_T^+)=\nu\cdot\mH^+\q\text{\rm on }\p\O.
\eeq
\end{Def}


\appendix

\section{Proof of Lemmas \ref{Lem3.5}, \ref{Lem-Uniq}}\label{AppendixA}

\begin{proof}[\bf Proof of Lemma \ref{Lem3.5}]\

{\it Step 1}. We prove (i). Let $(f,\A)$ be a weak solution of \eqref{eqfA}. Fix $R>0$ so that $\O\Subset B(0,R/2)$. We decompose
$\A=\A_R+\nabla\phi_R$ on $B(0,R)$, where $\phi_R\in H^1(B(0,R),\Bbb R^3)$, $\A_R\in H^1_{n0}(B(0,R),\div0)$.
From \eqref{wkeqfA-2} we have
$$
\int_{\O^c\cap B(0,R)}\curl\A_R\cdot\curl\B\, dx
=0,\q \forall \B\in C^1_c(\O^c\cap B(0,R),\Bbb R^3).
$$
Since $\div\A_R=0$ in $B(0,R)$, using this equality and applying the difference-quotient method (see for instance \cite[Section 4]{BaP}), we can show that $\A_R\in H^2_{\loc}(\O^c\cap B(0,R),\Bbb R^3)$, and
$\curl^2\A_R=\0$ a.e. in $\O^c\cap B(0,R)$.
This is true for any large $R$,
so \eqref{curl2A0} is true. This together with
the assumption $\curl\A\in L^2_{\loc}(\Bbb R^3,\Bbb R^3)$ (see Definition \ref{Def3.1})
imply that $\curl\A\in \mH_{\loc}(\O^c,\curl0)$. By the trace theorem of $\curl$-spaces (see
\cite[p.204, Theorm 2]{DaL3}), the outer tangential trace $(\curl\A)_T^+$
exists in $T\!H^{-1/2}(\p\O,\Bbb R^3)$. Then using \eqref{wkeqfA-2} and integration by parts we get
\eqref{wkeqA-exterior}.

{\it Step 2}. We prove (ii).
Let $\overline{B}(x_0,R)\subset\O$, denote $B_R=B(x_0,R)$, and decompose $\A=\A_R+\nabla\phi_R$ in $B_R$ with
$\A_R\in H^1_{n0}(B_R,\div0)$. If $\B\in C^2_c(B_R,\Bbb R^3)$,
we have
$$\aligned
&\int_{B_R}\curl\A_R\cdot\curl\B\, dx=\int_{B_R}\A_R\cdot\curl^2\B\, dx
=\int_{B_R}\A_R\cdot[-\Delta\B+\nabla\div\B] dx\\
=&\int_{B_R}[D\A_R\cdot D\B-(\div\A_R)\cdot(\div \B)]dx
=\int_{B_R}D\A_R\cdot D\B\, dx.
\endaligned
$$
So from \eqref{3.6} we have
$$\int_{B_R}\{D\A_R\cdot D\B+\lam^{-2}f^2\A\cdot\B\}dx=0,\q \forall \B\in C^2_c(B_R,\Bbb R^3).
$$
Hence $\A_R$ is a weak solution of $\Delta\A_R=\lam^{-2}f^2\A$ in $B_R$.
By the standard $L^2$ estimate of Laplace equation we have $\A_R\in H^2(B_{R/2},\Bbb R^3)$, so
$\curl\A=\curl\A_R\in H^1(B_{R/2},\Bbb R^3)$, hence
$\curl^2\A=\curl^2\A_R$
exists for a.e. $x\in B_{R/2}$ and it belongs to $L^2(B_{R/2},\Bbb R^3)$, and
the equality
$\curl^2\A=\curl^2\A_R=-\Delta \A_R=-\lam^{-2}f^2\A$
holds for a.e. $x\in B_{R/2}$. So we conclude that $\curl^2\A$ exists for a.e. $x\in\O$, and
\eq\label{A.1}
\curl^2\A=-\lam^{-2}f^2\A\q\text{for a.e. }x\in\O.
\eeq
The right hand side of \eqref{A.1} belongs to $L^2(\O,\Bbb R^3)$,
so does the left hand side, hence $\curl\A\in \mH(\O,\curl)$. Then from the trace theorem of curl-spaces the inner tangential trace
$(\curl\A)_T^-\in T\!H^{-1/2}(\p\O,\Bbb R^3)$.
From this and Step 1 we have $[(\curl\A)_T]=(\curl\A)_T^+ -(\curl\A)_T^-\in H^{-1/2}(\p\O,\Bbb R^3)$.
Then using \eqref{3.6} and integration by parts we have
$$
\int_\O\B\cdot(\lam^2\curl^2\A+f^2\A)dx-\lam\int_{\p\O}\{\B\times[(\curl\A)_T]\}\cdot\nu dS=0,\q\forall \B\in C^1_c(\Bbb R^3,\Bbb R^3).
$$
From this and \eqref{A.1} we get \eqref{t-cont-curlA}.

{\it Step 3}. We prove (iii). Since $\A\in \mA(\O,\Bbb R^3)$,
so $\mB=\curl\A\in L^2_{\loc}(\Bbb R^3,\Bbb R^3)$. From \eqref{t-cont-curlA} we have $[\mB_T]=\0$ on $\p\O$,
and since $\curl(\mB|_{\O})\in L^2(\O,\Bbb R^3)$ and
$\curl(\mB|_{\O^c})=\0$, we see that
$\curl\mB$ is well-defined
in $\Bbb R^3$ and $\curl\mB\in \mH(\Bbb R^3,\curl, \div0)$, hence
$\mB=\curl\A\in H^1_{\loc}(\Bbb R^3,\Bbb R^3)$.
So $(\curl\A)^+=(\curl\A)^-\in H^{1/2}(\p\O,\Bbb R^3)$.
\end{proof}

\begin{proof}[\bf Proof of Lemma \ref{Lem-Uniq}]\

Let $G(f,\A)(x)$ be the function given in \eqref{funct-E-Omega}, and we use the notation $G'(f,\A)$, $G''(f,\A)$, $G'_f(f,\A)$ and $G'_\A(f,\A)$ given after \eqref{funct-E-Omega}.
Let $(f_0,\A_0)$ and
$(f_1,\A_1)\in H^1(\O)\times \mA(\O,\Bbb R^3,\lam^{-1}\mH^e)$ be two Meissner solutions of \eqref{eqfA}-\eqref{1.4}.
Applying the integral form of \eqref{eqfA} to $(f_0,\A_0)$ and $(f_1,\A_1)$ respectively and subtracting one from another we get, for any $(g,\w)\in H^1(\O)\times \mB(\O,\Bbb R^3)$,
$$
\aligned
\int_\O\Bigl\{&{\lam^2\over\k^2}\nabla (f_1-f_0)\cdot\nabla g+{1\over
2}\big\langle
[G'(f_1,\A_1)-G'(f_0,\A_0)],(g,\w)\big\rangle\Big\}dx\\
&+\lam^2\int_{\Bbb R^3}\curl(\A_1-\A_0)\cdot\curl\w dx=0.
\endaligned
$$
Let $g=f_1-f_0$, $f_t=f_0+tg$,
$\w=\A_1-\A_0$, $\A_t=\A_0+t\w$. Since $\mH^e$ satisfies $(H)$, we can show $\w\in\mB(\O,\Bbb R^3)$.
Hence we have
\eq\label{A.2}
\aligned
&{\lam^2\over\k^2}\int_\O|\nabla g|^2dx+\lam^2\int_{\Bbb R^3}|\curl\w|^2dx\\
&+\int_\O\int_0^1\{|f_t\w+2g\A_t|^2+(3f_t^2-3|\A_t|^2-1)|g|^2\}dtdx=0.
\endaligned
\eeq
Since $(f_0,\A_0)$ and $(f_1,\A_1)$ satisfy \eqref{f-A-stable}, we can show that
$$
3f_t^2-3|\A_t|^2-1>0\q\text{for } x\in \O,\; 0\leq t\leq 1,
$$
see second part of Remark \ref{Rem4.1} (c). From this and \eqref{A.2} we see that $g=0$ in $\O$, i.e. $f_1\equiv
f_0$ on $\O$, which together with \eqref{A.2} implies that
$f_t\w+2g\A_t=f_0\w=\0$ in $\O$.
Since $f_0>0$, we have $\w=\0$ in $\O$, so $\A_1=\A_0$ in $\O$. Hence $(f_1,\A_1)=(f_0,\A_0)$ in $\O$.
Finally since $\curl\w=\0$ in $\Bbb R^3$ we see that
$\curl\A_1=\curl\A_0$ in $\O^c$.
\end{proof}

\section{Estimates of Solutions to BVP \eqref{dec-eqfA-1}}\label{AppendixB}

If $(f,\A)$ is a weak solution of \eqref{dec-eqfA-1}, then
$(f,\A)\in\mW(\O)$, $f$ is a weak solution of \eqref{3.4},
and $\A$ is a weak solution of
\eq\label{eqB.1}
\left\{\aligned
&\lam^2\curl^2\A+f^2\A=\0\q&\text{in }\O,\\
&\lam(\curl\A)^- _T=\mB_T\q&\text{\rm on }\p\O.
\endaligned\right.
\eeq

\begin{Lem}\label{LemB.1}
Assume that $\O$ is a bounded domain in $\mathbb R^3$
with a $C^2$ boundary and
$\mB_T\in T\!H^{-1/2}(\p\O,\mathbb R^3)$.
Let $(f,\A)\in\mW(\O)$ be a weak solution of \eqref{dec-eqfA-1} and set $\H=\lam\,\curl\A$.
\begin{itemize}
\item[(i)] For all $1<p<\infty$ and all $\a\in (0,1)$, we have
$$
\aligned
&f\in W^{2,p}(\O)\cap C^{1+\a}(\overline{\O}),\qq |f|\leq 1,\\
&f^2\A\in \mH(\O,\curl,\div0),\qq
\H\in\mH(\O,\curl,\div0),
\endaligned
$$
and there exists a positive constant $C=C(\O,\|\A\|_{L^\infty(\O)},\a)$ such that
\eq\label{f-C1}
\aligned
\Big({\lam\over\k}\Big)^\a[f]_\a+{\lam\over\k}\|Df\|_{C^0(\overline{\O})}+\Big({\lam\over\k}\Big)^{1+\a}[Df]_\a \leq C.
\endaligned
\eeq
If furthermore $\mB_T\in H^{1/2}(\p\O,\Bbb R^3)$,
then $\H\in H^1(\O,\Bbb R^3)$.

\item[(ii)] If $\A$ satisfies \eqref{1.5}, then $f^2\A\in H^1(\O,\Bbb R^3)$ and $\mB_T$ must satisfy \eqref{cond-3.27}.
If furthermore $\p\O$ is of $C^3$ and $\mB_T\in T\!H^{3/2}(\O,\Bbb R^3)$, then $\H\in H^2(\O,\Bbb R^3)$.
\end{itemize}
\end{Lem}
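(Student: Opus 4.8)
The plan is to treat the scalar Neumann problem for $f$ and the $\curl$ equation for $\A$ separately, and then to read the regularity of $\H=\lam\,\curl\A$ and of $f^2\A$ off the div-curl-gradient estimates of Section~\ref{Section2}; write $M=\|\A\|_{L^\infty(\O)}$, which is finite by the definition of $\mW(\O)$. First I would note that taking test pairs $(g,\0)\in\mV(\O)$ in \eqref{wkdec-eqfA-1} shows $f$ is a weak solution of the Neumann problem \eqref{3.4}; testing this with $g=(f-1)^+$ and with $g=\min\{f+1,0\}$ and using that the reaction term $(1-f^2-|\A|^2)f$ has the unfavourable sign on $\{f>1\}$ and on $\{f<-1\}$, the usual weak maximum principle argument gives $|f|\le1$. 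Then $(1-f^2-|\A|^2)f\in L^\infty(\O)$ with norm controlled by $M$, so $L^p$-regularity for the Neumann problem on a $C^2$ domain gives $f\in W^{2,p}(\O)$ for all $p<\infty$, hence $f\in C^{1+\a}(\overline\O)$ for all $\a\in(0,1)$. To obtain the scaled bound \eqref{f-C1}, set $\var=\lam/\k$, so $-\var^2\Delta f=(1-f^2-|\A|^2)f$ with $\p f/\p\nu=0$; a rescaling/covering argument (near any $x_0\in\overline\O$ work on balls, or near $\p\O$ on flattened half-balls, of radius comparable to $\min\{\var,1\}$, on which the rescaled $\hat f$ solves $-\Delta\hat f=\hat g$ with $\|\hat g\|_{L^\infty}\le C(M)$ and $|\hat f|\le1$, and invoke interior and boundary Schauder estimates; for $\var>1$ estimate instead the zero-mean part of $f$ directly) reproduces exactly the powers of $\var$ in \eqref{f-C1}. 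This bookkeeping of the $\lam/\k$ powers is the technical core of the lemma; everything else is comparatively soft.

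For the $\curl$ part, I would read the equation off \eqref{wkdec-eqfA-1}: taking $(0,\D)$ with $\D\in C^\infty_c(\O,\Bbb R^3)$ (so the boundary integral drops) gives $\lam^2\curl^2\A+f^2\A=\0$ in $\mD'(\O)$, and since $f^2\A\in L^\infty(\O)\subset L^2(\O)$ this holds a.e.\ and $\curl^2\A\in L^2(\O)$; taking $\D=\nabla\zeta$ with $\zeta\in C^\infty_c(\O)$ gives $\div(f^2\A)=0$ in $\O$. Since $f\in C^{1+\a}(\overline\O)\subset W^{1,\infty}(\O)$ and $\A\in L^\infty(\O)\cap\mH(\O,\curl)$, one has $\curl(f^2\A)=f^2\curl\A+2f\,\nabla f\times\A\in L^2(\O)$, so $f^2\A\in\mH(\O,\curl,\div0)$; and because $\curl\H=\lam\,\curl^2\A=-\lam^{-1}f^2\A\in L^2(\O)$, $\div\H=0$, $\H\in L^2(\O)$, also $\H\in\mH(\O,\curl,\div0)$. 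If moreover $\mB_T\in H^{1/2}(\p\O,\Bbb R^3)$, then $\nu\times\H^-=\nu\times\mB_T\in H^{1/2}(\p\O,\Bbb R^3)$, and interior elliptic regularity for the div-curl system together with its boundary regularity obtained by flattening $\p\O$ — a local argument, so no topological assumption on $\O$ is needed, equivalently \eqref{dcg1}--\eqref{dcg2} with $k=0$, $p=2$ once a $\|\H\|_{L^2(\O)}$ term is added — upgrades $\H$ to $H^1(\O,\Bbb R^3)$.

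For part (ii), assume in addition $\nu\cdot\A^-=0$. Since $f$ has a continuous trace, $\nu\cdot(f^2\A)^-=(f^-)^2\,\nu\cdot\A^-=0$, so $f^2\A$ has vanishing normal trace, zero divergence and $L^2$ curl, and the same div-curl regularity in normal-trace form (\eqref{dcg1}) gives $f^2\A\in H^1(\O,\Bbb R^3)$. Condition \eqref{cond-3.27} then follows because $\nu\cdot(\curl\H)^-$ equals the intrinsic surface curl of $\H^-_T=\mB_T$, which depends on $\mB_T$ alone, while $\curl\H=-\lam^{-1}f^2\A$ together with the vanishing normal trace of $f^2\A$ forces $\nu\cdot(\curl\H)^-=0$; equating the two expressions gives $\nu\cdot\curl\mB_T=0$. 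Finally, if $\p\O$ is of class $C^3$ and $\mB_T\in T\!H^{3/2}(\p\O,\Bbb R^3)$, then $\curl\H=-\lam^{-1}f^2\A\in H^1(\O)$, $\div\H=0\in H^1(\O)$, and $\nu\times\H^-=\nu\times\mB_T\in H^{3/2}(\p\O,\Bbb R^3)$, so the div-curl regularity at one higher order (\eqref{dcg2} with $k=1$, $p=2$, the $C^3$ hypothesis supplying the needed boundary smoothness) yields $\H\in H^2(\O,\Bbb R^3)$.
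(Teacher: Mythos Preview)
Your proposal is correct and follows essentially the same route as the paper: isolate the Neumann problem \eqref{3.4} for $f$ to get $|f|\le1$ and $f\in W^{2,p}\cap C^{1+\a}$ by elliptic theory, obtain \eqref{f-C1} by rescaling, read off $\div(f^2\A)=0$ and $\curl\H=-\lam^{-1}f^2\A$ from the second equation, and then feed these into the div-curl-gradient inequalities. You are in fact slightly more careful than the paper in two places: you note explicitly that without topological hypotheses on $\O$ one must include a lower-order $\|\H\|_{L^2(\O)}$ term on the right of \eqref{dcg1}--\eqref{dcg2} (the paper cites \eqref{dcg1} and \eqref{dcg2} directly although Lemma~\ref{LemB.1} does not assume $\O$ simply-connected or without holes), and your remark that for $\lam/\k\ge1$ one should estimate the zero-mean part of $f$ via the global $W^{2,p}$ Neumann estimate is the correct way to cover that regime.
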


\begin{proof}
(i) Since $(f,\A)\in\mW(\O)$, so $\A\in L^\infty(\O),\Bbb R^3)$.
 From \eqref{wkdec-eqfA-1} we get
\begin{equation}\label{eqB.3}
\|\nabla
f\|_{L^2(\O)}^2=\lam^{-2}\k^2\int_\O(1-f^2-|\A|^2)|f|^2dx,
\end{equation}
from which we get
\begin{equation}\label{est-B.4}
\|f\|_{H^1(\O)}\leq
(1+\lam^{-1}\k)\|f\|_{L^2(\O)}.
\end{equation}
Separate the equation for $f$ in \eqref{3.4}, and use elliptic regularity  theory we have $f\in
W^{2,p}(\O)$ for all $1<p<\infty$ and $f\in C^{1+\a}(\overline{\O})$ for all $0<\a<1$.
The maximum principle applying to \eqref{3.4} gives $|f|\leq 1$, and elliptic estimates applying to the rescaled functions gives \eqref{f-C1}.

Now assume $\mB_T\in T\!H^{1/2}(\p\O,\Bbb R^3)$.
Since $\curl\H=-\lam^{-1}f^2\A\in L^2(\O,\Bbb R^3)$, $\div\H=0$ and $\H_T=\mB_T\in H^{1/2}(\p\O,\Bbb R^3)$, by \eqref{dcg2} we have $\H\in H^1(\O,\Bbb R^3)$.

(ii) If $\A$ satisfies \eqref{1.5}, then $\nu\cdot(f^2\A)=0$ on $\p\O$. This together with $\curl(f^2\A)\in L^2(\O,\Bbb R^3)$ and $\div(f^2\A)=0$ implies that $f^2\A\in H^1(\O,\Bbb R^3)$, see \eqref{dcg1}. From the second
equation in \eqref{dec-eqfA-1} we have
$\nu\cdot\curl\mB_T=\nu\cdot\curl\H_T=\lam^{-2}\nu\cdot (f^2\A)=0.$
So $\mB_T$ satisfies \eqref{cond-3.27}.
\end{proof}

In the following lemma we assume $\k\geq \max\{1,\lam\}$, which makes the estimates simpler. The regularity results remain true without this assumption.

\begin{Lem}\label{LemB.2}
Let $\O$ be a bounded domain in $\Bbb R^3$  with a $C^2$ boundary, $\mB_T\in T\!H^{1/2}(\p\O,\Bbb R^3)$.
Assume $(f,\A)\in \mW(\O)$ is a weak Meissner solution of \eqref{dec-eqfA-1} and let
$\H=\lam\,\curl\A$. Denote
\eq\label{B.5}
c=\min_{\overline{\O}}f,\q M=\|\A\|_{L^\infty(\O)},\q d_1=c^{-1}\lam^{-1}\k,
\eeq
and let  $0<\a<1$,  $0<\beta<1/2$, $\k\geq \max\{1,\lam\}$. Then we have the following conclusions:
\begin{itemize}
\item[(a)] $f\in H^2(\O)\cap C^{1+\a}(\overline{\O})$, $\A\in H^1(\O,\Bbb R^3)$,
$\H\in H^1(\O,\Bbb R^3)$, $\curl\H\in H^1(\O,\Bbb R^3)\cap L^\infty(\O,\Bbb R^3)$, and
\eq\label{B.6}
\aligned
&\|\H\|_{L^2(\O)}\leq C(\O)\lam^{1/2}M^{1/2}\|\mB_T\|_{L^1(\p\O)}^{1/2},\\
&\|\H\|_{H^1(\O)}\leq C(\O)\{(\lam+\lam^{-1})M
+\|\mB_T\|_{H^{1/2}(\p\O)}\},\\
\endaligned
\eeq
\begin{equation}\label{B.7}
\aligned
&\|\curl\H\|_{L^\infty(\O)}\leq C(\O)\lam^{-1}M,\\
&\|\curl\H\|_{H^1(\O)} \leq
C(\O)\lam^{-1}\{d_1 M+\|\mB_T\|_{L^1(\p\O)}\},
\endaligned
\end{equation}
\begin{equation}\label{A-H1}
\aligned
&\|\A\|_{H^1(\O)}\leq C(\O)\{d_1 M +\|\mB_T\|_{L^1(\p\O)}\},\\
&\|\A\|_{C^\b(\overline{\O})}\leq C_1\{d_2M+\lam^{-1}\|\mB_T\|_{L^1(\p\O)}\},
\endaligned
\end{equation}
where   $d_2=1+\lam^{-2}+d_1$ and $C_1=C(\O,M,\b)$.

\item[(b)] If $\p\O$ is of class $C^3$, then
$f\in H^3(\O)\cap C^{2+\beta}(\overline{\O})$, $\A\in H^2(\O,\Bbb R^3)$, $\curl\H\in H^2(\O,\Bbb R^3)$, and
\begin{equation}\label{A-H2}
\|\A\|_{H^2(\O)}\leq C_2\{d_1M +\|\mB_T\|_{L^1(\p\O)}\},
\end{equation}
\begin{equation}\label{curl-H2}
\|\curl\H\|_{H^2(\O)}\leq
C_2\lam^{-1}\{d_1M+\|\mB_T\|_{L^1(\p\O)}\},
\end{equation}
where $C_2=C(\O,M)c^{-2}d_1$.
If $\mB_T\in T\!H^{3/2}(\p\O,\Bbb R^3)$, then $\H\in H^2(\O,\Bbb R^3)$, and
\begin{equation}\label{B.11}
\|\H\|_{H^2(\O)}\leq
C(\O)\lam^{-1}\{d_1M+\|\mB_T\|_{L^1(\p\O)}+\lam\|\mB_T\|_{H^{3/2}(\p\O)}\}.
\eeq

\item[(c)] If $\p\O$ is of class $C^{3+\beta}$, then
$f\in C^{3+\b}(\overline{\O})$,
$\A\in H^2(\O,\Bbb R^3)\cap C^{1+\b}(\overline{\O},\Bbb R^3)$,
$\curl\H\in C^{1+\b}(\overline{\O},\Bbb R^3)$, and we have the estimate
$$
\sum_{n=0}^3\Big({\lam\over\k}\Big)^n\|D^n
f\|_{C^0(\overline{\O})}+\Big({\lam\over\k}\Big)^{3+\b}[D^3f]_\b \leq
C(\O,M,\b).
$$
\begin{itemize}
\item[(c.1)] If furthermore $\p\O$ is of class $C^{m+2}$ with $m\geq
2$, and $\mB_T\in H^{n+1/2}(\p\O,\mathbb R^3)$ for some $1\leq n\leq
m$, then $\H\in H^{n+1}(\O,\mathbb R^3)$, and
\begin{equation}\label{H-H3}
\aligned
\|\H\|_{H^3(\O)}\leq& C_3\{d_1M +\|\mB_T\|_{L^1(\p\O)}\}
+C(\O)\|\mB_T\|_{H^{5/2}(\p\O)},\\
\|\A\|_{C^{1+\b}(\overline{\O})}\leq&
C_4\{d_3M +\|\mB_T\|_{L^1(\p\O)}+c\lam\k^{-1}\|\mB_T\|_{H^{3/2}(\p\O)}\},
\endaligned
\end{equation}
where $C_3=C(\O,M)c^{-2}\lam^{-1}d_1$, $C_4=C(\O,M,\b)\lam^{-1}d_1$, $d_3=d_1(c+\lam\k)$.

\item[(c.2)] If $\p\O$ is of class $C^{m+2+\a}$ with $m\geq 2$,
$\mB_T\in C^{n+\a}(\p\O,\mathbb R^3)$ for some $1\leq n\leq m-1$, and $(f,\A)\in \mK(\O)$, then
$f\in C^{n+3+\a}(\overline{\O})$, $\A\in C^{n+1+\a}(\overline{\O},\mathbb R^3)$,
$\H\in C^{n+\a}(\overline{\O},\mathbb R^3)$, $\curl\H\in
C^{n+1+\a}(\overline{\O},\mathbb R^3)$,
and there exists a positive constant $C=C(\O,n,\k,\a,\lam,\|\mB_T\|_{C^{n+\a}(\p\O)})$ such that
\eq\label{B.13}
\aligned
\|f\|_{C^{n+\a}(\overline{\O})}+\|\H\|_{C^{n+\a}(\overline{\O})}+
\|\A\|_{C^{n+a}(\overline{\O})}\leq C.
\endaligned
\eeq
\end{itemize}
\end{itemize}
\end{Lem}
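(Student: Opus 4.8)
The plan is to bootstrap regularity of $(f,\A)$ by working with $\H=\lam\,\curl\A$ rather than $\A$ directly, since the second equation of \eqref{dec-eqfA-1} gives $\A=-\lam f^{-2}\curl\H$ in $\O$ once we know $f$ stays away from zero, and $\H$ enjoys the divergence-free condition that $\A$ lacks. First I would establish the basic bounds: $|f|\leq 1$ by the maximum principle applied to \eqref{3.4}, and $0<c=\min_{\overline\O}f$ using the Harnack inequality together with the fact that a weak Meissner solution is by definition positive on $\overline\O$ and continuous (via elliptic regularity for the Neumann problem). With $c>0$ fixed, I would invoke Lemma \ref{LemB.1}: $f\in W^{2,p}(\O)\cap C^{1+\a}(\overline\O)$ for all $p,\a$, the rescaled Schauder estimate \eqref{f-C1} holds, and $\H\in\mH(\O,\curl,\div0)$; if $\mB_T\in T\!H^{1/2}$ then $\H\in H^1(\O)$ by the div-curl-gradient inequality \eqref{dcg2}, and under the Meissner condition \eqref{1.5} one gets $f^2\A\in H^1(\O)$ from \eqref{dcg1} and the compatibility \eqref{cond-3.27} is forced.

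For part (a), the key estimates: the energy identity (test \eqref{wkdec-eqfA-1} with $(g,\D)=(0,\A)$, using the curl-free extension $\mB$ of $\mB_T$) yields $\lam^2\|\curl\A\|_{L^2}^2+\|f\A\|_{L^2}^2 = \lam\int_{\p\O}(\mB_T\times\A_T)\cdot\nu\,dS$, bounded by $C\lam\|\mB_T\|_{L^1(\p\O)}M$, giving the $L^2$ bound on $\H$ in \eqref{B.6}. Then $\curl\H=-\lam^{-1}f^2\A$ gives $\|\curl\H\|_{L^\infty}\leq C\lam^{-1}M$ immediately, and the $H^1$ bound on $\curl\H$ follows from differentiating $f^2\A$ using $f\in C^{1+\a}$ and $\A\in H^1$; the $H^1$ bound on $\H$ itself comes from \eqref{dcg2} with $\curl\H$ and $\H_T=\mB_T$ as data. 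The bound on $\A$ in \eqref{A-H1} and the Hölder bound come from $\A=-\lam f^{-2}\curl\H$ together with the $C^{1+\a}$ control on $f$ and the just-obtained control on $\curl\H$; the $C^\b$ bound uses the embedding $H^{1+s}\hookrightarrow C^\b$ or interpolation. For parts (b) and (c) I would iterate this scheme: with $\p\O$ of class $C^3$ (resp.\ $C^{3+\b}$) the Schauder/$L^p$ theory for the Neumann problem bumps $f$ up to $H^3\cap C^{2+\b}$ (resp.\ $C^{3+\b}$), which feeds back through $\curl\H=-\lam^{-1}f^2\A$ and the div-curl inequalities on higher-order spaces to give $\A\in H^2$ (resp.\ $C^{1+\b}$), $\curl\H\in H^2$, and — when $\mB_T$ is smooth enough — $\H\in H^2$ via \eqref{dcg2} with the $H^{3/2}$ boundary data entering linearly. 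The sub-items (c.1) and (c.2) are a finite induction on the regularity index $n$: each step uses the equation for $\A$ to convert regularity of $f$ and $\mB_T$ into regularity of $\curl\H$, the div-curl-gradient inequalities \eqref{dcg2}, \eqref{dcg4} to convert that plus boundary data into regularity of $\H$, the relation $\div\A = -2f^{-1}\nabla f\cdot\A$ plus \eqref{dcg1}, \eqref{dcg3} to get regularity of $\A$, and finally the equation \eqref{3.4} with Schauder estimates to advance $f$ by two more derivatives; in case (c.2) the hypothesis $(f,\A)\in\mK(\O)$ ensures the coefficient $f^{-2}$ stays in the right Hölder class uniformly.

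The tracking of constants is the main bookkeeping burden — every estimate must be expressed through $c$, $M$, $\lam$, $\k$ via $d_1=c^{-1}\lam^{-1}\k$ and its relatives $d_2,d_3$, and the powers of $c$ in $C_2,C_3,C_4$ come from repeatedly dividing by $f^2$ — but this is routine given careful attention. The genuine obstacle is the boundary regularity bootstrap for $\H$: the equation $\lam^2\curl(f^{-2}\curl\H)+\H=\0$ is a quasilinear Maxwell-type system, and to go beyond $H^1$ one cannot simply use scalar elliptic theory — one must use the div-curl estimates \eqref{dcg1}--\eqref{dcg4} in a way that respects the fact that only the tangential trace $\H_T=\mB_T$ is prescribed, while the normal trace of $\H$ is free; here Lemma \ref{Lem-linear-curl} is the crucial tool, applied with $a=f^{-2}$, and one must verify at each stage that $\O$ being simply-connected and without holes is available so that the isomorphism statements apply. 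A secondary subtlety is the half-integer Sobolev trace spaces $T\!H^{s}(\p\O,\Bbb R^3)$ on which the boundary data lives: one needs the boundary $\p\O$ smooth enough (here $C^{m+2}$ or $C^{m+2+\a}$) for these traces and the associated div-curl estimates to make sense at the required order, which is exactly why the hypotheses on $\p\O$ escalate with $n$. Once these points are handled, the proof is a systematic, if lengthy, application of the preliminary lemmas.
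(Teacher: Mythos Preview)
Your overall bootstrap scheme---the energy identity for $\|\H\|_{L^2}$, the relation $\curl\H=-\lam^{-1}f^2\A$, the identity $\div\A=-2f^{-1}\nabla f\cdot\A$, the div-curl-gradient inequalities \eqref{dcg1}--\eqref{dcg4} applied with $\nu\cdot\A=0$ for $\A$ and with $\H_T=\mB_T$ for $\H$, and Schauder theory for \eqref{3.4}---is correct and is exactly what the paper does. Two specific points, however, are off.

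First, your route to $\A\in C^\b(\overline{\O})$ in part~(a) has a gap. You say the H\"older bound ``comes from $\A=-\lam f^{-2}\curl\H$ \dots; the $C^\b$ bound uses the embedding $H^{1+s}\hookrightarrow C^\b$ or interpolation.'' But at this stage you only have $\curl\H\in H^1\cap L^\infty$, and in three dimensions $H^1\cap L^\infty$ does not embed into any H\"older space; nor do you yet have $\A\in H^{1+s}$ for $s>1/2$ (that is part~(b) and needs a $C^3$ boundary). The paper's device is a Helmholtz-type splitting $\lam\A=\B+\nabla\phi$: one takes $\phi\in\dot H^1(\O)$ solving the scalar Neumann problem $\Delta\phi=\lam\,\div\A$, $\p\phi/\p\nu=0$; since $\div\A=-2f^{-1}\nabla f\cdot\A\in L^p$ for every $p<\infty$, elliptic $L^p$ theory gives $\phi\in W^{2,p}$ and hence $\nabla\phi\in C^\a$. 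The remainder $\B=\lam\A-\nabla\phi$ then satisfies $\curl\B=\H$, $\div\B=0$, $\nu\cdot\B=0$, so \eqref{dcg1} with $p=6$ and $\H\in H^1\subset L^6$ yields $\B\in W^{1,6}\subset C^\b$ for $\b<1/2$. The same decomposition is re-used in part~(c): rewriting the $\phi$-equation as $-\Delta\phi=2f^{-1}\nabla f\cdot(\nabla\phi+\B)$ and applying Schauder pushes $\phi$ to $C^{2+\b}$, after which \eqref{dcg3} on $\A$ gives $\A\in C^{1+\b}$.

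Second, you single out Lemma~\ref{Lem-linear-curl} as ``the crucial tool'' for the boundary regularity of $\H$. In fact the paper does not invoke Lemma~\ref{Lem-linear-curl} anywhere in the proof of Lemma~\ref{LemB.2}; that lemma is reserved for later arguments (the continuity step in Proposition~\ref{Prop4.6} and the convergence estimates in Appendix~\ref{AppendixD}). Here the regularity of $\H$ comes directly from \eqref{dcg2} and \eqref{dcg4} using $\div\H=0$, $\curl\H=-\lam^{-1}f^2\A$, and $\H_T=\mB_T$, with no inversion of a Maxwell-type operator. Note also that Lemma~\ref{Lem-linear-curl}~(i) already requires a $C^3$ boundary, so it would not even apply in part~(a). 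Your instinct that topological hypotheses are lurking is correct, but they enter through \eqref{dcg1}--\eqref{dcg4} themselves, not through Lemma~\ref{Lem-linear-curl}.
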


\begin{proof} {\it Step 1.} Assume $\p\O$ is of $C^2$, $(f,\A)\in\mW(\O)$
is a weak Meissner solution of \eqref{dec-eqfA-1}.

{\it Step 1.1}. We show $\A\in H^1(\O,\Bbb R^3)$ and $\div\A\in H^1(\O)$. From Lemma \ref{LemB.1},
$f\in H^2(\O)\cap C^{1+\a}(\overline{\O})$, $f^2\A\in H^1(\O,\Bbb R^3)$, and there exists $c>0$ such that
$c\leq f(x)\leq 1$. So $\A=f^{-2}(f^2\A)\in H^1(\O,\mathbb R^3)$.
Since $\div(f^2\A)=0$, we have
\begin {equation}\label{eq-divA}
\div\A=-{2\over f}\nabla f\cdot\A\q\text{a.e. in }\O.
\end{equation}
So $\div\A\in H^1(\O)\cap L^\infty(\O)$, and we can apply the standard difference-quotient method to \eqref{eqB.1} and show $\A\in
H^2_{\loc}(\O,\mathbb R^3)$.

{\it Step 1.2}. We show $\H, \curl\H\in H^1(\O,\Bbb R^3)$. Since $\A\in H^2_{\loc}(\O,\Bbb R^3)$, from \eqref{eqB.1} we have
\begin{equation}\label{eq-curlH}
\curl\H=-\lam^{-1}f^2\A\q\text{a.e. in }\O.
\end{equation}
The right side belongs to $H^1(\O,\Bbb R^3)\cap L^\infty(\O,\Bbb R^3)$. Since $\div\H=0$ in $\O$ and $\H_T=\mB_T\in H^{1/2}(\p\O,\Bbb R^3)$, using \eqref{dcg2} we get $\H\in H^1(\O,\mathbb R^3)$.
From \eqref{eqB.1} we have
\begin{equation}\label{eq-B.16}
\aligned
\int_\O(\lam^2|\curl\A|^2+f^2|\A|^2)dx
=\lam\int_{\p\O}(\A_T\times\H_T)\cdot\nu dS. \endaligned
\end{equation}
So the first inequality in \eqref{B.6} follows.
From \eqref{eq-curlH} and \eqref{eq-B.16} we have
$$\aligned
&\int_\O(|\H|^2+|\curl\H|^2)dx=
\int_\O(|\H|^2+\lam^{-2}f^4|\A|^2)dx
\leq
\max\{\lam,\lam^{-1}\}\|\A\|_{C^0(\overline{\O})}\|\mB_T\|_{L^1(\p\O)}.
\endaligned
$$
From this and \eqref{dcg2} we have
$$
\|\H\|_{H^1(\O)}^2\leq C(\O)\big(\max\{\lam,\lam^{-1}\}\|\A\|_{L^\infty(\O)}+\|\mB_T\|_{H^{1/2}(\p\O)}\big)\|\mB_T\|_{H^{1/2}(\p\O)}.
$$
Since
$\|\mB_T\|_{H^{1/2}(\p\O)}\leq C(\O)\|\H\|_{H^1(\O)},$
we get the second inequality in \eqref{B.6}.

From \eqref{eq-curlH} we get the first inequality of
\eqref{B.7}. Using \eqref{eq-curlH} we also get
\begin{equation*}
\aligned
&\|\curl\H\|_{H^1(\O)}\leq \|D(\curl\H)\|_{L^2(\O)}+\|\curl\H\|_{L^2(\O)}\\
\leq&
\lam^{-1}(\|D(f^2\A)\|_{L^2(\O)}+\|f^2\A\|_{L^2(\O)})\\
\leq&\lam^{-1}\Bigl\{\|f\|_{L^\infty(\O)}^2\|D\A\|_{L^2(\O)}
+2\|f\|_{L^\infty(\O)}\|\nabla
f\|_{L^2(\O)}\|\A\|_{L^\infty(\O)}+\|f^2\A\|_{L^2(\O)}\Bigr\}.
\endaligned
\end{equation*}
From this, \eqref{eqB.3}, \eqref{est-B.4}, \eqref{B.5},   we get the second inequality of \eqref{B.7}.

Since $\nu\cdot\A=0$ on $\p\O$, we use \eqref{eqB.3}, \eqref{eq-divA}, the first inequality in \eqref{B.6}, and \eqref{dcg1} to get
$$
\aligned
\|\A\|_{H^1(\O)} \leq& C(\O)\{\|2f^{-1}\A\|_{L^\infty(\O)}\|\nabla f\|_{L^2(\O)}+\lam^{-1}\|\H\|_{L^2(\O)}+M\}\\
\leq& C(\O)\{d_1M+\lam^{-1}(M+\lam\|\mB\|_{L^1(\p\O)})+M\},
\endaligned
$$
which yields the first inequality in \eqref{A-H1}.

{\it Step 1.3}. We show $\A$ is H\"older continuous. Take $\phi\in \dot H^1(\O)$ such that
\eq\label{eq-phi}
\Delta\phi=\lam\,\div\A\q\text{in }\O,\q {\p\phi\over\p\nu}=0\q\text{\rm on}\;\;\p\O.
\eeq
From this and \eqref{eq-divA} we have, for any $1<p<\infty$,
$$
\|\phi\|_{W^{2,p}(\O)}\leq C(\O,p)\lam\|f^{-1}\nabla f\cdot\A\|_{L^p(\O)}\leq
C(\O,M,p)c^{-1}\k M.
$$
Then from the Sobolev embedding theorem we have, for $0<\a<1$,
\begin{equation}\label{phi-Holder}
\|\phi\|_{C^{1+\a}(\overline{\O})}\leq
C(\O,M,\a)c^{-1}\k M.
\end{equation}
Let $\B=\lam\,\A-\nabla\phi$. Using \eqref{phi-Holder} we have
\eq\label{B-bd}
\|\B\|_{L^\infty(\O)}\leq C(\O,M)\lam(1+d_1)M.
\eeq
Since $\B$ satisfies
\begin{equation}\label{eq-BH}
\curl\B=\H\q\text{and}\q\div\B=0\q\text{\rm in }\O,\q
\nu\cdot\B=0\q\text{\rm on }\p\O,
\end{equation}
by \eqref{dcg3} and Sobolev imbedding we have,  for any $0<\b<1/2$,
\begin{equation}\label{B-Holder}
\|\B\|_{C^\b(\overline{\O})}\leq C(\O,\b)\|\B\|_{W^{1,6}(\O)}\leq C(\O,\b)(\|\H\|_{L^6(\O)}+\|\B\|_{L^6(\O)}).
\end{equation}
Thus $\A=\lam^{-1}(\B+\nabla\phi)\in C^{\b}(\overline{\O},\Bbb R^3)$. From \eqref{phi-Holder}, \eqref{B-bd}, \eqref{B-Holder} we have
$$
\aligned
\|\A\|_{C^\b(\overline{\O})}\leq &C(\O,\b)\lam^{-1}(\|\H\|_{H^1(\O)}+\|\B\|_{L^\infty(\O)}+\|\nabla\phi\|_{C^\b(\overline{\O})})\\
\leq &C(\O,M,\b)\{(1+\lam^{-2}+d_1)M +\lam^{-1}\|\mB_T\|_{L^1(\p\O)}\}.
\endaligned
$$
So we get the second inequality of \eqref{A-H1}.

{\it Step 2}. Assume $\p\O$ is of class $C^3$.
From step 1 the right-hand side of the equation in \eqref{3.4}
belongs to $H^1(\O)$. Applying the $H^k$ estimate to \eqref{3.4} we see that $f\in H^3(\O)$.
Since $\div\A\in H^1(\O)$, $\curl\A=\lam^{-1}\H\in H^1(\O,\Bbb R^3)$ and $\nu\cdot\A=0$ on $\p\O$, using \eqref{dcg1} and the first inequality in \eqref{A-H1} we see that $\A\in H^2(\O,\Bbb R^3)$, and \eqref{A-H2} holds.
It follows that $\curl\H=-\lam^{-1} f^2\A\in H^2(\O,\Bbb R^3)$, and
$$\aligned
\|D^2\curl\H\|_{L^2(\O)}\leq& 4\lam^{-1}\{\|D^2\A\|_{L^2(\O)}+\|\nabla f\|_{L^\infty(\O)}\|D\A\|_{L^2(\O)}\\
&\qq\q +\|D^2f\|_{L^2(\O)}\|\A\|_{L^\infty(\O)}+\|\nabla f\|_{L^2(\O)}^2\|\A\|_{L^\infty(\O)}\}\\
\leq &C(\O,M)\lam^{-1}\{\|\A\|_{H^2(\O)}+\lam^{-1}\k\|\A\|_{H^1(\O)}
+\lam^{-2}\k^2M\}.
\endaligned
$$
This together with \eqref{B.7} and  \eqref{A-H2} yields \eqref{curl-H2}.
Now the right side of the equation of \eqref{3.4} belongs to $C^\beta(\overline{\O})$, hence $f\in C^{2+\beta}(\overline{\O})$.

Now assume $\H_T\in T\!H^{3/2}(\p\O,\mathbb R^3)$. Using the first inequality in \eqref{B.6}, the second inequality in \eqref{B.7}, and \eqref{dcg2} with $k=p=2$,  we get
$$
\|\H\|_{H^2(\O)}\leq
C(\O)\{d_1M+\lam^{-1}\|\mB_T\|_{L^1(\p\O)}+\|\mB_T\|_{H^{3/2}(\p\O)}\}.
$$
So \eqref{B.11} is true.

{\it Step 3.} Assume $\p\O$ is of $C^{3+\beta}$.
Using \eqref{eq-divA} we write \eqref{eq-phi} as follows
\begin{equation}\label{eq-phi2}
-\Delta\phi=2f^{-1}\nabla f\cdot(\nabla\phi+\B)\q\text{\rm in
}\O,\q {\p\phi\over\p\nu}=0\q\text{\rm on }\p\O.
\end{equation}
By Schauder estimate, \eqref{phi-Holder} and \eqref{B-Holder} we find $\phi\in C^{2+\b}(\overline{\O})$ and
\begin{equation}\label{est-B.23}
\|\phi\|_{C^{2+\b}(\overline{\O})}\leq C\bigl\{\|\nabla\phi\|_{C^\b(\overline{\O})}
+\|\B\|_{C^\b(\overline{\O})}\bigr\},
\end{equation}
where $C$ depends only on $\O$, $\b$ and $\|f^{-1}\nabla
f\|_{C^{\b}(\overline{\O})}$.

Now $\A$ and $\curl\A$ belong to $C^\b(\overline{\O},\mathbb R^3)$, from
\eqref{eq-divA} $\div\A\in C^\b(\overline{\O})$, and since $\nu\cdot\A=0$ on
$\p\O$, by \eqref{dcg3} we have $\A\in C^{1+\b}(\overline{\O},\mathbb R^3)$.  Applying the
Schauder estimate to \eqref{3.4} we find that $f\in C^{3+\b}(\overline{\O})$, and
\begin{equation}\label{est-B.24}
[D^3f]_{\b}\leq C_\b\Bigl({\k\over\lam}\Bigr)^{3+\b},
\end{equation}
where $C_\b$ depends on $\O$, $\b$ and $\|\A\|_{C^{1,\beta}(\overline{\O})}$. From this and the second equation of \eqref{dec-eqfA-1} we have $\curl\H\in C^{1+\b}(\overline{\O},\mathbb R^3)$.

{\it Step 3.1}. We prove (c.1). Assume $\H_T\in
H^{5/2}(\p\O,\mathbb R^3)$.  Using the first inequality in \eqref{B.6}, the second inequality in \eqref{B.7},   \eqref{curl-H2}, and \eqref{dcg2} with $k=3$ and $p=2$ we get
\begin{equation}\label{B.25}
\|\H\|_{H^3(\O)}\leq C(\O,M)c^{-3}\lam^{-2}\k\{d_1M+\|\mB_T\|_{L^1(\p\O)}\}
+C(\O)\|\mB_T\|_{H^{5/2}(\p\O)}.
\end{equation}
Using Sobolev embedding theorem,  \eqref{dcg1} with $k=3$,  \eqref{B-bd} and \eqref{eq-BH},  we have
$$
\aligned
\|\B\|_{C^{1+\b}(\O)}\leq &C(\O,\b)\|\B\|_{H^3(\O)}\leq C(\O,\b)(\|\H\|_{H^2(\O)}+\|\B\|_{L^2(\O)})\\
\leq &C(\O,\b)\{\|\curl\H\|_{H^1(\O)}+\|\H_T\|_{H^{3/2}(\p\O)}+\|\B\|_{L^2(\O)}\}\\
\leq &C(\O,\b)\lam^{-1}\{d_1 M+\|\mB_T\|_{L^1(\p\O)}+\lam\|\mB_T\|_{H^{3/2}(\p\O)}\},\\
\endaligned
$$
so
$$\aligned
\|\A\|_{C^{1+\b}(\overline{\O})} \leq & \lam^{-1}(\|\B\|_{C^{1+\b}(\overline{\O})}+\|\nabla\phi\|_{C^{1+\b}(\overline{\O})})\\
\leq &
C(\O,M,\b)\lam^{-1}\{d_1^2(c+\lam\k)M +d_1\|\mB_T\|_{L^1(\p\O)}
 +\|\mB_T\|_{H^{3/2}(\p\O)}\}.
\endaligned
$$
From this and \eqref{B.25} we get \eqref{H-H3}, and $\curl\H=-\lam^{-1}f^2\A\in C^{1+\b}(\overline{\O},\mathbb R^3)$.

{\it Step 3.2}. We prove (c.2). Assume  $(f,\A)\in\mK(\O)$ and $\H_T=\mB_T\in
C^{n+\a}(\p\O,\mathbb R^3)$, where $1\leq n\leq m-1$.
Then
$$|\A(x)|^2\leq f^2(x)-1/3\leq 1-1/3=2/3.
$$
Hence
$0\leq M\leq {2/3}$.
Denote $\{\a,\b\}=\min\{\a,\b\}$. Since $\curl\H\in C^{1,\b}(\overline{\O},\Bbb R^3)$, $\div\H=0$ in $\O$ and $\H_T=\mB_T\in
T\!C^{n+\a}(\p\O,\mathbb R^3)$ with $1\leq n\leq m-1$, using \eqref{dcg4} we have
$\H\in C^{1+\{\a,\b\}}(\overline{\O},\mathbb R^3)$. From this and
\eqref{eq-BH} we see that $\B\in C^{2+\{\a,\b\}}(\overline{\O},\mathbb
R^3)$.

Since $\curl\A=\lam^{-1}\H\in C^{1+\{\a,\b\}}(\overline{\O},\mathbb
R^3)$, from \eqref{eq-divA} $\div\A\in C^{1+\b}(\overline{\O})$, and $\nu\cdot\A=0$, using \eqref{dcg3} we have $\A\in
C^{2+\{\a,\b\}}(\overline{\O},\mathbb R^3)$, so $\nabla\phi=\lam\A-\B\in
C^{2+\{\a,\b\}}(\overline{\O},\mathbb R^3)$. Then the right hand side of the equation in
\eqref{eq-phi2} is in $C^{2+\{\a,\b\}}(\overline{\O})$, hence $\phi\in
C^{4+\{\a,\b\}}(\overline{\O})$.
Going back to \eqref{3.4} we
see that $f\in C^{4+\{\a,\b\}}(\overline{\O})$, and from \eqref{eq-curlH}
$\curl\H=-\lam^{-1}f^2\A\in C^{2+\{\a,\b\}}(\overline{\O},\mathbb R^3)$. This together with $\div\H=0$ and $\H_T=\mB_T\in
C^{n+\a}(\p\O,\mathbb R^3)$ implies that
$$
\H\in
C^{\min\{3+\{\a,\b\}, n+\a\}}(\overline{\O},\mathbb R^3).
$$
If $n=1$, this implies $\curl\A=\lam^{-1}\H\in C^{1+\a}(\overline{\O},\Bbb R^3)$. Since $\nu\cdot\A=0$ on $\p\O$, and from \eqref{eq-divA} $\div\A\in C^{2+\{\a,\b\}}(\overline{\O},\Bbb R^3)\subset C^{1+\a}(\overline{\O},\Bbb R^3)$, \eqref{dcg3} yields $\A\in C^{2+\a}(\overline{\O},\Bbb R^3)$.
Then from \eqref{eq-curlH}
$\curl\H=-\lam^{-1}f^2\A\in C^{2+\a}(\overline{\O},\mathbb R^3)$. Going back to \eqref{3.4}
we see that $f\in C^{4+\a}(\overline{\O})$. So \eqref{B.13} holds and  (c.2) is true when $n=1$.
The case where $n\geq 2$ can be proved by iteration.
\end{proof}

\section{Proof of Theorem \ref{Thm-reg-fA}}
\label{AppendixC}

\begin{proof} {\it Step 1.} Let $(f,\A)$ be a weak solutions of \eqref{eqfA}. Then $(f,\A)$ satisfies \eqref{3.4}-\eqref{3.5}, \eqref{eqB.3} and \eqref{est-B.4} hold, $M=\|\A\|_{L^\infty(\O)}<\infty$ and $0<c\leq f\leq 1$. As in Lemma \ref{LemB.2} we have  $f\in C^{1+\a}(\overline{\O})$ for all $\a\in (0,1)$,  and  $\A\in H^1(\O,\Bbb R^3)$.
 Let $\H=\lam\,\curl\A$. Then \eqref{eq-curlH} holds for a.e. $x\in\O$, so $\curl\H=-\lam^{-1}f^2\A\in H^1(\O,\Bbb R^3)\cap L^\infty(\O,\Bbb R^3)$.
From Lemma \ref{Lem3.5}~(iii) we have $\H\in H^1_{\loc}(\Bbb R^3,\Bbb R^3)$, so $\H^-=\H^+=\H$ on $\p\O$ in $H^{1/2}(\p\O,\Bbb R^3)$.
Since $\div\H=0$, from \eqref{eq-B.16} we find that, for $\k\geq\max\{1,\lam\}$,
$$\int_\O(|\H|^2+|D\H|^2)dx=
\int_\O(|\H|^2+\lam^{-2}f^4|\A|^2)dx
\leq\max\{1,\lam^{-2}\}\lam\int_{\p\O}|\A_T\times\H_T| dS.
$$
Using this and $\|\H_T\|_{L^1(\p\O)}\leq C(\O)\|\H\|_{H^1(\O)}$, we get
\begin{equation}\label{C.1}
\aligned
&\|\H\|_{H^1(\O)}\leq
C(\O)(\lam+\lam^{-1})M,\\
&\|\H\|_{L^2(\O)}\leq C(\O)(1+\lam)M.
\endaligned
\end{equation}
Now \eqref{est-3.13} is proved.

We use  \eqref{eqB.3} and \eqref{eq-divA} to control $\|\div\A\|_{L^2(\O)}$, use the second inequality in \eqref{C.1} to control
$\|\curl\A\|_{L^2(\O)}$, and then use \eqref{1.5} and apply \eqref{dcg1} to find that, for $\k\geq \{1,\lam\}$,
\begin{equation}\label{C.2}
\aligned
\|\A\|_{H^1(\O)} \leq& C(\O)\{\|2f^{-1}\nabla f\cdot\A\|_{L^2(\O)}+\lam^{-1}\|\H\|_{L^2(\O)}\}\\
\leq& C(\O)(c^{-1}\k+1+\lam)\lam^{-1}M.
\endaligned
\end{equation}
From \eqref{eq-curlH} we get the first inequality in \eqref{B.7}, and
\begin{equation*}\aligned
\|\curl\H\|_{H^1(\O)}
\leq\lam^{-1}\bigl\{\|f\|_{L^\infty(\O)}^2\|D\A\|_{L^2(\O)}
+2\|f\|_{L^\infty(\O)}\|\nabla
f\|_{L^2(\O)}\|\A\|_{L^\infty(\O)}+\|f^2\A\|_{L^2(\O)}\bigr\}.
\endaligned
\end{equation*}
From this, \eqref{eqB.3} and \eqref{C.2},
\begin{equation}\label{C.3}
\aligned
\|\curl\H\|_{H^1(\O)} \leq
C(\O)c^{-1}\lam^{-2}\k M.
\endaligned
\end{equation}

{\it Step 2}. Assume $\p\O$ is of class $C^3$.
As in Lemma \ref{LemB.2} (b), we have
$f\in H^3(\O)$. From \eqref{1.5}, \eqref{eq-divA}, the first inequality in \eqref{C.1},  \eqref{C.2}, and \eqref{dcg1}, we have $\A\in H^2(\O,\Bbb R^3)$, and
\eq\label{C.4}
\aligned
\|\A\|_{H^2(\O)}\leq& C(\O)\{\lam^{-1}\|\H\|_{H^1(\O)}+\|2f^{-1}\nabla f\cdot\A\|_{H^1(\O)}+\|\A\|_{L^2(\O)}\}\\
\leq& C(\O,M)c^{-4}\lam^{-2}\k^2M.
\endaligned
\end{equation}
Then $\curl\H=-\lam^{-1}f^2\A\in H^2(\O,\Bbb R^3)$.
From \eqref{eq-curlH}, \eqref{est-B.24} and \eqref{C.4} we have
$$
\aligned
\|\curl\H\|_{H^2(\O)}\leq
C(\O,M)c^{-4}\lam^{-3}\k^2M.
\endaligned
$$
If $\H_T\in T\!H^{3/2}(\p\O,\Bbb R^3)$, then $\H\in H^2(\O,\mathbb
R^3)$,  by  \eqref{C.1}, \eqref{C.3}, and \eqref{dcg2},  we have
\eq\label{C.6}
\|\H\|_{H^2(\O)}\leq
C(\O)c^{-1}\lam^{-2}\|\A\|_{C^0(\overline{\O})}\k+C(\O)\|\H_T\|_{H^{3/2}(\p\O)}.
\eeq

{\it Step 3.} Assume $\p\O$ is of class $C^{3+\b}$, $0<\b<1/2$.
As in the proof of Lemma \ref{LemB.2}
we write $\lam\A=\B+\nabla\phi$ on $\O$, where $\B$ satisfies \eqref{eq-BH} and $\phi\in \dot H^1(\O)$ satisfies \eqref{eq-phi}.
Then $\B\in H^2(\O,\Bbb R^3)$ and   \eqref{B-Holder} holds.
From \eqref{eq-phi2} $\phi\in C^{2+\b}(\overline{\O})$ and  \eqref{est-B.23} holds.  Using \eqref{1.5}, as in Lemma \ref{LemB.2} (c) we have $\A\in C^{1+\b}(\overline{\O},\mathbb R^3)$. Then $\phi\in C^{3+\b}(\overline{\O})$ and $\curl\H\in C^{1+\b}(\overline{\O},\mathbb R^3)$.
From \eqref{eqfA}
$f\in C^{3+\b}(\overline{\O})$ and \eqref{est-B.24} holds.
The other estimates of Theorem \ref{Thm-reg-fA} can be derived as in the proof of Lemma \ref{LemB.2}.
\end{proof}

\section{Proof of Proposition \ref{Prop4.7} (i)}\label{AppendixD}

We prove \eqref{est-4.16}. In the 2-dimensional case such estimates have been obtained in \cite{BCM}. We treat the 3-dimensional case. Let $(f_\k,\A_\k)$ denote the solution of \eqref{dec-eqfA-1},
and $\A_\infty$ denote the solution of \eqref{eqA-Omega}-\eqref{cond-1.8} with $\mH_T$ replaced by $\mB_T$:
\eq\label{eqD.1}
\left\{\aligned
-&\lam^2\curl^2\A_\infty=f_\infty^2\A_\infty\q&\text{\rm in }\O,\\
&(\lam\,\curl\A_\infty)^-_T=\mB_T\q&\text{\rm on }\p\O,
\endaligned\right.
\eeq
where $f_\infty(x)=(1-|\A_\infty(x)|^2)^{1/2}$. We write the equation in this form for our late convenience.   From \cite[Theorem 1]{BaP}, $\A_\infty\in C^{2+\a}(\overline{\O},\mathbb R^3)$,
so $f_\infty\in C^{2+\a}(\overline{\O})$. Recall that $\A_\k$ satisfies \eqref{1.5}. From \eqref{cond-3.27} and  \cite[Lemma 2.5]{BaP} we know $\A_\infty$ also satisfies \eqref{1.5}.
Since $\O$ is simply-connected and $\nu\cdot\curl\mB_T=0$ on $\p\O$, so $\mB_T$ has a curl-free extension $\mB\in \mathcal
B^{2+\a}(\overline{\O},\mathbb R^3)$, see Lemma \ref{Lem-extension}.
Write
$$\H_\k=\lam\,\curl\A_\k,\q \mH_\k=\H_\k-\mB,\q
\H_\infty=\lam\,\curl\A_\infty,\q
\mH_\infty=\H_\infty-\mB.
$$
Then $\mH_\k$ and $\mathcal
H_\infty$ satisfy respectively the following systems
\eq\label{eqD.2}
\left\{\aligned
&\lam^2\curl(f_\k^{-2}\curl\mH_\k)+\mH_\k=-\mB\q&\text{\rm in }\O,\\
&\mH_{\k T}=\0\q&\text{\rm on }\p\O,
\endaligned\right.
\eeq
and
\eq\label{eqD.3}
\left\{\aligned
&\lam^2\curl(f_\infty^{-2}\curl\mH_\infty)+\mathcal
H_\infty=-\mB\q&\text{\rm in }\O,\\
& \mH_{\infty
T}=\0\q&\text{\rm on }\p\O.
\endaligned\right.
\eeq
As in general ${\p
f_\infty\over\p\nu}\not\equiv 0$ on $\p\O$, we follow the idea
in \cite{BCM} and approximate $f_\infty$ by a function $\hat f$ which
satisfies the following condition
\begin{equation}\label{cond-D.4}
{\p \hat f\over\p\nu}=0\q\text{\rm on }\p\O.
\end{equation}
In the following we denote
$$N(\hat f)=\max\{\|\hat f\|_{L^\infty(\O)},1\}.
$$

\begin{Lem} Assume the conditions in Proposition \ref{Prop4.7}
and $\k\geq \max\{1,\lam\}$. Let
$\hat f$ be a function satisfying \eqref{cond-D.4} such that $(\hat
f,\A_\infty)\in \mathbb K_\delta(\O)$. Define
\begin{equation}\label{D.5}
\bar f_\k=f_\k-\hat f,\q \tilde f_\k=f_\k-f_\infty,\q \bar \A_\k=\A_\k-\A_\infty,\q \bar\mH_\k=\lam\,\curl\bar \A_\k.
\end{equation}
We have
\begin{equation}\label{est-D.6}
\aligned
&{\lam^2\over\k^2}\|\nabla\bar f_\k\|_{L^2(\O)}^2+\|\bar f_\k\|_{L^2(\O)}^2+\lam^2\|\curl\bar \A_\k\|_{L^2(\O)}^2+\|\bar \A_\k\|_{L^2(\O)}^2\\
\leq & C_1\Bigl\{{\lam^4\over\k^4}\|\Delta\hat f\|_{L^2(\O)}^2
+\|\hat f-f_\infty\|_{L^2(\O)}^2 \Bigr\},
\endaligned
\end{equation}
and
\begin{equation}\label{est-D.7}
\aligned \|\bar {\mH}_\k\|_{H^1(\O)}\leq &C_2\bigl\{
\lam^{-1}\|\tilde f_\k\|_{L^2(\O)}\|\mH_\k+\mathcal
B\|_{L^\infty(\O)}\\
&\qq+\|\nabla \tilde f_\k\|_{L^2(\O)}+\|\tilde f_\k\|_{L^2(\O)}\|\nabla
f_\infty\|_{L^\infty(\O)}\bigr\},\\
\|\bar {\mH}_\k\|_{H^2(\O)}\leq
&C_2\|f_\infty\|_{C^1(\overline{\O})}\bigl\{
\lam^{-1}\|\tilde f_\k\|_{L^2(\O)}\|\mH_\k+\mathcal
B\|_{L^\infty(\O)}\\
&\qq\qq\qq+\|\nabla \tilde f_\k\|_{L^2(\O)}+\|\tilde f_\k\|_{L^2(\O)}\|\nabla
f_\infty\|_{L^\infty(\O)}\bigr\},
\endaligned
\end{equation}
where $C_1=C_1(\O,\delta, N(\hat f))$ and $C_2=C_2(\O,\delta).$
\end{Lem}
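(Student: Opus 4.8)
The plan is to derive the two families of estimates by testing the difference equations against suitable vector fields and using the structural features already established: the uniform bounds $1/\sqrt 3\le f_\k\le 1$ from $(f_\k,\A_\k)\in\mathbb K_\delta(\O)$, the convexity estimate of Remark~\ref{Rem4.1}(b), and the div-curl-gradient inequalities of Lemmas~\ref{Lem2.1}--\ref{Lem2.2} together with Lemma~\ref{Lem-linear-curl}. First I would subtract the weak form \eqref{wkdec-eqfA-1} satisfied by $(f_\k,\A_\k)$ from the corresponding identity for the limit solution $(f_\infty,\A_\infty)$ of \eqref{eqD.1}, and plug in the test pair $(g,\D)=(\bar f_\k,\bar\A_\k)=(f_\k-f_\infty,\A_\k-\A_\infty)$, which lies in $\mathcal W_{t0}(\O)$ because both fields have tangential trace $\mB_T$ (so the boundary integrals cancel). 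This produces
\[
\int_\O\Bigl\{\tfrac{\lam^2}{\k^2}|\nabla\bar f_\k|^2+\lam^2|\curl\bar\A_\k|^2
+\tfrac12\bigl\langle G'(f_\k,\A_\k)-G'(f_\infty,\A_\infty),(\bar f_\k,\bar\A_\k)\bigr\rangle\Bigr\}dx=R_\k,
\]
where $R_\k$ collects the terms coming from the fact that $f_\infty$ solves the Neumann-type identity only up to the discrepancy $\tfrac{\lam^2}{\k^2}\Delta\hat f$ (after replacing $f_\infty$ by $\hat f$, using \eqref{cond-D.4} so that $\hat f$ has vanishing normal derivative) and from $\hat f-f_\infty$. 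The key point is that along the segment $(f_t,\A_t)$ joining the two solutions one stays in $\mathcal K_\delta(\O)$ by Remark~\ref{Rem4.1}(d), so Remark~\ref{Rem4.1}(b) gives the coercivity $\langle G'(f_\k,\A_\k)-G'(f_\infty,\A_\infty),(\bar f_\k,\bar\A_\k)\rangle\ge C(\delta)(|\bar f_\k|^2+|\bar\A_\k|^2)$ pointwise. Combining this with the div-curl inequality \eqref{dcg1} applied to $\bar\A_\k$ (whose divergence is controlled via $\div\bar\A_\k$ and whose normal trace vanishes by \eqref{1.5}, valid for both $\A_\k$ and $\A_\infty$ since \eqref{cond-3.27} holds) absorbs $\|\bar\A_\k\|_{L^2}^2$ and $\lam^2\|\curl\bar\A_\k\|_{L^2}^2$, and a Cauchy-Schwarz bound on $R_\k$ yields \eqref{est-D.6} with the stated constant $C_1=C_1(\O,\delta,N(\hat f))$.

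For \eqref{est-D.7} I would instead subtract \eqref{eqD.3} from \eqref{eqD.2}, obtaining
\[
\lam^2\curl\bigl(f_\k^{-2}\curl\bar\mH_\k\bigr)+\bar\mH_\k
=-\lam^2\curl\bigl[(f_\k^{-2}-f_\infty^{-2})\curl\mH_\infty\bigr]\quad\text{in }\O,\qquad
(\bar\mH_\k)_T=\0\ \text{on }\p\O,
\]
where $\bar\mH_\k=\mH_\k-\mH_\infty=\lam\,\curl\bar\A_\k$. This is exactly the linear Maxwell system of Lemma~\ref{Lem-linear-curl} with coefficient $a=f_\k^{-2}\in C^1(\overline\O)$, bounded above and below by constants depending only on $\delta$ (since $1/\sqrt3\le f_\k\le1$), acting on a divergence-free field with vanishing tangential trace. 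Applying part~(i) of that lemma gives $\|\bar\mH_\k\|_{H^1(\O)}$ and $\|\bar\mH_\k\|_{H^2(\O)}$ in terms of the $L^2$-norm of the right-hand side; then writing $f_\k^{-2}-f_\infty^{-2}=(f_\infty^2-f_\k^2)f_\k^{-2}f_\infty^{-2}=\tilde f_\k(f_\infty+f_\k)f_\k^{-2}f_\infty^{-2}$ and expanding the curl by the Leibniz rule, the right-hand side is estimated by $\lam^{-1}\|\tilde f_\k\|_{L^2}\|\mH_\infty\|_{L^\infty}$ (recall $\curl\mH_\infty=\curl\H_\infty-\curl\mB$ and $\H_\infty=\lam\,\curl\A_\infty$ with $\A_\infty$ bounded, so $\mH_\infty+\mB=\H_\infty$ is controlled) plus $\|\nabla\tilde f_\k\|_{L^2}\|f_\infty\|_{C^1}$-type terms and $\|\tilde f_\k\|_{L^2}\|\nabla f_\infty\|_{L^\infty}$, which is precisely the form displayed in \eqref{est-D.7}. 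The extra factor $\|f_\infty\|_{C^1(\overline\O)}$ in the $H^2$ estimate is the $\|a\|_{C^1}$ term from \eqref{L-est1}.

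The main obstacle I anticipate is the careful bookkeeping in the first estimate: one must verify that substituting $\hat f$ for $f_\infty$ is legitimate — i.e.\ that the equation for $f_\infty$ in \eqref{eqD.1} combined with \eqref{cond-D.4} produces an error term that is exactly $\tfrac{\lam^2}{\k^2}\Delta\hat f$ in $L^2$ plus a lower-order $\|\hat f-f_\infty\|_{L^2}$ contribution, and that all cross terms in $\langle G'(f_\k,\A_\k)-G'(\hat f,\A_\infty),(\bar f_\k,\bar\A_\k)\rangle$ versus $\langle G'(f_\k,\A_\k)-G'(f_\infty,\A_\infty),(\bar f_\k,\bar\A_\k)\rangle$ differ only by terms absorbable into the right-hand side using the $C^{2+\a}$ bounds on $f_\infty,\A_\infty$ from \cite{BaP}. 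Keeping track of the dependence of $C_1$ on $N(\hat f)$ (which enters through the $G'$-difference when $\hat f$ replaces $f_\infty$) and ensuring the coercivity constant stays $\delta$-controlled along the homotopy are the delicate points; everything else is a routine application of Cauchy-Schwarz, the div-curl-gradient inequalities, and Lemma~\ref{Lem-linear-curl}.
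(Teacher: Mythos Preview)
Your approach to \eqref{est-D.6} is essentially the paper's: subtract the equations, test with $(\bar f_\k,\bar\A_\k)$, and use the coercivity from Remark~\ref{Rem4.1}(b) between $(\hat f,\A_\infty)$ and $(f_\k,\A_\k)$. One clarification: $\bar f_\k$ is defined as $f_\k-\hat f$, not $f_\k-f_\infty$, precisely so that $\partial\bar f_\k/\partial\nu=0$ on $\p\O$ via \eqref{cond-D.4}; the test pair is then $(\bar f_\k,\bar\A_\k)$ and the residual $\hat f-f_\infty$ goes to the right-hand side. You do not need the div-curl inequality here; the $\|\bar\A_\k\|_{L^2}^2$ term comes directly from the coercivity of $G'$.

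For \eqref{est-D.7} there is a genuine problem with your decomposition. You write the difference equation with coefficient $a=f_\k^{-2}$ and forcing $\curl[(f_\k^{-2}-f_\infty^{-2})\curl\mH_\infty]$, then invoke Lemma~\ref{Lem-linear-curl}. But the $H^2$ bound in \eqref{L-est1} carries the factor $\|a\|_{C^1(\overline\O)}$, and from \eqref{est-3.17} one has $\|\nabla f_\k\|_{L^\infty}\le C\k/\lam$, so $\|f_\k^{-2}\|_{C^1}$ grows like $\k$. Your $H^2$ estimate would therefore pick up an extra factor of $\k$, and $C_2$ would no longer be independent of $\k$; your remark that ``the extra factor $\|f_\infty\|_{C^1(\overline\O)}$ is the $\|a\|_{C^1}$ term'' is inconsistent with the choice $a=f_\k^{-2}$. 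The paper instead keeps $f_\infty^{-2}$ as the coefficient (which is $\k$-independent in $C^1$) and throws $\curl\mH_\k$ into the forcing,
\[
\lam^2\curl\bigl(f_\infty^{-2}\curl\bar\mH_\k\bigr)+\bar\mH_\k
=\lam^2\,\curl\bigl[(f_\infty^{-2}-f_\k^{-2})\curl\mH_\k\bigr],
\]
and then uses the equation $\lam^2\curl(f_\k^{-2}\curl\mH_\k)=-\mH_\k-\mB$ to rewrite $\curl^2\mH_\k$ and eliminate the dangerous $\nabla f_\k$ contribution from the forcing (this is how one arrives at terms involving only $\tilde f_\k$, $\nabla\tilde f_\k$, $\nabla f_\infty$ and $\mH_\k+\mB$). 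With $a=\lam^2 f_\infty^{-2}$ in Lemma~\ref{Lem-linear-curl} one gets $\|a\|_{C^1}\le C\lam^2\|f_\infty\|_{C^1(\overline\O)}$, which yields exactly the $\|f_\infty\|_{C^1(\overline\O)}$ prefactor in the $H^2$ line of \eqref{est-D.7} with $C_2=C_2(\O,\delta)$. Swapping the roles of $f_\k$ and $f_\infty$ in the decomposition fixes the gap.
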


\begin{proof} {\it Step 1}. We follow the ideas in \cite{BCM} to prove \eqref{est-D.6}. Let $G(f,\A)$ be the function defined in \eqref{funct-E-Omega}.
Since $(f_\k,\A_\k)$ satisfies \eqref{dec-eqfA-1} and
$G'_f(f_\infty,\A_\infty)=0$, we have
\begin{equation}\label{eqD.8}
\left\{\aligned
-&{\lam^2\over\k^2}\Delta \bar f_\k+{1\over
2}[G'_f(f_\k,\A_\k)-G'_f(\hat f,\A_\infty)]\\
&\qq +{1\over 2}[G'_f(\hat
f,\A_\infty)-G'_f(f_\infty,\A_\infty)]={\lam^2\over\k^2}\Delta\hat
f\q&\text{in }\O,\\
&\lam^2\curl^2\bar \A_\k+{1\over 2}[G_{\A}'(f_\k,\A_\k)-G_{\A}'(\hat
f,\A_\infty)]\\
&\qq +{1\over 2}[G_{\A}'(\hat f,\A_\infty)-G_{\A}'(f_\infty,\A_\infty)]
=\0\q&\text{in }\O,\\
&{\p \bar f_\k\over\p\nu}=0,\q (\curl\bar
\A_\k)_T=\0\q&\text{on }\p\O.
\endaligned
\right.
\end{equation}
So we get
$$\aligned
&{\lam^2\over\k^2}\|\nabla\bar f_\k\|_{L^2(\O)}^2
+\lam^2\|\curl\bar \A_\k\|_{L^2(\O)}^2\\
&+{1\over
2}\int_\O\Big\{[G'_f(f_\k,\A_\k)-G'_f(\hat f,\A_\infty)]\bar f_\k+[G_{\A}'(f_\k,\A_\k)-G_{\A}'(\hat
f,\A_\infty)]\cdot\bar\A_\k\Big\}dx\\
=&\int_\O\Big\{{\lam^2\over\k^2}(\Delta\hat
f)\bar f_\k + {1\over 2}[G'_f(f_\infty,\A_\infty)-G'_f(\hat f,\A_\infty)]\bar f_\k+{1\over 2}[G_{\A}'(f_\infty,\A_\infty)-G_{\A}'(\hat f,\A_\infty)]\cdot\bar\A_\k\Big\}dx.
\endaligned
$$
Applying Remark \ref{Rem4.1}~(b) to  $(f_0,\A_0)=(\hat f,\A_\infty)$ and $(f_1,\A_1)=(f_\k,\A_\k)$ we get
$$\aligned
&\int_\O\Big\{[G'_f(f_\k,\A_\k)-G'_f(\hat f,\A_\infty)]\bar f_\k+[G_{\A}'(f_\k,\A_\k)-G_{\A}'(\hat
f,\A_\infty)]\cdot\bar\A_\k\Big\}dx\\
\geq& C(\delta)(\|\bar f_\k\|_{L^2(\O)}^2+\|\bar\A_\k\|_{L^2(\O)}^2).
\endaligned
$$
Using the facts $|\A_\infty(x)|\leq 1/\sqrt{3}$ and $|f_\infty+\hat
f|\leq 2N(\hat f)$, we get
$$\aligned
&\int_\O[G'_f(f_\infty,\A_\infty)-G'_f(\hat f,\A_\infty)]\bar f_\k\,dx\leq C(\var)\|\hat f-f_\infty\|_{L^2(\O)}^2+\var\|\bar f_\k\|_{L^2(\O)}^2,\\
&\int_\O[G_{\A}'(f_\infty,\A_\infty)-G_{\A}'(\hat f,\A_\infty)]\cdot\bar\A_\k dx
\leq C(\var)\|\hat f-f_\infty\|_{L^2(\O)}^2+\var\|\bar\A_\k\|_{L^2(\O)}^2.
\endaligned
$$
By choosing $\var$ suitably small in these two inequalities, and summarizing the above computations, we get \eqref{est-D.6}.

{\it Step 2}. From \eqref{eqD.2} and \eqref{eqD.3} we have
\begin{equation}\label{eqD.9}
\left\{\aligned
&\lam^2\curl(f_\infty^{-2}\curl\bar{\mH}_\k)+\bar{\mathcal
H}_\k=\lam^2\mF_\k\q&\text{\rm in }\O,\\
& \bar{\mH}_{\k
T}=\0\q&\text{\rm on }\p\O,
\endaligned\right.
\end{equation}
where
$$\mF_\k=\curl[(f_\infty^{-2}-f_\k^{-2})\curl\mH_\k].
$$
We claim that
\begin{equation}\label{est-D.10}
\aligned
\|\mF_\k\|_{L^p(\O)}\leq&
3\lam^{-2}\|\tilde f_\k\|_{L^p(\O)}\|\mathcal
H_\k+\mB\|_{L^\infty(\O)}\\
&+2^{-3/2}9\lam^{-1}\{\|\nabla \tilde f_\k\|_{L^p(\O)}+\|\tilde f_\k\|_{L^p(\O)}\|\nabla
f_\infty\|_{L^\infty(\O)}\}.
\endaligned
\end{equation}
To verify, we compute
$$\aligned
\curl^2\mH_\k=&-\lam^{-2}f_\k^2(\mH_\k+\mathcal
B)+2f_\k^{-1}\nabla
f_\k\times\curl\mH_\k,\\
\mF_\k =&\lam^{-2}f_\infty^{-2}(f_\infty^2-f_\k^2)(\mathcal
H_\k+\mB)+2f_\infty^{-3}f_\k^{-1}(f_\infty\nabla
f_\k-f_\k\nabla f_\infty)\times\curl\mH_\k.
\endaligned
$$
Since $\curl\mB=\0$, we have $\curl\mH_\k=\curl\H_\k$. Since $(f_\k,\H_\k)\in\mathbb U_\delta(\O)$
and $1/\sqrt{3}\leq f_\k\leq 1$ (see Proposition \ref{Prop4.3}), we have
$$|\curl\mH_\k|=|\curl\H_\k|=|\lam^{-1}f_\k^2\A_\k|\leq \lam^{-1}f_\k^2\leq
\lam^{-1}.
$$
We also have $(f_\infty,\H_\infty)\in\mathbb U_\delta(\O)$ and
$\sqrt{2/3}\leq f_\infty\leq 1$. So \eqref{est-D.10} holds.

Now we apply Lemma \ref{Lem-linear-curl} (i) to \eqref{eqD.9} with
$a=\lam^2f_\infty^{-2}$. Since
$$\aligned
&m=\min\Big\{1,\lam^2\min_{x\in\overline{\O}}f_\infty^{-2}(x)\Big\}\geq
\lam^2,\q
\|a\|_{C^1(\overline{\O})}\leq 2\Big({3\over
2}\Big)^{3/2}\lam^2\|f_\infty\|_{C^1(\overline{\O})},
\endaligned
$$
from \eqref{L-est1} we have, for $0<\lam\leq\k$,
$$\aligned
& \|\bar{\mH}_\k\|_{H^1(\O)}\leq C(\O)\lam\|\mathcal
F_\k\|_{L^2(\O)},\q \|\bar{\mH}_\k\|_{H^2(\O)}\leq
C(\O)\|f_\infty\|_{C^1(\overline{\O})}\lam\|\mF_\k\|_{L^2(\O)}.
\endaligned
$$
Combining these with \eqref{est-D.10} we get \eqref{est-D.7}. \end{proof}

\begin{Lem}\label{LemD.2}
Assume the conditions of Proposition \ref{Prop4.7} and $\k\geq\max\{1,\lam\}$. We can
choose the function $\hat f$ such that the $\bar f_\k$, $\bar\A_\k$ and $\bar{\mH}_\k$ defined in
\eqref{D.5} satisfy the following inequalities:
\begin{equation}\label{est-D.11}
\aligned
&\|\bar f_\k\|_{L^2(\O)}\leq C\k^{-3/2},\qqq\qq\;
\|\nabla\bar f_\k\|_{L^2(\O)}\leq C(1+\lam^{-1})\k^{-1/2},\\
&\|\Delta\bar f_\k\|_{L^2(\O)}\leq C(1+\lam^{-2})\k^{1/2},\qq \|D^2\bar
f_\k\|_{L^2(\O)}\leq C(1+\lam^{-2})\k^{1/2},
\endaligned
\end{equation}
\begin{equation}\label{est-D.12}
\aligned
&\|\bar \A_\k\|_{L^2(\O)}\leq C\k^{-3/2},\\
\endaligned
\end{equation}
\begin{equation}\label{est-D.13}
\aligned
\|\bar{\mH}_\k\|_{L^2(\O)}\leq &C\k^{-3/2},\qqq
\|\bar{\mH}_\k\|_{H^1(\O)}\leq Cb_1\lam^{-3}\k^{-1/2},\\
\|\bar{\mH}_\k\|_{H^2(\O)}\leq &Cb_2\lam^{-3}\k^{-1/2},
\endaligned
\end{equation}
where
$$C=C(\O,\delta,N(\hat f),f_\infty),\q
b_1=\lam^2+ \|\mB\|_{C^0(\overline{\O})} +\lam^2\k^{-1}\|\nabla
f_\infty\|_{L^\infty(\O)},\q b_2=b_1\|f_\infty\|_{C^1(\overline{\O})}.
$$
\end{Lem}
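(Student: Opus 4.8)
\textbf{Proof proposal for Lemma \ref{LemD.2}.}
The plan is to choose $\hat f$ to be a suitable $C^2$ (in fact $C^{2+\a}$) approximation of $f_\infty$ that satisfies the homogeneous Neumann condition \eqref{cond-D.4} and whose deviation from $f_\infty$ is controlled in terms of $\k$; this is exactly the trick used in \cite{BCM} in two dimensions. Concretely, since $\partial f_\infty/\partial\nu\in C^{1+\a}(\p\O)$ by Proposition \ref{Prop4.3}, I would take $\hat f=f_\infty-\k^{-2}\zeta$, where $\zeta$ is a fixed function (independent of $\k$) in $C^{2+\a}(\overline\O)$ solving a linear Neumann problem such that $\partial\zeta/\partial\nu=\k^2\,\partial f_\infty/\partial\nu$ on $\p\O$ — more precisely one factors out the $\k$ dependence and writes $\hat f=f_\infty-\k^{-2}w$ with $w\in C^{2+\a}(\overline\O)$ the solution of, say, $\Delta w=0$ in $\O$, $\partial w/\partial\nu=\partial f_\infty/\partial\nu$ on $\p\O$ (solvable because $\int_{\p\O}\partial f_\infty/\partial\nu\,dS=\int_\O\Delta f_\infty\,dx$ and one may subtract the mean; alternatively absorb a lower-order term to make it uniquely solvable). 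Then for $\k$ large one has $\|\hat f-f_\infty\|_{C^{2+\a}(\overline\O)}\le C\k^{-2}$, so $N(\hat f)$ is bounded uniformly in $\k$, and $(\hat f,\A_\infty)\in\mathbb K_\delta(\O)$ for $\k$ large because $(f_\infty,\A_\infty)$ satisfies $f_\infty^2-|\A_\infty|^2-1/3>0$ on $\overline\O$ (from \eqref{cond-1.8}) and the perturbation is $O(\k^{-2})$ in $C^0$.

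Next I would feed these bounds into the two estimates \eqref{est-D.6} and \eqref{est-D.7} proved in the preceding lemma. Since $\|\Delta\hat f\|_{L^2(\O)}=\k^{-2}\|\Delta w\|_{L^2(\O)}\le C\k^{-2}$ (or $O(1)$ if one does not insist $w$ harmonic — in any case $O(1)$ suffices since it is multiplied by $\lam^4\k^{-4}$) and $\|\hat f-f_\infty\|_{L^2(\O)}\le C\k^{-2}$, the right side of \eqref{est-D.6} is bounded by $C(\lam^4\k^{-4}\cdot\k^{-4}+\k^{-4})\le C\k^{-3}$ (keeping the dominant term; the $\lam^4/\k^4$ factor only helps), which immediately yields $\|\bar f_\k\|_{L^2(\O)}\le C\k^{-3/2}$, $\|\bar\A_\k\|_{L^2(\O)}\le C\k^{-3/2}$, and $\lam\|\nabla\bar f_\k\|_{L^2(\O)}/\k\le C\k^{-3/2}$, i.e. $\|\nabla\bar f_\k\|_{L^2(\O)}\le C\lam^{-1}\k^{-1/2}$; combining with the trivial bound $\|\nabla\bar f_\k\|_{L^2}\le \|\nabla f_\k\|_{L^2}+\|\nabla\hat f\|_{L^2}$ one gets the stated $C(1+\lam^{-1})\k^{-1/2}$. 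From $\tilde f_\k=\bar f_\k+(\hat f-f_\infty)$ one gets $\|\tilde f_\k\|_{L^2}\le C\k^{-3/2}$ and $\|\nabla\tilde f_\k\|_{L^2}\le C(1+\lam^{-1})\k^{-1/2}$, and then \eqref{est-D.7} gives $\|\bar{\mathcal H}_\k\|_{H^1(\O)}$ and $\|\bar{\mathcal H}_\k\|_{H^2(\O)}$: substituting, the bracket in \eqref{est-D.7} is $\le \lam^{-1}\k^{-3/2}\|\mathcal H_\k+\mathcal B\|_{L^\infty}+(1+\lam^{-1})\k^{-1/2}+\k^{-3/2}\|\nabla f_\infty\|_{L^\infty}$, and using that $\mathcal H_\k+\mathcal B=\mathbf H_\k=\lam\curl\A_\k$ is bounded in $L^\infty(\O)$ by a constant times $M$ (from Proposition \ref{Prop4.3}, since $\|\A_\k\|_{L^\infty}$ and $\lam\|\curl\H_\k\|_{L^\infty}$ are controlled) one collects the $\lam$-powers into $b_1,b_2$ and reaches \eqref{est-D.13}. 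The $L^2$ bound $\|\bar{\mathcal H}_\k\|_{L^2(\O)}\le C\k^{-3/2}$ comes directly from \eqref{est-D.6} since $\lam^2\|\curl\bar\A_\k\|_{L^2}^2$ and hence $\|\bar{\mathbf H}_\k\|_{L^2}^2=\lam^2\|\curl\bar\A_\k\|_{L^2}^2$ is bounded by the right side, and $\bar{\mathcal H}_\k=\bar{\mathbf H}_\k$.

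For the second-derivative bounds on $\bar f_\k$ in \eqref{est-D.11} I would return to the PDE \eqref{eqD.8}: the first equation is a Neumann problem for $\bar f_\k$ with right-hand side equal to $\lam^2\k^{-2}\Delta\hat f$ minus the zeroth-order differences of $G_f'$, which are Lipschitz in $(f,\A)$ and hence bounded in $L^2$ by $C(\|\bar f_\k\|_{L^2}+\|\bar\A_\k\|_{L^2}+\|\hat f-f_\infty\|_{L^2})\le C\k^{-3/2}$; so $\lam^2\k^{-2}\|\Delta\bar f_\k\|_{L^2}\le \lam^2\k^{-2}\|\Delta\hat f\|_{L^2}+C\k^{-3/2}$, which after multiplying through by $\k^2\lam^{-2}$ gives $\|\Delta\bar f_\k\|_{L^2}\le C+C\lam^{-2}\k^{1/2}\le C(1+\lam^{-2})\k^{1/2}$, and then elliptic $H^2$-regularity for the Neumann problem upgrades this to the same bound on $\|D^2\bar f_\k\|_{L^2}$. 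The main obstacle, as the authors themselves flag in the discussion after Proposition \ref{Prop4.7}, is not any single estimate here but the bookkeeping: one must track the dependence on $\lam$ and $\k$ carefully so that the constants $b_1,b_2$ come out with exactly the stated powers, and one must be careful that the $L^\infty$ bound on $\mathbf H_\k=\mathcal H_\k+\mathcal B$ used in \eqref{est-D.7} is genuinely uniform in $\k$ (it is, by \eqref{B.7} applied with $\mathbf B_T=\mathbf B_T$ and the fact that $(f_\k,\A_\k)\in\mathbb K_\delta(\O)$ forces $M\le\sqrt{2/3}$), so that feeding $O(\k^{-1/2})$ and $O(\k^{-3/2})$ inputs into a linear estimate does not secretly reintroduce a $\k$-growing factor. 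Once the approximation $\hat f$ is fixed and these uniformity points are checked, the chain of substitutions is mechanical.
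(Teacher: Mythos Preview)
Your construction of $\hat f$ does not satisfy the Neumann condition \eqref{cond-D.4}, and this is the central gap. With $\hat f=f_\infty-\k^{-2}w$ and $\partial w/\partial\nu=\partial f_\infty/\partial\nu$ on $\p\O$, you get
\[
\frac{\partial\hat f}{\partial\nu}=\frac{\partial f_\infty}{\partial\nu}-\k^{-2}\frac{\partial w}{\partial\nu}=(1-\k^{-2})\frac{\partial f_\infty}{\partial\nu}\neq 0,
\]
so $\bar f_\k=f_\k-\hat f$ does not have zero Neumann data and the integration by parts behind \eqref{est-D.6}, as well as the elliptic $H^2$ estimate you invoke for $\|D^2\bar f_\k\|_{L^2}$, both break down. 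Removing the factor $\k^{-2}$ repairs \eqref{cond-D.4} but then $\|\hat f-f_\infty\|_{L^2}=\|w\|_{L^2}$ is a fixed positive number and the right side of \eqref{est-D.6} no longer decays. There is a genuine obstruction here: since $\partial(\hat f-f_\infty)/\partial\nu$ must equal the \emph{fixed} nonzero function $-\partial f_\infty/\partial\nu$, one cannot have $\hat f-f_\infty$ small in $C^{2+\a}(\overline\O)$ (or even bounded in $H^2$ with $L^2$-norm tending to~$0$); a boundary layer is forced.

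The paper follows \cite[Lemma~4.5]{BCM} and takes the boundary-layer corrector
\[
\hat f(x)=f_\infty(x)-\chi(\k\,d(x))\,d(x)\,\frac{\partial f_\infty}{\partial\nu}(y_x),
\]
with $d(x)=\dist(x,\p\O)$, $y_x$ the nearest boundary point, and $\chi$ a cutoff. This correction is supported in a tube of width $O(\k^{-1})$, has size $O(\k^{-1})$ there, and matches the normal derivative exactly on $\p\O$; scaling then gives the trade-off $\|\hat f-f_\infty\|_{L^2}\le C\k^{-3/2}$, $\|\nabla(\hat f-f_\infty)\|_{L^2}\le C\k^{-1/2}$, $\|\Delta(\hat f-f_\infty)\|_{L^2}\le C\k^{1/2}$, which is exactly what feeds into \eqref{est-D.6} to produce \eqref{est-D.11}--\eqref{est-D.13}. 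Your downstream bookkeeping (passing from $\bar f_\k$ to $\tilde f_\k$, using \eqref{est-D.7}, and the Laplacian estimate via \eqref{eqD.8}) is essentially what the paper does once $\hat f$ is chosen correctly. One minor remark: the paper does not rely on a $\k$-uniform bound for $\|\H_\k\|_{L^\infty}$ but uses the cruder $O(\k)$ bound coming from Lemma~\ref{Lem-linear-curl}~(i) applied to \eqref{eqD.2}; the extra $\k$ is absorbed by the $\k^{-3/2}$ from $\|\tilde f_\k\|_{L^2}$.
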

\begin{proof} Choose the function $\hat f$ as in
\cite[Lemma 4.5]{BCM}:
$$
\hat f(x)=f_\infty(x)-\chi(\k
d(x))d(x){\p f_\infty\over\p\nu}(y_x),
$$
where $d(x)=\text{dist}(x,\p\O)$, $y_x\in\p\O$ being such that
$|y_x-x|=d(x)$, and $\chi(t)$ is a smooth and non-increasing
function in $t$ such that $\chi(t)=1$ for $0\leq t\leq 1$ and
$\chi(t)=0$ for $t>2$. Then, as $\k\to\infty$ it holds that
\begin{equation}\label{est-D.14}
\aligned &\|\hat f-f_\infty\|_{L^2(\O)}\leq C_1\k^{-3/2},\qq\q
\|\nabla\hat f-\nabla f_\infty\|_{L^2(\O)}\leq C_2\k^{-1/2},\\
&\|\Delta \hat f-\Delta f_\infty\|_{L^2(\O)}\leq C_3\k^{1/2},\qq
\|D^2\hat f-D^2f_\infty\|_{L^2(\O)}\leq C_4\k^{1/2},\\
&\|\hat f-f_\infty\|_{C^\a(\overline{\O})}\leq C_5\k^{-(1-\a)},\\
\endaligned
\end{equation}
where $C_1$ and $C_2$ depend on $\|f_\infty\|_{C^1(\overline{\O})}$, $C_3$
depends on $\|f_\infty\|_{C^2(\overline{\O})}$, $C_4$ and $C_5$ depend on
$\|f_\infty\|_{C^3(\overline{\O})}$.\footnote{Note that $\A_\infty$ and $f_\infty$ depend on $\lam$. The analysis in \cite{BaP} shows
that if $\mB_T\in C^{2+\a}(\p\O,\Bbb R^3)$, then
$\|\A_\infty\|_{C^{2+\a}(\overline{\O})}\leq C\lam^{-2-\a}$. Hence
$\|D^{n+\a}f_\infty\|_{C^0(\O)}\leq C\lam^{-n-\a}$ for $n=0,1,2$.}
From \eqref{est-D.6} and \eqref{est-D.14} we find
$$
{\lam^2\over\k^2}\|\nabla\bar f_\k\|_{L^2(\O)}^2+\|\bar
f_\k\|_{L^2(\O)}^2+\lam^2\|\curl\bar \A_\k\|_{L^2(\O)}^2+\|\bar
\A_\k\|_{L^2(\O)}^2\leq C\k^{-3},
$$
where $C$ depends on $\O$, $\delta$, $N(\hat f)$ and $f_\infty$. From this inequality
we get the first two inequalities in \eqref{est-D.11},
\eqref{est-D.12}, and the first inequality in \eqref{est-D.13}.

Since $\tilde f_\k=\bar f_\k+(\hat f-f_\infty)$, using the first two inequalities in \eqref{est-D.11}, and \eqref{est-D.14}, we find
\begin{equation}\label{est-D.15}
\aligned
\|\tilde f_\k\|_{L^2(\O)}\leq C\k^{-3/2},\q
\|\nabla \tilde f_\k\|_{L^2(\O)}\leq C(1+\lam^{-1})\k^{-1/2},
\endaligned
\end{equation}
where $C=C(\O,N(\hat f),\delta,f_\infty)$.

From the first equation in \eqref{eqD.8} we have
\begin{equation}\label{est-D.16}
{\lam^2\over\k^2}\|\Delta\bar f_\k\|_{L^2(\O)}\leq\|(|f_\k|^2+|\A_\k|^2-1)f_\k\|_{L^2(\O)}+{\lam^2\over\k^2}\|\Delta
\hat f\|_{L^2(\O)}.
\end{equation}
To estimate the first term in the right side of \eqref{est-D.16} we note that
$(|f_\infty|^2+|\A_\infty|^2-1)f_\infty=0$,
$|\A_\k|\leq f_\k\leq 1$ and $|\A_\infty|\leq f_\infty\leq 1$,
so
$$\aligned
|(|f_\k|^2+|\A_\k|^2-1)f_\k|=&|(|f_\k|^2+|\A_\k|^2-1)f_\k-(|f_\infty|^2+|\A_\infty|^2-1)f_\infty|\\
\leq& 3|\tilde f_\k|+2|\bar
\A_\k|.
\endaligned
$$
To estimate the second term, we use the third inequality in \eqref{est-D.14} to get, for $\k\geq 1$,
$$\|\Delta\hat f\|_{L^2(\O)}\leq C_3\k^{1/2}+\|\Delta f_\infty\|_{L^2(\O)}\leq (C_3+\|f_\infty\|_{C^2(\overline\O)})\k^{1/2}.
$$
Combining the above computations, and using \eqref{est-D.12}, \eqref{est-D.16},  the fourth inequality in \eqref{est-D.14} and the first inequality in \eqref{est-D.15}, we find
\begin{equation*}
\|\Delta \bar f_\k\|_{L^2(\O)}\leq \|\Delta \hat
f\|_{L^2(\O)}+C\lam^{-2}\k^{1/2}\leq C(1+\lam^{-2})\k^{1/2},
\end{equation*}
where $C$ depends on $\O$, $\delta$, $N(\hat f)$ and $f_\infty$. Therefore the third inequality in \eqref{est-D.11} is true. From this
and since ${\p\bar f_\k\over\p\nu}=0$ on $\p\O$, we have
$$
\|D^2\bar f_\k\|_{L^2(\O)}\leq C(\O)\{\|\Delta\bar f_\k\|_{L^2(\O)}+\|\bar f_\k\|_{L^2(\O)}\}\leq
C(1+\lam^{-2})\k^{1/2},
$$
where $C$ depends on $\O$, $\delta$, $N(\hat f)$ and $f_\infty$.
So the fourth inequality in \eqref{est-D.11} is true.

Now we apply Lemma \ref{Lem-linear-curl} (i)  with $a=\lam^2f_\k^{-2}$ to \eqref{eqD.2}.
For $0<\lam\leq\k$, since $f_\k$ satisfies \eqref{est-3.17},  we have
$$\aligned
&m=\min\{1,\lam^2\min_{x\in\overline{\O}}f_\k^{-2}(x)\}\geq \lam^2,\q
\|a\|_{C^1(\overline{\O})}\leq
2\cdot3^{3/2}\lam^2\|f_\k\|_{C^1(\overline{\O})}\leq C(\O)\lam\k.
\endaligned
$$
So we use \eqref{L-est1} to get
\begin{equation}\label{est-D.17}
\aligned
\|\mH_\k\|_{L^\infty(\O)}\leq C(\O)\|\mathcal
H_\k\|_{H^2(\O)}\leq
C(\O)m_a^{-3/2}\|a\|_{C^1(\overline{\O})}\|\mB\|_{L^2(\O)}
\leq  C(\O)\lam^{-2}\k\|\mB\|_{L^\infty(\O)}.
\endaligned
\end{equation}
Plugging \eqref{est-D.15} into the first inequality in \eqref{est-D.7} and using \eqref{est-D.17} we find
\begin{equation*}\aligned
\|\bar{\mH}_\k\|_{H^1(\O)}
\leq &C\lam^{-1}\k^{-1/2}\bigl\{1+
C(\O)\lam^{-2}\|\mB\|_{L^\infty(\O)} +\k^{-1}\|\nabla
f_\infty\|_{L^\infty(\O)}\bigr\},
\endaligned
\end{equation*}
where $C$ depends on $\O$, $\delta$, $N(\hat f)$ and $f_\infty$.
So we get the second inequality in
\eqref{est-D.13}.

Similarly we get the third inequality in \eqref{est-D.13}. \end{proof}

\begin{Lem}\label{LemD.3}
Assume the conditions of Proposition \ref{Prop4.7} and $\k\geq\max\{1,\lam\}$.
We  have
\begin{equation}\label{est-D.18}
\aligned
\|\bar\A_\k\|_{H^1(\O)}\leq& Cb_3(1+\lam^{-1})\k^{-1/2},\\
\|\bar\A_\k\|_{H^2(\O)}\leq& Cb_4\lam^{-3}(1+\lam^{-2})\k^{1/2},
\endaligned
\end{equation}
where $C=C(\O,\delta,N(\hat f),f_\infty)$ and
$$\aligned
 b_3=&\lam\k^{-1}\|\nabla f_\infty\|_{L^\infty(\O)}+1,\\
b_4=&\lam^3(3+ \k^{-2}\|\nabla f_\infty\|_{L^\infty(\O)})
+\lam^4\k^{-1} (2\|\nabla f_\infty\|_{L^\infty(\O)}
+\|D\A_\infty\|_{L^\infty(\O)})\\
&+\lam^5\k^{-2}\{1+\|\nabla f_\infty\|_{L^\infty(\O)}(\|\nabla
f_\infty\|_{L^\infty(\O)}+\|D\A_\infty\|_{L^\infty(\O)})\}+\k^{-1}\|\mathcal
B\|_{C^0(\overline{\O})}.
\endaligned
$$
\end{Lem}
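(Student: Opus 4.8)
The plan is to prove the $H^1$ and $H^2$ estimates in \eqref{est-D.18} by bootstrapping from the already-established $L^2$-type bounds in Lemmas in this appendix, using the div-curl-gradient inequalities of Section \ref{Section2} together with the Maxwell estimate Lemma \ref{Lem-linear-curl}. Recall $\bar\A_\k=\A_\k-\A_\infty$, and both $\A_\k$ and $\A_\infty$ satisfy the normal-trace condition \eqref{1.5}, so $\nu\cdot\bar\A_\k=0$ on $\p\O$; moreover $\lam\,\curl\bar\A_\k=\bar{\mH}_\k$, for which we already have \eqref{est-D.13}. Since $\O$ is simply-connected, inequality \eqref{dcg1} gives
\[
\|\bar\A_\k\|_{H^1(\O)}\le C(\O)\bigl\{\|\div\bar\A_\k\|_{L^2(\O)}+\|\curl\bar\A_\k\|_{L^2(\O)}+\|\nu\cdot\bar\A_\k\|_{H^{1/2}(\p\O)}\bigr\}
=C(\O)\bigl\{\|\div\bar\A_\k\|_{L^2(\O)}+\lam^{-1}\|\bar{\mH}_\k\|_{L^2(\O)}\bigr\}.
\]
So the first step is to control $\div\bar\A_\k$. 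Using $\div(f_\k^2\A_\k)=0$ and $\div(f_\infty^2\A_\infty)=0$ one gets, as in \eqref{eq-divA}, $\div\A_\k=-2f_\k^{-1}\nabla f_\k\cdot\A_\k$ and similarly for $\A_\infty$; subtracting and adding/subtracting cross terms, $\div\bar\A_\k$ is a sum of products of $\tilde f_\k$, $\nabla\tilde f_\k$, $\bar\A_\k$ and the $C^1$-bounded quantities $f_\infty$, $\nabla f_\infty$, $\A_\infty$, together with $f_\k^{-1}$ and $f_\infty^{-1}$ which are bounded since $f_\k,f_\infty\ge\sqrt{1/3+\delta}$. Feeding in \eqref{est-D.12}, \eqref{est-D.13}, \eqref{est-D.15} and the analogous bounds on $\nabla f_\infty$ yields $\|\div\bar\A_\k\|_{L^2(\O)}\le Cb_3(1+\lam^{-1})\k^{-1/2}$, and combined with $\|\bar{\mH}_\k\|_{L^2(\O)}\le C\k^{-3/2}\le C\k^{-1/2}$ this gives the first line of \eqref{est-D.18}.

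For the $H^2$ estimate I would apply \eqref{dcg1} with $k=1$:
\[
\|\bar\A_\k\|_{H^2(\O)}\le C(\O)\bigl\{\|\div\bar\A_\k\|_{H^1(\O)}+\|\curl\bar\A_\k\|_{H^1(\O)}+\|\nu\cdot\bar\A_\k\|_{H^{3/2}(\p\O)}\bigr\}
=C(\O)\bigl\{\|\div\bar\A_\k\|_{H^1(\O)}+\lam^{-1}\|\bar{\mH}_\k\|_{H^1(\O)}\bigr\},
\]
the boundary term vanishing. The $\curl$ term is handled by the second line of \eqref{est-D.13}, i.e. $\lam^{-1}\|\bar{\mH}_\k\|_{H^1(\O)}\le Cb_1\lam^{-4}\k^{-1/2}$. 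The main work is $\|\div\bar\A_\k\|_{H^1(\O)}$: differentiating the product expression for $\div\bar\A_\k$, one needs $L^2$-control of $\nabla\bar\A_\k$ (just obtained), of $D^2\tilde f_\k$ and $\nabla\tilde f_\k$ (from \eqref{est-D.11} and \eqref{est-D.15}, giving the $(1+\lam^{-2})\k^{1/2}$ growth), and of $\nabla\bar{\mH}_\k$ entering through $\nabla\A_\k$; all other factors are bounded in $C^1(\overline\O)$ by quantities controlled in terms of $f_\infty$, $\A_\infty$, using again $f_\k\ge\sqrt{1/3+\delta}$ and the $C^1$-bound on $f_\k$ from \eqref{est-3.17}. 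Collecting the worst powers of $\lam$ and $\k$ produces precisely the constant $b_4\lam^{-3}(1+\lam^{-2})\k^{1/2}$ with the displayed $b_4$.

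The main obstacle is bookkeeping rather than conceptual: one must expand the telescoping differences $f_\k^{-1}\nabla f_\k\,\A_\k - f_\infty^{-1}\nabla f_\infty\,\A_\infty$ (and their derivatives) carefully so that every term carries either $\tilde f_\k$, $\nabla\tilde f_\k$, $\bar\A_\k$ or $\bar{\mH}_\k$ as a ``small'' factor, and then track how the $\lam$-dependent bounds $\|D^{n+\a}f_\infty\|_{C^0}\le C\lam^{-n-\a}$ and $\|D\A_\infty\|_{L^\infty}\le C\lam^{-1}$ (noted in the footnote to \eqref{est-D.14}) propagate through products; this is exactly what fixes the somewhat elaborate form of $b_3$ and $b_4$. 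No new idea beyond the div-curl-gradient inequality plus the estimates already proved in this appendix is needed, so the argument is a finite, if tedious, chain of Hölder and Sobolev product estimates.
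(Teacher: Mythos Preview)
Your proposal is correct and follows essentially the same route as the paper: apply the div-curl-gradient inequality \eqref{dcg1} to $\bar\A_\k$ using $\nu\cdot\bar\A_\k=0$, control $\curl\bar\A_\k=\lam^{-1}\bar{\mH}_\k$ via \eqref{est-D.13}, and control $\div\bar\A_\k$ by writing it explicitly from $\div(f_\k^2\A_\k)=\div(f_\infty^2\A_\infty)=0$ (the paper records this as \eqref{eqD.20}), then iterate once more for $H^2$. Two small corrections: Lemma \ref{Lem-linear-curl} is not used here (it was used earlier to obtain \eqref{est-D.13}); and when you differentiate \eqref{eqD.20} the term you need is $\nabla\bar\A_\k$ (just controlled), not $\nabla\bar{\mH}_\k$---the paper uses the first line of \eqref{est-D.18} together with the pointwise bounds $|\nabla f_\k|\le C\lam^{-1}\k$, $|D^2 f_\k|\le C\lam^{-2}\k^2$ from \eqref{est-3.17}, and keeps $\|\nabla f_\infty\|_{L^\infty}$, $\|D\A_\infty\|_{L^\infty}$ as explicit parameters in $b_3,b_4$ rather than invoking the $\lam$-dependent bounds of the footnote.
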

\begin{proof} Recall that
$\nu\cdot\bar\A_\k=0$ on $\p\O$. If $0<\lam\leq\k$, we use  the last equality in \eqref{D.5},  \eqref{est-D.12}, the first
inequality in \eqref{est-D.13} and \eqref{dcg1} to get
\begin{equation}\label{est-D.19}
\aligned
\|\bar\A_\k\|_{H^1(\O)}\leq
C\{\|\div\bar\A_\k\|_{L^2(\O)}+\lam^{-1}\k^{-3/2}\},
\endaligned
\end{equation}
where $C=C(\O,\delta,M,f_\infty)$.
From the second equation of \eqref{dec-eqfA-1} we have
$\div(f_k^2\A_\k)=0$, and from the first equation of \eqref{eqD.1} we
have $\div(f_\infty^2\A_\infty)=0$. Hence
\begin{equation}\label{eqD.20}
\aligned
\div\bar\A_\k={2\over f_\k f_\infty}\tilde f_\k\nabla
f_\infty\cdot\A_\infty-{2\over
f_\k}(\nabla f_\infty\cdot\bar\A_\k+\nabla \tilde f_k\cdot\A_\k).
\endaligned
\end{equation}
Since $1/\sqrt{3}\leq f_\k\leq 1$, $1/\sqrt{3}\leq f_\infty\leq 1$
and $|\A_\infty|\leq f_\infty$, we use \eqref{est-D.12} and \eqref{est-D.15}
to find
$$\aligned
\|\div\bar\A_\k\|_{L^2(\O)}\leq& 2\sqrt{3}\|\nabla
f_\infty\|_{L^\infty(\O)}\{\|\tilde f_\k\|_{L^2(\O)}
+\|\bar\A_\k\|_{L^2(\O)}\}+2\sqrt{3}\|\nabla\tilde f_\k\|_{L^2(\O)}\\
\leq &C\|\nabla
f_\infty\|_{L^\infty(\O)}\k^{-3/2}+C(1+\lam^{-1})\k^{-1/2}.
\endaligned
$$
From this and \eqref{est-D.19}, the first inequality in \eqref{est-D.18} is
true for all $\k\geq\max\{1,\lam\}$.

Using Lemma 2.1 (i), \eqref{est-D.12}, and the second inequality in
\eqref{est-D.13} we have
 \begin{equation}\label{est-D.21}
\aligned \|\bar\A_\k\|_{H^2(\O)}\leq
C\{\|\div\bar\A_\k\|_{H^1(\O)}+b_1\lam^{-4}\k^{-1/2}+\k^{-3/2}\}.
\endaligned
\end{equation}
We use \eqref{eqD.20} to compute $\p_j\div\bar\A_\k$, then use \eqref{est-D.12}, \eqref{est-D.15}, the first inequality in \eqref{est-D.18}, and use the facts $1/\sqrt{3}\leq f_\k,\; f_\infty\leq 1$, $|\A_\k|\leq f_\k$,
$|\A_\infty|\leq f_\infty$, $|\nabla f_\k(x)|\leq C\lam^{-1}\k$,
$|D^2f_\k(x)|\leq C\lam^{-2}\k^2$ to get
$$
\|\p_j\div\bar\A_j\|_{L^2(\O)} \leq C\b_1(1+\lam^{-2})\k^{1/2}.
$$
Plugging this back to \eqref{est-D.21} we get
$$\|\bar\A_\k\|_{H^2(\O)}\leq C\b_2(1+\lam^{-2})\lam^{-2}\k^{1/2},
$$
where
$$\aligned
\b_1=&1+b_3+\lam\k^{-1}(\|\nabla
f_\infty\|_{L^\infty(\O)}+\|D\A_\infty\|_{L^\infty(\O)})\\
&+\lam^2\k^{-2}\|\nabla f_\infty\|_{L^\infty(\O)}(\|\nabla
f_\infty\|_{L^\infty(\O)}+\|D\A_\infty\|_{L^\infty(\O)}),\\
\b_2=&\b_1\lam^2+b_1\k^{-1}+\lam^4\k^{-2}.
\endaligned
$$
When $\k\geq \max\{1,\lam\}$ we have $\b_2\leq b_4\lam^{-1}$. So the second
inequality in \eqref{est-D.18} holds.
\end{proof}

\begin{proof}[\bf Proof of part (i) of Proposition \ref{Prop4.7}]\
$f_\infty$ depends only on $\O$, $\lam$ and $\mB_T$, and $N(\hat f)$ can be constructed to depend only on $\O$ and $f_\infty$. Hence the estimate \eqref{est-4.16} follows from Lemma \ref{LemD.2}, Lemma \ref{LemD.3} and \eqref{est-D.15}.
 \end{proof}

\section{Additional Remarks}\label{AppendixE}

\subsection{Remarks on requirement \eqref{cond-5.9}}\

We consider a problem slightly more general than \eqref{cond-5.9}:

\begin{Prob} Find conditions on $g$ such that the following equation has a solution $\mA_T\in T\!C^{1+\a}(\p\O,\Bbb R^3)$:
\eq\label{eq-g}
\nu\cdot\curl\mA_T=g\q\text{\rm on }\p\O.
\eeq
\end{Prob}

We shall see that the solvability of \eqref{eq-g} depends on both the topology of $\O$ and $g$.
Let us denote the connected components of $\p\O$ by $\Gamma_j$, $j=1,\cdots, m+1$, where $m\geq 0$, and
$\Gamma_{m+1}$ denotes the boundary of the infinite connected component of $\O^c$.

\begin{Prop}\label{PropE.2}
Let $\O$ be a bounded domain in $\Bbb R^3$ with a $C^{2+\a}$ boundary, $0<\a<1$.
\begin{itemize}
\item[(i)] Let $g\in C^\a(\p\O)$. Then \eqref{eq-g} has a solution $\mA_T\in T\!C^{1+\a}(\p\O,\Bbb R^3)$ if and only if
\eq\label{cond-g}
\int_{\Gamma_j} g\, dS=0,\q j=1,\cdots, m+1.
\eeq
\item[(ii)] Let $\mH^e\in C^\a(\p\O,\Bbb R^3)$ and let $\phi_0\in C^{2+\a}$ be a solution of \eqref{eqphi-mu} associated with $\mu=0$. Then \eqref{cond-5.9} holds for some $\mA_T\in T\!C^{1+\a}(\p\O,\Bbb R^3)$ if and only if $g=\nu\cdot\mH^e$ satisfies \eqref{cond-g}. In particular, if $\O$ has no holes, then this condition is exactly \eqref{cond-vanish}.
\end{itemize}
\end{Prop}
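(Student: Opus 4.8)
Part (i) is a purely boundary statement: on the closed $C^{2+\a}$ surface $\p\O$, the quantity $\nu\cdot\curl\mA_T$ is intrinsic to $\p\O$ and coincides with the scalar surface curl of the tangential field $\mA_T$ (this intrinsic character was already recorded just before Lemma \ref{Lem-extension}). Hence the equation \eqref{eq-g} decouples over the connected components $\Gamma_j$ of $\p\O$, and on each $\Gamma_j$ it becomes a Hodge-type solvability question on a compact surface without boundary. Part (ii) will then follow by applying part (i) with $g=\lambda^{-1}\bigl[(\nu\cdot\mH^e)^+ +\p\phi_0/\p\nu\bigr]$, once one checks that the flux of $\nabla\phi_0$ through each $\Gamma_j$ vanishes.

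\textbf{Proof of (i).} For necessity, suppose $\mA_T\in T\!C^{1+\a}(\p\O,\Bbb R^3)$ solves \eqref{eq-g}. On a single component $\Gamma_j$ we may write $\nu\cdot\curl\mA_T|_{\Gamma_j}=\text{div}_{\Gamma_j}(\mA_T\times\nu)$, and $\mA_T\times\nu$ is a tangential field on $\Gamma_j$, so its surface divergence integrates to $0$ over the closed surface $\Gamma_j$; thus $\int_{\Gamma_j}g\,dS=0$, which is \eqref{cond-g}. For sufficiency, assume \eqref{cond-g}. On each $\Gamma_j$, since $g|_{\Gamma_j}\in C^{\a}(\Gamma_j)$ has zero mean, the Laplace--Beltrami equation $\Delta_{\Gamma_j}\psi_j=g|_{\Gamma_j}$ has a solution $\psi_j\in C^{2+\a}(\Gamma_j)$, unique up to an additive constant, by the Fredholm alternative together with the Schauder theory on the $C^{2+\a}$ surface $\Gamma_j$ (the kernel and cokernel of $\Delta_{\Gamma_j}$ are the constants). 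Set $\mA_T:=\nu\times\nabla_{\Gamma_j}\psi_j$ on $\Gamma_j$; this is a tangential field of class $C^{1+\a}$, and the standard surface identity $\nu\cdot\curl(\nu\times\nabla_{\Gamma_j}\psi_j)=\Delta_{\Gamma_j}\psi_j$ gives $\nu\cdot\curl\mA_T=g$ on $\Gamma_j$. Because the $\Gamma_j$ are mutually disjoint, assembling the pieces yields $\mA_T\in T\!C^{1+\a}(\p\O,\Bbb R^3)$ solving \eqref{eq-g}.

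\textbf{Proof of (ii) and the no-holes case.} By construction $\phi_0\in C^{2+\a}_{\loc}(\overline{\O^c})$ is harmonic in $\O^c$, satisfies $\int_{\p\O}(\p\phi_0/\p\nu)\,dS=0$ from \eqref{eqphi-mu} with $\mu=0$, and by Remark \ref{Rem5.2} obeys $|\nabla\phi_0(x)|=O(|x|^{-3})$ as $|x|\to\infty$. For $1\le j\le m$, $\Gamma_j$ is the full boundary of a bounded component $D_j$ of $\O^c$; applying the divergence theorem to $\phi_0$ in $D_j$, and noting that $\nu$ points \emph{into} $\O^c$ hence is opposite to the outward normal of $D_j$ along $\Gamma_j$, gives $\int_{\Gamma_j}(\p\phi_0/\p\nu)\,dS=-\int_{D_j}\Delta\phi_0\,dx=0$. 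For $j=m+1$, apply the divergence theorem in $D_\infty\cap B_R$, where $D_\infty$ is the unbounded component with $\p D_\infty=\Gamma_{m+1}$; the flux through $\p B_R$ is $O(R^{-1})$ by the decay of $\nabla\phi_0$ and tends to $0$ as $R\to\infty$, so again $\int_{\Gamma_{m+1}}(\p\phi_0/\p\nu)\,dS=0$. Now write \eqref{cond-5.9} as \eqref{eq-g} with $g=\lambda^{-1}\bigl[(\nu\cdot\mH^e)^+ +\p\phi_0/\p\nu\bigr]\in C^{\a}(\p\O)$; by part (i), \eqref{cond-5.9} has a solution $\mA_T\in T\!C^{1+\a}(\p\O,\Bbb R^3)$ if and only if $\int_{\Gamma_j}g\,dS=0$ for every $j$, and since $\int_{\Gamma_j}(\p\phi_0/\p\nu)\,dS=0$ this is equivalent to $\int_{\Gamma_j}\nu\cdot\mH^e\,dS=0$ for every $j$, i.e.\ to $g=\nu\cdot\mH^e$ satisfying \eqref{cond-g}. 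Finally, if $\O$ has no holes then $m=0$, $\p\O=\Gamma_1$ is connected, and \eqref{cond-g} for $g=\nu\cdot\mH^e$ is the single condition $\int_{\p\O}\nu\cdot\mH^e\,dS=0$, which is \eqref{cond-vanish}.

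\textbf{Main difficulty.} The delicate points are (a) fixing the surface differential-geometric identity $\nu\cdot\curl(\nu\times\nabla_S\psi)=\Delta_S\psi$ on a $C^{2+\a}$ surface, with correct sign conventions, and the Schauder solvability of $\Delta_S\psi=g$ at the right Hölder exponent; and (b) in part (ii), upgrading the single identity $\int_{\p\O}(\p\phi_0/\p\nu)\,dS=0$ to the \emph{componentwise} vanishing $\int_{\Gamma_j}(\p\phi_0/\p\nu)\,dS=0$ for each $j$. Step (b) is precisely where the harmonicity of $\phi_0$ in every component of $\O^c$, the normalization $\mu=0$, the topological fact that each $\Gamma_j$ bounds exactly one component of $\O^c$, and the decay estimate of Remark \ref{Rem5.2} are all used, and it is the step I expect to require the most care.
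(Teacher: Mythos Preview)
Your proof is correct, but your approach to part (i) is genuinely different from the paper's. You work \emph{intrinsically on the surface} $\p\O$: for necessity you use the identity $\nu\cdot\curl\mA_T=\div_{\Gamma_j}(\mA_T\times\nu)$ and integrate over the closed surface $\Gamma_j$; for sufficiency you solve the Laplace--Beltrami equation $\Delta_{\Gamma_j}\psi_j=g|_{\Gamma_j}$ on each component and set $\mA_T=\nu\times\nabla_{\Gamma_j}\psi_j$. The paper instead works via \emph{volume extensions into $\O$}: it extends $\mA_T$ to a divergence-free field $\A$ on $\overline{\O}$ and, for the componentwise conditions, pairs $\curl\A$ against the harmonic gradients $\nabla q_j$ spanning $\Bbb H_2(\O)$; for sufficiency it solves a harmonic Neumann problem $\Delta\psi=0$ in $\O$ with $\p\psi/\p\nu=g$, then invokes the Hodge-type identity $\curl[H^1_{n0}(\O,\Bbb R^3)]=\mH^\Gamma(\O,\div0)$ to write $\nabla\psi=\curl\A$ with $\nu\cdot\A=0$, and finally appeals to the div--curl--gradient estimate \eqref{dcg3} for the $C^{1+\a}$ regularity of $\A$. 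Your route is more elementary and self-contained (surface Schauder theory plus one differential-geometric identity), and avoids the Hodge machinery and the extension step; the paper's route ties the result more directly to the function spaces and decompositions already in play elsewhere in the article. For part (ii), the paper simply says ``we only need to prove (i)'' and does not spell out the componentwise vanishing $\int_{\Gamma_j}\p\phi_0/\p\nu\,dS=0$; your argument using harmonicity of $\phi_0$ in each component of $\O^c$ together with the decay from Remark~\ref{Rem5.2} fills this in explicitly, and is correct.
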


\begin{proof} We only need to prove (i). Let
$$\Bbb H_2(\O)=\{\w\in L^2(\O,\Bbb R^3):~ \curl\w=\0\;\;\text{and}\;\; \div\w=0\;\;\text{in }\O,\;\; \u_T=\0\;\;\text{on }\p\O\}.
$$
Then $\dim \Bbb H_2(\O)=m$.
Assume \eqref{eq-g} has a solution $\mA_T\in T\!C^{1+\a}(\p\O,\Bbb R^3)$ and
let $\A\in C^{1+\a}(\overline{\O},\Bbb R^3)$ be a divergence-free extension of $\mA_T$, see \cite{P4}. Then
$$\int_{\p\O} g\,dS=\int_{\p\O}\nu\cdot\curl\mA_T dS=\int_{\p\O}\nu\cdot\curl\A\, dS=\int_\O\div\curl\A\, dx=0.
$$
This gives \eqref{cond-g} when $m=0$ (namely when $\O$ has no holes). If $m>0$, then $\Bbb H_2(\O)$ has a basis $\{\nabla q_i\}_{i=1}^m$, where $q_i$ is a harmonic function in $\O$ and $q_i=\delta_{ij}$ on $\Gamma_j$ for $1\leq j\leq m+1$. Then we have
$$\aligned
\int_{\Gamma_j}g\, dS=&\int_{\p\O}q_j\nu\cdot\curl\A dS=\int_{\O}\div(q_j\curl\A)dx
=\int_\O \nabla q_j\cdot\curl\A dx\\
=&\int_\O\div(\A\times\nabla q_j)dx=\int_{\p\O}\nu\cdot[\A\times(\nabla q_j)_T]dS=0.
\endaligned
$$
Hence \eqref{cond-g} is a necessary condition for \eqref{eq-g} to have a solution.

To prove \eqref{cond-g} is also sufficient, let us recall that (see \cite[P.223, Proposition 3]{DaL3})
$$
\curl [H^1_{n0}(\O,\Bbb R^3)]=\mH^\Gamma(\O,\div0),\q \mH(\O,\div0)=\mH^\Gamma(\O,\div0)\oplus\Bbb H_2(\O),
$$
where
$$\mH^\Gamma(\O,\div0)=\{\w\in \mH(\O,\div0):~ \langle 1,\nu\cdot\w\rangle_{H^{1/2}(\Gamma_j),H^{-1/2}(\Gamma_j)}=0\;\;\text{for } j=1,\cdots, m+1\}.
$$
Assume $g\in C^\a(\p\O)$ satisfies \eqref{cond-g}. Let $\psi$ be a harmonic function in $\O$ satisfying
${\p\psi\over\p\nu}=g$ on $\p\O$.
Then
$$\nabla\psi\in C^\a(\overline{\O},\Bbb R^3)\cap \mH^\Gamma(\O,\div0)=C^\a(\overline{\O},\Bbb R^3)\cap \curl [H^1_{n0}(\O,\Bbb R^3)].
$$
So there exists $\A\in H^1_{n0}(\O,\div0)$ such that
$\curl\A=\nabla\psi$ in $\O$.
From \eqref{dcg3} we have $\A\in C^{1+\a}(\p\O,\Bbb R^3)$. Let $\mA_T=\A_T$. Then
$\nu\cdot\curl\mA_T={\p\psi\over\p\nu}=g$.
\end{proof}

Assume $\O$ satisfies  $(O)$. Proposition \ref{PropE.2} says that for any $\mB_T\in T\!C^{1+\a}(\p\O,\Bbb R^3)$ satisfying \eqref{cond-5.2}, and for any  $\mH^e\in C^\a(\p\O,\Bbb R^3)$ satisfying \eqref{cond-vanish},  there always exists $\mA_T\in T\!C^{1+\a}(\p\O,\Bbb R^3)$ such that \eqref{cond-5.9} holds. Therefore for $\O$, $\mB_T$ and $\mH^e$ satisfying the conditions in Theorem \ref{Thm5.3}, we can always find $\mA_T$ such that there exists $\A$ satisfying \eqref{dec-eqfA-2}-\eqref{1.4} except the boundary condition $\A_T^+=\mA_T$.

\subsection{Remarks on condition \eqref{cond-6.15}}\

As observed in Proposition \ref{Prop6.5}, condition \eqref{cond-6.15} is necessary for $\mH$ to produce a solution of \eqref{eqA}. Thus it is important to classify  vector fields satisfying \eqref{cond-6.15}.
We start with the following two problems
\eq\label{eq-w}
\left\{\aligned
&\curl\w=\0,\q \div\w=0\q&\text{in }\O^c,\\
&\w_T=\bold v\q&\text{on }\p\O,\\
&\lim_{|x|\to\infty}\w(x)=\0,\q \int_{\p\O}\nu\cdot\w dS=0,
\endaligned\right.
\eeq
and
\eq\label{eq-u}
\left\{\aligned
&\curl\u=\0,\q \div\u=0\q&\text{in }\O^c,\\
&\nu\cdot\u=g\q&\text{on }\p\O,\\
&\lim_{|x|\to\infty}\u(x)=\0.
\endaligned\right.
\eeq

\begin{Lem}\label{LemE.3} Assume $\O$ satisfies $(O)$ with $r\geq 3$ and $0<\a<1$.
\begin{itemize}
\item[(i)] For any $\bold v\in \mB^{2+\a}(\p\O)$, \eqref{eq-w} has a unique solution $\w$, and
\eq\label{est-w}
\w \in C^{2+\a}_{\loc}(\overline{\O^c},\Bbb R^3)\cap C^\a(\overline{\O^c},\Bbb R^3),\qq
\|\w\|_{C^{2+\a}(\overline{\O^c})}\leq C_1(\O,\a)\|\bold v\|_{C^{2+a}(\p\O)}.
\eeq
\item[(ii)] For any $g\in \dot C^{2+\a}(\p\O)$,  \eqref{eq-u} has a unique solution $\u$, and
$$
\u \in C^{2+\a}_{\loc}(\overline{\O^c},\Bbb R^3)\cap C^\a(\overline{\O^c},\Bbb R^3),\qq\|\u\|_{C^{2+\a}(\overline{\O^c})}\leq C_2(\O,\a)\|g\|_{C^{2+a}(\p\O)}.
$$
\end{itemize}
\end{Lem}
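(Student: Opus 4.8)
\textbf{Proof proposal for Lemma \ref{LemE.3}.}

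The plan is to reduce both problems to the scalar potential theory for the exterior Laplacian on $\O^c$ that is developed in \cite{NW}, and then to invoke the existence, uniqueness and Schauder-estimate statements quoted earlier in the paper (in the proofs of Lemma \ref{Lem5.1} and Theorem \ref{Thm5.3}). Since $\O$ satisfies $(O)$, both $\O$ and $\O^c$ are simply-connected and $\O^c$ has connected boundary $\p\O$, so every curl-free field on $\O^c$ is a gradient. First I would treat (ii). Given $g\in \dot C^{2+\a}(\p\O)$, a curl-free divergence-free field $\u$ on $\O^c$ with $\u\to\0$ at infinity must be written $\u=\nabla\eta$ with $\eta$ harmonic in $\O^c$, and the boundary condition $\nu\cdot\u=g$ becomes the exterior Neumann problem
\eq\label{E-neumann}
\Delta\eta=0\q\text{in }\O^c,\q {\p\eta\over\p\nu}=g\q\text{on }\p\O,\q \nabla\eta\to\0\q\text{as }|x|\to\infty.
\eeq
The compatibility condition $\int_{\p\O} g\, dS=0$, which is exactly $g\in\dot C^{2+\a}(\p\O)$, is precisely what is needed so that the flux of $\nabla\eta$ through $\p\O$ vanishes; then by \cite[Lemma 2.7]{NW} the solution with zero flux is unique, and by \cite[Corollary 2.1, Lemma 2.6]{NW} it exists in $C^{2+\a}_{\loc}(\overline{\O^c})$ with the decay $\eta(x)=O(|x|^{-2})$, $|\nabla\eta(x)|=O(|x|^{-3})$ as $|x|\to\infty$. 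Hence $\u=\nabla\eta\in C^{2+\a}_{\loc}(\overline{\O^c},\Bbb R^3)\cap C^\a(\overline{\O^c},\Bbb R^3)$, and the Schauder estimate for \eqref{E-neumann} up to the boundary together with the decay yields $\|\u\|_{C^{2+\a}(\overline{\O^c})}\leq C_2(\O,\a)\|g\|_{C^{2+\a}(\p\O)}$. Uniqueness of $\u$ follows from uniqueness of $\eta$ modulo constants.

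For (i), given $\bold v\in \mB^{2+\a}(\p\O)$, i.e. $\bold v\in T\!C^{2+\a}(\p\O,\Bbb R^3)$ with $\nu\cdot\curl\bold v=0$ on $\p\O$, a curl-free divergence-free $\w$ on $\O^c$ with $\w\to\0$ at infinity is again $\w=\nabla\phi$, and the tangential condition $\w_T=\bold v$ becomes the exterior problem
\eq\label{E-dirichlet}
\Delta\phi=0\q\text{in }\O^c,\q (\nabla\phi)_T=\bold v\q\text{on }\p\O,\q \nabla\phi\to\0\q\text{as }|x|\to\infty,
\eeq
which is solvable because $\bold v$ satisfies the compatibility condition $\nu\cdot\curl\bold v=0$; see \cite[Lemmas 2.5, 2.6]{NW}. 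This is exactly the family $\{\phi_{\bold v,\mu}\}$ of \eqref{eqphi-mu}/\eqref{eq6.29}, parametrized by the flux $\mu=\int_{\p\O}{\p\phi\over\p\nu}dS$, and the additional requirement $\int_{\p\O}\nu\cdot\w\,dS=0$ selects $\mu=0$, which by \cite[Lemma 2.7]{NW} picks out a unique $\phi=\phi_{\bold v,0}$ with the stronger decay $\phi_{\bold v,0}(x)=O(|x|^{-2})$, $|\nabla\phi_{\bold v,0}(x)|=O(|x|^{-3})$ as $|x|\to\infty$ (as recorded in Remark \ref{Rem5.2} and \eqref{eq6.7}). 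Setting $\w=\nabla\phi_{\bold v,0}$ gives a field in $C^{2+\a}_{\loc}(\overline{\O^c},\Bbb R^3)\cap C^\a(\overline{\O^c},\Bbb R^3)$; the Schauder estimate for \eqref{E-dirichlet} up to $\p\O$ combined with the decay gives \eqref{est-w}. Uniqueness: if $\w_1,\w_2$ both solve \eqref{eq-w}, then $\w_1-\w_2$ is curl-free, divergence-free, has vanishing tangential trace and vanishing flux on $\p\O$ and decays at infinity, so by \cite[Theorem 3.3]{NW} (the uniqueness part, as used near the end of Section \ref{Section6}) $\w_1\equiv\w_2$.

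The only genuinely non-routine point is bookkeeping of the compatibility and flux conditions: one must check that $\bold v\in\mB^{2+\a}(\p\O)$ is \emph{exactly} the hypothesis under which \eqref{E-dirichlet} is solvable, and that imposing zero flux both is consistent (so the solution set is nonempty) and forces uniqueness — i.e. that the flux map $\mu\mapsto\phi_{\bold v,\mu}$ is an affine bijection onto the solution family, which is the content of \cite[Corollary 2.1, Lemmas 2.6, 2.7]{NW}. Once that is in place, the regularity and the estimates are a direct application of the quoted exterior-domain Schauder theory plus the harmonic-decay estimates (e.g. \cite[p.391--392, Proposition 17]{DaL1}), exactly as in the proofs of Lemma \ref{Lem5.1} and Theorem \ref{Thm5.3}; no new analysis is required. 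I would therefore present the argument as two short reductions to \eqref{E-dirichlet} and \eqref{E-neumann} respectively, citing \cite{NW} for existence/uniqueness/regularity and closing each part with the uniqueness statement from \cite[Theorem 3.3]{NW} (for (i)) and the classical exterior Neumann uniqueness (for (ii)).
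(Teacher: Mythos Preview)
Your reduction of both systems to scalar exterior harmonic problems via $\w=\nabla\phi$, $\u=\nabla\eta$ is correct and yields the same conclusions, but it differs from the paper's route. The paper does not pass through scalar potentials at all: it simply invokes the vector-valued div-curl theorems of \cite{NW} directly, citing \cite[Theorem 3.3 (b)]{NW} for the tangential problem \eqref{eq-w} and \cite[Theorem 3.2]{NW} for the normal problem \eqref{eq-u}, and then appeals to the layer-potential integral equations in the proof of \cite[Theorem 4.2]{NW} for the $C^{2+\a}$ estimates. Your approach is more explicit and self-contained (it makes clear why the flux condition in \eqref{eq-w} and the mean-zero condition on $g$ in \eqref{eq-u} are exactly what singles out a unique solution), whereas the paper's approach is shorter and stays at the level of the original vector systems.

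One small citation issue: in your treatment of (ii), the lemmas \cite[Corollary 2.1, Lemma 2.6]{NW} you invoke are, as used throughout the paper, results for the tangential-gradient problem $(\nabla\phi)_T=\bold v$, not for the exterior Neumann problem $\partial\eta/\partial\nu=g$. The exterior Neumann problem with mean-zero data is of course classical and solvable with the stated regularity and decay, but you should either cite the appropriate Neumann result (or simply appeal to \cite[Theorem 3.2]{NW} for the vector problem, as the paper does) rather than those tangential lemmas.
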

\begin{proof} Existence of a unique solution of \eqref{eq-w} follows from  \cite[Theorem 3.3 (b)]{NW}, and  that of \eqref{eq-u} follows from \cite[Theorem 3.2]{NW}.
The $C^{2+\a}$ estimate can be obtained using the integral equations given in the proof of Theorem 4.2 in \cite{NW}.
\end{proof}

The vector fields $\mH$ satisfying \eqref{cond-6.10} and the vector fields $\mW(\bv)$ satisfying \eqref{cond-6.28} are connected by the relation $$\mH=\mH^e+\mW(\bv).
$$

\begin{Def}\label{DefE.4} Assume $\O$ satisfies $(O)$ with $r\geq 3$ and $0<\a<1$, and $\mH^e$ satisfies $(H)$. Let $\bold v\in \mB^{2+\a}(\p\O)$ satisfy \eqref{cond-6.28}. We denote by $\mW(\bold v)$ the unique solution $\w$ of \eqref{eq-w} with boundary date $\bold v$ and define
\eq
\Sigma(\bold v)=\nu\cdot\mW(\bold v)^+,
\eeq
where $\nu$ is the unit outer normal vector to $\p\O$. We also define
\eq
\Gamma(\lam,\bold v)=\nu\cdot[\mS(\lam,(\mH^e_T)^+ +\bold v)]^-,
\eeq
where  $\mS(\lam,(\mH^e_T)^++\bold v)$ is the solution $\H_\O$ of \eqref{eq6.2} with boundary data $(\H_\O)_T^-=(\mH^e_T)^++\bold v$ as defined in Definition \ref{Def6.4}.

With these maps, the comparability condition \eqref{cond-6.15} can be written as
\eq\label{solvability-1}
\Gamma(\lam,\bold v)=\nu\cdot(\mH^e)^+ +\Sigma(\bold v)\q\text{\rm on }\p\O,
\eeq
and the solvability condition \eqref{eq6.33} can be written as
\eq
\Gamma(\lam, \bold v)=\nu\cdot(\mH^e)^+ +{\p\phi_{\bold v,0}\over\p\nu}\q\text{\rm on }
\p\O,
\eeq
where $\phi_{\bold v,0}$ is the solution of \eqref{eq6.29} for this $\bold v$ and for $\mu=0$.
\end{Def}

Under the assumptions in the above definition, from Lemma \ref{LemE.3} we know that $\w=\mW(\bv)$ exists and is unique, hence the operators $\Sigma(\bv)$ is well-defined.  Since $\bold v\in \mB^{2+\a}(\p\O)$ satisfies  \eqref{cond-6.28}, so $\mH=\mH^e+\w$ satisfies \eqref{cond-6.10} in $\O^c$, and $\Sigma(\bv)$ is well-defined. From \cite[Theorem 1]{BaP} we know that the solution $\H_\O$ of \eqref{eq6.2} in $\O$ with boundary condition $(\H_\O)_T=\mH_T$ exists and is unique, so $\Gamma(\lam,\bv)$ is well-defined. If the condition \eqref{solvability-1} holds, then $\mH$ satisfies \eqref{cond-6.15}.

Note that $\Sigma$ maps the tangential component $\w_T^+=\bold v$ of the solution $\w=\mW(\bv)$ of \eqref{eq-w} to the normal component $\nu\cdot\w^+$, and
$\Gamma$ maps the tangential component $(\H_\O)_T^-=(\mH^e_T)^++\bold v$ of a solution $\H_\O$ of \eqref{eq6.2} to its normal component $\nu\cdot\H_\O^-=\nu\cdot[\mS(\lam,(\mH^e_T)^++\bold v)]^-$.
Thus both operators may be called the \emph{Dirichlet-to-Neumann maps}, while $\Sigma$ is a linear operator with respect to an equation in the exterior domain, $\Gamma(\lam,\cdot)$ is a nonlinear operator related to an equation in $\O$. Recall that
$$
\int_{\p\O}\Gamma(\lam,\bold v) dS=0,\qq \int_{\p\O}\Sigma(\bold v) dS=0,
$$
where the first equality comes from the divergence theorem because $\div\H_\O=0$ in $\O$, and the second equality comes from the last condition in \eqref{eq-w}.

\begin{Lem} Assume $\O$ satisfies $(O)$ with $r\geq 3$ and $0<\a<1$. Then the operator
$$\Sigma~:~ TC^{2+\a}(\p\O,\Bbb R^3)\to \dot C^{2+\a}(\p\O)
$$
is a homeomorphism, where
$$
\dot C^{k+\a}(\p\O)=\{g\in C^{k+\a}(\p\O)~:~ \int_{\p\O} g(x)dS=0\}.
$$
\end{Lem}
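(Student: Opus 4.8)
The plan is to identify $\Sigma$ with the ``Dirichlet-to-Neumann'' operator for the exterior Laplace problem, compose it with the tangential-curl potential, and then invoke the invertibility results already available. First I would decompose the map. Given $\bold v\in TC^{2+\a}(\p\O,\Bbb R^3)$ we do not directly know $\nu\cdot\curl\bold v$; but the relevant domain is $\mB^{2+\a}(\p\O)$, the tangential fields with $\nu\cdot\curl\bold v=0$ on $\p\O$, since $\mW(\bold v)$ is only defined there. So the precise statement is that the restriction $\Sigma:\mB^{2+\a}(\p\O)\to\dot C^{2+\a}(\p\O)$ is a homeomorphism, and this is how I would read it. By Lemma \ref{Lem-extension}, every $\bold v\in\mB^{2+\a}(\p\O)$ equals $(\nabla\phi)_T$ for a harmonic $\phi\in C^{3+\a}(\overline{\O})$, and by Lemma \ref{LemE.3}(i) the solution $\w=\mW(\bold v)$ of \eqref{eq-w} is curl-free, divergence-free, zero-flux, decaying, with $C^{2+\a}_{\loc}(\overline{\O^c})$ regularity and the estimate in \eqref{est-w}. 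Writing $\w=\nabla\psi$ in $\O^c$ (possible since $\O^c$ is simply-connected), we get $\Delta\psi=0$ in $\O^c$, $(\nabla\psi)_T^+=\bold v$ on $\p\O$, $\int_{\p\O}\tfrac{\p\psi}{\p\nu}dS=0$, $\psi(x)=O(|x|^{-1})$; i.e. $\psi=\phi_{\bold v,0}$ in the notation of \eqref{eq6.29}, and $\Sigma(\bold v)=\tfrac{\p\phi_{\bold v,0}}{\p\nu}$ on $\p\O$.

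\textbf{Continuity of $\Sigma$.} Continuity from $\mB^{2+\a}(\p\O)$ to $\dot C^{2+\a}(\p\O)$ is immediate from \eqref{est-w}: indeed $\Sigma(\bold v)=\nu\cdot\mW(\bold v)^+$, the trace operator $C^{2+\a}_{\loc}(\overline{\O^c})\to C^{2+\a}(\p\O)$ is bounded, and $\mW$ is linear with $\|\mW(\bold v)\|_{C^{2+\a}(\overline{\O^c})}\le C_1\|\bold v\|_{C^{2+\a}(\p\O)}$, so $\|\Sigma(\bold v)\|_{C^{2+\a}(\p\O)}\le C\|\bold v\|_{C^{2+\a}(\p\O)}$; the zero-mean property $\int_{\p\O}\Sigma(\bold v)dS=0$ follows from the flux condition in \eqref{eq-w}, so the range does lie in $\dot C^{2+\a}(\p\O)$.

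\textbf{Bijectivity and continuity of the inverse.} Injectivity: if $\Sigma(\bold v)=0$ then $\tfrac{\p\phi_{\bold v,0}}{\p\nu}=0$ on $\p\O$, and $\phi_{\bold v,0}$ solves the exterior Neumann problem with zero data and decay at infinity, hence $\nabla\phi_{\bold v,0}\equiv\0$ by \cite[Lemma 2.7]{NW} (used already in the proof of Lemma \ref{Lem5.1}), so $\bold v=(\nabla\phi_{\bold v,0})_T=\0$. Surjectivity: given $g\in\dot C^{2+\a}(\p\O)$, apply Lemma \ref{LemE.3}(ii) to obtain the unique solution $\u$ of \eqref{eq-u}; since $\O^c$ is simply-connected $\u=\nabla\eta$ with $\eta$ harmonic in $\O^c$, $\tfrac{\p\eta}{\p\nu}=g$, decaying; then $\u$ has zero flux (as $\int_{\p\O}g\,dS=0$), so set $\bold v=\u_T^+=(\nabla\eta)_T$. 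One checks $\bold v\in TC^{2+\a}(\p\O,\Bbb R^3)$ from the $C^{2+\a}$ estimate in Lemma \ref{LemE.3}(ii), and that $\nu\cdot\curl\bold v=\nu\cdot\curl\u^+=0$ (since $\curl\u=\0$), so $\bold v\in\mB^{2+\a}(\p\O)$. By uniqueness of \eqref{eq-w} with this tangential datum, $\mW(\bold v)=\u$, hence $\Sigma(\bold v)=\nu\cdot\u^+=g$. Finally, continuity of $\Sigma^{-1}$: the map $g\mapsto\u$ is bounded $\dot C^{2+\a}(\p\O)\to C^{2+\a}_{\loc}(\overline{\O^c})$ by the estimate in Lemma \ref{LemE.3}(ii), and $\bold v=\u_T^+$ depends boundedly on $\u$ via the trace theorem, so $\|\Sigma^{-1}(g)\|_{C^{2+\a}(\p\O)}\le C\|g\|_{C^{2+\a}(\p\O)}$. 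Therefore $\Sigma$ is a homeomorphism.

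\textbf{Main obstacle.} The only delicate point is bookkeeping about the correct domain: the statement as written uses $TC^{2+\a}(\p\O,\Bbb R^3)$, but $\Sigma$ is only well-defined on the subspace $\mB^{2+\a}(\p\O)$ (the constraint $\nu\cdot\curl\bold v=0$ is exactly what makes the exterior problem \eqref{eq-w} solvable, cf. the necessity argument after Lemma \ref{Lem5.1} and the solvability discussion around \eqref{cond-6.6}). I expect the proof should be stated for $\Sigma:\mB^{2+\a}(\p\O)\to\dot C^{2+\a}(\p\O)$; both spaces are closed subspaces of Banach spaces, $\Sigma$ is linear and bounded, and the explicit two-sided inverse constructed above (harmonic extension, gradient, tangential trace) gives the homeomorphism via the open mapping theorem if one prefers not to track the inverse estimate by hand. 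The topological input — simple-connectedness of $\O^c$ and the uniqueness lemmas of \cite{NW} — is already in place, so no new analytic ingredient is needed.
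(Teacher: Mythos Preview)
Your proposal is correct and follows essentially the same route as the paper: continuity of $\Sigma$ via the estimate in Lemma~\ref{LemE.3}(i), surjectivity and the inverse estimate by solving \eqref{eq-u} via Lemma~\ref{LemE.3}(ii) and taking $\bold v=\u_T$, then noting that $\u$ solves \eqref{eq-w} for this $\bold v$ so that $\Sigma(\bold v)=g$. Your treatment is slightly more explicit than the paper's---you add a separate injectivity step through the exterior Neumann problem and you pass through the scalar potential $\phi_{\bold v,0}$, whereas the paper works directly with the vector fields and lets injectivity follow implicitly from the two-sided estimates and uniqueness in \eqref{eq-w}, \eqref{eq-u}; your remark that the natural domain is $\mB^{2+\a}(\p\O)$ rather than the full $TC^{2+\a}(\p\O,\Bbb R^3)$ is a valid observation that the paper glosses over.
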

\begin{proof} Using \eqref{est-w} we have, for any $\bold v\in \mB^{2+\a}(\p\O)$ and $\w=\mW(\bold v)$,
$$\|\Sigma(\bold v)\|_{C^{2+\a}(\p\O)}=\|\nu\cdot\w\|_{C^{2+\a}(\p\O)}\leq\|\w\|_{C^{2+\a}(\overline{\O^c})}\leq C_1\|\bold v\|_{C^{2+\a}(\p\O)}.
$$
Hence $\Sigma$ is a continuous linear operator.
On the other hand, from Lemma \ref{LemE.3} (ii), for any $g\in \dot C^{2+\a}(\p\O,0)$, \eqref{eq-u} has a unique solution $\u$. Denote $\bold v=\u_T$. Then $\u$ is also a solution of \eqref{eq-w} with boundary data $\bold v=\u_T$. Hence $g=\nu\cdot\u=\Sigma(\bold v)$. Thus $\Sigma$ is surjective, and
$$\aligned
\|\bold v\|_{C^{2+\a}(\p\O)}=&\|\u_T\|_{C^{2+\a}(\p\O)}\leq\|\u\|_{C^{2+\a}(\overline{\O^c})}\leq C_2(\O,\a)\|g\|_{C^{2+\a}(\p\O)}
=C_2\|\Sigma(\bold v)\|_{C^{2+\a}(\p\O)}.
\endaligned
$$
\end{proof}

Now we mention that finding $\mW(\bv)$ satisfying \eqref{solvability-1} is not a trivial question. In fact, if $\w=\mW(\bv)$ satisfies \eqref{solvability-1}, then $\w$ is a solution of
\eq\label{Dirichlet-w}
\left\{\aligned
&\curl\w=\0,\q \div\w=0\q&\text{in }\O^c,\\
&\w=\w_0(\bv)\q&\text{on }\p\O,\\
& \lim_{|x|\to\infty}\w(x)=\0,
\endaligned
\right.
\eeq
where $\w_0(\bold v)=\bold v+\Gamma(\lam,\bold v)-\nu\cdot(\mH^e)^+$.
Note that \eqref{Dirichlet-w} is a div-curl system with a boundary condition of prescribing the full trace, which may not be solvable for an arbitrary boundary data $\w_0$.

\section{Proof of Lemma \ref{Lem7.1}}
\label{AppendixF}

{\it Step 1.} We prove (i). Let $(f,\A)\in C^2(\overline{\O})\times \mathbb
C^{3,0}_t(\overline{\O},\overline{\O^c}, \mathbb R^3)$ be a solution of
\eqref{eqfA}-\eqref{1.4} with $f>0$ on $\overline{\O}$, and
$\H=\lam\,\curl\A$. Then $\H\in C^2(\overline{\O},\mathbb R^3)\cap
C^2_{\loc}(\overline{\O^c},\mathbb R^3)$ and $\H_T\in \mathbb
C^0(\p\O,\mathbb R^3)$, so $\H\in\mathbb C^{2,0}_t(\overline{\O},\overline{\O^c},\mathbb R^3)$.
From \eqref{eqfA} and \eqref{1.4} we see that $(f,\H)$ is a solution
of \eqref{eqfH}-\eqref{cond-1.12}. If in addition $\A\in \mathbb
C_t^{3,1}(\overline{\O},\overline{\O^c},\mathbb R^3)$, then $\A_T\in\mathbb
C^1(\p\O,\mathbb R^3)$, and hence
$\nu\cdot\H=\lam\,\nu\cdot\curl(\A_T)\in \mathbb C^0(\p\O)$,  so $\H\in
\mathbb C^{2,0}(\overline{\O},\overline{\O^c},\mathbb R^3)$.

{\it Step 2}. We prove (iia). Assume  $(f,\H)\in C^{2+\a}(\overline{\O})\times
\mathbb C^{3+\a,0}(\overline{\O},\overline{\O^c},\mathbb R^3)$ is a solution of
\eqref{eqfH}-\eqref{cond-1.12} with $f>0$ on $\overline{\O}$ and $0<\a<1$. Applying the Schauder estimate to the Neumann problem for $f$ (see \eqref{eq4.7})
we conclude that $f\in C^{3+\a}(\overline{\O})$.

Since $\O$ is simply-connected and  without holes, and $\H\in C^{3+\a}(\overline{\O},\div0)$,
there exists a unique $\B\in C^{2+\a}_{n0}(\overline{\O},\div0)$ solving \eqref{eq-BH}. Let $\bold
Q=\lam^2f^{-2}\curl\H+\B$. Then $\bold Q\in C^{2+\a}(\overline{\O},\mathbb
R^3)$. From \eqref{eqfH} $\curl \bold Q=\bold 0$, so $\bold Q=\nabla\varphi$ for some $\varphi\in
C^{3+\a}(\overline{\O})$, hence
\begin{equation}\label{F.1}
\bold B-\nabla\varphi+\lam^2f^{-2}\curl\H=\bold 0.
\end{equation}
Let
\begin{equation}\label{F.2}
\A^i=\lam^{-1}(\B-\nabla\varphi).
\end{equation}
Then $\A^i\in C^{2+\a}(\overline{\O},\mathbb R^3)$ and $\A^i_T\in
T\!C^{2+\a}(\p\O,\mathbb R^3)$. From the first two equalities in \eqref{eqfH}
we see that $(f,\A^i)$ satisfies the first two equalities in \eqref{eqfA}.

For the $\H$ given above, we can find $\mB\in
C^{2+\a}_{\loc}(\overline{\O^c},\mathbb R^3)\cap C^{1+\delta}(\overline{\O^c},\mathbb
R^3)$ satisfying
\begin{equation}\label{F.3}
\left\{\aligned
&\lam\,\curl\mB=\H-\mH^e,\q \div\mB=0\q&\text{\rm  in }\O^c,\\
&\mB_T=\A^i_T-\mF^e_T\q&\text{\rm on }\p\O,\\
&\int_{\p\O}\nu\cdot\mB dS=0.
\endaligned\right.
\end{equation}
In fact, using \eqref{cond-7.1} and arguing as in the proof of \cite[Lemma 3.3]{P3} we can verify that
$$
\lam\,\nu\cdot\curl[\nu\times(\A^i-\mF^e)]=-\nu\cdot(\H-\mH^e),
$$
and for any closed and oriented surface $\Sigma\subset\O^c$ it holds that
$$
\int_\Sigma \nu_\Sigma\cdot(\H-\mH^e)dS=0.
$$
Recalling that $\O$ is simply-connected and has no holes, applying \cite[Theorem 3.3]{NW} we see that \eqref{F.3} has a
solution $\mB\in C^{1+\delta}(\overline{\O^c},\mathbb R^3)$ which decays at infinity.
Using local estimate as in the proof of \cite[Lemma 3.3]{P3} we can show that $\mB\in
C^{2+\a}_{\loc}(\overline{\O^c},\mathbb R^3)$.

Define $\tilde\A^o$ on $\overline{\O^c}$ by letting
$\tilde\A^o=\mF^e+\mB.$ Then $\tilde\A^o\in
C^{2+\a}_{\loc}(\overline{\O^c},\mathbb R^3)\cap C^{1+\delta}(\overline{\O^c},\mathbb R^3)$.
Since
$(\A^i)^-
-(\tilde\A^o)^+\in C^{2+\a}(\p\O,\mathbb R^3)$, we can find
$\phi\in C^{3+\a}_{\loc}(\overline{\O^c})$ satisfying
\begin{equation} \label{F.4}
\phi=0\q\text{\rm and}\q {\p\phi\over\p\nu}=\nu\cdot[(\A^i)^--(\tilde\A^o)^+]\q\text{\rm on }\p\O.
\end{equation}
Existence of $\phi$ can be proved by using the trace theorem of $H^2(\O^c)$.
Set $\A^o=\tilde\A^o+\nabla\phi$, and define a vector field $\A$ on $\Bbb R^3$ by letting
$\A=\A^i$ in $\O$ and $\A=\A^o$ in $\O^c$.
Then $\A$ satisfies the first three equations in \eqref{eqfA},
$\A\in \C^{2+\a,0}(\overline{\O},\overline{\O^c},\mathbb R^3)$ and
$[\A]=\0$ on $\p\O$. So $(f,\A)$ is a solution of \eqref{eqfA}-\eqref{1.4}. Since $\A^i\in C^{2+\a}(\overline{\O},\mathbb R^3)$ and
$\A^o=\mF^e+\mB+\nabla\phi\in
C^{2+\a}_{\loc}(\overline{\O^c},\mathbb R^3)$, we have $\A\in
\C^{2+\a,0}(\overline{\O},\overline{\O^c},\mathbb R^3)$.

{\it Step 3}. We prove (iib). Since
$\H\in \mathbb C^{2+\a,0}(\overline{\O},\overline{\O^c},\mathbb R^3)$ and
$\H-\mH^e\in C^2_{\loc}(\overline{\O^c},\mathbb R^3)$, we have
$\mH^e\in C^2_{\loc}(\overline{\O^c},\mathbb R^3)$.
Define $\B$ and $\A^i$ as in step
2, and let $\varphi$ be the function in \eqref{F.1}.  Since $\H\in C^{2+\a}(\overline{\O},\mathbb R^3)$, from \cite[Lemma 2.1 (ii)]{P5} we have $\B\in
C^{3+\a}(\overline{\O},\mathbb R^3)$, and from \eqref{F.1} we see $\varphi\in C^{2+\a}(\overline{\O})$.  Since $\nu\cdot\curl\H=\0$, from \eqref{F.1} we
see that $\varphi$ satisfies
\begin{equation}\label{F.5}
\Delta\varphi=\lam^2\nabla (f^{-2})\cdot\curl\H\q\text{in }\O,\q
{\p\varphi\over\p\nu}=0\q\text{\rm on}\;\;\p\O.
\end{equation}
The right side of the first equation in \eqref{F.5} belongs to $C^{1+\a}(\overline{\O})$, hence
$\varphi\in C^{3+\a}(\overline{\O})$. Then from \eqref{F.2} $\A^i\in
C^{2+\a}(\overline{\O},\mathbb R^3)$, so $\A^i_T\in
T\!C^{2+\a}(\p\O,\Bbb R^3)$. Let $\mB$ be the solution of
\eqref{F.3}. Since $\mH^e\in
C^{2}_{\loc}(\overline{\O^c},\mathbb R^3)$ and $\mF^e_T\in
C^{2+\a}(\p\O,\Bbb R^3)$, as in step 2 we can show $\mB\in C^{2+\a}_{\loc}(\overline{\O^c},\mathbb R^3)\cap
C^{1+\delta}(\overline{\O^c},\mathbb R^3)$. Then $\tilde\A^o=\mathcal
F^e+\mB\in C^{2+\a}_{\loc}(\overline{\O^c},\mathbb R^3)$. So
$(\A^i)^--(\tilde\A^o)^+\in C^{2+\a}(\p\O,\Bbb R^3)$, and we can
find a function $\phi\in C^{3+\a}_{\loc}(\overline{\O^c})$ which satisfies
\eqref{F.4}. Thus $\A^o=\tilde\A^o+\nabla\phi\in
C^{2+\a}_{\loc}(\overline{\O^c},\mathbb R^3)$. Define $\A$ as in Step 2. We see that $\A\in \mathbb C^{2+\a,0}(\overline{\O},\overline{\O^c},\mathbb
R^3)$, and $(f,\A)$ is a solution of problem \eqref{eqfA}-\eqref{1.4}.
From the condition $\nu\cdot\curl\H=0$ on $\p\O$ , the fact $\A\in
C^2(\overline{\O},\mathbb R^3)$, and using the second equation in \eqref{eqfA}, we see that $\nu\cdot\A=0$ on $\p\O$. Thus
$(f,\A)$ is a Meissner solution of problem \eqref{eqfA}-\eqref{1.4}.
\qed

\section{Notations}

$$\aligned
&\mA(\O,\mathbb R^3),\;\; \mA(\O,\mathbb R^3,\lam^{-1}\mH^e) \qqq\qqq\qqq\q &\eqref{sp-AB}\\
&\mB(\O,\mathbb R^3)\qqq\qq\;\;\, &\eqref{sp-AB}\\
&\mB^{k+\a}(\p\O),\;\;\mB^{k+\a}(\overline{\O}) \qq\qq\q\,\,&\eqref{sp-Bkalpa-1}, \eqref{sp-Bkalpa-2}\\
&\C^{k+\b}(\p\O,\Bbb R^3)\qqq\q &\text{Definition } \ref{Def3.7}\\
&\C^{k+\a,m+\b}(\overline{\O},\overline{\O^c},\mathbb R^3),\;\;
\C_t^{k+\a,m+\b}(\overline{\O},\overline{\O^c},\mathbb R^3)\qq  &\eqref{sp-Ckalpha}\\
&\text{Conditions }(F), (H), (H_0), (O)\qqq\; &\text{Subsection }\ref{Subsection2.3}\\
&H^1_{0,\loc}(\O^c)\qqq\qq\q &\text{beginning of Section }\ref{Section5}\\
&\mH(\O,\div),\q \mH(\O,\div0)\qqq\qq\;\;\;&\text{section 2}\\
&\mH(\O,\curl),\q \mH(\O,\curl0)\qqq\qq\;\;\,&\text{section 2}\\
&\mH_{\loc}(D,\text{\rm div}),\;\; \mH_{\loc}(D,\text{\rm curl})\qqq\qq\;&\eqref{sp-Hloc}\\
&\mH(\O,\mathbb R^3),\;\; \mH(\O,\mathbb R^3,\lam^{-1}\mH^e)\qqq\;\;&\eqref{sp-HHU}\\
&\mK(\O),\;\; \overline{\mK}(\O),\;\; \mK_\delta(\O),\;\; \mK^1_\delta(\O)\qqq\q\;\,&\eqref{sp-KK}\\
&\mathbb K(\O),\;\; \overline{\mathbb K}(\O),\;\; \mathbb K_\delta(\O)\qq\, &\eqref{sp-XKUU}\\
&T\!H^s(\O,\Bbb R^3),\;\; T\!C^{k+\a}(\p\O,\Bbb R^3)\qq&\eqref{TanS}\\
&\mathbb U(\O),\;\;\overline{\mathbb U}(\O),\;\;\mathbb U_\delta(\O)\qq\q &\eqref{sp-XKUU}\\
&\mU(\O,\mathbb R^3) \qqq\qqq &\eqref{sp-HHU}\\
&\mV(\O)\qqq\qqq\qq&\eqref{sp-WWVD}\\
&\mW(\O),\;\; \mW_{t0}(\O)\qqq\qqq\q&\eqref{sp-WWVD}\\
&\mZ(\O^c)\qqq\qqq\,&\eqref{sp-Z}\\
&\lam_{f\A}(\O,\mB_T),\;\; \k_{f\A}(\O,\mB_T,\lam)\qqq\qqq &\text{Theorem \ref{Thm4.9}}\\
&\lam_\H(\O,\mH_T)\qqq&\text{Lemma \ref{Lem6.2}}\\
&\lam_\A(\O,\var_0)\qqq &\eqref{lam-A-var}\\
&\lam_{f\A}(\O,\var_0),\;\; \k_{f\A}(\O,\var_0,\lam)\qqq&\eqref{lam-ka-var}\\
\endaligned
$$

\v0.3in

%
%
%
%
%

\begin {thebibliography}{DUMA}

\bibitem[Ad]{Ad} R. A. Adams, {\it Sobolev Spaces}, Academic Press, New York, 1975.

\bibitem[ABD]{ABD} C. Amrouche, C. Bernardi, M. Dauge and V. Girault, {\it Vector potentials in three-dimensional non-smooth domains}, Math. Meth. Appl. Sci., {\bf 21} (9) (1998), 823-864.

\bibitem[AS]{AS} C. Amrouche and N. Seloula, {\it $L^p$-teory for vector potentials and Sobolev's inequalities for vector fields. Application to the Stokes equations with presure boundary conditions}, Math. Models Methods Appl. Sci., {\bf 23} (1) (2013), 37-92.

\bibitem[AuA]{AuA} G. Auchmuty and J. C. Alexander, {\it $L^2$-well-posedness of 3d-div-curl boundary value problems}, Quart. Appl. Math. {\bf 63} (3) (2005), 479-508.

\bibitem[BaP]{BaP} P. Bates and X. B. Pan, {\it On a problem related to vortex nucleation of 3 dimensional superconductors},  Comm. Math.
Phys., {\bf 276} (3) (2007), 571-610. Erratum, {\bf 283} (3) (2008), 861.

\bibitem[BF]{BF} V. Benci and D. Fortunato, {\it Towards a unified field theory for classical electrodynamics}, Arch. Rational Mech. Anal., {\bf 173} (3) (2004), 379-414.

\bibitem[BBC]{BBC} H. Berestycki, A. Bonnet and S. J. Chapman, {\it A
semi-elliptic system arising in the theory of type-I\!I
superconductivity}, Comm. Appl. Nonlinear Anal., {\bf 1} (3)(1994),
1-21.

\bibitem[BCM]{BCM} A. Bonnet, S. J. Chapman and R. Monneau, {\it Convergence of the Meissner minimizers of the Ginzburg-Landau energy
of superconductivity as $\k\to+\infty$}, SIAM J. Math. Anal., {\bf 31} (6) (2000), 1374-1395.

\bibitem[BCS]{BCS} A. Buffa, M. Costabel, and D. Sheen, {\it On traces for $H(\curl,\Omega)$ in Lipschitz domains}, J. Math. Anal. Appl., {\bf 276} (2) (2002), 845-867.

\bibitem[BH]{BH} C. Bolley and B. Helffer, {\it Rigorous results on Ginzburg-Landau models in a film submitted to an
exterior parallel magnetic field}, I, II. Nonlinear Stud., {\bf 3} (1) (1996), 1-29; and {\bf 3} (2) (1996), 121-152.

\bibitem[BW]{BW} J. Bolik and W. von Wahl, {\it Estimating $\nabla \u$ in terms of {\rm div} $\u$, {\rm curl}
$ \u$, either $(\nu,\u)$ or $\nu\times\u$ and the topology}, Math. Methods in the Appl. Sci., {\bf 20} (9) (1997), 734-744.

\bibitem[Ce]{Ce} M. Cessenat,
{\it Mathematical Methods in Electromagnetism-Linear Theory and Applications},
Series on Advances in Mathematics for Applied Sciences, vol. {\bf 41},
World Scientific Publishing Co., Inc., River Edge, NJ, 1996.

\bibitem[C1]{C1} S. Chapman, {\it Macroscopic Models of Superconductivity}, Ph. D thesis, Oxford University, 1991.

\bibitem[C2]{C2} S. J. Chapman, {\it Superheating fields of type I\!I superconductors}, SIAM J. Appl. Math., {\bf 55} (5) (1995), 1233-1258.

\bibitem[C3]{C3} S. J. Chapman, {\it Nucleation of vortices in type I\!I superconductors in increasing magnetic fields}, Appl. Math. Lett., {\bf 10} (2)
(1997), 29-31.

\bibitem[Co]{Co} M. Costabel, {\it A remark on the regularity of solutions of Maxwell¡¯s equations on Lipschitz domains}, Math. Meth. in the Appl. Sci., {\bf 12} (4) (1990), 365-368.

\bibitem[CD]{CD} M. Costabel and M. Dauge, {\it Singularities of electromagnetic fields in polyhedral domains}, Arch. Rational Mech. Anal., {\bf 151} (3) (2000) 221-276.

\bibitem[DaL1]{DaL1} R. Dautray and J. -L. Lions, {\it Mathematical Analysis and Numerical Methods for Science and technology}, vol. {\bf 1}, Springer-Verlag,  New York, 1990.

\bibitem[DaL3]{DaL3} R. Dautray and J. -L. Lions, {\it Mathematical Analysis and Numerical Methods for Science and technology}, vol. {\bf 3}, Springer-Verlag,  New York, 1990.

\bibitem[dG1]{dG1} P. G. De Gennes, {\it Vortex nucleation in type I\!I superconductors}, Solid State Comm., {\bf 3} (6) (1965), 127-130.

\bibitem[dG2]{dG2} P.G. De Gennes, {\it Superconductivity of Metals and Alloys}, Benjamin, 1966.

\bibitem[Fn]{Fn} H. J. Fink, {\it Delayed flux entry into type I\!I superconductors}, Phys. Lett., {\bf 20} (4) (1966), 356-357.

\bibitem[FP1]{FP1} H. J. Fink and A. G. Presson, {\it Stability limit of the superheated Meissner state due to three-dimensional
fluctuations of the order parameter and vector potential}, Phys. Rev., {\bf 182} (2) (1969), 498-503.

\bibitem[FP2]{FP2} H.J. Fink and A.G. Presson, {\it Superconducting surface sheath of a semi-infinite half-space and its
instability due to fluctuations}, Phys. Rev. B, {\bf 1} (3) (1970), 1091-1096.

\bibitem[Ga]{Ga} V. P. Galaiko, {\it Stability limit of the superconducting state in a magnetic field for superconductors of the second kind},
Soviet Phys. JETP, {\bf 23} (3) (1966), 475-478.

\bibitem[GL]{GL} V. Ginzburg and L. Landau, {\it On the theory of  superconductivity}, Soviet Phys. JETP, {\bf 20} (1950), 1064-1082.

\bibitem[GR]{GR} V. Girault and P. A. Raviart, {\it Finite Element Methods for the Navier-Stokes Equations, Theory and Algorithms}, Springer, Berlin, 1986.

\bibitem[Jo]{Jo} F. Jochmann, {\it The semistatic limit for Maxwell's equations in an exterior domain}, Comm. PDE, {\bf 23} (11-12) (1998), 2035-2076.

\bibitem[Kra]{Kra} L. Kramer, {\it Vortex nucleation in type I\!I superconductors}, Phys. Lett. A, {\bf 24} (11) (1967), 571-572.

\bibitem[KY]{KY}  H. Kozono and  T. Yanagisawa, {\it $L^r$-variational inequality for vector fields and the Helmholtz-Weyl decomposition in bounded domains,} Indiana Univ. Math. J. {\bf 58} (4), (2009) 1853-1920.

\bibitem[LiP]{LiP} G. Lieberman and X. B. Pan, {On a quasilinear system arising in theory of superconductivity},
Proc. Royal Soc. Edinburgh, {\bf 141 A} (2) (2011), 397-407.

\bibitem[LuP]{LuP} K. Lu and X. B. Pan, {\it Ginzburg-Landau equation with
De Gennes boundary condition}, J. Diff. Equations, {\bf 129} (1) (1996),
136-165.

\bibitem[MS]{MS} J. Matricon and D. Saint-James, {\it Superheating fields in superconductors}, Phys. Lett. A, {\bf 24} (5) (1967), 241-242.

\bibitem[MP1]{MP1} A. Milani and R. Picard, {\it Decomposition theorems and their application to nonlinear electro- and magneto-static boundary value problems}, in: S. Hildebrandt and R. Leis eds.,
    Partial Differential Equations and Calculus of Variations, pp. 317-340, Lecture Notes in Math., vol. 1357, Springer-Verlag, Berlin, 1988.

\bibitem[MP2]{MP2} A. Milani and R. Picard, {\it Weak solution theory for Maxwell's equstions in the semistatic limit case}, J. Math. Anal. Appl., {\bf 191} (1) (1995), 77-100.

\bibitem[MMT]{MMT} D. Mitrea, M. Mitrea and M. Taylor, {\it Layer Potentials, the Hodge Laplacian and Global Boundary Problems in Non-Smooth Riemannian Manifolds}, Memoirs Amer. Math. Soc.,  vol. {\bf 150}, no. 713, 2001.

\bibitem[Mon]{Mon} R. Monneau, {\it Quasilinear elliptic system arising in a three-dimensional type I\!I superconductor for infinite
$\k$}, Nonlinear Analysis, {\bf 52} (3) (2003), 917-930.

\bibitem[Ne]{Ne} J.-C. Nedelec, {\it \'El\'ements finis mixtes incompressibles pour l'\'equation de Stokes dans $\Bbb R^3$}, Numer.
Math., {\bf 39} (1) (1982), 97-112.

\bibitem[NW]{NW} M. Neudert and W. von Wahl, {\it Asymptotic behavior of the {\rm div-curl} problem in exterior
domains}, Adv. in Differential Equations, {\bf 6} (11) (2001), 1347-1376.

\bibitem[P1]{P1} X. B. Pan, {\it Analogies between superconductors and liquid crystals: nucleation and critical fields}, in: H. Kozono, T. Ogawa, E. Yanagida, K. Tanaka, Y. Tsutsumi eds, Asymptotic Analysis and Singularities - Elliptic and Parabolic PDEs and Related
Problems, Proceedings of the 14th MSJ International Research Institute Sendai,
Japan, 18-27 July 2005, Advanced Studies in Pure Mathematics, Mathematical
Society of Japan, Tokyo, {\bf 47-2} (2007); pp. 479-518.

\bibitem[P2]{P2} X. B. Pan, {\it On a quasilinear system involving the operator Curl}, Calc.
Var.  PDE, {\bf 36} (3) (2009), 317-342.

\bibitem[P3]{P3} X. B. Pan, {\it Nucleation of instability of Meissner state of superconductors and related mathematical problems}, in: B. J. Bian,
S. H. Li and X. J. Wang eds., Trends in Partial Differential Equations, ``Advanced Lectures in Mathematics", ALM 10,
Higher Education Press and International Press, Beijing-Boston, 2009, pp.323-372.

\bibitem[P4]{P4} X. B. Pan, {\it Minimizing curl in a multiconnected domain}, J.
Math. Phys., {\bf 50} (3) (2009), art. no. 033508.

\bibitem[P5]{P5} X. B. Pan, {\it Regularity of weak solutions to nonlinear Maxwell systems}, J. Math. Phys., {\bf 56} (7) (2015), 071508.

\bibitem[P6]{P6} X. B. Pan, {\it Existence and regularity of solutions to quasilinear systems of Maxwell type and Maxwell-Stokes type}, Calc. Var. PDEs, {\bf 55} (6) (2016), article no. 143.

\bibitem[P7]{P7} X. B. Pan, {\it Directional curl spaces and applications to the Meissner states of anisotropic
superconductors}, J. Math. Phys., {\bf 58} (1) (2017), article no. 011508.

\bibitem[P8]{P8} X. B. Pan, {\it Mathematical problems of boundary layer behavior of superconductivity and liquid
crystals}, Scientia Sinica Mathematics, {\bf 48} (1) (2018), 83-110.

\bibitem[PK]{PK} X. B. Pan and K. Kwek, {\it On a problem related to
vortex nucleation of superconductivity}, J. Differential Equations, {\bf 182} (1) (2002), 141-168.

\bibitem[Pi]{Pi} R. Picard, {\it On the boundary value problems of electro- and magnetostatics}, Proc. Royal Soc.  Edinburgh, {\bf 92 A} (1-2) (1982), 165-174.

\bibitem[Sa]{Sa} J. Saranen, {\it On an inequality of Friedrichs}, Math. Scand., {\bf 51} (2) (1982), 310-322.

\bibitem[S]{S} S. Serfaty, {\it Local minimizers for the Ginzburg-Landau energy near critical magnetic field}, Commun. Contemp. Math., {\bf 1} (2) (1999), 213-254; and {\bf 1} (3)  (1999),  295-333.

\bibitem[SS]{SS} E. Sandier and S. Serfaty, {\it Vortices in the magnetic
Ginzburg-Landau model}, Progress in Nonlinear Differential Equations
and their Applications, {\bf 70}, Birkh\"auser Boston, Inc., Boston,
2007.

\bibitem[SST]{SST} D. Saint-James and G. Sarma and E.J. Thomas, {\it Type II Superconductivity}, Pergamon Press, Oxford, 1969.

\bibitem[Sc]{Sc} G. Schwarz, {\it Hodge Decomposition-- A Method for Solving Boundary Value Problems}, Lecture Notes in Math., vol. {\bf 1607}, Springer-Verlag, Berlin Heidelberg, 1995.

\bibitem[T]{T} M. Tinkham, {\it Introduction to Superconductivity}, McGraw-Hill Inc., New York, 1975.

\bibitem[W]{W} W. von Wahl, {\it Estimating $\nabla\u$ by {\rm div} $\u$ and {\rm curl} $\u$}, Math. Methods in the Appl. Sci., {\bf 15} (2) (1992), 123-143.

\bibitem[X]{X} X.F. Xiang, {\it On the shape of Meissner solutions to a limiting form of Ginzburg¨CLandau systems},
Arch. Rational Mech. Anal., {\bf 222} (3) (2016), 1601-1640.

\bibitem[Y1]{Y1} H. M. Yin, {\it On a nonlinear Maxwell's system in quasi-stationary electromagnetic fields},
Math. Models Methods Appl. Sci., {\bf 14} (10) (2004), 1521-1539.

\bibitem[Y2]{Y2}  H. M. Yin, {\it Regularity of weak solution to an $p$-{\rm curl}-system}, Diff. Integral Eqs., {\bf 19} (4) (2006),
361-368.

\end{thebibliography}
\end{document}